\def\ourtitle{The indecomposable objects in the center of Deligne's category $\uRep S_t$}
\title{The indecomposable objects  in the  center of Deligne's category $\uRep S_t$}
\date{\today}
\author{Johannes Flake}
\address{Max Planck Institute for Mathematics, Vivatsgasse 7, 53111 Bonn, Germany}
\email{flake@mpim-bonn.mpg.de}
\urladdr{https://johannesflake.net}
\author{Nate Harman}
\address{School of Mathematics, Institute for Advanced Study, 1 Einstein Dr, Princeton, NJ 08540}
\email{nharman@math.ias.edu}
\urladdr{https://www.math.ias.edu/~nharman/}
\author{Robert Laugwitz}
\address{School of Mathematical Sciences,
University of Nottingham, University Park, Nottingham, NG7 2RD, UK}
\email{robert.laugwitz@nottingham.ac.uk}
\urladdr{https://www.nottingham.ac.uk/mathematics/people/robert.laugwitz}
\tikzset{
	partition/.style={
      scale=0.5,
      yscale=-1,
      baseline={([yshift=-0.5ex]current bounding box.center)}
    }
}
\tikzset{
    bend/.cd,
    0/.style={},
    1/.style={bend right},
    -1/.style={bend left}
}
\newcommand\makePartPt[1]{({Mod(#1,10)},{(#1-Mod(#1,10))*.1})}
\newcommand\makePartLn[2]{%
\pgfmathtruncatemacro\bend{%
(int(#1/10)==int(#2/10)) ?
(#1<10 ? 1 : -1)*(#1>#2 ? 1 : -1)
: 0%
}
\draw ({mod(#1,10)},{int(#1/10}) to[bend/\bend] ({mod(#2,10)},{int(#2/10)});
}
\newcommand\tp[1] {%
\tikz[partition] {
\draw[white,opacity=0] (1,0)--(1,1); 
\def\j{0}
\foreach \i [remember=\i as \j] in {#1} {
  \ifnum \i>0
    \ifnum \j>0
      \makePartLn{\i}{\j};
    \fi
    \draw[fill] \makePartPt\i circle (2.5pt);
  \fi
} 
}}
\newcommand{\superimpose}[2]{%
  {\ooalign{$#1\@firstoftwo#2$\cr\hfil$#1\@secondoftwo#2$\hfil\cr}}}
\newcommand{\bunderline}[1]{\mkern1mu\underline{\mkern-1mu#1\mkern-7mu}\mkern7mu }
\newcommand{\mat}[1]{\left(\begin{smallmatrix}#1\end{smallmatrix}\right)}
\newcommand{\op}[1]{\operatorname{#1}}
\newcommand{\ov}[1]{\overline{#1}}
\newcommand{\ab}{\mathrm{ab}}
\newcommand{\cha}{\operatorname{char}}
\newcommand{\colim}{\operatorname{colim}}
\newcommand{\Drin}{\operatorname{Drin}}
\newcommand{\End}{\operatorname{End}}
\newcommand{\gr}{\operatorname{gr}}
\newcommand{\Hom}{\operatorname{Hom}}
\newcommand{\ide}{\operatorname{Id}}
\newcommand{\Img}{\operatorname{Im}}
\newcommand{\Ind}{\operatorname{Ind}}
\newcommand{\Irrep}{\operatorname{Irrep}}
\newcommand{\uInd}{\underline{\operatorname{Ind}}}
\newcommand\lift{\operatorname{Lift}}
\newcommand{\one}{\mathbf{1}}
\newcommand{\Res}{\operatorname{Res}}
\newcommand{\uRes}{\underline{\operatorname{Res}}}
\newcommand{\Rep}{\operatorname{Rep}}
\newcommand{\uRep}{\protect\underline{\mathrm{Re}}\!\operatorname{p}}
\newcommand{\rev}{\operatorname{rev}}
\newcommand{\sgn}{\operatorname{sign}}
\newcommand{\triv}{\operatorname{triv}}
\providecommand{\op}[1]{\operatorname{#1}}
\newcommand{\mC}{\mathbb{C}}
\newcommand{\mQ}{\mathbb{Q}}
\newcommand{\mZ}{\mathbb{Z}}
\newcommand{\mN}{\mathbb{N}}
\newcommand{\mF}{\mathbb{F}}
\newcommand{\cA}{\mathcal{A}}
\newcommand{\cC}{\mathcal{C}}
\newcommand{\cD}{\mathcal{D}}
\newcommand{\cF}{\mathcal{F}}
\newcommand{\cO}{\mathcal{O}}
\newcommand{\cP}{\mathcal{P}}
\newcommand{\cU}{\mathcal{U}}
\newcommand{\cZ}{\mathcal{Z}}
\newcommand{\uW}[1]{\bunderline{W}_{\mkern-7mu#1}}
\newcommand{\uWold}[1]{\bunderline{W}_{\mkern-7mu#1}^{\operatorname{old}}}
\newcommand{\Set}[1]{\left\lbrace #1\right\rbrace}
\newcommand{\isomorph}{\stackrel{\sim}{\longrightarrow}}
\newcommand\kk{\Bbbk}
\newcommand\s\sigma
\newcommand\p\pi
\renewcommand\o\otimes
\tikzset{
	partition/.style={
      scale=0.5,
      yscale=-1,
      baseline={([yshift=-0.5ex]current bounding box.center)}
    }
}
\newcommand\makeDot[2]{\draw[fill] (#1,#2) circle (2.5pt);}
\newcommand\tpid[2][1]{%
\foreach \x in {#2} \draw (\x,#1) -- +(0,1);
}
\newcommand\tpart[2]{\tikz[partition]{%
\foreach \i [count=\c] in {#1} \relax
\draw[white] (1,1)--(1,\c); %
\foreach \i [count=\y] in {#1} {
  \ifnum \i>0 \foreach \x in {1,...,\i} \makeDot{\x}{\y};
  \fi
}
#2
}}
\newtheoremstyle{mystyle}%
  {0.5cm}                   %
  {0.5cm}                   %
  {\normalfont}           %
  {}                      %
  {\itfont\bfseries}  %
  {:}                     %
  {0.3cm}              %
  {\thmname{#1}}
\newtheoremstyle{defstyle}%
  {0.5cm}                   %
  {0.5cm}                   %
  {\normalfont}           %
  {}     %
  {\normalfont\bfseries}  %
  {:}                     %
  {0.3cm}              %
  {\thmname{#1}\thmnumber{ #2}\thmnote{ (#3)}}
\numberwithin{equation}{section}
\newtheorem*{rep@theorem}{\rep@title}
\newcommand{\newreptheorem}[2]{%
\newenvironment{rep#1}[1]{%
 \def\rep@title{#2 \ref{##1}}%
 \begin{rep@theorem}}%
 {\end{rep@theorem}}}
\newtheorem{theorem}{Theorem}[section]
\newtheorem{proposition}[theorem]{Proposition}
\newtheorem{corollary}[theorem]{Corollary}
\newtheorem{lemma}[theorem]{Lemma}
\newtheorem{theorem*}{Theorem}
\theoremstyle{definition}
\newtheorem{definition}[theorem]{Definition}
\theoremstyle{remark}
\newtheorem{example}[theorem]{Example}
\newtheorem{remark}[theorem]{Remark}
\renewcommand{\sectionmark}[1]		%
	{
	\markboth{\small\it \thesection{} #1}{}
	}
\subjclass[2020]{Primary 18M15; Secondary 05E10}
\keywords{Tensor Categories, Monoidal Center, Deligne's Interpolation Category}
\begin{document}

\newcommand\jtodo[1]{{\color{cyan}[#1]}}

\begin{abstract}
We classify the indecomposable objects in the monoidal center of Deligne's interpolation category $\uRep S_t$ by viewing $\uRep S_t$ as a model-theoretic limit in rank and characteristic. We further prove that the center of $\uRep S_t$ is semisimple if and only if $t$ is not a non-negative integer. 
In addition, we identify the associated graded Grothendieck ring of this monoidal center with that of the graded sum of the centers of representation categories of finite symmetric groups with an induction product. We prove analogous statements for the abelian envelope.
\end{abstract}
\maketitle

\section{Introduction}

In the seminal paper \cite{Del}, P.~Deligne constructed symmetric tensor categories $\uRep S_t$, where $t$ can be any complex number, which interpolate the categories of representations over the symmetric groups $S_d$, $d\in \mZ_{\geq 0}$. These categories, and their relatives for other series of groups,   have proven interesting to the study of symmetric tensor categories, as well as to the study of stability phenomena in representation theory, for example, see \cites{CO,CO-Rep-St-ab,Et,Et2,BEH,EHS}. The category $\uRep S_t$ can be constructed using the combinatorics of partitions (see \Cref{sec::diagrammatic}) and has a universal property with respect to Frobenius algebra objects of dimension $t$ in symmetric monoidal categories. %

In addition to giving this combinatorial definition and universal property Deligne observed that for $t$ transcendental  $\uRep S_t$ can be realized as an ultraproduct, a sort of model-theoretic limit, of the categories $\Rep  S_d$, as $d\in \mZ_{\geq 0}$ grows to infinity.  This understanding was extended to all values of $t$ by the second listed author in \cite{Har1}*{Theorem~1.1} by viewing $\uRep S_t$ as a limit of the categories $\uRep_{p} S_{d}$, varying the rank $d$ as well as the characteristic~$p$. We recall the main ideas of this approach in Sections~\ref{sec:deligne}--\ref{sect:int-group}.

\smallskip

In this paper, we apply these ultrafilter techniques to prove several results on the monoidal centers $\cZ(\uRep S_t)$ of Deligne's categories. 
The \emph{monoidal} or \emph{Drinfeld center}  $\cZ(\cC)$ \cites{Maj2,JS} of a monoidal category $\cC$ is a universal construction of a braided monoidal category with a forgetful functor $\cZ(\cC)\to\cC$ and instrumental in the construction of quantum groups and solutions to the quantum Yang--Baxter equation, see e.g.~\cites{Maj1, Kas,BK}.

In \cite{FL}, the first and third listed authors started the investigation of $\cZ(\uRep S_t)$, showed that this is a ribbon category, and obtained invariants of framed links as an application. It was shown that the braided categories $\cZ(\uRep S_t)$ interpolate the braided categories $\cZ(\Rep S_d)$ in the sense that $\cZ(\Rep S_d)$ is the semisimplification of $\cZ(\uRep S_d)$ for $d\in \mZ_{\geq 0}$. In the present paper, we answer several open structural questions about the categories $\cZ(\uRep S_t)$ including a classification of their indecomposable objects and computation of the (associated graded) Grothendieck rings.

\smallskip

We start the paper by providing some required background results on $\uRep S_t$ in \Cref{sec:background}. Many of the results stated there are known to experts but are sometimes not available in the literature. Thus, we have included proofs when appropriate.

\smallskip

A key statement we prove in \Cref{sec:centerlimit} displays $\cZ( \uRep S_t)$ as a model-theoretic limit in rank and characteristic, similar to $\uRep S_t$. For this, we define a bi-filtration on the center, with the filtration layer $\cZ(\uRep S_t)^{\leq m,k}$ being the full subcategory on objects which are in the preimage of $(\uRep S_t)^{\leq m,k}$ under the forgetful functor $\cZ(\uRep S_t)\to \uRep S_t$. Here, $(\uRep S_t)^{\leq m,k}$ consists of objects that are direct summands of objects of the form $X^{\otimes m_1}\oplus \ldots\oplus X^{\otimes m_k}$, with $m_i\leq m$, where $X$ is the generating object of $\uRep S_t$. With respect to this filtration, we show that 
$\cZ(\uRep S_t)^{\le k, m}$ is equivalent to the ultraproduct of the categories $\cZ(\Rep_{\ov{\mF}_{p_i}}S_{n_i})^{\le k,m}$ with partially defined monoidal and additive structures, see \Cref{center-limit}. In particular, this enables us to solve the question of semisimplicity of $\cZ(\uRep S_t)$ raised in \cite{FL}*{Question~3.31}. Recall that $\uRep S_t$ is semisimple if and only if $t\not\in\mZ_{\geq0}$.

\begin{reptheorem}{semisimple}
The category $\cZ(\uRep S_t)$ is semisimple if and only if $t\notin \mZ_{\geq 0}$.
\end{reptheorem}

Next, we construct a $\mC$-linear functor
\begin{equation}
\uInd\colon \cZ(\Rep  S_n)\boxtimes \uRep S_{t-n} \longrightarrow \cZ(\uRep S_t).\label{intro:UInd}
\end{equation}
for every $n\geq0$ and $t\in\mC$.
This functor $\uInd$ is a \emph{separable Frobenius monoidal functor} compatible with the braidings, see \Cref{prop:indcenter}. It enables us to classify the indecomposable objects in $\cZ(\uRep S_t)$. Let $n\geq 0$ be an integer, $\mu$ a singleton free partition of $n$, $Z(\mu)$ the centralizer of an element $\s\in S_n$ of cycle type $\mu$, $V$ an $Z(\mu)$-module, and $U$ an object in $\uRep S_{t-n}$. We denote  the image of the object $\Ind_{Z(\mu)}^{S_n}(V)\boxtimes U$ under $\uInd$ by $\uW{\mu,V,U}$. 
Up to isomorphism the object $\uW{\mu,V,U}$ does not depend on the choice of $\sigma$, and only depends on the isomorphism classes of $U$ and $V$.

\begin{reptheorem}{indec-classification}
$\uW{\mu,V,U}$ is indecomposable if and only if $V$ and $U$ are, and the objects $\uW{\mu,V,U}$ for $n,\mu,V,U$ as above with $V$ and $U$ indecomposable form a complete list of all indecomposable objects in $\cZ(\uRep S_t )$ up to isomorphism.
\end{reptheorem}
The question of classifying the indecomposable objects in $\cZ(\uRep S_t)$ naturally emerged from the paper \cite{FL} but was independently raised by P.~Etingof in his research statement. 
Moreover, in \Cref{cor:blocksZRepSt} we describe the blocks of the category $\cZ(\uRep S_t)$ as 
$$B_{\mu,V,B}=\{\uW{\mu,V,U} \mid U\in B\},$$
where $B$ is a block of $\uRep S_{t-n}$ as classified in \cite{CO}, and the pair $(\mu,V)$ is as above (parametrizing blocks of $\cZ(\Rep S_n)$ not induced from $\cZ(\Rep S_m)$ with $m<n$).

We also classify the indecomposable and indecomposable projective objects in  $\cZ(\uRep^{\ab} S_d)$, for the \emph{abelian envelope} $\uRep^{\ab} S_d$ of $\uRep S_d$, $d\in \mZ_{\geq 0}$, as constructed in \cite{CO-Rep-St-ab}, see \Cref{sec:ZRepab}. Abelian envelopes and their general theory have been receiving an increasing amount of attention recently \cites{EHS, BEO, Cou}. We show that $\cZ(\uRep^{\ab} S_d)$ indeed satisfies the universal property of the abelian envelope of $\cZ(\uRep S_d)$ in the sense of \cite{BEO}, see \Cref{cor:abenvelopeZRep} and \Cref{app:ZC}.

We note that the first paper \cite{FL} on $\cZ(\uRep S_t)$ already contained a general construction of objects via explicit idempotents and half-braidings using the combinatorial description of $\uRep S_t$ by partitions. We identify the objects constructed in \cite{FL} in the image of $\uInd$ in \Cref{sec:comparison} and prove that these objects  generate $\cZ(\uRep S_t)$ as a Karoubian tensor category but are, in general, not  indecomposable.

\medskip

In \Cref{sec:K0}, we address the question of describing the associated graded Grothendieck ring  $\gr K_0^{\oplus} (\cZ(\uRep S_t))$ which was suggested by V.~Ostrik. To this end, we introduce an induction tensor product structure on the direct sum of categories $\cZ(\Rep  S_n)$. Namely, we define in \Cref{sec:towers} the abelian monoidal category
$$\cZ\Rep S_{\geq 0}:=\bigoplus_{n\geq 0} \cZ(\Rep  S_n)$$
with the tensor product of an object $V$ in $\cZ(\Rep  S_n)$ and $W$ in $\cZ(\Rep  S_m)$ given by 
$$V\odot W:=\cZ\Ind_{S_n\times S_m}^{S_{n+m}}(V\boxtimes W)\,\, \in\,\, \cZ(\Rep  S_{n+m}).$$
Note that $\cZ\Rep S_{\geq 0}$ is a tower of centers, \emph{not} the center of a tower of representation categories.
Here, $\cZ\Ind_{S_n\times S_m}^{S_{n+m}}(V\boxtimes W)$ is the usual induction of group representations with additional half-braiding defined in \Cref{centerind-groups}. This induction product on the sum (or \emph{tower}) of centers can be applied to other series of groups and may be of independent interest. More generally, in \Cref{app:frob} we show that induction produces separable Frobenius monoidal functors
$$\cZ(\Rep  G)\longrightarrow \cZ(\Rep  H)$$
if $G\subseteq H$ is a subgroup. 

From \eqref{intro:UInd} we obtain an oplax monoidal functor
\begin{equation}
    \uInd\colon \cZ\Rep S_{\geq 0} \longrightarrow \cZ(\uRep S_t),\quad V\longmapsto \uInd(V\boxtimes\one)\label{intro:oplaxInd},
\end{equation}
see \Cref{sec:oplax}, and a description of the associated graded of the  \emph{additive} Grothendieck ring $K_0^{\oplus}$. 
\begin{reptheorem}{thm:K0}
The functor $\uInd$ from \eqref{intro:oplaxInd} induces an isomorphism of graded rings
$$\gr K_0^{\oplus}(\uInd) \colon K_0(\cZ\Rep S_{\geq 0})\isomorph \gr K_0^\oplus(\cZ(\uRep S_t)),$$
where the associated graded of $K_0^\oplus(\cZ(\uRep S_t))$ is taken with respect to the filtration induced by the filtration $\cZ(\uRep S_t)^{\leq k}$ of $\cZ(\uRep S_t)$.
\end{reptheorem}
An analogous statement holds for the abelian envelope $\cZ(\uRep^\ab S_d)$ if $t=d\in \mZ_{\geq 0}$, see \Cref{thm:K0-ab}.
Computations in $\cZ\uRep S_{\geq 0}$ --- and hence in $K^{\oplus}_0(\uRep S_t)$ --- can be carried out by computing induction of modules over centralizer groups of symmetric groups. Some sample computations are included in \Cref{sec:computations}.

\medskip

A particularly important class of tensor categories are \emph{modular (fusion) categories} which find applications in topological field theory, see \cites{TV} and references therein. In particular, the center $\cZ(\Rep  G)$, for $G$ a finite group and its cocycle twists appear in Dijkgraaf--Witten theory \cite{DPR}. Modular  categories and some of their applications have been generalized to non-semisimple finite tensor categories \cites{FS,BD}. In this generality, a \emph{modular category} is a non-degenerate finite ribbon tensor category.

The categories $\cZ(\uRep S_t)$, for $t$ generic, and $\cZ(\uRep^\ab S_d)$, for $d\in \mZ_{\geq 0}$, are \emph{infinite} analogues of modular categories.
This interpretation follows from \cite{FL}*{Theorem~3.27} where $\cZ(\uRep S_t)$ was shown to be a ribbon category and \Cref{sec:non-deg} where we prove that $\cZ(\uRep S_t)$ and $\cZ(\uRep^\ab S_d)$ are non-degenerate braided tensor categories. We note that these categories are also \emph{factorizable} braided tensor categories by \cite{EGNO}*{Proposition~8.6.3}. In the finite case, non-degeneracy and factorizability of braided tensor categories are equivalent \cite{Shi}.

As $\cZ(\uRep S_t)$ and $\cZ(\uRep^\ab S_d)$ are infinite tensor categories, the concept of modular category and applications to topological field theory have not been developed for these categories. However, we note that these categories satisfy all conditions (beside finiteness) imposed on modular categories. For $t$ generic, the category $\cZ(\uRep S_t)$ is, moreover, semisimple.
Hence, the results on $\cZ(\uRep S_t)$ of this paper and \cite{FL} give interesting infinite yet locally finite analogues of modular categories which are not equivalent to (co)modules over (quasi)-Hopf algebras.

\subsection*{Acknowledgements}

The authors thank P.~Etingof and V.~Ostrik for interesting conversations on the topics of this paper. We also thank C.~Schweigert for posing an interesting question that motivated adding \Cref{sec:non-deg}. The authors further thank V.~Miemietz for organizing the workshop `Representations of monoidal categories and 2-categories' (July 2019, in Norwich) discussions at which lead to the start of this project. The research of J.~F. was supported by the Deutsche Forschungsgemeinschaft (DFG, German Research Foundation) -- Project-ID 286237555 -- TRR 195.
The research of R.~L. is supported by a Nottingham Research Fellowship. We further thank the referee for their careful reading of the manuscript and valuable comments which helped to improve the exposition of the paper.

\section{Background}\label{sec:background}

\subsection{Notation and conventions} \label{sec:conventions}

In the following, $\kk$ denotes a field and $\Rep_\kk G$ the category of finite-dimensional $G$-representations over $\kk$, with $\Rep G=\Rep_\mC G$.
Given a prime $p$, we abbreviate $\Rep_p G=\Rep_{\overline{\mF}_p} G$.

The categories considered in this paper are, at the very least, $\Bbbk$-linear Karoubian rigid monoidal categories. The \emph{Karoubian envelope} of a $\Bbbk$-linear category is the idempotent completion of the closure under finite direct sums, and a category is \emph{Karoubian} if the inclusion into its Karoubian envelope gives an equivalence of categories.
In general, the symbol $\cC\boxtimes\cD$ of two such categories $\cC,\cD$ denotes the external product as in \cite{Kel}, \cite{Ost2}*{Section~2.2} which is the Karoubian envelope of the naive $\Bbbk$-linear tensor product which has objects $X\boxtimes Y$, for $X\in \cC$, $Y\in \cD$. Note that in most cases studied, $\cC$ is a finite semisimple (abelian) category, which implies that $\cC\boxtimes \cD$ is a finite direct sum of copies of $\cD$. If, in addition, $\cD$ is abelian, then $\cC\boxtimes \cD$ is abelian. 

For terminology on monoidal, braided and symmetric monoidal categories we follow \cite{EGNO}. In particular, an \email{(abelian) tensor category} is a  $\kk$-linear rigid monoidal category which is locally finite, abelian, with $\End(\one)=\kk$, for the tensor unit $\one$. A \emph{Karoubian tensor category} shall satisfy the same conditions with one exception: instead of being abelian, we require it only to be additive and idempotent-complete.

\smallskip

An important technical tool used in this paper is that of ultraproducts of categories (see e.g.~\cite{Cru}). For simplicity, we assume that the categories considered here are small so that objects and morphisms form sets. We replace representation-theoretic categories by equivalent small ones, noting that up to equivalence the ultraproduct will not depend on the choice of equivalent small categories. We assume throughout that $\cU$ is a fixed (non-principal) \emph{ultrafilter} on $\mathbb{N}$ and refer the reader to, e.g., \cite{Sch} for generalities on this concept. We may think of $\cU$ as a collection of subsets of $\mN$, each of which contains ``almost all'' numbers. 

Given an ultrafilter $\cU$ and a collection of categories $(\cC_i)_{i\in \mN}$ we can defined their \emph{ultraproduct} $\prod_{\cU}\cC_i$. Its objects are sequences $\prod_\cU V_i$ of objects $V_i$ of $\cC_i$ defined for all $i$ in a set belonging to the ultrafilter. Two such sequences are equal if they agree on all indices of some set belonging to $\cU$. Similarly, morphisms are sequences $\prod_{\cU}f_i\colon \prod_\cU V_i\to \prod_\cU W_i$ of morphisms $f_i\colon V_i\to W_i$ which are defined on a set belonging to the ultrafilter and identified when they agree on some set of the ultrafilter $\cU$. We refer the reader to \cite{Cru} or \cite{Har1} for further explanations. If all categories $\cC_i$ are $\Bbbk_i$-linear, then $\prod_\cU \cC_i$ is linear over the ultraproduct of fields $\prod_\cU \Bbbk_i$. If all $\cC_i$ are monoidal categories, then $\prod_\cU \cC_i$ is naturally a monoidal category. 

We will make essential use of \L o\'s' theorem \cite{Sch}*{Theorem 1.3.2} which allows us to transfer any first order logical statement from the categories $\cC_i$ to their ultraproduct $\prod_\cU \cC_i$. See also \cite{HK}*{Section~1} for examples on how this theorem is used. We further need Steinitz' Theorem that states that an uncountable algebraically closed field is determined, up to isomorphism, by its characteristic and uncountable cardinality \cite{Ste}. This theorem implies the existence of isomorphisms of the ultraproducts of fields $\prod_{\cU}\mC$, or $\prod_{\cU}\ov{\mF}_{p_i}$, for a sequence of primes with $\liminf_i p_i=\infty$, and the complex numbers, see e.g. \cite{Sch}*{Chapter~1}.
In the following, we consider equivalences of monoidal categories linear over an ultraproduct $\prod_{\cU}\Bbbk_i$ of algebraically closed fields, which we regard as equivalences of $\mC$-linear monoidal categories under an isomorphism of fields $\prod_{\cU}\Bbbk_i\cong \mC$ obtained from Steinitz' theorem.

\subsection{Deligne's categories as diagrammatic categories}  
\label{sec::diagrammatic}

In \cite{Del}, Deligne constructs a class of Karoubian  $\Bbbk$-linear symmetric monoidal categories $\uRep S_t$ depending on a parameter $t$ in the field $\Bbbk$ of characteristic zero. It is a well-known observation that every simple complex $S_n$-module appears as a direct summand of a tensor power $X_n^{\otimes k}$ of the $n$-dimensional permutation representation $X_n$ of $S_n$ for some $k\geq1$. In other words, the category $\Rep  S_n$ of finite-dimensional ${\mC} S_n$-modules is the  \emph{Karoubian envelope} of the monoidal category generated by the single object $X_n$,  cf. \cite{Del}*{\S 1.7--1.8}. The morphism spaces $\Hom(X_n^{\otimes k},X_n^{\otimes l})$ are given by the $S_n$-invariants of $X_n^{\otimes (k+l)}$ and can be described combinatorially using the partition algebras $P_k(n)$, see e.g.~\cite{CO}*{Section~2} for details. To define $\uRep S_t$, $n$ is now replaced by a general parameter $t\in \Bbbk$.

Deligne's category $\uRep S_t$ can be constructed using a graphical calculus: it has a distinguished object $X$ represented by a point and its tensor powers $X^{\o k}$ for $k\geq 0$ represented by $k$ points. Morphisms between two such tensor powers $X^{\o k}$ and $X^{\o l}$ are represented using diagrams consisting of $k$ upper points labelled $1,\dots,k$ and $l$ lower points labelled $1',\dots,l'$, and an arbitrary number of strings connecting points. Two such diagrams are considered equivalent, if the partitions of the points $1,\dots,k,1',\dots,l'$ given by the connected components of each string diagram coincide. The morphism spaces between $X^{\o k}$ and $X^{\o l}$ is defined as the free $\kk$-vector space spanned by the equivalence classes of such diagrams, so tensor products and compositions can be defined on diagrams and extended linearly. The following is a typical string diagram representing a morphism in $\uRep S_t$:
\[
\tp{1,3,14,0,11,2,12,0,13} .
\]

The tensor product of two diagrams is given by stacking the diagrams horizontally. The composition of two diagrams $\pi,\mu$ is achieved by first stacking them vertically, and identifying the lower points of $\pi$ with the upper points of $\mu$. Then these identified points are removed from the string diagram, leaving only the upper points of $\pi$, the lower points of $\mu$, and a number of strings. Each connected component of this string diagram which does not contain upper points of $\pi$ or lower points of $\mu$ (such components may arise when removing the identified points) is removed and the resulting string diagram is multiplied by a factor $t^\ell$, where $\ell\geq 0$ is the number of connected components removed in the process. The category $\uRep S_t$ is now defined as the Karoubian envelope of the category defined on the objects $X^{\otimes k}$, for $k\geq 0$. We refer the reader to \cite{Del} and \cite{CO}*{Section~2} for details on the combinatorial construction of $\uRep S_t$.

In the generic case, i.e.~if $t\notin \mZ_{\geq 0}$, $\uRep S_t$ is a semisimple symmetric tensor category.
If $t=d$ is a non-negative integer, there is an essentially surjective full monoidal functor 
\begin{equation}\label{fiberfunctor}
\cF_d\colon\uRep S_d\longrightarrow \Rep  S_d
\end{equation}
which maps $X$ to the $d$-dimensional standard $S_d$-module $X_d$ and, consequently, $X^{\o k}$ to $(X_d)^{\o k}$ for any $k\geq0$. Choosing a basis $e_1,\dots,e_d$ in $X_d$ provides a basis $(e_{\bf i}=e_{i_1}\o\dots\o e_{i_k})_{i_1,\dots,i_k}$ indexed by tuples $\mathbf i=(i_1,\dots,i_k)$ for $\cF_d(X^{\o k})$, for any $k\geq0$, and the image $\cF_d(\pi)$ of any diagram $\pi$ with $k$ upper and $l$ lower points is the $S_d$-module morphism sending
\begin{equation}\label{Fdeq}
e_{\mathbf i} \, \longmapsto
  \sum_{\mathbf j=(j_1,\dots,j_l)} f(\pi)^{\mathbf i}_{\mathbf j} e_{\mathbf j}
\end{equation}
where the coefficient $f(\pi)^{\mathbf i}_{\mathbf j}$ is $1$ if the indices $i_1,\dots,i_k, j_1,\dots,j_l\in\{1,\dots,t\}$ induce a partition on the upper and lower points of the diagram $\pi$ which refines the one given by the connected components of $\pi$, and $0$ otherwise.
The functors $\cF_d$ allow us to view $\uRep S_t$ as an interpolation category for the classical symmetric tensor categories $\Rep  S_d$.

Further, we recall the (recursive) definition of the morphism $x_{\pi}$ from \cite{CO}*{Equation (2.1)}, 
\begin{align}\label{eqn:xpi}
    x_{\pi}= \pi-\sum_{\tau}x_\tau,
\end{align}
where the sum is taken over all partitions $\tau$ strictly coarser than $\pi$. Note that the set $\{x_\pi\}$, for all partitions of $\{1,\ldots, k,1',\ldots,l'\}$, gives a basis for $\Hom_{\uRep S_t}(X^{\otimes k}, X^{\otimes l})$. If, again, $t=d$, then $\cF_d(x_\pi)$ is the $S_d$-module morphism
\begin{equation}
e_{\mathbf i}\longmapsto\sum_{\mathbf j=(j_1,\dots,j_l)} f'(x_\pi)^{\mathbf i}_{\mathbf j} e_{\mathbf j},
\end{equation}
where now the coefficient $f'(x_\pi)^{\mathbf i}_{\mathbf j}$ is $1$ if the indices $i_1,\dots,i_k, j_1,\dots,j_l\in\{1,\dots,t\}$ induce the same partition on the upper and lower points of the diagram $\pi$ as the one given by the connected components of $\pi$, and $0$ otherwise.

Indecomposable objects in $\uRep S_t$ are classified by partitions, see \cite{CO}*{Section 3.1}:

\begin{theorem} \label{thm::X-lambda}
There is a bijection between partitions $\lambda\vdash n$, for $n\geq 0$, and indecomposable objects $X_\lambda$ of $\uRep S_t$. The object $X_\lambda$ is a direct summand of $X^{\otimes n}$, but not of $X^{\o i}$ for $i<n$.
\end{theorem}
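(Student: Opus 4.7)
By definition, $\uRep S_t$ is the Karoubian envelope of the full monoidal subcategory generated by the tensor powers $X^{\otimes k}$, so every indecomposable object is, up to isomorphism, the image of a primitive idempotent $e$ in some $P_k(t):=\End_{\uRep S_t}(X^{\otimes k})$. A standard factorization argument shows that $eX^{\otimes k}$ is isomorphic to a summand of $X^{\otimes j}$ for some $j<k$ if and only if $e$ lies in the two-sided ideal $I_{k,k-1}\subset P_k(t)$ of endomorphisms factoring through some $X^{\otimes j}$ with $j<k$. Using the basis $\{x_\pi\}$ from \eqref{eqn:xpi}, and noting that any coarsening of a non-permutation diagram is again a non-permutation diagram, this ideal coincides with the span of those $x_\pi$ whose \emph{propagating number} --- the number of blocks of $\pi$ containing both an upper and a lower point --- is strictly less than $k$. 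The problem therefore reduces to classifying, for each $k\geq 0$, the conjugacy classes of primitive idempotents of $P_k(t)$ not lying in $I_{k,k-1}$.

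The key structural input is the algebra isomorphism $P_k(t)/I_{k,k-1}\cong \mC[S_k]$, under which the class of a permutation diagram maps to the corresponding element of $S_k$. Since $\mC[S_k]$ is semisimple, its conjugacy classes of primitive idempotents are in bijection with partitions $\lambda\vdash k$ via Young symmetrizers. The next step is to lift each such class to a conjugacy class of primitive idempotents $z_\lambda\in P_k(t)$; the resulting indecomposable objects $X_\lambda:=z_\lambda X^{\otimes k}$ are then, by construction, summands of $X^{\otimes k}$ but not of $X^{\otimes j}$ for $j<k$. Non-isomorphism of $X_\lambda$ and $X_{\lambda'}$ for $\lambda\neq\lambda'$ follows because two conjugate primitive idempotents of $P_k(t)$ induce conjugate (or simultaneously zero) images in the semisimple quotient $\mC[S_k]$, so Young symmetrizers of different shapes remain inequivalent upstairs. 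Taking the disjoint union over $k\geq 0$ then exhausts the indecomposables of $\uRep S_t$.

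The principal obstacle is the non-generic case $t\in\mZ_{\geq 0}$, in which $P_k(t)$ fails to be semisimple and $I_{k,k-1}$ need not be contained in the Jacobson radical of $P_k(t)$, so the standard idempotent-lifting lemma does not apply to the single-step quotient $P_k(t)\twoheadrightarrow\mC[S_k]$ directly. To handle this, I would invoke the refined propagating-number filtration $0\subset I_{k,0}\subset I_{k,1}\subset\ldots\subset I_{k,k}=P_k(t)$: the partition algebras are known to be cellular with cells indexed by pairs $(j,\lambda)$ for $j\leq k$ and $\lambda\vdash j$, and iterated use of Young symmetrizers together with the cellular idempotents of this filtration (rather than any single-stage lift) produces the required primitive idempotent $z_\lambda\in P_k(t)$ uniformly in $t$. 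The same cellular structure also guarantees that different partitions yield non-isomorphic indecomposables in both the generic and non-generic cases, completing the classification.
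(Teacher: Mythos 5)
The paper itself does not prove this statement: it is quoted from Comes--Ostrik (\cite{CO}, Section~3.1), and your outline reconstructs essentially their argument --- the propagating-number filtration of $P_k(t)=\End(X^{\otimes k})$, the quotient $P_k(t)/I_{k,k-1}\cong\mC S_k$, and idempotent lifting --- so the architecture is the right one. However, one step is wrong as written: $I_{k,k-1}$ is the span of the partition \emph{diagrams} $\pi$ with propagating number $<k$, not of the corresponding elements $x_\pi$. The two subspaces have the same dimension but genuinely differ. Already for $k=1$: the only composite $X\to\one\to X$ of diagrams is $\pi_0=\{\{1\},\{1'\}\}$, so $I_{1,0}=\mC\,\pi_0$, whereas by \eqref{eqn:xpi} one has $x_{\pi_0}=\pi_0-\id$, and $\mC\,x_{\pi_0}$ is not even a two-sided ideal of $P_1(t)$ for $t\neq1$, since $x_{\pi_0}\,\pi_0=(t-1)\pi_0$. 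Relatedly, the supporting claim that any coarsening of a non-permutation diagram is again a non-permutation diagram is false: $\{\{1\},\{1'\}\}$ coarsens to the identity diagram $\{\{1,1'\}\}$. The repair is harmless --- work in the diagram basis, where the identification of $I_{k,k-1}$ with the span of diagrams of propagating number $<k$ follows because a diagram with $p$ propagating blocks factors exactly (with no power of $t$) through $X^{\otimes p}$, and the propagating number can only decrease under composition --- but the step as you state it fails.

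On the lifting problem you flag for $t\in\mZ_{\geq0}$: the cellularity detour would work, but it is heavier than needed and is not how the cited proof goes. The standard resolution is the general fact that for a finite-dimensional algebra $A$ and \emph{any} two-sided ideal $I$, conjugacy classes of primitive idempotents of $A$ not contained in $I$ biject with conjugacy classes of primitive idempotents of $A/I$: if $e$ is primitive and $e\notin I$, then $\bar e(A/I)\bar e$ is a nonzero quotient of the local ring $eAe$, hence local, so $\bar e$ is primitive; lifting is done through $A/\operatorname{rad}(A)$ (idempotents lift modulo the nilpotent radical and units lift along surjections); and conjugacy upstairs versus downstairs is detected by the tops of the projective covers $Ae$. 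Applied to $A=P_k(t)$ and $I=I_{k,k-1}$ with semisimple quotient $\mC S_k$, this gives the bijection with $\lambda\vdash k$ uniformly in $t$, including the non-semisimple integral values, with no appeal to the cell structure. With these two repairs your argument is complete and agrees with the proof in \cite{CO}.
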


For $t \notin \mathbb{Z}_{\ge 0}$ the objects $X_\lambda$ behave uniformly in $t$. Their dimensions and character values are given by polynomials in $t$, and they can be cut out of $X^{\otimes n}$ by primitive idempotents which are $\mathbb{Q}(t)$-linear combinations of partition diagrams.

At non-negative integral values $t = d \in \mathbb{Z}_{\ge 0}$, some of the rational functions defining these primitive idempotents develop poles and the idempotents no longer exist.  As such, some surviving idempotents which are generically not primitive become primitive at these special values, and the corresponding generically simple objects get ``glued together" in a sense.

As part of their analysis of blocks, Comes and Ostrik described a process of ``lifting'' which takes an indecomposable object $X_\lambda$ at $t=d$, and describes how it splits apart when we deform it to nearby semisimple values of $t$. The combinatorics of this process is completely described by Comes and Ostrik \cite{CO}, but we will just use the following simplified version:

\begin{theorem}[{\cite{CO}*{Proposition~3.10, Proposition~3.12(a), Lemma~5.20}}]\label{lifting}
For every object $X\in\uRep S_d$ there is an object $\lift_d(X)$ in $\uRep S_t$, uniquely defined up to isomorphism, for $t$ in a formal neighborhood of $d$ such that for all $X,Y\in\uRep S_d$,
\begin{enumerate}
    \item $\lift_d(X\oplus Y)\cong\lift_d(X)\oplus\lift_d(Y)$,
    \item $\lift_d(X\o Y)\cong\lift_d(X)\o\lift_d(Y)$,
    \item For any indecomposable object $X_\lambda\in\uRep S_d$, either
    
   \begin{enumerate} \item $\lift_d(X_\lambda)$ remains indecomposable and is still labeled by $X_\lambda$

or

    \item $\lift_d(X_\lambda)$ decomposes as $X_\lambda \oplus X_{\lambda'}$ for a partition $\lambda'$ depending on $d$ and $\lambda$ satisfying $|\lambda| > |\lambda'|$.
    
    \end{enumerate}
       Moreover each $X_{\lambda'}$ arises as a summand of $\lift_d(X_\lambda)$ for at most one partition $\lambda$ with $|\lambda| > |\lambda'|$.
\end{enumerate}

\end{theorem}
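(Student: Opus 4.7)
The plan is to construct $\lift_d$ by deforming the parameter $t$ formally around $d$ and lifting idempotents. Consider the $\Bbbk[[t-d]]$-linear partition category $\mathcal{P}$ with objects $X^{\otimes n}$ and morphism spaces the free $\Bbbk[[t-d]]$-modules on partition diagrams, with composition given by the same rule as in \Cref{sec::diagrammatic} (where the $t^\ell$ factor from discarded components becomes a power series in $t$). The morphism spaces between fixed tensor powers are free modules of constant rank, so reduction modulo $(t-d)$ gives $\uRep S_d$ while base change to a generic $t$ in the formal neighborhood of $d$ gives $\uRep S_t$. Given an object $X = (X^{\otimes n}, e)$ of $\uRep S_d$, the endomorphism algebra $\End_\mathcal{P}(X^{\otimes n})$ is a free $\Bbbk[[t-d]]$-algebra whose reduction is $\End_{\uRep S_d}(X^{\otimes n})$, so since $\Bbbk[[t-d]]$ is complete local, the standard idempotent lifting lemma produces $\tilde e$ lifting $e$, unique up to conjugation. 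I would set $\lift_d(X) := (X^{\otimes n}, \tilde e)$, which is well-defined up to isomorphism; properties~(1) and~(2) then follow immediately, since orthogonal sums and tensor products of idempotents lift to orthogonal sums and tensor products of the respective lifts.

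For property~(3), pick a generic $t$ in the formal neighborhood. Since $\uRep S_t$ is semisimple there, $\lift_d(X_\lambda)$ decomposes as $\bigoplus_\mu X_\mu^{\oplus m_\mu}$ for some multiplicities $m_\mu \geq 0$, and the corner algebra $\tilde e_\lambda \End(X^{\otimes n}) \tilde e_\lambda \cong \prod_\mu \Mat_{m_\mu}(\Bbbk)$ is semisimple. By flatness of the deformation, its $\Bbbk$-dimension equals that of its fiber at $t = d$, namely $\dim \End_{\uRep S_d}(X_\lambda)$. I would then invoke the Comes--Ostrik block description: each block of $\uRep S_d$ is either a single isolated indecomposable with one-dimensional endomorphism algebra, or forms a chain of partitions along which the endomorphism algebra of any indecomposable is two-dimensional and commutative. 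In the first case $\lift_d(X_\lambda) \cong X_\lambda$; in the second, the $\leq 2$-dimensional commutative corner algebra forces a decomposition into two simples with multiplicity one, namely $X_\lambda$ and its lower neighbor $X_{\lambda'}$ in the chain, with $|\lambda'| < |\lambda|$. That $X_\lambda$ itself appears follows by tracking the reduction of a matrix unit of the semisimple corner back to $e_\lambda$. The uniqueness clause --- that each $X_{\lambda'}$ is the junior partner of at most one $X_\lambda$ --- follows because the ``parent'' of a node in a linear chain is unique.

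The hard part is not the idempotent-lifting machinery, which is standard, but the Comes--Ostrik combinatorial analysis of blocks of $\uRep S_d$: one must show that blocks are linearly ordered chains, that endomorphism algebras of indecomposables are at most two-dimensional and commutative, and that the parent in a chain always has strictly larger size than its child. Abstract deformation theory alone only yields that $\lift_d(X_\lambda)$ decomposes into semisimples whose total corner dimension is controlled by $\dim \End_{\uRep S_d}(X_\lambda)$; the sharper control on the number of summands, their sizes, and the parent/child uniqueness is specific to the diagrammatic structure of partition categories, and is established through an explicit analysis of primitive idempotents built from partition diagrams.
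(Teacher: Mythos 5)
The paper does not prove this statement at all: it is imported verbatim from Comes--Ostrik (the bracketed citation \cite{CO}*{Proposition~3.10, Proposition~3.12(a), Lemma~5.20} \emph{is} the proof), so there is nothing internal to compare your argument against. That said, your sketch reproduces the strategy of the cited source accurately: lifting is defined by idempotent lifting in the partition category over the complete local ring $\Bbbk[[t-d]]$, properties (1) and (2) are formal consequences, and property (3) reduces to the flatness/rank argument on corner algebras combined with the combinatorial description of blocks. You are also candid that the combinatorial core --- the chain structure of non-trivial blocks and the dimensions of endomorphism algebras --- is being invoked rather than established, which is exactly the content of the cited Lemma~5.20 and the surrounding block analysis; so your proposal is a correct reduction to the cited results rather than a self-contained proof, which is an entirely reasonable state of affairs for a quoted theorem.

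Two points deserve correction or care. First, your dichotomy ``isolated indecomposable with one-dimensional endomorphism algebra, or a chain along which the endomorphism algebra of \emph{any} indecomposable is two-dimensional'' is not quite right: each non-trivial block is an infinite chain $\lambda^{(0)}, \lambda^{(1)}, \dots$ whose \emph{minimal} member $X_{\lambda^{(0)}}$ still has one-dimensional endomorphism algebra and lifts to an indecomposable, while only the non-minimal members have two-dimensional local endomorphism algebras and split as $X_{\lambda^{(i)}}\oplus X_{\lambda^{(i-1)}}$. As stated, your case analysis would miss the minimal elements of non-trivial blocks. Second, when you pass to the generic fibre over $K=\Bbbk((t-d))$, a two-dimensional commutative semisimple corner algebra could a priori be a quadratic field extension of $K$ rather than $K\times K$; to rule this out you need that the simple objects of $\uRep S_t$ over $K$ are split (endomorphism algebra equal to $K$), which again comes from the Comes--Ostrik classification of indecomposables over an arbitrary field of characteristic zero. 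Neither point invalidates the approach, but both belong to the combinatorial analysis you are deferring, so they should be flagged as part of what is being cited rather than silently assumed.
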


\begin{remark}
We believe that over the complex numbers one can check that this lifting operation can be defined for $t$ in an analytic neighborhood of $d$, rather than just a formal one. For our purposes though this will not be necessary, so we will not pursue this direction further.
\end{remark}

\subsection{Deligne's categories as limits in characteristic and rank}\label{sec:deligne}

We now recall \cite{Har1}*{Theorem 1.1} to display $\uRep S_t$, for \emph{any} $t\in \mC$, as an ultraproduct of the representation categories $\Rep_p S_d$, cf.~\Cref{sec:conventions}. The case for $t$ transcendental was already contained in Deligne's seminal paper \cite{Del}. We are interested in three cases of this theorem to describe all cases of Deligne's category $\uRep S_t$ up to equivalence of symmetric monoidal categories (under isomorphisms of fields $\mC\to \mC$ that send one transcendental parameter to the other).

\begin{definition}
For any object $X$ in a Karoubian or abelian tensor category, we denote by $\langle X\rangle$ the full Karoubian tensor subcategory generated by $X$. 
\end{definition}

Note that $\langle X\rangle$ is generated by duals, tensor products, direct sums, and direct summands involving $X$. In the situations we discuss, $X$ will often be a self-dual object.

\begin{theorem}[\cite{Del},\cite{Har1}*{Theorem 1.1}]\label{limitthm}
In each case below, we specify an increasing sequence of positive integers $\mathbf{t}=(t_i)_{i\in \mN}$ and a sequence of fields $(\Bbbk_i)_{i\in \mN}$. We denote by $X$ the object $\prod_{\cU} X_{t_i}$ in $\prod_{\cU}\Rep_{\Bbbk_i}S_{t_i}$, and by $t$ the complex number corresponding to $\prod_{\cU} t_i$ under the respective isomorphism of fields $\prod_{\cU}\Bbbk_i\cong \mC$.
\begin{enumerate}
\item $t$ transcendental: Consider a sequence $\mathbf{t}=(t_i)_{i\in \mN}$ with $\liminf_{i} t_i=\infty$ and the ultraproduct $\prod_{\cU}\Rep  S_{t_i}$.
Then, under an isomorphism of fields $\prod_{\cU}\mC\cong \mC$, $\langle X\rangle\subset\prod_\cU \Rep S_{t_i}$ is equivalent to $\uRep S_t$ as a $\mC$-linear symmetric monoidal category. 
\item $t\in \overline{\mQ}\setminus \mZ_{\geq 0}$, with minimal polynomial $m_t$: We can choose increasing sequences of positive integers $\mathbf{t}=(t_i)_{i\in \mN}$  and primes $\mathbf{p}=(p_i)_{i\in \mN}$ such that $t_i<p_i$ for any $i\in \mN$, satisfying 
\begin{align*}
m_t(t_i)&\equiv 0 \mod{p_i}, &\forall i\in \mN.
\end{align*}
Then, under an isomorphism of fields $\prod_{\cU}\ov{\mF}_{p_i}\cong \mC$, $\langle X \rangle\subset \prod_{\cU}\Rep_{p_i}S_{t_i}$ is equivalent to $\uRep S_t$ as a $\mC$-linear symmetric monoidal category.  
\item $t=d\in \mZ_{\geq 0}$: Set $t_i=p_i+d$ for $p_i$ the $i$-th prime number. Then again, under an isomorphism of fields $\prod_{\cU}\ov{\mF}_{p_i}\cong \mC$, $\langle X \rangle\subset \prod_{\cU}\Rep_{p_i}S_{p_i+d}$ is equivalent to $\uRep S_d$ as a $\mC$-linear symmetric monoidal category.
\end{enumerate}
\end{theorem}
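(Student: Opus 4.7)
The plan is to construct a $\mC$-linear symmetric monoidal functor $F\colon \uRep S_t \to \langle X\rangle$ sending the generator of $\uRep S_t$ to the ultraproduct object $X = \prod_\cU X_{t_i}$ and to verify that it is an equivalence. Essential surjectivity is immediate from the definition of $\langle X\rangle$ as the Karoubian subcategory generated by $X$, so the real content is in defining $F$ coherently and establishing full faithfulness on tensor powers of the generator; the extension to Karoubian envelopes is then formal.

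First I would define $F$ diagrammatically. For each $i$, formula \eqref{Fdeq} already assigns to every partition diagram $\pi$ a morphism $\cF_{t_i}(\pi)\colon X_{t_i}^{\otimes k} \to X_{t_i}^{\otimes l}$, and composition of two such diagrams picks up a factor of $t_i$ per removed loop. The ultraproduct $\prod_\cU \cF_{t_i}(\pi)$ is therefore a well-defined morphism on tensor powers of $X$, and composition now scales by $\prod_\cU t_i$ per loop. The crux is that under the Steinitz isomorphism $\prod_\cU \kk_i \cong \mC$, the scalar $\prod_\cU t_i$ matches $t$: in case~(1), $\prod_\cU t_i$ is transcendental over $\mQ$ (since the integers $t_i$ are unbounded), so one chooses the isomorphism sending it to $t$; in case~(2), \L o\'s' theorem applied to the first-order assertion $m_t(t_i)\equiv 0 \pmod{p_i}$ forces $\prod_\cU t_i$ to be a root of $m_t$, and we align the isomorphism accordingly; in case~(3), the identity $t_i = p_i + d \equiv d \pmod{p_i}$ gives $\prod_\cU t_i = d$ directly in $\prod_\cU \kk_i$. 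With this scalar matching, the defining relations of $\uRep S_t$ hold among the $\prod_\cU \cF_{t_i}(\pi)$, so $F$ is a well-defined symmetric monoidal functor.

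For full faithfulness on $\Hom_{\uRep S_t}(X^{\otimes k}, X^{\otimes l})$, recall that this space has the partition basis of size $B(k+l)$, the Bell number. On the other side, $\Hom_{\prod_\cU \Rep_{\kk_i} S_{t_i}}(X^{\otimes k}, X^{\otimes l})$ is isomorphic to $\prod_\cU \Hom_{S_{t_i}}(X_{t_i}^{\otimes k}, X_{t_i}^{\otimes l})$ as a $\prod_\cU \kk_i$-module, and a standard orbit count shows each factor has dimension equal to the number of partitions of $\{1,\ldots,k+l\}$ with at most $t_i$ parts (a characteristic-free statement, since $X_{t_i}^{\otimes m}$ is the permutation module on $\{1,\ldots,t_i\}^m$). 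The hypothesis $\liminf t_i = \infty$ means $t_i \geq k+l$ on a set belonging to $\cU$, so this dimension is $B(k+l)$, and \L o\'s' theorem transfers the count to the ultraproduct. Since $F$ maps the spanning family of partition diagrams to the spanning family of their ultraproduct realizations and source and target have the same dimension, $F$ is an isomorphism on Hom spaces.

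The main obstacle, and the conceptual heart of cases~(2) and~(3), is that in positive characteristic one could a priori fear linear dependencies among partition morphisms or negligible morphisms collapsing in the ultraproduct. What makes case~(3) work, in contrast with the naive guess $t_i = d$ in characteristic zero which would only reach the semisimplification $\Rep S_d$, is that each $\Rep_{p_i} S_{p_i+d}$ is itself non-semisimple with Hom spaces retaining the full combinatorial dimension $B(k+l)$ whenever $k+l \leq p_i+d$; this ``non-semisimple fat'' survives the ultraproduct and matches the Hom spaces of $\uRep S_d$ rather than those of $\Rep S_d$. Once this orbit-counting argument is carried out alongside the asymptotics $t_i,p_i \to \infty$, fully faithfulness on tensor powers follows, and passage to Karoubian envelopes yields the claimed equivalence.
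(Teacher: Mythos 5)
The paper does not prove this theorem itself --- it is recalled verbatim from Deligne and from \cite{Har1}*{Theorem 1.1} --- so your proposal can only be measured against the arguments in those references, and it is essentially the standard one: define the functor on the diagrammatic presentation of $\uRep S_t$, match the loop parameter $\prod_\cU t_i$ with $t$ under the Steinitz isomorphism, and prove full faithfulness on tensor powers of $X$ by the characteristic-free orbit count $\dim\Hom_{S_n}(X_n^{\otimes k},X_n^{\otimes l})=\#\{\text{partitions of }k+l\text{ points into}\leq n\text{ blocks}\}$, which stabilizes at $B(k+l)$ once $t_i\geq k+l$. Your identification of why case (3) lands on $\uRep S_d$ rather than $\Rep S_d$ (the Hom spaces of $\Rep_{p_i}S_{p_i+d}$ retain the full dimension $B(k+l)$) is exactly the right point, and the scalar computations in all three cases are correct.

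Two small gaps worth closing. First, in case (2) the theorem also asserts the \emph{existence} of the sequences $(t_i,p_i)$ with $m_t(t_i)\equiv 0\pmod{p_i}$ and $t_i<p_i$; you use these sequences but never justify that they exist. This requires the classical fact that a nonconstant integer polynomial has infinitely many prime divisors (Schur; or Chebotarev), after which one picks a root $t_i$ of $m_t$ modulo each such $p_i$ in $\{0,\dots,p_i-1\}$. Second, the well-definedness of $F$ rests on the identity $\cF_{t_i}(\pi)\circ\cF_{t_i}(\mu)=t_i^{\ell}\,\cF_{t_i}(\pi\mu)$ holding over $\ov{\mF}_{p_i}$ and not just over $\mC$; you assert this, and it is true because the verification is a purely combinatorial index count valid over any commutative ring, but a complete write-up should say so explicitly rather than import it from the characteristic-zero literature.
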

We remark that under the chosen isomorphisms of fields, the respective functors $ \uRep S_t\to \langle X \rangle$ are obtained by the universal property of $\uRep S_t$ by sending the tensor generator of $\uRep S_t$ to the object $X=\prod_{\cU}X_{t_i}$ in the ultraproduct.

Since $\uRep S_t$ is defined in terms of its generating object $X$ there is a natural exhaustive filtration $(\uRep S_t)^{\le k , m}$  encoding the complexity of objects in terms of this generator. Explicitly, $(\uRep S_t)^{\le k , m}$ is the full subcategory of objects which are direct summands of objects $X^{\otimes j_1} \oplus \dots \oplus X^{\otimes j_\ell}$, where each $j_i$ is at most $k$ and the number of terms $\ell$ is at most $m$.  We may similarly filter the categories $\Rep_{p_i} S_{n_i}$ in terms of the defining $n_i$-dimensional representation of $S_{n_i}$, analogously calling the filtered pieces $(\Rep_{p_i} S_{n_i})^{\le k , m}$.

The advantage of these filtrations is that Theorem \ref{limitthm} tells us that $(\uRep S_t)^{\le k , m}$ is equivalent to the ultraproduct of $(\Rep_{p_i} S_{n_i})^{\le k , m}$, with no need to further cut down the ultraproduct. These subcategories $(\uRep S_t)^{\le k , m}$ are not monoidal or even additive, as taking a tensor product or direct sum will possibly land you in a higher term of the filtration, but the ultraproduct does respect those products and sums that are defined.
 
Taking the union over all $m \in \mathbb{N}$ we obtain a coarser filtration $\mathcal{C}^{\le k}$, for either $\mathcal{C}=\uRep S_t$ or $\mathcal{C}= \Rep_{p_i} S_{n_i}$. These filtered pieces are additive and in fact abelian in the case where $t \notin \mathbb{Z}_{\ge 0}$. These are not monoidal subcategories but they satisfy the condition that if $V \in \mathcal{C}^{\le k}$ and $W \in \mathcal{C}^{\le k'}$ then $V\otimes W\in \mathcal{C}^{\le k+k'}$, which makes the Grothendieck ring a filtered ring. 
 
The descriptions of $\uRep S_t$ can be used to transfer any first order statement in the signature of symmetric monoidal categories with a distinguished object $X$ from the classical (modular) representation theory to the limit, i.e.~the interpolation category $\uRep S_t$ using \L o\'s' Theorem. For more details and applications of this philosophy see \cites{Har1, HK}.

\medskip

The combinatorial description of $\uRep S_t$ recalled in \Cref{sec::diagrammatic} can be matched with the above characterization through ultrafilters in \Cref{limitthm}, giving an evaluation of the equivalence $\langle X \rangle \simeq \uRep S_t$ on morphisms. 

Given a partition $\pi$ of ${1,\ldots, k,1',\ldots,l'}$ viewed as a morphism $\pi\colon X^{\otimes k}\to X^{\otimes l}$, $\pi$ corresponds to the ultraproduct $\prod_{\cU} \pi_i$ described in the following. Using the notation $e_{\bf i}$ as in \eqref{Fdeq}, $\pi_i\colon X_{t_i}^{\otimes k}\to X_{t_i}^{\otimes l}$ is defined by
\begin{align}\label{piimage}
\pi_i(e_{\mathbf{i}})&=\sum_{\mathbf{j}}f(\pi)_{\mathbf{j}}^{\mathbf{i}}e_{\mathbf{j}},
\end{align}
for all $k$-tuples $\mathbf{i}$ and all $l$-tuples $\mathbf{j}$ of integers in $\Set{1,\dots,t_i}$, where the coefficient $f(\pi)_{\mathbf{j}}^{\mathbf{i}}$ is the same as in \eqref{Fdeq}, the complex case.

We may extend the assignment $\pi\mapsto \prod_{\cU}\pi_i$ using the fixed isomorphism of fields $\mC\isomorph \prod_{\cU}\ov{\mF}_{p_i}, \alpha_i\mapsto \prod_{\cU}\alpha_i$ by $\alpha\pi\mapsto \prod_{\cU}\alpha_i\pi_i$, for $\alpha\in \mC$. We note that $m\in \mZ$ corresponds to $\prod_\cU m$ under the isomorphism of fields, but for a general complex number $\alpha$, $\alpha_i$ is only defined for almost all $i$.

\subsection{Representations of a fixed group in large characteristic}\label{sect:int-group}
 
In addition to symmetric groups, we will often have an auxiliary finite group $G$ for which we want to compare representations across different large characteristics. 
 
Let $G$ be a fixed finite group, and let $X$ be a faithful representation of $G$ defined over the integers (e.g.~the permutation representation). In a slight abuse of notation we will use $X$ to denote the corresponding base changes to $\Rep G$ as well as to $\Rep_p G$. As $X$, in particular, labels an object in each of the categories $\Rep_{p_i} G$ for any sequence of primes $(p_i)_{i\in\mN}$, we also have an object $\prod_\cU X$ in the ultraproduct $\prod_\cU \Rep_{p_i} G$, which we will also denote by $X$.
The following result of Crumley realizes the characteristic zero representation theory of $G$ as an ultraproduct of the representations of $G$ in large characteristic. 
\begin{theorem}[{\cite{Cru}*{Section 9.5.1}}]\label{rep-limit}
Let $\mathbf{p} = (p_i)_{i\in \mathbb{N}}$ be an arbitrary increasing sequence of primes.   Under an isomorphism of fields $\prod_{\cU}\ov{\mF}_{p_i}\cong \mC$, there exists an equivalence of $\mC$-linear symmetric monoidal categories between $\langle X \rangle \subset \prod_{\cU} \Rep_{p_i}G$ and $\Rep  G$.
\end{theorem}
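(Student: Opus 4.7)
The plan is to construct an explicit $\mathbb{C}$-linear symmetric monoidal functor
\[
F\colon \Rep G \longrightarrow \prod_\cU \Rep_{p_i} G
\]
sending the chosen faithful representation $X$ to $\prod_\cU X$, and then to show it lands in, and gives an equivalence onto, the full subcategory $\langle X \rangle$.

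First, since $G$ is a fixed finite group, there are only finitely many primes dividing $|G|$. As $\liminf_i p_i = \infty$, the set $\{i : p_i \nmid |G|\}$ belongs to $\cU$, so we may replace $(p_i)$ by a subsequence on which $p_i \nmid |G|$ throughout. By Maschke, each $\Rep_{p_i} G$ is then a semisimple $\ov{\mF}_{p_i}$-linear symmetric tensor category, and by Brauer's theorem on modular irreducibles in coprime characteristic the number of simples equals the number of conjugacy classes of $G$, matching the complex case.

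Second, I would fix a cyclotomic extension $K = \mQ(\zeta_N) \subset \mC$ with $N = |G|$ over which all complex irreducibles $V_1, \ldots, V_r$ of $G$ are defined, choose models over the ring of integers $\cO_K$, and for each $i$ choose a prime of $\cO_K$ above $p_i$ to obtain reductions $V_j^{(i)} \in \Rep_{p_i} G$. By standard modular representation theory, reduction mod $p$ is a bijection on irreducibles for $p \nmid |G|$, so the $V_j^{(i)}$ form a complete set of simple objects in $\Rep_{p_i}G$. I would define $F$ on simples by $F(V_j) := \prod_\cU V_j^{(i)}$, extend additively, and declare the $\mathbb{C}$-linear structure via the Steinitz isomorphism $\prod_\cU \ov{\mF}_{p_i} \cong \mC$.

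Third, promote $F$ to a symmetric monoidal functor. The data needed --- fusion multiplicities, associators, braidings --- are governed by character values on $G$, which lie in $\cO_K$. For almost all $i$ these values reduce faithfully, so the decomposition $V_j \otimes V_k \cong \bigoplus_\ell m_{jk}^\ell V_\ell$ transfers to an identical decomposition of $V_j^{(i)} \otimes V_k^{(i)}$ with the same integer multiplicities, and $\dim \Hom_G$ spaces match. \L o\'s' theorem then lifts these identities --- which are first-order in the signature of symmetric monoidal categories with the distinguished objects $X$ and $V_j$ --- to the ultraproduct, yielding both the monoidal coherence of $F$ and its full faithfulness on morphism spaces between objects built from the $V_j$. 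Finally, to identify the image with $\langle X \rangle$, I would invoke the classical fact (Burnside--Brauer) that a faithful representation tensor-generates the entire representation category: every $V_j$ appears as a summand of some $X^{\otimes n}$ over $\mC$, and, by \L o\'s' again, the analogous statement holds in $\prod_\cU \Rep_{p_i} G$. Thus $F$ factors through $\langle X \rangle$ and, since $\langle X \rangle$ is by definition the Karoubian envelope of tensor powers of $X = \prod_\cU X$, it is essentially surjective onto $\langle X \rangle$.

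The main obstacle will be the careful bookkeeping of several field identifications --- between $K$, its residue fields, $\ov{\mF}_{p_i}$, the ultraproduct $\prod_\cU \ov{\mF}_{p_i}$, and $\mC$ --- so that $F$ is genuinely $\mathbb{C}$-linear and symmetric monoidal \emph{after} passing through the highly non-canonical Steinitz isomorphism. All representation-theoretic ingredients (faithfulness implies tensor-generation, reduction mod $p$ for $p \nmid |G|$ is a bijection on irreducibles, character-theoretic fusion rules) are classical; the substance of the argument lies in assembling them coherently at the level of ultraproducts using \L o\'s' theorem.
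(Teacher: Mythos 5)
Your outline is correct and follows the intended route: the paper does not actually prove this statement (it is quoted from Crumley), but the ingredients it assembles around it --- Theorem \ref{modular-summary} (reduction mod $\mathfrak{p}$ is a bijection on irreducibles for $p\nmid|G|$), the choice of the Steinitz isomorphism so that on $\mathcal{O}$ it is the ultraproduct of the quotient maps $\mathcal{O}\to\mathcal{O}/\mathfrak{p}_i$, and Proposition \ref{ultra-reduction} --- are exactly the ones you invoke. Two small points. First, you need not ``pass to a subsequence'' on which $p_i\nmid|G|$; the set of such indices is cofinite, hence lies in the non-principal ultrafilter, which is all that is needed. Second, the vaguest step is promoting $F$ to a symmetric monoidal functor by defining it simple-by-simple and then arguing that associators and braidings match: it is cleaner to define $F$ first on the tensor powers $X^{\otimes n}$ (where morphism spaces are spanned by integral, i.e.\ $\mathcal{O}$-defined, maps, so $F$ is tautologically symmetric monoidal and, by \L o\'s, fully faithful after extending scalars along the chosen field isomorphism) and then extend to the Karoubian envelope; essential surjectivity onto $\langle X\rangle$ is then immediate from the definition of $\langle X\rangle$, and your Burnside--Brauer argument identifies the source with all of $\Rep G$. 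With that rearrangement your proof closes without gaps.
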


This result is nice in that it involves an equivalence of categories, and closely parallels the ultraproduct construction of Deligne categories above. However, in this case we can actually be very explicit about what happens at the level of objects, but first let us recall a bit about the representation theory of finite groups in large characteristic.

It was first observed by Dickson in 1902 \cite{Dic} that if $p$ does not divide $|G|$, then the representation theory of $G$ over an algebraically closed field of characteristic $p$ is ``the same'' as over the complex numbers.  Translated into a more modern set-up the following theorem explicitly describes this relationship. We refer to \cite{Ser}*{Part~III} for basic facts about the modular representation theory of finite groups.
 
\begin{theorem}\label{modular-summary}

Suppose $\mathcal{O}$ is the ring of integers in a number field $\Bbbk$ such that every irreducible complex representation of $G$ is defined over $\mathcal{O}$, $p$ is a prime number not dividing $|G|$ or the discriminant of $\mathcal{O}$, and let $\mathfrak{p}$ be a prime ideal of $\mathcal{O}$ lying above $p$. If $V$ is an irreducible complex representation of $G$, choose an integral form $V_\mathcal{O}$ defined over $\mathcal{O}$ and define its reduction modulo $p$ as $V_\mathfrak{p} := V_\mathcal{O} \otimes_{\mathcal{O}} \mathcal{O}/\mathfrak{p}$. 

\begin{enumerate}

\item $V_\mathfrak{p}$ is an (absolutely) irreducible representation of $G$ over $\mathcal{O}/\mathfrak{p}$.

\item The isomorphism class of $V_\mathfrak{p}$ is independent of the choice of the integral form $V_\mathcal{O}$.

\item If $\mathfrak{p}'$ is another prime lying above $p$, then there exists an automorphism $\sigma$ of $\kk$ sending $\mathfrak{p}$ to $\mathfrak{p}'$.  The induced isomorphism $\tilde{\sigma}: \mathcal{O}/\mathfrak{p} \to \mathcal{O}/\mathfrak{p}'$ identifies $V_\mathfrak{p}$ with $V_{\mathfrak{p}'}$.

\item Every irreducible representation of $G$ over $\mathcal{O}/\mathfrak{p}$ arises this way.

\item If $V$ and $W$ are two irreducible complex representations then $V_\mathfrak{p} \cong W_\mathfrak{p}$ if and only if $V \cong W$.

\end{enumerate}

\end{theorem}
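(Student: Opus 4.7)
The strategy is to lift the split Wedderburn decomposition of $\kk[G]$ to an integral decomposition over the localization $\mathcal{O}_\mathfrak{p}$ (a DVR with residue field $\mathcal{O}/\mathfrak{p}$ and fraction field $\kk$) and then reduce modulo $\mathfrak{p}$. Since $p\nmid|G|$, the element $|G|$ is a unit in $\mathcal{O}_\mathfrak{p}$, so the central primitive idempotents
$$e_i\;=\;\frac{\dim V_i}{|G|}\sum_{g\in G}\chi_i(g^{-1})\,g\;\in\;\kk[G]$$
have coefficients in $|G|^{-1}\mathcal{O}\subseteq\mathcal{O}_\mathfrak{p}$ and hence lie in $\mathcal{O}_\mathfrak{p}[G]$. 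This produces a block decomposition
$$\mathcal{O}_\mathfrak{p}[G]\;=\;\bigoplus_i A_i,\qquad A_i:=e_i\mathcal{O}_\mathfrak{p}[G],$$
each $A_i$ being a full $\mathcal{O}_\mathfrak{p}$-lattice in $A_i\otimes_{\mathcal{O}_\mathfrak{p}}\kk\cong\mathrm{Mat}_{n_i}(\kk)$, where $n_i=\dim V_i$.

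To upgrade this to $A_i\cong\mathrm{Mat}_{n_i}(\mathcal{O}_\mathfrak{p})$, I would observe that $|G|^{-1}\sum_g g\otimes g^{-1}$ is a separability idempotent, so $\mathcal{O}_\mathfrak{p}[G]$ is a separable, and in particular maximal, $\mathcal{O}_\mathfrak{p}$-order in $\kk[G]$; consequently each block $A_i$ is a maximal $\mathcal{O}_\mathfrak{p}$-order in the split matrix algebra $\mathrm{Mat}_{n_i}(\kk)$, and over a DVR such orders are isomorphic to $\mathrm{Mat}_{n_i}(\mathcal{O}_\mathfrak{p})$. Concretely, the action of $A_i$ on any $\mathcal{O}_\mathfrak{p}$-integral form $L_i\subset V_i$ (free of rank $n_i$) gives an injection $\rho\colon A_i\to\mathrm{End}_{\mathcal{O}_\mathfrak{p}}(L_i)\cong\mathrm{Mat}_{n_i}(\mathcal{O}_\mathfrak{p})$ between free $\mathcal{O}_\mathfrak{p}$-modules of rank $n_i^2$ which is generically an isomorphism, and maximality of $A_i$ forces $\rho$ to be an isomorphism. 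Reducing modulo $\mathfrak{p}$ then yields
$$(\mathcal{O}/\mathfrak{p})[G]\;\cong\;\prod_i\mathrm{Mat}_{n_i}(\mathcal{O}/\mathfrak{p}).$$

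From this integral Wedderburn decomposition the assertions (1), (2), (4), and (5) are immediate. The reduction $V_{i,\mathfrak{p}}$ identifies with the standard simple module of the $i$-th matrix factor, so it is absolutely simple, giving (1); any $\mathcal{O}$-integral form of $V_i$ is, after localization, a finitely generated projective $\mathrm{Mat}_{n_i}(\mathcal{O}_\mathfrak{p})$-module of $\mathcal{O}_\mathfrak{p}$-rank $n_i$, hence unique up to isomorphism, so the reduction does not depend on the choice, proving (2); the list $\{V_{i,\mathfrak{p}}\}_i$ exhausts the simples of $\prod_i\mathrm{Mat}_{n_i}(\mathcal{O}/\mathfrak{p})$, giving (4); and distinct $V_i,V_j$ correspond to distinct Wedderburn factors, so their reductions are non-isomorphic, proving (5).

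For (3), after replacing $\kk$ by a Galois closure over $\mathbb{Q}$ if necessary (or taking the standard choice $\kk=\mathbb{Q}(\zeta_{|G|})$, which is already Galois), transitivity of $\mathrm{Gal}(\kk/\mathbb{Q})$ on primes of $\mathcal{O}$ above $p$ supplies a $\sigma$ with $\sigma(\mathfrak{p})=\mathfrak{p}'$; because the $G$-action on $V_\mathcal{O}$ is $\mathcal{O}$-linear and $\sigma$ transports it equivariantly to $\sigma_*V_\mathcal{O}$, the induced isomorphism $\tilde\sigma\colon\mathcal{O}/\mathfrak{p}\to\mathcal{O}/\mathfrak{p}'$ identifies $V_\mathfrak{p}$ with $V_{\mathfrak{p}'}$. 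The main technical hurdle is the identification $A_i\cong\mathrm{Mat}_{n_i}(\mathcal{O}_\mathfrak{p})$ in the second step; once this integral structure is secured the remaining assertions are direct consequences of the Wedderburn decomposition over the residue field.
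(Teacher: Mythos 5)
The paper offers no proof of this theorem: it is quoted as background, with the reader referred to Dickson and to \cite{Ser}*{Part III}. So there is no in-paper argument to compare against, and your proposal has to stand on its own.

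For parts (1), (2), (4), (5) your argument is correct and is essentially the classical one. Since every irreducible is defined over $\mathcal{O}$, the algebra $\kk[G]$ is split, the central primitive idempotents $e_i$ have coefficients in $|G|^{-1}\mathcal{O}\subseteq\mathcal{O}_\mathfrak{p}$, and the separability idempotent shows $\mathcal{O}_\mathfrak{p}[G]$ is a separable, hence maximal, order (this is Auslander--Goldman; equivalently, Curtis--Reiner's theorem that $RG$ is maximal iff $|G|\in R^\times$). Your concrete step --- $A_i\hookrightarrow\End_{\mathcal{O}_\mathfrak{p}}(L_i)$ is an inclusion of orders with the same rational span, so maximality forces equality --- cleanly yields $(\mathcal{O}/\mathfrak{p})[G]\cong\prod_i\Mat_{n_i}(\mathcal{O}/\mathfrak{p})$, from which (1), (4), (5) are immediate, and the Morita/rank argument for (2) is fine (a lattice over $\Mat_{n_i}(\mathcal{O}_\mathfrak{p})$ of $\mathcal{O}_\mathfrak{p}$-rank $n_i$ corresponds to a rank-one free $\mathcal{O}_\mathfrak{p}$-module, hence is the standard column module). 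The discriminant hypothesis is not needed for any of this; it is used elsewhere in the paper to control the residue fields.

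Part (3) is where your argument does not deliver what is claimed. Transporting $V_\mathcal{O}$ along $\sigma$ produces $\sigma_*V_\mathcal{O}$, which is an integral form of the Galois-conjugate representation $V^\sigma$, not of $V$: in a basis, the matrices of $\sigma_*V_\mathcal{O}$ are $\sigma(\rho(g))$. Hence what the transport argument actually shows is that $\tilde\sigma$-base-change carries $V_\mathfrak{p}$ to $(V^\sigma)_{\mathfrak{p}'}$, and by your own part (5) this is isomorphic to $V_{\mathfrak{p}'}$ only when $V^\sigma\cong V$. For a concrete failure take $G=\mathbb{Z}/3$, $\kk=\mathbb{Q}(\zeta_3)$, $p=7=\mathfrak{p}\mathfrak{p}'$: if $V$ is the character $g\mapsto\zeta_3$ then $V_\mathfrak{p}$ and $V_{\mathfrak{p}'}$ send $g$ to the two distinct primitive cube roots of unity in $\mathbb{F}_7$, while $\tilde\sigma$ is the identity of $\mathbb{F}_7$ and cannot interchange them. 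The honest conclusion of your argument is that $\tilde\sigma$ identifies the full set $\{V_\mathfrak{p}\}_V$ of irreducibles mod $\mathfrak{p}$ with the set $\{V_{\mathfrak{p}'}\}_V$ mod $\mathfrak{p}'$, up to the relabelling $V\mapsto V^\sigma$; this is arguably all that the theorem's item (3) can mean, and it is all the paper uses (in \Cref{ultra-reduction} a single $\mathfrak{p}_i$ is fixed over each $p_i$ and absorbed into the field isomorphism). You should either prove this weaker, correct statement or add the hypothesis $V^\sigma\cong V$; as written, the final clause of your proof of (3) asserts an identification that your construction does not produce.
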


We are about ready to give an explicit description of what Crumley's equivalence does, but first let us recall basics about central characters.  If $C \subset G$ is a conjugacy class, then the element $\sum_{g \in C} g \in Z(\mathbb{Z}G)$ defines an endomorphism of the identity functor in $\Rep  G$ and $\Rep_{p}G$ for all primes $p$.  Evaluating the trace of these endomorphisms on a representation $V$ gives the central character of $V$. Over the complex numbers the central character of $V$ is just a rescaling of the ordinary character, and in particular determines $V$ up to isomorphism. 

 Fix $\mathcal{O}$ as in \Cref{modular-summary}, and for each prime $p_i$ as in Theorem \ref{rep-limit} fix a prime ideal $\mathfrak{p}_i$ lying above $p_i$ as well as an algebraic closure $\overline{\mathcal{O}/\mathfrak{p}_i}$ of $\mathcal{O}/\mathfrak{p}_i$.  We may choose our isomorphism $\mathbb{C} \cong \prod_\mathcal{U} \overline{\mathcal{O}/\mathfrak{p}_i}$ such that on $\mathcal{O}$ it is the ultraproduct of the natural quotient maps $\mathcal{O} \to \mathcal{O}/\mathfrak{p}_i$.

\begin{proposition}\label{ultra-reduction}
Under the equivalence of categories defined in Theorem \ref{rep-limit}, the image of an irreducible representation $V \in \Rep  G$ is isomorphic to $\prod_{\cU} V_{\mathfrak{p}_i} \in \prod_{\cU} \Rep_{\mathfrak{p}_i} G$.
\end{proposition}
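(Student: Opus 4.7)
The plan is to identify $\prod_\cU V_{\mathfrak{p}_i}$ with $V$ under Crumley's equivalence by matching central characters, using that these are intrinsic categorical invariants and therefore preserved under any $\mC$-linear monoidal equivalence.

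First I would verify that $\prod_\cU V_{\mathfrak{p}_i}$ actually lies in $\langle X\rangle$. For almost all $i$ the prime $p_i$ does not divide $|G|$, so $\Rep_{p_i}G$ is semisimple and $V_{\mathfrak{p}_i}$ is a well-defined simple object by \Cref{modular-summary}. Fix $k$ large enough that every simple object of $\Rep G$ appears as a summand of $X^{\otimes k}$ (which holds because $X$ is faithful). Applying \Cref{modular-summary} to each of the finitely many complex irreducibles, this same $k$ works in $\Rep_{p_i}G$ for almost all $i$, so $V_{\mathfrak{p}_i}$ is a summand of $X^{\otimes k}$ uniformly in $i$, and hence $\prod_\cU V_{\mathfrak{p}_i}$ is a summand of $X^{\otimes k}$ in the ultraproduct. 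Let $W \in \Rep G$ denote the object corresponding to $\prod_\cU V_{\mathfrak{p}_i}$ under \Cref{rep-limit}.

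Next I would check that $W$ is simple. By Schur's lemma $\End(V_{\mathfrak{p}_i}) \cong \overline{\mathcal{O}/\mathfrak{p}_i}$ for almost all $i$, so $\End(\prod_\cU V_{\mathfrak{p}_i}) \cong \prod_\cU \overline{\mathcal{O}/\mathfrak{p}_i} \cong \mC$. Since the equivalence is $\mC$-linear we have $\End(W) = \mC$, which in the semisimple category $\Rep G$ forces $W$ to be simple.

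Now I would compare central characters. For each conjugacy class $C \subset G$, the element $z_C = \sum_{g\in C} g$ defines an endomorphism of the identity functor on each of $\Rep G$, $\Rep_{p_i}G$, their ultraproduct, and $\langle X\rangle$. It acts on $V$ by the scalar $\omega_V(C) \in \mathcal{O}$; on $V_{\mathfrak{p}_i}$ it therefore acts by $\omega_V(C) \bmod \mathfrak{p}_i$; and so on $\prod_\cU V_{\mathfrak{p}_i}$ it acts by $\prod_\cU (\omega_V(C) \bmod \mathfrak{p}_i) = \omega_V(C)$, using our normalization of the isomorphism $\mC \cong \prod_\cU \overline{\mathcal{O}/\mathfrak{p}_i}$ so that it restricts to the quotients on $\mathcal{O}$. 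Because a monoidal equivalence intertwines the respective $z_C$-endomorphisms of the identity functors, $W$ and $V$ have identical central characters. Since $V$ and $W$ are both simple complex $G$-representations and central characters separate isomorphism classes of irreducibles, we conclude $V \cong W$.

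The main obstacle is the uniform bound in the first step: one must ensure a single $k$ works for all relevant $i$ so that the ultraproduct really lies in $\langle X\rangle$ and not merely in the full ultraproduct category. Once this is in hand the rest is a formal manipulation of central characters through a $\mC$-linear monoidal equivalence.
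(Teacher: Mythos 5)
Your proof is correct and follows essentially the same route as the paper: the paper likewise notes a priori that $\prod_{\cU} V_{\mathfrak{p}_i}$ corresponds to some irreducible $V'$ and then matches central characters, using exactly the normalization of the isomorphism $\mC\cong\prod_{\cU}\overline{\mathcal{O}/\mathfrak{p}_i}$ on $\mathcal{O}$ that you invoke. Your additional care in verifying membership in $\langle X\rangle$ and simplicity of $W$ fills in details the paper leaves implicit, but the core argument is identical.
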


\begin{proof} A priori we know that under this equivalence of categories $\prod_{\cU} V_{p_i}$ gets identified with some irreducible complex representation $V'$, hence it suffices to check that $V$ and $V'$ have the same central character. $V$ is defined over $\mathcal{O}$, so therefore its central character is as well. Moreover, by construction the central character of $V_\mathfrak{p_i}$ is the reduction modulo $\mathfrak{p_i}$ of the central character of $V$.  Since we chose our identification $\mathbb{C} \cong \prod_\mathcal{U} \overline{\mathcal{O}/\mathfrak{p}_i}$ to identify $\mathcal{O}$ with the product of its reductions modulo $\mathfrak{p}_i$ we see that indeed these characters agree. 
\end{proof}

\subsection{The Grothendieck ring of \texorpdfstring{$\uRep S_t$}{Rep St}} \label{sec:K0RepSt}

Given a $\Bbbk$-linear additive monoidal category $\cC$, let $K_0^{\oplus}(\cC)$ denote its \emph{additive Grothendieck ring}, i.e., the quotient the free ring generated by all isomorphism classes of objects in $\cC$ by the ideal of relations given through direct sums and tensor products. If we denote by $[Y]$ the symbol of an object $Y$ of $\cC$ inside of the ring $K_0^\oplus(\cC)$, these relations are
$$
[Y_1]+[Y_2] = [Y_1\oplus Y_2] ,\qquad
[Y_1][Y_2] = [Y_1\o Y_2]
$$
for all $Y_1,Y_2\in\cC$. 

The Grothendieck ring of $\uRep S_t$ is a filtered ring: Recall the filtration on the category $\uRep S_t$ from \Cref{sec:deligne}. Here, an object $Y\in\uRep S_t$ is in $(\uRep S_t)^{\leq k}$, for $k\geq0$, if $Y$ is isomorphic to a direct summand of a sum of objects $X^{\o l}$ with $l\leq k$, where $X$ is the tensor generator of $\uRep S_t $. This defines a filtration of the ring $K_0^\oplus(\uRep S_t )$.

Let us denote the irreducible complex $S_n$-module corresponding uniquely up to isomorphism to a partition $\lambda\vdash n$ by $S^\lambda$.

\begin{lemma}[{\cite{Del}*{Proposition 5.11},\cite{CO}*{Proposition~3.12},\cite{Har2}*{Theorem~3.3}}] 
\label{lem::K0-RepSt} Sending $[X_\lambda]\mapsto [S^\lambda]$ induces an isomorphism $\gr K_0^\oplus(\uRep S_t)\cong\bigoplus_{n\geq0} K_0(\Rep  S_n)$, where the product on the right-hand side is given by induction, or equivalently, by Littlewood--Richardson coefficients. %
\end{lemma}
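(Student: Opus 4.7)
The plan is to decompose the claim into an additive isomorphism on graded pieces and a verification that the multiplicative structures agree modulo lower filtration.

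\textbf{Additive part.} By \Cref{thm::X-lambda}, the classes $\{X_\lambda\}$ for $\lambda$ a partition form a complete set of isomorphism classes of indecomposable objects of $\uRep S_t$, and each $X_\lambda$ lies in $(\uRep S_t)^{\leq|\lambda|}$ but not in $(\uRep S_t)^{\leq|\lambda|-1}$. Since $K_0^\oplus$ of a Karoubian additive category is free abelian on indecomposables, $\gr_n K_0^\oplus(\uRep S_t)$ is a free abelian group with basis $\{[X_\lambda]\}_{\lambda\vdash n}$. The right-hand side $\bigoplus_n K_0(\Rep S_n)$ has the matching basis $\{[S^\lambda]\}_{\lambda\vdash n}$, so $[X_\lambda]\mapsto[S^\lambda]$ is a degree-preserving bijection of bases.

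\textbf{Multiplicative part.} The filtration is multiplicative, since $X^{\otimes n}\otimes X^{\otimes m}=X^{\otimes(n+m)}$, so $X_\lambda\otimes X_\mu\in(\uRep S_t)^{\leq n+m}$ for $\lambda\vdash n$, $\mu\vdash m$. It therefore suffices to show
\[
[X_\lambda]\cdot[X_\mu]\equiv\sum_{\nu\vdash n+m}c^\nu_{\lambda,\mu}\,[X_\nu]\pmod{(\uRep S_t)^{\leq n+m-1}},
\]
where $c^\nu_{\lambda,\mu}$ denotes the Littlewood--Richardson coefficient, i.e.\ the multiplicity of $S^\nu$ in $\Ind_{S_n\times S_m}^{S_{n+m}}(S^\lambda\boxtimes S^\mu)$. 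I would prove this by transferring the computation to the ultraproduct side via \Cref{limitthm}. Under the equivalence $\uRep S_t\simeq\langle X\rangle\subset\prod_\cU\Rep_{p_i}S_{t_i}$, one identifies $X_\lambda$ with $\prod_\cU S^{\lambda[t_i]}$, where $\lambda[d]:=(d-|\lambda|,\lambda_1,\lambda_2,\dots)$ is the usual padded partition (a genuine partition of $t_i$ for almost all $i$ in the ultrafilter); this identification follows from \Cref{ultra-reduction} combined with \Cref{lifting} applied at generic nearby values. The classical Murnaghan stability theorem for Kronecker products of symmetric group characters then asserts that for fixed $\lambda,\mu,\nu$, the multiplicity of $S^{\nu[d]}$ in the Kronecker product $S^{\lambda[d]}\otimes S^{\mu[d]}$ stabilizes as $d\to\infty$, vanishes for $|\nu|>|\lambda|+|\mu|$, and equals $c^\nu_{\lambda,\mu}$ precisely in the top range $|\nu|=|\lambda|+|\mu|$. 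Passing the stabilized decomposition through the ultraproduct yields the claimed identity modulo $(\uRep S_t)^{\leq n+m-1}$.

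\textbf{Main obstacle.} The substantive input is the top-degree case of Murnaghan stability, namely the identification of the stable Kronecker coefficient with the Littlewood--Richardson coefficient when $|\nu|=|\lambda|+|\mu|$. This is classical but requires care to match our combinatorial conventions. As a cross-check, one can bypass Murnaghan and instead read off the top-degree tensor product coefficients directly from the idempotent/partition-diagram description of $\uRep S_t$ recalled in \Cref{sec::diagrammatic}, as carried out in \cite{CO}*{Proposition~3.12} and \cite{Har2}*{Theorem~3.3}, either of which supplies the required multiplicative compatibility.
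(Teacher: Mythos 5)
Your argument is correct in substance, but note that the paper itself offers no proof of this lemma --- it is quoted from the literature, and the route you take (additive part via Krull--Schmidt together with \Cref{thm::X-lambda}; multiplicative part via Murnaghan stability of Kronecker products, with the top-degree reduced Kronecker coefficients equal to Littlewood--Richardson coefficients) is precisely the route of the cited \cite{Har2}*{Theorem~3.3}. Two imprecisions are worth fixing. First, \Cref{ultra-reduction} concerns a \emph{fixed} finite group in growing characteristic and does not by itself give the identification $X_\lambda\leftrightarrow\prod_\cU S^{\lambda[t_i]}$; the correct reference is \cite{CO}*{Proposition~3.25} (recalled in \Cref{ind-res-sect}), valid once $t_i-|\lambda|\ge\lambda_1$, which holds for almost all $i$. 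Second, for $t=d\in\mZ_{\ge0}$ the ultraproduct components of $X_\lambda$ are Young modules $Y(\lambda[p_i+d])$ in non-semisimple characteristic $p_i\le t_i$, not Specht modules, so the identification you rely on fails there; your parenthetical appeal to \Cref{lifting} is the right fix, but it should be made explicit that one first transports the entire computation to a nearby generic value of $t$ via $[X]\mapsto[\lift_d(X)]$, which induces an isomorphism of associated graded rings by the upper-triangularity in \Cref{lifting}(3), exactly as is done in the proof of \Cref{lem::K0-RepSt-ab}. With these two points made explicit, the proof is complete.
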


As the ring on the right-hand side is a graded ring which does not depend on $t$, this result exhibits the Grothendieck ring of $\uRep S_t$ as a filtered deformation of that ring.

\subsection{Induction and restriction between Deligne categories}\label{ind-res-sect}

The universal property of $\uRep S_{t+k}$ gives a natural exact symmetric monoidal \emph{restriction functor} 
$$
 \uRes^{S_{t+k}}_{S_k \times S_t}\colon \uRep S_{t+k} \longrightarrow \Rep  S_k \boxtimes \uRep S_t 
\ ,
$$
which sends the defining object $X_{t+k}$ to $X_k \boxtimes \mathbf{1} \oplus \mathbf{1} \boxtimes X_t$. If $t = d$ is a non-negative integer this descends to the ordinary restriction functor from  $\Rep S_{d+k}$ to $\Rep  S_k \boxtimes \Rep S_d \cong \Rep (S_k \times S_d)$, under the fiber functors $\cF_{d+k}$, $\cF_d$ to the semisimple categories from \eqref{fiberfunctor}.

Etingof (\cite{Et}, Section 2.3) considered \emph{induction} and \emph{co-induction} functors
$$
\uInd_{S_k \times S_t}^{S_{t+k}}, \widetilde{\uInd}_{S_k \times S_t}^{S_{t+k}} \colon \Rep  S_k \boxtimes \uRep S_t \longrightarrow \uRep S_{t+k}
\ ,
$$
defined a priori as the left and right adjoints to the restriction functor defined above. Etingof also considered restriction to a product of two Deligne categories, and in that case one needs to pass to an ind-completion in order to define these adjoints, but for our purposes we will only need the finite versions. 

It was observed in \cite{HK} that if we think of $\uRep S_t$ and $\Rep  S_k$ as the model-theoretic limits of categories $\Rep_{p_i} S_{n_i}$ and $\Rep_{p_i}S_k$ respectively (in the sense described above), then these induction and co-induction functors are limits of the ordinary induction and co-induction functors
$$
\Ind_{S_k \times S_{t_i}}^{S_{t_i+k}}, \widetilde{\Ind}_{S_k \times S_{t_i}}^{S_{t_i+k}} \colon \Rep_{p_i} S_k\boxtimes \Rep_{p_i} S_{t_i} \longrightarrow \Rep_{p_i}S_{k+t_i}
$$
corresponding to the embedding of $S_k \times S_{t_i}$ into $S_{k+t_i}$ for each $t_i$. Moreover, since induction and co-induction are naturally isomorphic for representations of finite groups (over any field) it follows that these Deligne category induction and co-induction functors are naturally isomorphic as well and we can view them as a single two-sided adjoint to the restriction functor which we will refer to just as induction. 

These induction functors are also well behaved with respect to the filtrations $(\uRep S_t)^{\le m}$ defined earlier.  In particular we have:
\begin{align}\label{ind-fitration-comp}
\uInd_{S_k \times S_t}^{S_{t+k}}\colon \Rep  S_k \boxtimes (\uRep S_t)^{\le m} \longrightarrow \uRep S_{t+k} ^{\le m+k}
\ ,
\end{align}
moreover, this $m \to m+k$ shift in the filtration is optimal in a  strong sense: If $V$ is an object of $\Rep  S_k \times (\uRep S_t)^{\le m}$ that does not lie in $\Rep  S_k \times (\uRep S_t)^{\le m-1}$, then $\uInd_{S_k \times S_t}^{S_{t+k}}(V)$ lies in $(\uRep S_{t+k}) ^{\le m+k}$ but not in $(\uRep S_{t+k}) ^{\le m+k-1}$.

If $G \subset S_k$ is a subgroup then we may further restrict from $\uRep S_{t+k}$ to $\Rep  G \boxtimes \uRep S_{t} $. By the same reasoning as above, this also admits a two-sided adjoint induction functor

$$
\uInd_{G \times S_t}^{S_{t+k}}\colon \Rep  G \boxtimes \uRep S_t \longrightarrow \uRep S_{t+k}
\ .
$$
We note that $\uInd_{G \times S_t}^{S_{t+k}}$ is naturally isomorphic to $\uInd_{S_k \times S_t}^{S_{t+k}} \circ (\Ind_G^{S_k} \boxtimes \text{Id}_{\uRep S_t })$, where we first perform ordinary induction from $G$ to $S_k$ and then perform Deligne category induction. In particular, it shifts up the filtration by the same amount.

\medskip

Given a partition $\lambda$ of size $n$ we recall that $S^\lambda$ denotes the corresponding irreducible representation of $S_n$. We will use $X_\lambda$ to denote the corresponding indecomposable object of $\uRep S_t $, see \Cref{sec:conventions}. Note that by \Cref{thm::X-lambda}, $X_\lambda\in (\uRep S_t)^{\leq |\lambda|}$ but not in $(\uRep S_t)^{\leq |\lambda|-1}$.

If $\lambda = (\lambda_1, \lambda_2, \dots \lambda_\ell)$ is a partition, then for $n$ sufficiently large define the \emph{padded partition} $\lambda[n] = (n- |\lambda|, \lambda_1, \lambda_2, \dots \lambda_\ell)$. In terms of Young diagrams, this padding operation just adds a long first row to $\lambda$ to make it have $n$ total boxes. 
The relevance for our purposes is that Comes and Ostrik showed that
if $n-|\lambda|\geq \lambda_1$ (i.e., if $\lambda[n]$ defines a Young diagram), then $X_\lambda \in \uRep S_n$ gets mapped to $S^{\lambda[n]} \in \Rep  S_n$ under the specialization functor $\mathcal{F}_n\colon\uRep S_n\to\Rep  S_n$ \cite{CO}*{Proposition 3.25}.

For a partition $\lambda$ let $\lambda - h.s.$ denote the set of partitions $\mu$ which can be obtained from $\lambda$ by removing a horizontal strip, that is, by removing at most one box from each column. The following Deligne category Pieri rule gives us an upper triangularity property between the simple objects $X_\lambda$ and certain easy to work with induced objects. 

\begin{lemma} \label{lem:inddec} \  \begin{enumerate}
\item For $t \not\in \mathbb{Z}_{\ge 0}$ or for $t \in \mathbb{Z}$ with $t \gg |\lambda|$
$$\uInd_{S_k \times S_t}^{S_{t+k}}(S^{\lambda} \boxtimes \mathbf{1}) = \bigoplus_{\mu \in \lambda-h.s.} X_\mu
.
$$
In particular, the right hand side is $X_\lambda$ plus terms $X_\mu$ with $|\mu| < |\lambda|$.

\item For $t \in \mathbb{Z}_{\ge 0}$ the object $\uInd_{S_k \times S_t}^{S_{t+k}}(S^{\lambda} \boxtimes \mathbf{1})$ decomposes as a multiplicity-free direct sum of indecomposable objects $X_\mu$, such that:

\begin{enumerate}
    \item $X_\lambda$ occurs with multiplicity one.
    
    \item Each $X_\mu$ that appears has $\mu \in \lambda-\text{h.s.}$ (but not all such $\mu$ need appear).
\end{enumerate}

\end{enumerate}

\end{lemma}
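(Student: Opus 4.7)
The plan is to establish part (1) using the ultraproduct realization of $\uRep S_t$ from \Cref{limitthm}, which reduces the computation of $\uInd_{S_k\times S_t}^{S_{t+k}}(S^\lambda\boxtimes\mathbf{1})$ to the classical Pieri rule for induction between symmetric group representations. Part (2) will then follow from (1) by deforming the parameter $t$ to a nearby generic value via the lifting operation of \Cref{lifting}.

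For part (1), set $Y_t:=\uInd_{S_k\times S_t}^{S_{t+k}}(S^\lambda\boxtimes\mathbf{1})$. By the discussion in \Cref{ind-res-sect} (following \cite{HK}), when $\uRep S_t$ is realized via \Cref{limitthm} as an ultraproduct of categories $\Rep_{p_i}S_{t_i}$ (or $\Rep S_{t_i}$ in the transcendental case), the Deligne induction functor is the ultrafilter limit of ordinary inductions, so $Y_t$ identifies with the ultraproduct over $\cU$ of $\Ind_{S_k\times S_{t_i}}^{S_{k+t_i}}(S^\lambda\boxtimes\mathbf{1})$. Under the hypotheses of (1) we may choose the sequence $(t_i)$ tending to infinity, with primes $p_i$ sufficiently large when applicable. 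For each such $i$, classical Pieri's rule yields a Specht-module decomposition
$$\Ind_{S_k\times S_{t_i}}^{S_{k+t_i}}(S^\lambda\boxtimes\mathbf{1})\cong\bigoplus_{\nu}S^{\nu},$$
where $\nu\vdash k+t_i$ ranges over partitions obtained from $\lambda$ by adding a horizontal strip of size $t_i$. An elementary combinatorial check shows that when $t_i\gg|\lambda|$ these $\nu$ are exactly the padded partitions $\mu[k+t_i]$ for $\mu\in\lambda-\text{h.s.}$ Taking the ultraproduct and using the identification of $S^{\mu[k+t_i]}$ with $X_\mu$ under the limit (cf.~\cite{CO}*{Proposition~3.25}) yields $Y_t\cong\bigoplus_{\mu\in\lambda-\text{h.s.}}X_\mu$.

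For part (2), the lifting operation $\lift_d$ of \Cref{lifting} respects direct sums and tensor products, and since $\uInd$ is a two-sided adjoint to a restriction functor that is itself natural in $t$, one verifies that $\lift_d$ also commutes with induction. Consequently $\lift_d(Y_d)\cong Y_t$ in $\uRep S_{t+k}$ for $t$ in a formal neighborhood of $d$. By (1), this lift equals $\bigoplus_{\mu\in\lambda-\text{h.s.}}X_\mu$, which is multiplicity-free. Writing $Y_d=\bigoplus_\alpha X_\alpha^{n_\alpha}$ and applying $\lift_d$, the identity $\bigoplus_\alpha n_\alpha\lift_d(X_\alpha)\cong\bigoplus_{\mu\in\lambda-\text{h.s.}}X_\mu$, combined with \Cref{lifting}(3) (each $\lift_d(X_\alpha)$ is either $X_\alpha$ or $X_\alpha\oplus X_{\alpha'}$ with $|\alpha'|<|\alpha|$, and each $X_{\alpha'}$ arises in at most one such splitting), forces $n_\alpha=1$ for every $\alpha$ appearing in $Y_d$, with each such $\alpha\in\lambda-\text{h.s.}$ Claim (a) follows since $|\lambda|=k$ is maximal among the sizes in $\lambda-\text{h.s.}$, so $X_\lambda$ cannot arise as the smaller partner $X_{\alpha'}$ of any lifting pair and must therefore come from a summand $X_\lambda$ at $t=d$ appearing with multiplicity one. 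The main subtlety throughout is the compatibility of $\uInd$ with $\lift_d$, which ultimately rests on both constructions being defined via morphism-space data depending polynomially on $t$; for part (1), a secondary point is checking that Pieri's identifications survive the ultrafilter limit in the modular cases (2) and (3) of \Cref{limitthm}, which holds for $p_i$ large relative to the data fixed by $\lambda$.
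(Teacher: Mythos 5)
Your argument follows the paper's own proof in all essentials: part (1) is the classical Pieri rule transported through the ultraproduct realization of $\uRep S_t$ together with the identification of $S^{\mu[k+t_i]}$ with $X_\mu$, and part (2) is deduced from (1) by applying $\lift_d$ and exploiting the upper-triangularity in \Cref{lifting}(3). Your derivation of multiplicity-freeness in (2) (each $\lift_d(X_\alpha)$ contains $X_\alpha$ as a summand, so $n_\alpha\le 1$ and $\alpha\in\lambda-\text{h.s.}$; and $X_\lambda$ cannot occur as the smaller partner $X_{\alpha'}$ of any lifting pair) is correct and, if anything, slightly cleaner than the paper's downward induction on the size of the partitions. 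The compatibility $\lift_d(Y_d)\cong Y_t$ is asserted with essentially the same (minimal) justification in the paper, so I do not count that against you.

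There is, however, one genuine gap, in the sub-case of part (1) where $t=d\in\mathbb{Z}_{\ge 0}$ with $d\gg|\lambda|$. The only ultraproduct realization of $\uRep S_{d+k}$ available is \Cref{limitthm}(3), with $t_i=p_i+d$, so your induction takes place in $\Rep_{p_i}S_{p_i+d+k}$, where the characteristic $p_i$ is always \emph{smaller} than the rank. That category is far from semisimple, the induced module is not a direct sum of Specht modules (it decomposes into Young modules, with multiplicities given by $p$-Kostka numbers), and your closing caveat that the identification survives ``for $p_i$ large relative to the data fixed by $\lambda$'' does not help, since $p_i$ is never large relative to $t_i+k$ in this case. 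The paper avoids this by treating the integer case through the quotient functor $\mathcal{F}_{d+k}\colon\uRep S_{d+k}\to\Rep S_{d+k}$ and \cite{CO}*{Proposition 3.25}, which matches $X_\mu$ with $S^{\mu[d+k]}$; alternatively, one can observe that for $d\gg|\lambda|$ all $X_\mu$ with $|\mu|\le|\lambda|$ lie in blocks on which $\lift_d$ acts trivially, so the generic answer from the non-integer case persists. Note that part (2) of your argument is unaffected, since it only invokes part (1) at the nearby non-integer value of $t$.
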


\begin{proof}

The ordinary Pieri rule for symmetric groups tells us that in characteristic $0$ or $p > n+k$
$$\Ind_{S_k \times S_n}^{S_{n+k}}(S^{\lambda} \boxtimes \mathbf{1}) = \bigoplus_{\mu \in \lambda-h.s.} S^{\mu[n]}$$
Part 1 follows since $S^{\mu[n]}$ corresponds to the object $X_\mu$, under both the ultraproduct identification and under the quotient functor from $\uRep S_n$ to $\Rep  S_n$.

Now suppose $t=d \in \mathbb{Z}_{\ge 0}$. A priori one knows that $\Ind_{S_k \times S_d}^{S_{d+k}}(S^{\lambda} \boxtimes \mathbf{1})$ decomposes as a direct sum of indecomposable objects $X_\nu$ with some multiplicities $c_\nu$. If we apply the Comes--Ostrik lifting operator $\lift_d$ to this sum we must obtain the answer from Part (1).

$$\lift_d(\bigoplus_{\nu} c_\nu X_\nu) = \bigoplus_{\mu \in \lambda-h.s.} X_\mu $$

Comes and Ostrik's description of lifting (as summarized in Theorem \ref{lifting}) tells us that $X_\lambda$ appears as a direct summand of $\lift_{d}(X_\nu)$ for $\nu = \lambda$ and in that case, at most one other partition $\lambda'$,  with $|\lambda'| > |\lambda|$ appears as a summand giving $\lift_{d}(X_{\lambda'}) = X_{\lambda'} \oplus X_\lambda$.
Since $\lambda$ is the largest partition appearing in $\lambda-h.s.$ and it appears with multiplicity one we see that $X_\lambda$ must occur at $t=d$ with multiplicity one, proving part (1). 

To show part (2) we then inductively apply the same logic as above to the next largest partitions $\mu \in \lambda-h.s.$.  Note that as we repeat this argument for some $\mu \in \lambda-{h.s.}$ the term $X_\mu$ might come from $\lift_d(X_{\mu'})$ for some larger $\mu' \in \lambda-{h.s.}$ already accounted for, in which case $X_\mu$ does not appear in the induction at $t=d$.
\end{proof}

Note that this lemma gives us an alternative way of characterizing the indecomposable object $X_\lambda$ as the unique direct summand of $\Ind_{S_k \times S_t}^{S_{t+k}}(S^{\lambda} \boxtimes \mathbf{1})$ not occurring as a summand of $\Ind_{S_j \times S_t}^{S_{t+j}}(S^{\mu} \boxtimes \mathbf{1})$ for any partition $\mu$ with $|\mu| < |\lambda|$.

Once one has established the Deligne category Pieri rule for induction, one can iteratively compute more general induced representations combinatorially.  In particular, if we only keep track of the leading order terms one obtains:

\begin{corollary}[See Section 2.1 of \cite{Har2}]\label{cor:inddec}

$$\uInd_{S_k \times S_t}^{S_{t+k}}(S^{\lambda} \boxtimes X_\mu) = \bigoplus_\nu c_{\lambda,\mu}^{\nu} X_\nu \oplus \{\text{terms } X_\tau \text{ with } |\tau| < |\lambda|+|\mu| \}$$ 
where $c_{\lambda,\mu}^{\nu}$ denotes a Littlewood--Richardson coefficient.
\end{corollary}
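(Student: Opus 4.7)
The plan is to reduce to the Pieri-type rule of Lemma~\ref{lem:inddec} by using that same lemma in reverse to realize $X_\mu$ as the leading summand of $\uInd_{S_{|\mu|}\times S_{t-|\mu|}}^{S_t}(S^\mu\boxtimes\mathbf{1})$, then applying transitivity of induction together with the classical Littlewood--Richardson rule, and finally invoking the Pieri rule once more to expand the resulting summands in terms of indecomposables.

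First, I would apply Lemma~\ref{lem:inddec} (either case, depending on whether $t\in\mZ_{\geq0}$) to write
\[
\uInd_{S_{|\mu|}\times S_{t-|\mu|}}^{S_t}(S^\mu\boxtimes\mathbf{1})=X_\mu\oplus R_\mu,
\]
where $R_\mu$ is a direct sum of objects $X_\nu$ with $|\nu|<|\mu|$, so that $R_\mu\in(\uRep S_t)^{\leq|\mu|-1}$. Applying $\uInd_{S_k\times S_t}^{S_{t+k}}(S^\lambda\boxtimes-)$ and invoking the filtration bound \eqref{ind-fitration-comp}, the contribution $\uInd(S^\lambda\boxtimes R_\mu)$ sits in $(\uRep S_{t+k})^{\leq|\lambda|+|\mu|-1}$ and hence only produces summands $X_\tau$ with $|\tau|<|\lambda|+|\mu|$, which will be absorbed into the error term of the claimed identity. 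By transitivity of $\uInd$ (valid for $\uInd$ by the discussion in Section~\ref{ind-res-sect}, or by taking ultraproducts of the classical transitivity),
\[
\uInd_{S_k\times S_t}^{S_{t+k}}\bigl(S^\lambda\boxtimes\uInd_{S_{|\mu|}\times S_{t-|\mu|}}^{S_t}(S^\mu\boxtimes\mathbf{1})\bigr)=\uInd_{S_{k+|\mu|}\times S_{t-|\mu|}}^{S_{t+k}}\bigl(\Ind_{S_k\times S_{|\mu|}}^{S_{k+|\mu|}}(S^\lambda\boxtimes S^\mu)\boxtimes\mathbf{1}\bigr),
\]
and the inner ordinary induction decomposes as $\bigoplus_\nu c_{\lambda,\mu}^\nu\, S^\nu$ with $|\nu|=|\lambda|+|\mu|$ by the classical Littlewood--Richardson rule.

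Finally, I would apply Lemma~\ref{lem:inddec} once more to each $\uInd_{S_{|\nu|}\times S_{t-|\mu|}}^{S_{t+k}}(S^\nu\boxtimes\mathbf{1})$, expanding it as $X_\nu$ plus a sum of $X_\tau$ with $|\tau|<|\nu|=|\lambda|+|\mu|$; combining all contributions then yields the claimed identity. The main obstacle I anticipate is the careful filtration bookkeeping: one must verify that every ``lower-order'' correction (from $R_\mu$, and from each Pieri expansion of $\uInd(S^\nu\boxtimes\mathbf{1})$) genuinely lies in filtration degree strictly below $|\lambda|+|\mu|$, so that no cancellation with the leading terms can occur. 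This is controlled by the filtration compatibility \eqref{ind-fitration-comp} together with the sharp statement that $X_\mu\in(\uRep S_t)^{\leq|\mu|}\setminus(\uRep S_t)^{\leq|\mu|-1}$ (Theorem~\ref{thm::X-lambda}). In the integral case $t\in\mZ_{\geq0}$ the Pieri sums can truncate, but the multiplicity-one appearance of the top summand $X_\nu$ guaranteed by Lemma~\ref{lem:inddec}(2a) still ensures that the Littlewood--Richardson coefficients appear correctly as leading-order multiplicities.
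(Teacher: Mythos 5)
Your argument is correct and is essentially the approach the paper intends: the paper gives no proof of this corollary beyond citing [Har2, \S2.1] and remarking that it follows by ``iteratively'' applying the Pieri rule of Lemma~\ref{lem:inddec}, and your reconstruction --- realizing $X_\mu$ as the top summand of $\uInd_{S_{|\mu|}\times S_{t-|\mu|}}^{S_t}(S^\mu\boxtimes\mathbf{1})$, using transitivity of induction plus the classical Littlewood--Richardson rule, and re-expanding via the Pieri rule --- is exactly that iteration carried out in detail. The filtration bookkeeping via \eqref{ind-fitration-comp}, Theorem~\ref{thm::X-lambda}, and the Krull--Schmidt property correctly rules out any interference between the leading terms and the corrections from $R_\mu$ and the Pieri expansions, including at integral $t$.
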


\subsection{The abelian envelope of Deligne's categories}\label{sec:RepSdab}

An (abelian) multitensor category $\cD$ is called an \emph{abelian envelope} of a Karoubian rigid monoidal tensor category $\cC$ if it contains $\cC$ and for any multitensor category $\cD'$, the category of tensor functors $\cD\to\cD'$ is equivalent to the category of faithful monoidal functors $\cC\to\cD'$ by restriction. If it exists, the abelian envelope is unique up to equivalence. We refer to   \cite{BEO} and \cite{EHS}*{Section~9} for details on the abelian envelope and its universal property. 

For $d\in \mZ_{\geq 0}$, the abelian envelope $\uRep^\ab S_d$ of $\uRep S_d$ (\cite{BEO}*{Example 2.45(1)}) has several explicit constructions.
The first construction uses a symmetric monoidal functor $\uRep S_d\to \uRep S_{-1}$ and displays the abelian envelope as a category of representations over an algebraic group object internal to the semisimple category $\uRep S_{-1}$. The second construction introduces a $t$-structure on complexes of objects in $\uRep S_d$ and displays the abelian envelope as the heart of this $t$-structure --- see \cite{CO-Rep-St-ab} for details on these constructions.  

A third construction, which we will employ, is given using ultrafilters. For this, we consider the ultraproduct of general modules over $S_{p_i+d}$ in finite characteristic $p_i$ (rather than just those in $\langle X_{p_i+d}\rangle$) in \Cref{limitthm}(c) and obtain the following modification of the ultraproduct description for the abelian envelope, which is obtained as the closure of $\langle X\rangle$ under subquotients.

\begin{theorem}[{\cite{Har1}*{Theorem~1.1(b)}}]\label{limitthm2}
Set $t_i=p_i+d$ for $p_i$ the $i$-th prime number. Then there are equivalences of additive categories 
$$(\uRep^\ab S_d)^{\leq k}\cong \prod_{\cU}(\Rep_{p_i}S_{p_i+d})^{\leq k}$$
which provide, under an isomorphism of fields $\prod_{\cU}\ov{\mF}_{p_i}\cong \mC$, an equivalence of $\mC$-linear symmetric tensor categories between
$\colim_{k\geq 0}\prod_{\cU}(\Rep_{p_i}S_{p_i+d})^{\leq k}$ and $\uRep^\ab S_d$.
\end{theorem}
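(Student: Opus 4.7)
The plan is to extend the monoidal ultraproduct description of $\uRep S_d$ from \Cref{limitthm}(c) from the Karoubian subcategory $\langle X\rangle$ to its subquotient closure inside the ambient abelian ultraproduct. The strategy has three main steps: identify the filtered pieces on the ultraproduct side as abelian subcategories, use the universal property of the abelian envelope to produce a candidate tensor functor, and verify that it is an additive equivalence on each filtered piece.

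First, I would verify that $\prod_\cU (\Rep_{p_i} S_{p_i+d})^{\leq k}$ is a $\mC$-linear abelian category by showing $(\Rep_{p_i} S_{p_i+d})^{\leq k}$ is a Serre subcategory of $\Rep_{p_i} S_{p_i+d}$ for $p_i$ sufficiently large relative to $d$ and $k$. In that range $p_i$ divides $|S_{p_i+d}|$ only to the first power, so every block of $S_{p_i+d}$ has defect at most $1$, giving explicit control over composition factors of subquotients of objects $\bigoplus_j X^{\otimes j}$ with $j\leq k$. Taking colimits over $k$ then yields a $\mC$-linear abelian symmetric tensor category $\mathcal{A} = \colim_k \prod_\cU (\Rep_{p_i} S_{p_i+d})^{\leq k}$, which by \Cref{limitthm}(c) contains $\uRep S_d \simeq \langle X\rangle$ as a dense Karoubian subcategory.

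Next, the universal property of the abelian envelope \cite{BEO} applied to the faithful exact tensor embedding $\uRep S_d \hookrightarrow \mathcal{A}$ yields a unique exact symmetric tensor functor $\Phi\colon \uRep^\ab S_d \to \mathcal{A}$. By construction $\Phi$ respects the filtrations, since $(\uRep^\ab S_d)^{\leq k}$ is the subquotient closure of $(\uRep S_d)^{\leq k}$ inside $\uRep^\ab S_d$ and likewise for the ultraproduct side.

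The main obstacle is showing that $\Phi$ restricts to an equivalence $(\uRep^\ab S_d)^{\leq k} \to \prod_\cU (\Rep_{p_i} S_{p_i+d})^{\leq k}$ of additive categories. Essential surjectivity follows from the subquotient generation. Full faithfulness --- known on $\uRep S_d$ by \Cref{limitthm}(c) --- requires matching Hom spaces between general subquotients and is the most delicate step. I would approach this by matching indecomposable projective objects on both sides via Comes--Ostrik's construction of $\uRep^\ab S_d$ in \cite{CO-Rep-St-ab}, and applying \L o\'s' theorem to first-order statements about composition multiplicities, projective covers, and extension classes to transfer these data from finite characteristic to the ultraproduct. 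Gluing the resulting additive equivalences over $k$ then yields the claimed symmetric tensor equivalence in the colimit.
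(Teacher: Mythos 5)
First, note that the paper does not prove this statement: it is quoted from \cite{Har1}*{Theorem 1.1(b)}, so there is no internal proof to compare your argument against. Your outline is nonetheless a reasonable reconstruction of how such a result can be established, and the first two steps are sound in spirit: the filtered pieces $(\Rep_{p_i}S_{p_i+d})^{\leq k}$ (understood as subquotient closures) are abelian subcategories, their ultraproducts are $\mC$-linear abelian with finite-dimensional Hom spaces, and the universal property of the abelian envelope produces a tensor functor $\Phi\colon \uRep^\ab S_d\to\cA$ extending the equivalence of \Cref{limitthm}(c), automatically faithful and exact. (The defect-one observation is correct but is not what makes the filtered piece abelian --- that holds simply because subquotient closures are stable under kernels, cokernels and finite sums; what you do need, and omit, is that $\cA$ is rigid and locally finite with $\End(\one)=\mC$, so that the universal property applies at all.)

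The genuine gap is in the last step. ``Essential surjectivity follows from subquotient generation'' is circular as stated: to lift a subobject of $\Phi(P)$ to a subobject of $P$ you already need $\Phi$ to be full. And fullness is precisely where the content lies; \L o\'s' theorem cannot supply it on its own, because it only relates the finite-characteristic components to their ultraproduct --- it says nothing about how that ultraproduct compares to the abstractly defined $\uRep^\ab S_d$. To close this you need concrete structural input on $\uRep^\ab S_d$ itself (the Comes--Ostrik block description via tilting modules for quantum $SL(2)$, or equivalently the highest-weight structure on $(\Rep_{p_i}S_{p_i+d})^{\le k}$ with Specht modules as standard objects and Young modules as tilting objects, as recalled in \Cref{sec:RepSdab}) so that simples, projectives and Homs can actually be matched on both sides. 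A cleaner route, and the one this paper itself uses in the analogous situation for centers (\Cref{cor::center-abelian-envelope-general}, via \cite{BEO}*{Theorem~2.42}), avoids comparing Hom spaces between general subquotients altogether: show that $\cA$ is a tensor category with enough projectives, all of whose projectives lie in $\langle X\rangle$, and that $\langle X\rangle\hookrightarrow\cA$ is fully faithful (which is \Cref{limitthm}(c)); the recognition theorem for abelian envelopes then identifies $\cA$ with $\uRep^\ab S_d$ without any further Hom computations.
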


It follows from \Cref{limitthm2} (or can be deduced directly from the universal property of $\uRep^\ab S_d$) that the induction functor from \Cref{ind-res-sect} extends to the abelian envelope such that 
\begin{align}
    \vcenter{\hbox{\xymatrix{
    \Rep  S_k\boxtimes \uRep S_{d-k}\ar[rr]^-{\uInd_{S_k\times S_{d-k}}^{S_d}}\ar@{^{(}->}[d]&&\uRep S_d\ar@{^{(}->}[d]\\
    \Rep  S_k\boxtimes \uRep^\ab S_{d-k} \ar[rr]^-{\uInd_{S_k\times S_{d-k}}^{S_d}}&&\uRep^\ab S_d
    }}}
\end{align}
is a commutative diagram of  $\mC$-linear functors. This functor is again left and right adjoint to restriction and hence exact.

Comes and Ostrik showed (\cite{CO-Rep-St-ab}*{Proposition 2.9,  Corollary 4.6}) that each block of $\uRep^\ab S_d$ is equivalent to a block in the category of representations of quantum $SL(2)$, with the objects in $\uRep S_d$ corresponding to tilting objects. As a consequence, $\uRep^\ab S_d$ itself forms a highest weight category with the indecomposable objects $X_\lambda \in \uRep S_d$ as the indecomposable tilting objects.  The simple objects $D^\lambda$ of $\uRep^\ab S_d$ are again indexed by partitions, with $D^\lambda$ appearing as a composition factor of $X_{\lambda}$ with multiplicity one, and all other composition factors $X_{\lambda}$ are of the form $D^\mu$ with $\mu$ satisfying $|\mu| < |\lambda|$. 

Under the ultrafilter identification the indecomposable module $X_\lambda$ corresponds to the so-called Young modules $Y(\lambda[p_i+d])$, which arise as direct summands of the permutation representation. The simple objects $D^\lambda$ of $\uRep^\ab S_d$ correspond to ultraproducts of simple objects $D^{\lambda[p_i+d]}$.  We note that while in general $\Rep  S_n$ is not a highest weight category, the truncated categories $(\Rep  S_n)^{\le k}$ with $k$ less than the characteristic $p$ are highest weight with Specht modules as standard objects and Young modules as tilting objects. One can check that the ultraproduct of these highest weight structures defines a highest weight structure on $\uRep^\ab S_d$ agreeing with the one defined by Comes and Ostrik.

Induction does not preserve semisimplicity in general so we cannot expect a version of Corollary \ref{cor:inddec} for induction of simple objects to hold as a direct sum decomposition in $\uRep^\ab S_d$. We can however relax it by passing to the Grothendieck ring and instead keeping track of composition multiplicities. 

For an abelian category $\cA$, the \emph{(abelian) Grothendieck ring} $K_0(\cA)$ is the quotient of the additive Grothendieck ring $K_0^{\oplus}(\cA)$ from \Cref{sec:K0RepSt} by relations obtained from short exact sequences. If $\cA$ is semisimple, then $K_0(\cA)=K_0^{\oplus}(\cA)$.

\begin{corollary}\label{cor:Ko(St-ab)}
In $K_0(\uRep^\ab S_d)$ one has 
$$[\uInd_{S_k \times S_d}^{S_{d+k}}(S^{\lambda} \boxtimes D^\mu)] = \sum_\nu c_{\lambda,\mu}^{\nu} [D^\nu] + \{\text{terms } [D^\tau] \text{ with } |\tau| < |\lambda|+|\mu| \}$$ 
where $c_{\lambda,\mu}^{\nu}$ denotes a Littlewood--Richardson coefficient.
\end{corollary}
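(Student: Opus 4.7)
The plan is to reduce this statement to Corollary \ref{cor:inddec} by passing between the bases $\{[X_\lambda]\}$ and $\{[D^\lambda]\}$ of $K_0(\uRep^\ab S_d)$ and using the exactness of induction on the abelian envelope.

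First I would use the highest weight structure on $\uRep^\ab S_d$ recalled just before the corollary: every indecomposable tilting object $X_\mu$ has $D^\mu$ as a composition factor with multiplicity one and all other composition factors are of the form $D^\tau$ with $|\tau|<|\mu|$. This means that in $K_0(\uRep^\ab S_d)$ the transition matrix between $\{[X_\mu]\}$ and $\{[D^\mu]\}$ is upper-unitriangular with respect to the partial order $|\cdot|$. Inverting it gives an identity
\[
[D^\mu] \;=\; [X_\mu] + \sum_{|\sigma|<|\mu|} a^\mu_\sigma\, [X_\sigma]
\]
for integers $a^\mu_\sigma$.

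Next I would apply the functor $\uInd_{S_k\times S_d}^{S_{d+k}}(S^\lambda \boxtimes -)$, which by the commutative diagram following \Cref{limitthm2} extends to the abelian envelope and is exact (being two-sided adjoint to the exact restriction functor), so it descends to a well-defined $\mZ$-linear map on $K_0$. Combining this with the expression above and with Corollary \ref{cor:inddec} applied to each $[\uInd(S^\lambda \boxtimes X_\sigma)]$ yields
\[
[\uInd(S^\lambda \boxtimes D^\mu)] \;=\; \sum_\nu c_{\lambda,\mu}^{\nu}\,[X_\nu] \;+\; \Bigl\{\text{terms }[X_\tau]\text{ with }|\tau|<|\lambda|+|\mu|\Bigr\},
\]
since any contribution coming from $[X_\sigma]$ with $|\sigma|<|\mu|$ automatically has total degree $|\lambda|+|\sigma|<|\lambda|+|\mu|$, and Littlewood--Richardson coefficients $c_{\lambda,\mu}^\nu$ vanish unless $|\nu|=|\lambda|+|\mu|$. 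Finally, converting back via $[X_\nu]=[D^\nu]+\sum_{|\tau|<|\nu|}b^\nu_\tau[D^\tau]$ preserves the leading part (since $|\nu|=|\lambda|+|\mu|$) and only produces further terms of strictly smaller degree, giving the claimed identity.

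The argument is essentially bookkeeping, so the only real thing to check is that the two shifts — from $D$ to $X$ on the left and back from $X$ to $D$ on the right — only introduce terms of strictly smaller $|\cdot|$, so that the leading coefficient is genuinely the Littlewood--Richardson coefficient; this is immediate from the highest weight structure. The one point that must be invoked explicitly (rather than being routine) is the exactness of $\uInd_{S_k\times S_d}^{S_{d+k}}$ on the abelian envelope, which is precisely what allows Corollary \ref{cor:inddec} (formulated in the additive Grothendieck ring of $\uRep S_d$) to be used inside the abelian Grothendieck ring $K_0(\uRep^\ab S_d)$.
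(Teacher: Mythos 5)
Your proposal is correct and follows essentially the same route as the paper, whose proof is the one-line remark that the statement follows from Corollary \ref{cor:inddec} by passing to $K_0$ and substituting $[X_\lambda]=[D^\lambda]+\{\text{l.o.t.}\}$ everywhere; you have simply made explicit the unitriangular change of basis in both directions and the role of exactness of $\uInd$ on the abelian envelope.
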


\begin{proof}
This follows immediately from Corollary \ref{cor:inddec} by passing to the Grothendieck ring and then substituting $[X_\lambda] = [D^\lambda] + \{l.o.t\}$ everywhere and collecting all of the lower order terms to one side.
\end{proof}

We have the following analog of \Cref{lem::K0-RepSt} for the abelian Grothendieck ring of $\uRep S_d$:

\begin{lemma}
\label{lem::K0-RepSt-ab} Sending $[D^\lambda]\mapsto [S^\lambda]$ induces an isomorphism of rings from $\gr K_0(\uRep^\ab S_d)$ to $\bigoplus_{n\geq0} K_0(\Rep  S_n)$, where the product on the right-hand side is given by induction, or equivalently, by Littlewood--Richardson coefficients. %
\end{lemma}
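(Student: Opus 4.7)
The plan is to deduce this lemma from \Cref{lem::K0-RepSt} via the unitriangular change of basis between the tilting objects $X_\lambda$ and the simple objects $D^\lambda$ in $\uRep^\ab S_d$.

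First, I would verify that $\{[D^\lambda]\}_\lambda$ is a $\mZ$-basis of $K_0(\uRep^\ab S_d)$ with $[D^\lambda]$ of filtration degree exactly $|\lambda|$. The basis statement is automatic since $\uRep^\ab S_d$ is locally finite abelian with simples indexed by partitions. For the upper bound on the degree, $D^\lambda$ is a composition factor of $X_\lambda\in(\uRep S_d)^{\leq|\lambda|}\subseteq(\uRep^\ab S_d)^{\leq|\lambda|}$, and the filtration pieces in the abelian envelope are closed under subquotients, giving $[D^\lambda]$ filtration degree $\leq|\lambda|$. Inverting the unitriangular identity $[X_\lambda]=[D^\lambda]+\sum_{|\mu|<|\lambda|}a_{\lambda,\mu}[D^\mu]$ in $K_0(\uRep^\ab S_d)$ then pins the degree to be exactly $|\lambda|$, and also yields $\gr[X_\lambda]=\gr[D^\lambda]$ in $\gr K_0(\uRep^\ab S_d)$.

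Next, I would note that the inclusion $\iota\colon\uRep S_d\hookrightarrow\uRep^\ab S_d$ is an exact symmetric monoidal functor respecting the filtrations, so it induces a filtered ring homomorphism $\iota_*\colon K_0^\oplus(\uRep S_d)\to K_0(\uRep^\ab S_d)$ and a graded ring homomorphism $\gr\iota_*$. The product formula
$$\gr[X_\lambda]\cdot\gr[X_\mu]=\sum_\nu c_{\lambda,\mu}^{\nu}\gr[X_\nu]$$
provided by \Cref{lem::K0-RepSt} therefore transfers across $\gr\iota_*$ to $\gr K_0(\uRep^\ab S_d)$. Substituting $\gr[X_\lambda]=\gr[D^\lambda]$ from the first step yields $\gr[D^\lambda]\cdot\gr[D^\mu]=\sum_\nu c_{\lambda,\mu}^{\nu}\gr[D^\nu]$, which is exactly the Littlewood--Richardson product in $\bigoplus_{n\geq0}K_0(\Rep S_n)$ transported along $[S^\lambda]\mapsto\gr[D^\lambda]$. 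Since both sides are free $\mZ$-modules of the same rank in each graded piece, this gives the claimed isomorphism of graded rings.

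The main technical subtlety is the subquotient-closedness of the filtration $(\uRep^\ab S_d)^{\leq k}$, required to bound the filtration degree of $[D^\lambda]$ by $|\lambda|$. This should follow from the ultraproduct description in \Cref{limitthm2}: for $p_i$ large relative to $k$, the subcategory $(\Rep_{p_i}S_{p_i+d})^{\leq k}$ is a Serre subcategory of the finite-characteristic module category whose simples are precisely the $D^{\lambda[p_i+d]}$ with $|\lambda|\leq k$, and this Serre property passes through the ultraproduct and the colimit over $k$ to give the corresponding highest-weight structure on each filtration piece of $\uRep^\ab S_d$.
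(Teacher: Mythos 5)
Your proof is correct and takes essentially the same route as the paper's: both arguments rest on the unitriangular relation $[X_\lambda]=[D^\lambda]+(\text{l.o.t.})$ to identify $\gr K_0(\uRep^\ab S_d)$ with $\gr K_0^\oplus(\uRep S_d)$ via $[D^\lambda]\mapsto[X_\lambda]$, and then reduce to \Cref{lem::K0-RepSt}. The only divergence is that the paper inserts an intermediate isomorphism $\gr K_0^\oplus(\uRep S_d)\cong\gr K_0(\uRep S_t)$ at a nearby generic $t$ using the Comes--Ostrik lifting (\Cref{lifting}) before invoking \Cref{lem::K0-RepSt}, whereas you apply that lemma directly at $t=d$ (legitimate, since it is stated for all $t$); your explicit verification that each $[D^\lambda]$ has filtration degree exactly $|\lambda|$ also makes precise a point the paper's proof leaves implicit.
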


\begin{proof} The proof will be via a chain of isomorphisms.

First note that the upper triangularity property $[X_\lambda] = [ D^\lambda] + \{ \text{l.o.t.}\}$ implies that these $[X_\lambda]$ form a basis for $K_0(\uRep^\ab S_d)$, and therefore the inclusion $K_0^\oplus(\uRep S_d) \hookrightarrow K_0(\uRep^\ab S_d)$ is in fact an isomorphism. Moreover this upper triangularity property implies that the assignment $[D^\lambda] \to [X_\lambda]$ defines an isomorphism of associated graded rings $ \gr K_0(\uRep^\ab S_d) \cong \gr K_0^\oplus(\uRep S_d)$.

Next, the first two properties of lifting in \Cref{lifting} say that the map $[X] \mapsto [\lift_d(X)]$ defines a ring homomorphism $K_0^\oplus(\uRep S_d) \to K_0(\uRep S_t)$ at a nearby transcendental value of $t$.  The third property in Theorem \ref{lifting} is again an upper triangularity property, which implies that this homomorphism is an isomorphism, but moreover that $[X_\lambda] \mapsto [X_\lambda]$ defines an isomorphism of associated graded rings $\gr K_0^\oplus(\uRep S_d) \to \gr K_0(\uRep S_t)$ at generic $t$.

Finally we use the isomorphism $\gr K_0(\uRep S_t) \cong \bigoplus_{n\geq0} K_0(\Rep  S_n)$ of \Cref{lem::K0-RepSt}, which sends $[X_\lambda]$ to $[S^\lambda]$. Composing these three isomorphisms gives the desired result.
\end{proof}

\subsection{The projective objects in the abelian envelope}

We now turn to describing the projective objects in $\uRep^\ab S_d$ for later use. Key to understanding of the projectives will be the projective cover of the tensor unit.

\begin{lemma}\label{lem:proj-ultra}
Let $P\cong \prod_{\cU} P_i$ be an object in $\uRep^\ab S_d$. Then $P$ is projective in $\uRep^\ab S_d$ if and only if there exists an integer $k_0\geq 0$ such that for each integer $k\geq k_0$, almost all $P_i$ are projective objects in $(\Rep_{p_i} S_{t_{i}})^{\leq k}$.
\end{lemma}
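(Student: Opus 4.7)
The plan is to characterize projectivity of $P$ in the ambient category $\uRep^\ab S_d$ as projectivity in each sufficiently large filtered piece $(\uRep^\ab S_d)^{\leq k}$, and then to transfer this local characterization across the ultraproduct equivalence of \Cref{limitthm2} by means of \L o\'s' theorem. Throughout, I may take $n$ to be the smallest integer with $P\in(\uRep^\ab S_d)^{\leq n}$.

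First I would establish the following reformulation: $P$ is projective in $\uRep^\ab S_d$ if and only if $P$ is projective in $(\uRep^\ab S_d)^{\leq k}$ for every $k\geq n$. One direction uses that each inclusion $(\uRep^\ab S_d)^{\leq k}\hookrightarrow \uRep^\ab S_d$ is an exact embedding, so any short exact sequence with $P$ as a quotient living in the truncation remains short exact in the ambient category and hence splits under the global projectivity assumption. The other direction uses exhaustiveness of the filtration: any short exact sequence $0\to A\to B\to P\to 0$ in $\uRep^\ab S_d$ has all three terms contained in some $(\uRep^\ab S_d)^{\leq k}$ with $k\geq n$, so the sequence splits in that truncation and hence in $\uRep^\ab S_d$.

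Next I would invoke \Cref{limitthm2} to identify $(\uRep^\ab S_d)^{\leq k}$ with $\prod_\cU(\Rep_{p_i} S_{t_i})^{\leq k}$ as $\mC$-linear abelian categories. Projectivity of a fixed object is expressible by a first-order sentence in the language of categories (``for every epimorphism $f\colon B\to C$ and every morphism $g\colon P\to C$ there exists $h\colon P\to B$ with $fh=g$''), so \L o\'s' theorem yields that $\prod_\cU P_i$ is projective in the ultraproduct if and only if $P_i$ is projective in $(\Rep_{p_i} S_{t_i})^{\leq k}$ for almost all $i$. Combining this with the previous reduction delivers the equivalence asserted in the lemma, with $k_0$ taken to be any integer $\geq n$.

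The principal technical obstacle is verifying that each truncation $(\uRep^\ab S_d)^{\leq k}$ is an exact abelian subcategory of $\uRep^\ab S_d$, i.e.~that kernels, cokernels, and short exactness match those of the ambient category; otherwise ``projective in the truncation'' would not be the right local notion. This should follow directly from the ultraproduct identification of \Cref{limitthm2}, since by construction $\uRep^\ab S_d=\colim_k\prod_\cU(\Rep_{p_i} S_{t_i})^{\leq k}$ inherits its exact structure from the truncations. A secondary, bookkeeping point is that \L o\'s' theorem is being applied to ultraproducts of categories rather than of first-order structures, in the form already used throughout the paper.
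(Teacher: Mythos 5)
Your proposal is correct and follows essentially the same route as the paper's proof: reduce projectivity in $\uRep^\ab S_d$ to projectivity in each sufficiently large filtered piece, then transfer that first-order property across the ultraproduct equivalence of \Cref{limitthm2} via \L o\'s' theorem. You are somewhat more explicit than the paper about why global projectivity is equivalent to projectivity in every truncation (the paper simply appeals to the filtered colimit structure), but the substance of the argument is identical.
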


\begin{proof}
By virtue of $P$ being an object of $\uRep^\ab S_d$, there exists $k_0\geq 0$ such that $P\in (\uRep^\ab S_d)^{\leq k_0}$. In particular, $P$ is contained in $P\in (\uRep^\ab S_d)^{\leq k}$ for any $k\geq k_0$. Hence, for fixed $k$, almost all $P_i$ are contained in $(\Rep_{p_i} S_{t_{i}})^{\leq k}$.
Assume that $P$ is projective.
The property that $P$ is projective is defined by the functor $\Hom(P,-)$ being right exact, as a functor on $(\uRep^\ab S_d)^{\leq k}$, which can be expressed as a first order property. Thus, $P_i$ is a projective object in $(\Rep_{p_i} S_{t_{i}})^{\leq k}$, for almost all $i$.

Conversely, again using \L o\'s' Theorem, we see that if $P_i$ is a projective object in $(\Rep_{p_i} S_{t_{i}})^{\leq k}$, for almost all $i$, then $P$ is projective in $(\uRep^\ab S_d)^{\leq k}$. Since $\uRep^\ab S_d$ is the filtered colimit of all of these categories, projectivity of $P$ holds in the entire category $\uRep^\ab S_d$. 
\end{proof}

Recall the indecomposable objects $X_{\lambda}$ of $\uRep S_t$, see \Cref{sec::diagrammatic} and consider the case $\lambda=(d+1)$.

\begin{lemma}\label{lem:projcover}
The indecomposable object $X_{(d+1)}\in (\uRep^\ab S_d)^{\leq d+1}$ is the projective cover of the tensor unit $\one$.
\end{lemma}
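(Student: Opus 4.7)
The plan is to verify two properties of $X_{(d+1)}$ and then conclude: first, that $X_{(d+1)}$ is projective in $\uRep^\ab S_d$; second, that its simple head is $\one$. Combined with the indecomposability of $X_{(d+1)}$ recorded in \Cref{thm::X-lambda}, these properties identify it as the projective cover of $\one$.

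For projectivity, I would apply \Cref{lem:proj-ultra}. Under the ultrafilter identification from \Cref{limitthm2}, the indecomposable $X_{(d+1)}$ corresponds to the ultraproduct of the Young modules $Y_i := Y((p_i-1, d+1))$ in $\Rep_{p_i} S_{p_i+d}$, where we use that $(d+1)[p_i+d] = (p_i-1,d+1)$. For $p_i>d+1$ the partition $(p_i-1, d+1)$ is $p_i$-restricted, since both $(p_i-1)-(d+1)$ and $d+1$ are less than $p_i$. By a classical result of Klyachko (see also James--Erdmann), $p$-restricted Young modules are projective, so each $Y_i$ is projective in the full abelian category $\Rep_{p_i} S_{p_i+d}$. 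Any short exact sequence in a full subcategory is a short exact sequence in the ambient category, so projectivity descends, and each $Y_i$ remains projective in the truncation $(\Rep_{p_i} S_{p_i+d})^{\leq k}$ for every $k\geq d+1$. Then \Cref{lem:proj-ultra} yields projectivity of $X_{(d+1)}$ in $\uRep^\ab S_d$.

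For the head, I would invoke the block description of $\uRep^\ab S_d$ due to Comes and Ostrik~\cite{CO-Rep-St-ab}: each block of $\uRep^\ab S_d$ is equivalent to a block in the category of representations of quantum $SL_2$, with the indecomposables $X_\lambda$ corresponding to indecomposable tilting modules. In the block containing $\one$, this equivalence identifies $\one = X_\emptyset$ with the lowest tilting $T_0 = L_0$ and $X_{(d+1)}$ with the next tilting $T_1$. Since $T_1$ is self-dual with composition series $L_0, L_1, L_0$ from top to bottom, its simple head is $L_0$; transporting back gives $X_{(d+1)}$ a simple head equal to $\one$.

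An indecomposable projective object with simple head $\one$ is precisely the projective cover of $\one$, completing the proof. The main obstacle of the plan lies in the matching step of the previous paragraph, namely verifying that $X_{(d+1)}$ corresponds specifically to the second tilting $T_1$ (rather than a higher tilting) in the principal block; this follows from the combinatorial labeling of the principal block in \cite{CO-Rep-St-ab} combined with the characterization of $X_{(d+1)}$ as the minimal indecomposable summand of $X^{\otimes (d+1)}$ not living in a lower filtration layer. A direct alternative entirely inside the ultrafilter framework would proceed by showing via the Mullineux correspondence that $Y_i$ is the projective cover of the trivial $\kk_i S_{p_i+d}$-module for almost all $i$, and then transferring this statement through \Cref{lem:proj-ultra}.
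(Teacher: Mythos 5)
Your strategy (indecomposable $+$ projective $+$ surjection onto $\one$ $\Rightarrow$ projective cover) is sound, and the proposal is essentially correct, but it reaches the two key facts by a genuinely different route than the paper. The paper never argues about $X_{(d+1)}$ directly: it builds the explicitly projective object $P_i=\Ind_{S_{d+1}\times S_{p_i-1}}^{S_{p_i+d}}(\one\boxtimes\one)$ (projective because it is induced from a subgroup of order prime to $p_i$, hence from a semisimple category), observes $P=\prod_\cU P_i\twoheadrightarrow\one$, decomposes $P\cong X_{(d+1)}\oplus(\text{l.o.t.})$ by the Pieri rule (\Cref{lem:inddec}), and then uses the block description of \cite{CO-Rep-St-ab}*{Theorem~2.6} to see that the surjection onto $\one$ must be carried by the summand $X_{(d+1)}$, since $\one$ itself is not projective. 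You instead identify $X_{(d+1)}$ with the Young module $Y((p_i-1,d+1))$ and invoke the classification of projective Young modules; note that here the heavy machinery is unnecessary, since $Y((p_i-1,d+1))$ is a summand of the permutation module $M^{(p_i-1,d+1)}=\Ind_{S_{p_i-1}\times S_{d+1}}^{S_{p_i+d}}(\one)$, which is projective for $p_i>d+1$ for exactly the reason the paper uses --- so your first step is really the paper's construction in disguise. Your second step (reading off the head of $X_{(d+1)}$ from the tilting module $T_1$ in the quantum $SL_2$ block) is the genuinely different ingredient; it buys a cleaner conceptual statement (simple head equal to $\one$) at the cost of the matching you yourself flag.

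That matching is the one soft spot. To know that $X_{(d+1)}$ corresponds to $T_1$ rather than a higher tilting, you must know that $(d+1)$ is the \emph{second} partition in the chain labelling the principal block, i.e.\ that no partition $\mu$ with $0<|\mu|<d+1$ lies in the block of $\varnothing$. Your proposed characterization of $X_{(d+1)}$ via filtration degree does not by itself rule this out; what closes it is precisely the combinatorial description of \cite{CO-Rep-St-ab}*{Theorem~2.6}, which is also the input the paper's proof relies on. With that reference made explicit, your argument is complete; as written, it is an acknowledged but real gap rather than an error.
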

\begin{proof}
Recall from \Cref{limitthm2} that $(\uRep^\ab S_d)^{\leq k}$ is equivalent to the ultraproduct of the categories $(\Rep_{p_i}S_{p_i+d})^{\leq k}$. Observe that $\uInd$ is left adjoint of a functor preserving epimorphisms and thus preserves projective objects, therefore any object of the form
$\Ind_{S_{d+1}\times S_{p_i-1}}^{S_{p_i+d}}(V\boxtimes W)$
is projective in $\Rep_{p_i}S_{p_i+d}$ as it is induced from a semisimple category where all objects are projective. In particular, observe that $$P_i:=\Ind_{S_{d+1}\times S_{p_i-1}}^{S_{p_i+d}}(\one\boxtimes \one)\in (\uRep^\ab S_d)^{\leq d+1}$$
and that the surjective morphisms $P_i\to \ov{\mF}_{p_i}, g\otimes (1\boxtimes 1)\mapsto 1$ induce a surjective morphism $P\twoheadrightarrow \one$, for $P=\prod_{\cU}P_i$.

By \Cref{lem:inddec}, $P\cong X_{(d+1)}\oplus \text{(l.o.t)}$. The indecomposables appearing as lower order terms are of the form $X_{(k)}$, with $k\leq d+1$. The combinatorial description of objects in the  block of $X_{(d+1)}$ from \cite{CO-Rep-St-ab}*{Theorem~2.6} implies that of the objects $X_{(k)}$, only $X_{(0)}=\one$ belongs to this block and $\Hom(P,Y)=0$ for objects $Y$ from other blocks. However, $\one$ is not projective as $\uRep^\ab S_d$ is not semisimple. Thus, $X_{(d+1)}$ is the projective cover of $\one$.
\end{proof}

\begin{proposition}\label{prop:proj-filt}
Let $X\in (\uRep^\ab S_d)^{\leq k}$. Then $X$ is projective in $(\uRep^\ab S_d)^{\leq (k+d+1)}$ if and only if $X$ is projective in $\uRep^\ab S_d$.
\end{proposition}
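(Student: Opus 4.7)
The plan is to prove the two implications separately, with the converse being the substantive direction.

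The \emph{only if} direction is immediate: since $(\uRep^\ab S_d)^{\leq (k+d+1)}$ is a full subcategory of $\uRep^\ab S_d$, every short exact sequence in the subcategory is a short exact sequence in the whole category. If $\Hom(X,-)$ is exact on $\uRep^\ab S_d$, it is automatically exact on the subcategory.

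For the \emph{if} direction, suppose $X$ is projective in $(\uRep^\ab S_d)^{\leq (k+d+1)}$. The key input is \Cref{lem:projcover}, which gives a surjection $p\colon X_{(d+1)}\twoheadrightarrow\one$ in $\uRep^\ab S_d$ with $X_{(d+1)}$ projective and lying in $(\uRep^\ab S_d)^{\leq d+1}$. Because $\uRep^\ab S_d$ is rigid, tensoring with any object preserves projective objects: for any object $V$ and projective $P$, one has a natural isomorphism $\Hom(V\otimes P,-)\cong\Hom(P,V^*\otimes -)$, and both $V^*\otimes -$ and $\Hom(P,-)$ are exact. Applying this to $V=X$ and $P=X_{(d+1)}$ shows that $X\otimes X_{(d+1)}$ is projective in $\uRep^\ab S_d$; moreover, since $X\in (\uRep^\ab S_d)^{\leq k}$, this object lies in $(\uRep^\ab S_d)^{\leq (k+d+1)}$.

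Tensoring $p$ with $\id_X$ and composing with the right unit isomorphism yields a surjection $\id_X\otimes p\colon X\otimes X_{(d+1)}\twoheadrightarrow X$ inside $(\uRep^\ab S_d)^{\leq (k+d+1)}$. By the projectivity hypothesis on $X$ in this subcategory, this surjection splits, so $X$ is a direct summand of $X\otimes X_{(d+1)}$. Since direct summands of projective objects are projective, $X$ is projective in $\uRep^\ab S_d$, finishing the proof.

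The only real subtlety — and the reason the shift by $d+1$ appears — is pinning down the projective cover of $\one$ precisely and ensuring it lives in filtration degree $d+1$; this is already handled by \Cref{lem:projcover}. Everything else is formal from rigidity of the abelian envelope.
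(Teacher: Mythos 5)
Your proof is correct and takes essentially the same route as the paper: both hinge on the observation from \Cref{lem:projcover} that $X\otimes X_{(d+1)}$ is a projective object of filtration degree at most $k+d+1$ admitting an epimorphism onto $X$. The paper phrases the conclusion via projective covers (together with \Cref{lem:proj-ultra}), whereas you split the epimorphism directly using projectivity in the filtered piece; this is a minor stylistic difference, and if anything your version is slightly more self-contained.
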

\begin{proof}
Given an indecomposable object $X$ in $(\uRep^\ab S_d)^{\leq k}$, we can use \Cref{lem:projcover} to see that the projective cover of $X$ is contained in $(\uRep^\ab S_d)^{\leq (k+d+1)}$. Indeed, $X\otimes X_{(d+1)}$ is a projective object in $(\uRep^\ab S_d)^{\leq(k+d+1)}$ with an epimorphism to $X\otimes\one\cong X$. Now, $X$ is projective in an additive subcategory $\cC$ of $\uRep^\ab S_d$ if and only if it its projective cover is contained in $\cC$ and $X$ is isomorphic to its projective cover. By the above observation and \Cref{lem:proj-ultra}, this condition holds in $\cC=\uRep^\ab S_d$ if and only if it holds in $\cC=(\uRep^\ab S_d)^{\leq (k+d+1)}$.
\end{proof}

The advantage of the above proposition is that projectivity of an object in $\uRep^\ab S_d$ can be checked using the components of an ultrafilter presentation, as well as the following observation.

\begin{corollary}
All projective objects of $\uRep^\ab S_d$ are contained in the subcategory $\uRep S_d$.
\end{corollary}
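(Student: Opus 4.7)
The plan is to realize each indecomposable projective $P\in\uRep^\ab S_d$ as a direct summand of an object known to lie in $\uRep S_d\subset\uRep^\ab S_d$. Since $\uRep S_d$ is Karoubian closed in $\uRep^\ab S_d$, this will give $P\in\uRep S_d$. Let $D^\lambda$ denote the head of $P$, set $n:=|\lambda|$, and consider
\[
Q\;:=\;\uInd_{S_n\times S_{d-n}}^{S_d}(S^\lambda\boxtimes\one)\;\in\;\uRep^\ab S_d.
\]
I claim that (i) $Q$ is projective, (ii) $Q\in\uRep S_d$, and (iii) $P$ is a direct summand of $Q$.

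The first two steps are essentially formal. For (i), write $Q=\prod_\cU Q_i$ with $Q_i=\Ind_{S_n\times S_{p_i+d-n}}^{S_{p_i+d}}(S^\lambda\boxtimes\triv)$; for $p_i>n$ the source $\Rep_{p_i}(S_n\times S_{p_i+d-n})$ is semisimple, so every object is projective and induction (left adjoint to exact restriction) preserves projectivity, as in the proof of \Cref{lem:projcover}. Thus each $Q_i$ is projective in $\Rep_{p_i}S_{p_i+d}$, and \Cref{lem:proj-ultra} transfers projectivity to $Q$. For (ii), \Cref{lem:inddec}(2) gives a decomposition $Q\cong\bigoplus_\mu X_\mu$ in $\uRep^\ab S_d$ with each $X_\mu\in\uRep S_d$.

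For (iii), by Krull--Schmidt $Q$ is a direct sum of indecomposable projectives, so it is enough to show $\Hom_{\uRep^\ab S_d}(Q,D^\lambda)\neq 0$: this places $D^\lambda$ in the head of $Q$, forcing its projective cover $P$ to appear as a direct summand of $Q$. By Frobenius reciprocity this Hom equals $\Hom(S^\lambda\boxtimes\one,\uRes(D^\lambda))$, which we verify at the ultrafilter level: $\uRes(D^\lambda)=\prod_\cU\Res_{S_n\times S_{p_i+d-n}}^{S_{p_i+d}}(D^{\lambda[p_i+d]})$, and for $p_i$ large relative to $|\lambda|$ and $d$, $S^\lambda\boxtimes\triv$ appears as a submodule of the latter by the modular branching rules for symmetric groups.

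The main obstacle is this last non-vanishing step in (iii). In characteristic zero it is immediate from the Pieri/Littlewood--Richardson rule, which gives $c^{\lambda[n]}_{\lambda,(n-|\lambda|)}=1$; transferring this to large modular characteristic requires some care because the block of $\lambda[p_i+d]$ in $\Rep_{p_i}S_{p_i+d}$ has weight at most one, so one must control the relationship between Specht modules and simples in weight-$\leq 1$ blocks of the symmetric group. Once this branching statement is in hand, $P$ appears as a direct summand of $Q\in\uRep S_d$, and the corollary follows.
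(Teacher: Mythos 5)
There is a genuine gap, and in fact step (i) of your plan is false as stated. You claim that for $p_i>n$ the category $\Rep_{p_i}(S_n\times S_{p_i+d-n})$ is semisimple; but whenever $n=|\lambda|\le d$ the second factor $S_{p_i+d-n}$ contains $S_{p_i}$, so $p_i$ divides its order and $\Rep_{p_i}S_{p_i+d-n}$ is \emph{not} semisimple. Consequently $S^\lambda\boxtimes\triv$ need not be projective in the source category and $Q_i$ need not be projective in $\Rep_{p_i}S_{p_i+d}$. Indeed $Q$ is genuinely non-projective in this range: for $\lambda=\varnothing$ one gets $Q=\one$, which is not projective, and more generally \Cref{lem:inddec} shows $Q\cong\bigoplus_\mu X_\mu$ contains low-order summands such as $\one$ that are not projective. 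This is precisely why \Cref{lem:projcover} induces from $S_{d+1}\times S_{p_i-1}$ — the shift by $d+1$ is what makes the second factor $p_i$-regular — and why \Cref{prop:proj-filt} tensors with $X_{(d+1)}$ before invoking projectivity. On top of this, your step (iii) rests on a modular branching non-vanishing statement that you explicitly defer; even granting a corrected $Q$, that step would still need to be supplied (say via the weight-$\le 1$ block theory you allude to).

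All of this machinery is unnecessary: the paper's proof is a two-line splitting argument. By \cite{CO-Rep-St-ab}*{Remark~4.8}, every object $P$ of $\uRep^\ab S_d$ admits a surjection $\bigoplus_{i=1}^m X^{\otimes n_i}\twoheadrightarrow P$. If $P$ is projective this surjection splits, so $P$ is a direct summand of an object of $\uRep S_d$ and hence lies in $\uRep S_d$, which is Karoubian closed. If you want to salvage your inductive approach, replace $Q$ by something like $\uInd_{S_n\times S_{d-n}}^{S_d}\bigl(S^\lambda\boxtimes X_{(d-n+1)}\bigr)$ (or argue with $P\otimes X_{(d+1)}$ as in \Cref{prop:proj-filt}) so that the object being induced is actually projective almost everywhere — but the direct argument above makes this unnecessary.
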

\begin{proof}
This follows directly from \cite{CO-Rep-St-ab}*{Remark~4.8} since for any object $P$ of $\uRep^\ab S_d$ there exists a surjective map $\bigoplus_{i=1}^m X^{\otimes n_i}\twoheadrightarrow P$. Hence, if $P$ is projective, it is a direct summand of an object in $\uRep S_d$.
\end{proof}

\subsection{Indecomposable Yetter--Drinfeld modules over \texorpdfstring{$S_n$}{Sn} in arbitrary characteristic}\label{YDmods}
In this section, let $\Bbbk$ be an algebraically closed field and $G$ a finite group.
We now turn to background results on the monoidal center $\cZ(\Rep_\kk G)$ of the category $\Rep_\kk G$. We employ the equivalent description of this braided tensor category as \emph{Yetter--Drinfeld modules} \cite{Yet}*{Definition~3.6}, i.e.~$G$-graded $G$-representations $V=\bigoplus_{g\in G}V_g$ such that $g\cdot V_h=V_{ghg^{-1}}$, see e.g.~\cite{Maj1}*{Proposition~7.1.6}. If $v\in V_g$, we write $|v|=g$ for the \emph{degree} of $v$. Equivalently, $\cZ(\Rep_\kk G)$ can be described as modules over the \emph{Drinfeld double} $\Drin(G)$ of $G$ \cite{Dri}.
We recall the following result found in \cite{DPR} for $\Bbbk=\mC$ and in \cite{Wit}*{Corollary 2.3} for general characteristic. %

\begin{theorem}[Dijkgraaf--Pasquier--Roche, Witherspoon]\label{DPR-W-Thm}
Let $G$ be a finite group and $\Bbbk$ be an algebraically closed field. A complete list of indecomposable (respectively, irreducible) objects in $\cZ(\Rep_\Bbbk G)$ is given by modules of the form $W_{\s,V}=\Ind_Z^G(V)$ where $\s$ is a representative of a conjugacy class of elements in $G$ and $V$ is an indecomposable (respectively, irreducible) module over the centralizer $Z=Z(\s)$ of $\s$ in $G$.
\end{theorem}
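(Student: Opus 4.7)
The plan is to work entirely within the Yetter--Drinfeld description $\cZ(\Rep_\kk G) \simeq \lYD{G}$. The argument has two parts: first a block-type decomposition of $\lYD{G}$ along conjugacy classes of $G$, and then, for each such class, an equivalence with representations of the centralizer.

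First I would show that the decomposition of a YD module $V=\bigoplus_{g\in G}V_g$ along conjugacy classes,
$$V \;=\; \bigoplus_{C\in G/\!\sim}V_C, \qquad V_C := \bigoplus_{g\in C}V_g,$$
is actually a decomposition in $\lYD{G}$. Each $V_C$ is visibly graded by $C\subset G$, and YD-compatibility $g\cdot V_h\subseteq V_{ghg^{-1}}$ guarantees that $V_C$ is stable under the $G$-action since conjugation by any element of $G$ preserves the class $C$. Thus every indecomposable YD module is supported on a single conjugacy class $C$.

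Second, I would fix a conjugacy class $C$ with representative $\s\in C$ and centralizer $Z=Z(\s)$, and construct an equivalence
$$\mathcal{R}\colon \lYD{G}_C \,\longrightarrow\, \Rep_\kk Z, \qquad V\longmapsto V_\s,$$
where $\lYD{G}_C$ denotes YD modules supported on $C$. The point is that, for $z\in Z$, YD-compatibility gives $z\cdot V_\s\subseteq V_{z\s z^{-1}}=V_\s$, so $V_\s$ naturally carries a $Z$-action. To construct a quasi-inverse, fix coset representatives $\{g_i\}_{i\in I}$ of $Z$ in $G$ (so that $\{g_i\s g_i^{-1}\}_i=C$) and for a $Z$-module $W$ set
$$\mathcal{I}(W) \;:=\; \Ind_Z^G W \;=\; \bigoplus_{i\in I} g_i\o_{\kk Z} W,$$
equipped with the $G$-grading placing $g_i\o W$ in degree $g_i\s g_i^{-1}$ and the standard induced $G$-action. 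One checks that this grading together with the induced action satisfies the YD axiom --- the computation reduces to the fact that for $g\in G$ one has $g\cdot(g_i\o w)=g_j\o(z\cdot w)$ where $gg_i=g_jz$ with $z\in Z$, and $g(g_i\s g_i^{-1})g^{-1}=g_j\s g_j^{-1}$. The natural isomorphisms $\mathcal{R}\mathcal{I}(W)=W$ and $\mathcal{I}\mathcal{R}(V)\cong V$ (the latter using that each homogeneous component $V_{g_i\s g_i^{-1}}$ equals $g_i\cdot V_\s$, hence $V$ is recovered from $V_\s$ by inducing) establish the equivalence.

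Combining the two steps gives an equivalence
$$\lYD{G} \;\simeq\; \bigoplus_{[\s]\in G/\!\sim}\Rep_\kk Z(\s),$$
and since equivalences of additive (respectively abelian) $\kk$-linear categories preserve indecomposable (respectively simple) objects, the list $W_{\s,V}=\mathcal{I}(V)=\Ind_{Z(\s)}^G V$ as $[\s]$ ranges over conjugacy classes and $V$ over (iso-classes of) indecomposable, respectively irreducible, $Z(\s)$-modules is complete and without repetition. The main technical point --- and the only step requiring any genuine bookkeeping --- is verifying the YD axiom for the induced module $\mathcal{I}(W)$ and checking that the unit and counit of the $(\mathcal{I},\mathcal{R})$-adjunction are isomorphisms on objects of $\lYD{G}_C$; this is a direct but slightly fiddly coset-representative calculation, and is independent of the characteristic of $\kk$, which is why no semisimplicity assumption on $\kk Z(\s)$ is needed.
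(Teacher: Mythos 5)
Your proof is correct, and it is the standard argument. Note that the paper does not actually prove this statement: it is quoted as a known result of Dijkgraaf--Pasquier--Roche (for $\Bbbk=\mC$) and Witherspoon (for general algebraically closed $\Bbbk$), with only the construction of the half-braiding and grading on $\Ind_Z^G(V)$ recorded afterwards. Your two-step argument --- the conjugacy-class decomposition of Yetter--Drinfeld modules followed by the equivalence $\lYD{G}_C\simeq\Rep_\kk Z(\s)$ via $V\mapsto V_\s$ with quasi-inverse induction --- is exactly the mechanism underlying the paper's subsequent Lemma \ref{lem:cZHoms} and the decomposition $\cZ(\Rep_\Bbbk G)\cong\bigoplus_{[g]}\Rep_\Bbbk Z(g)$, and the coset computations you flag (the YD axiom for $\mathcal I(W)$ and $V_{g_i\s g_i^{-1}}=g_i\cdot V_\s$) all go through independently of the characteristic, as you say.
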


In fact, for each $\s\in G$, with $Z=Z(\s)$, we have functors
$$\Ind_Z^G\colon \Rep_\Bbbk Z\longrightarrow \cZ(\Rep_\Bbbk G),\qquad  V\longmapsto \Ind_Z^G(V).$$
The half-braiding on $W_{\s,V}:=\Ind_Z^G(V)$ is given by $c_{W}((g\otimes v)\otimes w)= (g\s g^{-1})\cdot w\otimes (g\otimes v)$, for any $G$-module $W$. 
The $G$-grading of the associated Yetter--Drinfeld module $\Ind_Z^G(V)$ here is given by $\delta(g\otimes v)=g\s g^{-1}\otimes v$. This grading does not depend on $V$ so it is clear that any morphism of $\Bbbk Z$-modules $f\colon V\to V'$ induces one in $\cZ(\Rep_\Bbbk G)$, namely, $\Ind_{Z}^{G}(f)\colon W_{\s,V}\to W_{\s, V'}$. The constructions are independent of the choice of a representative $\s$ of a conjugacy class up to isomorphism.

The following lemma is easily seen from the description of $\cZ(\Rep_\Bbbk G)$ in terms of Yetter--Drinfeld modules over $G$.

\begin{lemma}\label{lem:cZHoms}
The functors $\Ind_Z^G$ are full and faithful, i.e.~there are isomorphisms
$$\Hom_{\Rep_\Bbbk Z}(V,W)\isomorph \Hom_{\cZ(\Rep_\Bbbk G)}(\Ind_{Z}^G V,\Ind_Z^G W),$$
for any $\Bbbk Z$-modules $V$ and $W$. 

If $\s$ and $\tau$ are not conjugate, $V$ is a $\Bbbk Z(\s)$-module, and $W$ is a $\Bbbk Z(\tau)$-module, then 
$$\Hom_{\cZ(\Rep_\Bbbk G)}(\Ind_{Z(\s)}^G V,\Ind_{Z(\tau)}^G W)=\{ 0\}.$$
\end{lemma}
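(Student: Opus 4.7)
The plan is to use the Yetter--Drinfeld module description of $\cZ(\Rep_\Bbbk G)$ and analyze the $G$-grading on each $\Ind_Z^G V$ explicitly. Recall that as a $G$-module, $W_{\s,V} = \Bbbk G \otimes_{\Bbbk Z} V$, with $Z = Z(\s)$, and the $G$-grading is given by $\delta(g\otimes v) = g\s g^{-1}$. In particular, the support of the grading of $W_{\s,V}$ is exactly the conjugacy class $C_\s$ of $\s$ in $G$, independent of $V$, and the $\s$-graded piece $(W_{\s,V})_\s$ is canonically identified with the $Z$-submodule $1\otimes V \cong V$.

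For the second assertion, suppose $\s$ and $\tau$ are not conjugate, so $C_\s\cap C_\tau=\emptyset$. Any morphism in $\cZ(\Rep_\Bbbk G)$ must preserve the $G$-grading, hence sends the $g$-graded piece of $\Ind_{Z(\s)}^G V$ into the $g$-graded piece of $\Ind_{Z(\tau)}^G W$. Since the grading supports are disjoint conjugacy classes, every such piece of the target is zero, forcing the whole morphism to vanish.

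For the fullness and faithfulness statement, I would combine ordinary Frobenius reciprocity with the grading constraint. Frobenius reciprocity gives
\[
\Hom_{\Rep_\Bbbk G}(\Ind_Z^G V,\Ind_Z^G W) \cong \Hom_{\Rep_\Bbbk Z}(V,\Res_Z^G\Ind_Z^G W),
\]
and $\Res_Z^G\Ind_Z^G W \cong \bigoplus_{g\in Z\backslash G/Z} \Bbbk(ZgZ)\otimes_{\Bbbk Z}W$ as a $Z$-module, with the summand for $g=1$ isomorphic to $W$ sitting in grade $\s$. A morphism $f$ in $\cZ(\Rep_\Bbbk G)$ must additionally preserve the $G$-grading; applied to the $\s$-graded piece $V\subseteq \Ind_Z^G V$, this forces the image of $V$ to land in $(\Ind_Z^G W)_\s$. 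But $(\Ind_Z^G W)_\s$ is precisely the identity-coset summand $1\otimes W \cong W$ (any other coset $g\otimes W$ lies in grade $g\s g^{-1}\neq \s$ unless $g\in Z$). Hence the grading condition projects $\Hom_{\Rep_\Bbbk Z}(V,\Res_Z^G\Ind_Z^G W)$ onto the subspace $\Hom_{\Rep_\Bbbk Z}(V,W)$.

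Conversely, given any $Z$-linear map $\varphi\colon V\to W$, the induced $G$-equivariant map $\id\otimes\varphi\colon \Bbbk G\otimes_{\Bbbk Z}V\to \Bbbk G\otimes_{\Bbbk Z}W$ sends $g\otimes v$ to $g\otimes\varphi(v)$, which trivially respects the grading $g\otimes(-)\mapsto g\s g^{-1}$. Thus the Frobenius isomorphism restricts to the claimed bijection, proving that $\Ind_Z^G$ is full and faithful as a functor into $\cZ(\Rep_\Bbbk G)$. The only conceptual point is the matching of the grading condition with the identity-coset summand; everything else is bookkeeping with cosets, so no serious obstacle is expected.
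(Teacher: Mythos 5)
Your argument is correct and is exactly the route the paper intends: the paper gives no written proof, merely asserting that the lemma ``is easily seen from the description of $\cZ(\Rep_\Bbbk G)$ in terms of Yetter--Drinfeld modules,'' and your grading analysis (support of the grading is the conjugacy class of $\s$, the $\s$-graded piece is the identity-coset copy of $V$, resp.\ $W$, and Frobenius reciprocity plus grading-preservation pins the Hom space down to $\Hom_{\Rep_\Bbbk Z}(V,W)$) is precisely how that claim is made rigorous. No gaps.
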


Hence, as an abelian category, $\cZ(\Rep_\Bbbk G)$ decomposes as a direct sum  (as defined in \cite{EGNO}*{Section~1.3})
$$\cZ(\Rep_\Bbbk G)\cong \bigoplus_{[g]\in G^G} \Rep_\Bbbk Z(g),$$
where $g$ ranges over a set of representatives of the conjugacy classes of $G$.

A direct consequence of this observation is the following: let $W$ be a Yetter--Drinfeld $G$-module over any field $\Bbbk$, and let $W_{g,V}$ be the simple Yetter--Drinfeld $G$-module obtained from $g$ in $G$ and a simple module $V$ over the centralizer $Z(g)$ of $g$ in $G$. Then the multiplicity $[W:W_{g,V}]$ in $\cZ(\Rep_\Bbbk G)$ equals the multiplicity $[W_g:V]$ in $\Rep_\Bbbk Z(g)$, where $W_g$ is the homogeneous subspace of $W$ of degree $g$.

\begin{lemma}\label{proj-reps}
$W_{\s,P}$ is projective in $\cZ(\Rep_\Bbbk G)$ if and only if $P$ is projective in $\Rep_\Bbbk Z(\s)$.
\end{lemma}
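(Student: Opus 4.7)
The plan is to exploit the block decomposition $\cZ(\Rep_\Bbbk G) \cong \bigoplus_{[g]} \Rep_\Bbbk Z(g)$ described just before the lemma. Under this decomposition, $W_{\sigma,P} = \Ind_{Z(\sigma)}^G P$ lies entirely in the block indexed by the conjugacy class $[\sigma]$, and by \Cref{lem:cZHoms} the functor $\Ind_{Z(\sigma)}^G$ identifies $\Rep_\Bbbk Z(\sigma)$ with that block as an abelian category. Since projectivity of an object in a direct sum decomposition of abelian categories is equivalent to projectivity within the single summand containing it, the equivalence of projectivity statements follows at once.

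To make this self-contained (and avoid relying on the block decomposition being verified at the level of projectives), I would also argue by adjunction. First, I would introduce the functor $(-)_\sigma \colon \cZ(\Rep_\Bbbk G) \to \Rep_\Bbbk Z(\sigma)$ sending a Yetter--Drinfeld module $W = \bigoplus_{g\in G} W_g$ to its $\sigma$-homogeneous component $W_\sigma$, which is naturally a $Z(\sigma)$-submodule of $W$ since $Z(\sigma)$ fixes $\sigma$ under conjugation. This functor is exact because kernels and cokernels in $\cZ(\Rep_\Bbbk G)$ are computed on the underlying vector spaces and respect the $G$-grading, so the grading decomposition is split.

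Next, I would verify the natural isomorphism
\[
\Hom_{\cZ(\Rep_\Bbbk G)}\bigl(\Ind_{Z(\sigma)}^G P,\, W\bigr) \;\cong\; \Hom_{\Rep_\Bbbk Z(\sigma)}\bigl(P,\, W_\sigma\bigr),
\]
which is essentially ordinary Frobenius reciprocity refined by the observation that a $G$-linear map $\Bbbk G \otimes_{\Bbbk Z(\sigma)} P \to W$ respects gradings if and only if its restriction to $1 \otimes P$ lands in $W_\sigma$. Thus $\Ind_{Z(\sigma)}^G$ is left adjoint to the exact functor $(-)_\sigma$, hence preserves projectives. Conversely, a direct computation shows $(\Ind_{Z(\sigma)}^G P)_\sigma \cong P$ as $Z(\sigma)$-modules: the $\sigma$-graded piece of $\Bbbk G \otimes_{\Bbbk Z(\sigma)} P$ is spanned by $g \otimes v$ with $g\sigma g^{-1} = \sigma$, i.e. $g \in Z(\sigma)$, and these tensors collapse to $1 \otimes P$. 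Since $(-)_\sigma$ is itself exact, it preserves projectives, so projectivity of $W_{\sigma, P}$ forces projectivity of $P$.

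I do not expect a substantive obstacle: the exactness of $(-)_\sigma$, the adjunction identity, and the formula $(\Ind_{Z(\sigma)}^G P)_\sigma \cong P$ are all immediate from the Yetter--Drinfeld description, and in any case the statement is already essentially contained in the block decomposition recorded before the lemma. The main point is simply to record the observation that projectivity is compatible with this decomposition.
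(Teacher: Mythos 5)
Your first paragraph is already a complete proof, and it takes a genuinely different route from the paper's. The paper argues concretely with the Drinfeld double: for the forward direction it observes that $\Ind_Z^G(\Bbbk Z)=\Bbbk G\delta_\s$ is a direct summand of the free module $\Drin(G)$, so by functoriality summands of free $\Bbbk Z$-modules induce to projectives; for the converse it takes the $\s$-homogeneous component of a free $\Drin(G)$-module and checks that $\Bbbk G\delta_\s$ is free over $\Bbbk Z$. Your route instead invokes the abelian direct-sum decomposition $\cZ(\Rep_\Bbbk G)\cong\bigoplus_{[g]}\Rep_\Bbbk Z(g)$ recorded just before the lemma, together with the fact that projectivity in a direct sum of abelian categories is detected summand-wise. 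That is shorter, but note it leans on \Cref{DPR-W-Thm} (via Krull--Schmidt) to know that $\Ind_{Z(\s)}^G$ is essentially surjective onto its block, not just fully faithful as in \Cref{lem:cZHoms}; the paper's argument is independent of the classification of indecomposables.

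In your ``self-contained'' backup argument, the forward direction is fine: the graded Frobenius reciprocity $\Hom_{\cZ(\Rep_\Bbbk G)}(\Ind_{Z(\s)}^G P, W)\cong\Hom_{\Rep_\Bbbk Z(\s)}(P,W_\s)$ together with exactness of $(-)_\s$ shows $\Ind_{Z(\s)}^G$ preserves projectives. But the last step of the converse, ``since $(-)_\s$ is itself exact, it preserves projectives,'' is not a valid inference: exact functors need not preserve projectives (restriction of scalars along a ring homomorphism is exact but preserves projectives only when the target is projective over the source). The conclusion is nevertheless correct and can be repaired in either of two ways. One can check the adjunction on the other side --- $(-)_\s$ is also \emph{left} adjoint to $\Ind_{Z(\s)}^G$ (the coinduction form of Frobenius reciprocity restricts to graded maps, since the $1\otimes V$ component of $\Ind_{Z(\s)}^G V$ sits in degree $\s$), and $\Ind_{Z(\s)}^G$ is exact, so $(-)_\s$ preserves projectives after all. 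Alternatively, one argues as the paper does: a projective $W_{\s,P}$ is a summand of some $\Drin(G)^{\oplus m}$, and $(\Drin(G)^{\oplus m})_\s=(\Bbbk G\delta_\s)^{\oplus m}$ is a free $\Bbbk Z(\s)$-module, so $P\cong(W_{\s,P})_\s$ is projective.
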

\begin{proof} 
For $\s\in G$, consider the regular $\Bbbk Z$-module and note that $\Ind_Z^G(\Bbbk Z)=\Bbbk G$. As an object in $\cZ(\Rep_\Bbbk G)$, this is the projective $\Drin(G)$-submodule $\Drin(G)\delta_\s=\Bbbk G\delta_\s$ of $\Drin(G)$, where $\{\delta_g\}_{g\in G}$ is the basis of $(\Bbbk G)^*$ dual to the basis $\{g\}_{g\in G}$. By functoriality of $\Ind_Z^G$, $P$ being a direct summand of $(\Bbbk Z)^{\oplus n}$, it readily follows that $W_{\s,P}$ is a direct summand of $\Drin(G)\delta_\s$ and thus projective.

Assume that $W:=W_{\s,P}$ is a direct summand of a direct sum $R:=((\Bbbk G)^*\otimes\Bbbk G)^{\oplus m}$ of the regular module in $\cZ(\Rep_\Bbbk G)$. Note that $R_\sigma=(\Bbbk G\delta_\sigma)^{\oplus m}$ and $W_\sigma=P$. Thus, we obtain that $P$ is a direct summand of $(\Bbbk G\delta_\sigma)^{\oplus m}$ as a $\Bbbk Z$-module. Choosing a decomposition of $G$ into right $Z$-cosets, we observe that as a $\Bbbk Z$-module, $\Bbbk G\delta_\sigma$ is simply a direct sum of copies of the regular module. Thus, $P$ is projective.
\end{proof}

The Grothendieck ring of the monoidal center of representations over algebraically closed fields of arbitrary characteristic was studied in \cite{Wit}. See also \cite{Don} for some concrete examples.

\begin{corollary}[{\cite{Wit}*{Section~3}}]\label{cor:WithK0}
For $G$ a finite group. There is an isomorphism of rings
$$K_0(\cZ(\Rep_\Bbbk  G))\otimes_{\mZ}\mC\cong \bigoplus_{\s} Z(\mC Z(\s)),$$
where $\s$ varies over a set of representatives of the conjugacy classes of $G.$
\end{corollary}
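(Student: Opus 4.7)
The plan is to chain together two well-established ingredients: the block decomposition of the center as an abelian category, and the classical character-theoretic identification of $K_0$ of finite-group representations with the center of the group algebra. First I would invoke \Cref{DPR-W-Thm} to enumerate the simple objects of $\cZ(\Rep_\Bbbk G)$ as $W_{\s,V}$ indexed by pairs (conjugacy class representative $\s$, simple $\Bbbk Z(\s)$-module $V$), combined with \Cref{lem:cZHoms}, which asserts that $\Ind_{Z(\s)}^G$ is fully faithful and that $\Hom_{\cZ(\Rep_\Bbbk G)}(W_{\s,V}, W_{\tau,W}) = 0$ whenever $\s$ and $\tau$ are not conjugate in $G$. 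These two facts together immediately yield the direct sum decomposition of abelian categories $\cZ(\Rep_\Bbbk G)\cong \bigoplus_{[\s]}\Rep_\Bbbk Z(\s)$ already noted in the paragraph following \Cref{lem:cZHoms}.

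Next, applying the Grothendieck group functor to this decomposition reduces the claim to the classical identification, for each centralizer $H=Z(\s)$, of $K_0(\Rep_\Bbbk H)$ with $Z(\mC H)$ after scalar extension. This is the usual (Brauer) character isomorphism: the characters of the simple modules span a space of $\mC$-valued class functions which is identified with $Z(\mC H)$ through the standard correspondence between simple modules, primitive central idempotents, and conjugacy class sums. Summing over conjugacy classes of $G$ yields the asserted isomorphism $K_0(\cZ(\Rep_\Bbbk G))\cong \bigoplus_{[\s]} Z(\mC Z(\s))$ at the level of additive groups.

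The main delicate point is matching the ring structures. The tensor product in $\cZ(\Rep_\Bbbk G)$ does not respect the block decomposition, because in the Yetter--Drinfeld picture the grading of $v\o w$ is the product of the grades of $v$ and $w$, so $W_{\s,V}\otimes W_{\tau,W}$ typically decomposes across several conjugacy classes indexed by products of conjugates of $\s$ and $\tau$. Consequently the ring structure on $\bigoplus_{[\s]}Z(\mC Z(\s))$ is the one transported from the left, and not the naive componentwise product of group-algebra centers. Checking that this transported multiplication coincides with the one described by Witherspoon in \cite{Wit}*{Section~3} is where the genuine computation lies, but the underlying additive identification already follows cleanly from the two ingredients above.
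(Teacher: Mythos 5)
The paper does not actually prove this corollary: the sentence immediately following it just records that it ``is proved in \cite{Wit}*{p.~316} and uses a result of G.~Lusztig.'' So you are being compared against a citation, and your first two steps are consistent with the paper's surrounding discussion: \Cref{DPR-W-Thm} together with \Cref{lem:cZHoms} gives the block decomposition $\cZ(\Rep_\Bbbk G)\cong\bigoplus_{[\s]}\Rep_\Bbbk Z(\s)$, and (after tensoring with $\mC$, which the statement of the corollary suppresses) the character map identifies each $\mC\otimes_{\mZ} K_0(\Rep_\Bbbk Z(\s))$ with $Z(\mC Z(\s))$ as a vector space. Note, though, that this last identification only works when $\cha\Bbbk$ does not divide $|Z(\s)|$: in characteristic $p$ dividing the group order the Brauer character map lands in class functions on $p$-regular elements, the number of simples drops to the number of $p$-regular classes, and the ranks of the two sides no longer agree. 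So your additive argument implicitly assumes the semisimple setting.

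The genuine gap is your final paragraph. The corollary asserts an isomorphism of \emph{rings}, where $\bigoplus_\s Z(\mC Z(\s))$ carries the componentwise product --- equivalently, that the complexified Grothendieck ring of the double is split semisimple. Your construction produces an additive isomorphism and then transports the multiplication from the left-hand side, explicitly deferring the check that the transported product is isomorphic to the componentwise one. That check is not a routine verification to be postponed; it is the entire content of the statement, and it is precisely the step for which Witherspoon invokes Lusztig's result on the commutative algebra structure on $\bigoplus_{[u]}Z(\mC Z(u))$ and its diagonalization via the nonabelian Fourier transform. As written, your proposal establishes an isomorphism of complex vector spaces, not of rings, so the proof is incomplete at exactly the point where the theorem has substance.
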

This corollary is proved in \cite{Wit}*{p.~316} and uses a result of G.~Lusztig.

\section{Classification of indecomposable objects in the center of \texorpdfstring{$\uRep S_t$}{Rep St}}\label{sec:classification}

\subsection{The center as a model-theoretic limit}\label{sec:centerlimit}

Viewing $\uRep S_t$ as a model-theoretic limit of categories $\Rep_{p_i} S_{t_i}$ as in \Cref{limitthm} suggests that it may be possible to interpret the center $\cZ(\uRep S_t)$ as a limit of the centers $\cZ(\Rep_{p_i}S_{t_i})$.  %
An object of the center of a monoidal category consists of the data of an object of the category along with a half-braiding. Therefore, the centers $\cZ(\uRep S_t)$ and $\cZ(\Rep_{p_i}S_{t_i})$ inherit filtrations $\cZ(\uRep S_t)^{\le k, m}$ and $\cZ(\Rep_{p_i}S_{t_i})^{\le k,m}$ just by looking at the underlying object. Precisely, $\cZ(\Rep_{p_i}S_{t_i})^{\le k,m}$ consists of those objects $(V,c)$ where $V$ lies in $(\Rep_{p_i}S_{t_i})^{\le k,m}$.

\begin{proposition}\label{center-limit} %
Recall the setup of \Cref{limitthm}(1)--(3). In all cases, 
the category $\cZ(\uRep S_t)^{\le k, m}$ is equivalent to the ultraproduct of the categories $\cZ(\Rep_{p_i}S_{t_i})^{\le k,m}$ (or, $\cZ(\Rep S_{t_i})^{\le k,m}$) as $\mathbb{C}$-linear categories with partially defined monoidal and additive structures.
\end{proposition}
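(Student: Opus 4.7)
The plan is to exploit the observation that a half-braiding on an object $V\in \uRep S_t$ is essentially determined by the single morphism $c_X\colon V\o X\to X\o V$, together with naturality and hexagon conditions that can be expressed as first-order statements in the signature of symmetric monoidal categories with distinguished object $X$. Combined with \Cref{limitthm} (respectively the analogous transcendental case), this allows \L o\'s' theorem to transfer half-braidings back and forth between the finite-characteristic models and their ultraproduct limit, in each fixed filtered piece.

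First I would construct a comparison functor
$$\Phi\colon \prod_{\cU}\cZ(\Rep_{p_i}S_{t_i})^{\le k,m}\longrightarrow \cZ(\uRep S_t)^{\le k,m}.$$
Given a sequence of pairs $(V_i,c_i)$, set $V=\prod_\cU V_i$, which lies in $(\uRep S_t)^{\le k,m}$ by \Cref{limitthm}. Each half-braiding $c_i$ restricts to a morphism $c_{i,X_{t_i}}\colon V_i\o X_{t_i}\to X_{t_i}\o V_i$, whose ultraproduct is a morphism $c_X\colon V\o X\to X\o V$ in $\uRep S_t$. Iterating the hexagon extends $c_X$ uniquely to $c_{X^{\o n}}$, and additivity together with the universal property of the Karoubian envelope extends further to a candidate family $c_W$ for all $W\in\uRep S_t$. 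For this to define a half-braiding, one needs naturality equations of the form $(\pi\o\id_V)\circ c_X=c_X\circ (\id_V\o \pi)$ for each partition diagram $\pi\colon X^{\o k}\to X^{\o l}$; each such equation is first-order and holds in each $\cZ(\Rep_{p_i}S_{t_i})$, so transfers by \L o\'s' theorem. Morphisms are treated analogously: the condition $c'_X\circ(f\o\id_X)=(\id_X\o f)\circ c_X$ is first-order in $(f, c_X, c'_X)$, so ultraproducts of center morphisms are again center morphisms and vice versa.

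Faithfulness and fullness of $\Phi$ are inherited from the corresponding properties of the underlying ultraproduct equivalence $\langle X\rangle\simeq \uRep S_t$, combined with the first-order characterization of compatibility with half-braidings just described. For essential surjectivity, start with $(V,c)\in \cZ(\uRep S_t)^{\le k,m}$, present $V\cong \prod_\cU V_i$ with $V_i\in (\Rep_{p_i}S_{t_i})^{\le k,m}$, and write $c_X=\prod_\cU c_{i,X_{t_i}}$. The requirement that each $c_{i,X_{t_i}}$ extend to a genuine half-braiding is a conjunction of first-order statements (naturality against each partition diagram plus the hexagon), all of which hold in the ultraproduct; thus by \L o\'s' theorem they hold in $\cZ(\Rep_{p_i}S_{t_i})$ for almost all $i$, yielding the required lift $(V_i,c_i)$.

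The main obstacle is that naturality of a half-braiding is an \emph{infinite} family of conditions --- one for every morphism between tensor powers of $X$ --- whereas \L o\'s' theorem only transfers statements one at a time. The resolution is that each naturality equation, indexed by an individual partition diagram $\pi$, is a finite first-order statement, and thus transfers individually; taking intersections over the countably many diagrams preserves membership in the ultrafilter only when lifting to the ultraproduct, which is exactly the direction we need. A secondary subtlety is handling the Karoubian structure: passing from $V\o X\to X\o V$ to the half-braiding on a direct summand cut out by an idempotent $e$ requires that $e$ and $c_X$ be lifted to compatible sequences simultaneously, which is automatic from the ultraproduct construction of the Karoubian envelope already used in \Cref{limitthm}.
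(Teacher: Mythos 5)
There is a genuine gap in your essential surjectivity argument. You write that the requirement that each $c_{i,X_{t_i}}$ extend to a genuine half-braiding is ``a conjunction of first-order statements (naturality against each partition diagram plus the hexagon), all of which hold in the ultraproduct; thus by \L o\'s' theorem they hold in $\cZ(\Rep_{p_i}S_{t_i})$ for almost all $i$.'' This is exactly the step that fails: \L o\'s' theorem transfers each individual first-order statement, giving for each partition diagram $\pi$ a set $A_\pi\in\cU$ of indices where that particular naturality square holds, but a non-principal ultrafilter is closed only under \emph{finite} intersections, so you cannot conclude that $\bigcap_\pi A_\pi$ lies in $\cU$, i.e.\ that for almost all $i$ \emph{all} the conditions hold simultaneously. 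Your ``resolution'' paragraph identifies this as the main obstacle but then dismisses it by asserting the infinite intersection is only needed ``when lifting to the ultraproduct''; in fact it is needed in the opposite direction --- descending from the ultraproduct to the components --- which is precisely where your essential surjectivity argument invokes it. (The direction from components to the ultraproduct is unproblematic, as you say, since there one only needs each statement separately to hold in the limit.)

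The paper closes this gap with \Cref{center-data}: the datum of a half-braiding on an object of $\uRep S_t$ or $\Rep_p S_d$ is equivalent to a single morphism $c_X$ satisfying a \emph{finite} list of equations, namely compatibility with the four structural maps $\p^*,\p_*,\p_H,\p_X$ of the Frobenius algebra $X$. A finite conjunction is a single first-order statement and therefore does transfer to almost all $i$, yielding genuine global half-braidings $c_i$ on almost every component. Note that this finiteness reduction is itself nontrivial in the non-semisimple case $p_i\le t_i$, where it uses that $\langle X_{t_i}\rangle$ contains all projectives together with the extension result \Cref{center-extend}; your proposal implicitly needs the same fact (also for fullness of $\Phi$, where compatibility of a morphism with $c_X$ alone must imply compatibility with the full half-braidings $c_i,c_i'$), but never establishes it. With \Cref{center-data} in hand your argument becomes essentially the paper's proof; without it, the descent step does not go through.
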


\begin{proof}
One direction is fairly clear: If $(Y_i, c_i)$ is an object of $\cZ(\Rep_{p_i}S_{t_i})^{\le k,m}$ for each $i$, then we can take the ultraproduct of the underlying objects $Y_i$ to obtain an object $Y$ in $(\uRep S_t)^{\le k , m}$. The fact that $(Y_i, c_i)$ is an element of $\cZ(\Rep_{p_i}S_{t_i})$ means that the half-braiding $c_i$ is not just defined on $\cZ(\Rep_{p_i}S_{t_i})^{\le k,m}$, but on all larger filtered pieces as well. Using \L o\'s' theorem, the ultraproduct $(Y,c)$ becomes an object in $(\uRep S_t)^{\le k, m}$ with  half-braiding $c_Z$ globally defined for all $Z\in \uRep S_t$, hence lies in $\cZ(\uRep S_t)^{\le k, m}$.

Going the other way requires a bit more work. If we start with an object $(Y, c)$ in $\cZ(\uRep S_t)^{\le k, m}$ we can always represent $Y$ as an ultraproduct of objects $Y_i$ in $(\Rep_{p_i} S_{t_i})^{\le k , m}$.  However, a priori the half-braiding $c$ need not come from an ultraproduct of half-braidings $c_i$ globally on each $\Rep_{p_i} S_{t_i}$. Instead, initially we can just conclude it gives a sequence of partially defined half-braidings $c_i$, each defined on some $(\Rep_{p_i} S_{t_i})^{\le k_i , m_i}$ with $k_i$ and $m_i$ tending to infinity as $p_i$ does, but possibly not globally. 
This is essentially because the half-braiding is a natural transformation defined globally on the entire category, and the ultraproduct is only well behaved on these finite filtered pieces. To correct for this we  use Lemma \ref{center-data}, which shows that the data of a half-braiding is equivalent to some finite data that only involves objects and morphisms  inside one of these finite filtered pieces, namely $(\uRep S_t)^{\leq 2,2}$. 
Under the ultraproduct identification $Y$ corresponds to a sequence of objects $Y_i$ in $\cZ(\Rep_{p_i} S_{t_i})^{\le k , m}$ and $c_X$ gives us a sequence of maps $c_{Y_i,X}\colon  Y_i \otimes X \to  X  \otimes Y_i$ which satisfy the conditions of Lemma \ref{center-data} for almost all $i$. Namely, as soon as $k_i,m_i\geq 2$. Hence for almost all $i$ this corresponds to an object $(Y_i,c_i)$ inside $\cZ(\uRep S_t)^{\le k, m}$, as desired. This completes the proof of Proposition \ref{center-limit}.
\end{proof}

The following lemma specifies the finite datum required to obtain objects in the center of both interpolation categories and representation categories in finite characteristic.

\begin{lemma}\label{center-data}
Given an object $Y$ in $\uRep S_t$ or $\Rep_p S_d$, the datum of a half-braiding $c=\Set{c_V\colon Y\otimes V\to V \otimes Y\mid V\in \uRep S_t}$ is equivalent to the datum of a single morphism $c_X\colon Y \otimes X \to X\otimes Y$ such that
\begin{gather}
 c_X (\ide_{Y}\o\p_*) = \ide_Y\o \p_* \ ,
\quad
 (\p^*\o\ide_Y)  c_X = \p^*\o\ide_Y \ ,\label{centercond1}
\\
 c_{X\otimes X} (\ide_Y\o\p_X) = (\p_X\o\ide_Y) c_{X\otimes X}\,
\quad
 c_{X\otimes X} (\ide_Y\o\p_H) = (\p_H\o\ide_Y) c_{X\otimes X}\ .\label{centercond2}
\end{gather}
Here we denote
\begin{gather}
\p^* = \tpart{1,0}{}
  \in\Hom(X,\one)\ ,
\quad
\p_* = \tpart{0,1}{}
  \in\Hom(\one,X)\ ,
\quad
\p_H = \tpart{2,2}{\tpid{1,2} \draw (1,1.5)--(2,1.5);}
  \ ,
\quad
\p_X = \tpart{2,2}{\draw (1,1)--(2,2) (1,2)--(2,1);}
  \in\End(X^{\otimes 2})\, ,
  \end{gather}
      and we note that
  \begin{gather}
c_{X\otimes X}=(\ide_X\otimes c_X)(c_X\otimes \ide_X).
\end{gather}
In the case of $\Rep_p S_d$, we use the morphisms $f_{\pi^*},f_{\pi_*}, f_{\pi_H},f_{\pi_X}$ instead of $\pi^*,\pi_*,\pi_H,\pi_X$.

Equivalently, $(Y,c)$ being an object in the center is equivalent to $c_X$ commuting with the structural maps of the Frobenius algebra structure of $X$.

A morphism $f\colon (Y,c)\to (Y',c')$ in $\cZ(\uRep S_t)$ is equivalent to the datum of a morphism $f\colon Y\to Y'$ such that 
\begin{gather}
    c'_X(f\otimes \ide_X)=(\ide_X\otimes f)c_X.
\end{gather}
\end{lemma}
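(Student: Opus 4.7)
The plan is to handle the two implications separately, with the forward direction being a direct consequence of the naturality of any half-braiding applied to the four distinguished morphisms, and the reverse direction relying on the fact that $X$ tensor-generates $\uRep S_t$ (and similarly $\Rep_p S_d$) as a Karoubian symmetric monoidal category, together with the observation that the morphism spaces $\Hom(X^{\otimes k},X^{\otimes l})$ are spanned by partition diagrams, each of which can be obtained as a composition of tensor products of identities with the four generators $\p_*,\p^*,\p_H,\p_X$ (respectively $f_{\pi_*},f_{\pi^*},f_{\pi_H},f_{\pi_X}$ in the modular case).

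Given a candidate $c_X$ satisfying the stated conditions, I would first extend it to all tensor powers $X^{\otimes n}$ by the hexagon prescription $c_{X^{\otimes n}} = (\ide_X \otimes c_{X^{\otimes (n-1)}})(c_X \otimes \ide_{X^{\otimes (n-1)}})$, and then verify that for every partition morphism $\pi\colon X^{\otimes k}\to X^{\otimes l}$ one has the naturality identity $(\pi\otimes \ide_Y)\,c_{X^{\otimes k}} = c_{X^{\otimes l}}\,(\ide_Y\otimes \pi)$. Because the partition category is generated under composition and tensor product by $\p_*,\p^*,\p_H,\p_X$, this reduces to checking naturality on these four generators, which is precisely what the hypotheses \eqref{centercond1} and \eqref{centercond2} supply. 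The hexagon relation for arbitrary tensor products $Z \otimes Z'$ then follows inductively from the definition, while compatibility with the unit is guaranteed by the first condition.

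To descend to the Karoubian envelope, for an object $Z$ realized as the image of an idempotent $e\colon X^{\otimes n}\to X^{\otimes n}$, I would set $c_Z$ to be the restriction of $c_{X^{\otimes n}}$ along $e$; naturality with respect to the inclusion and projection (which are themselves morphisms in the partition category) shows this is independent of the chosen presentation and still satisfies the half-braiding axioms. The final statement on morphisms follows by the same generation principle: a map $f\colon (Y,c)\to (Y',c')$ commutes with $c_Z$ and $c'_Z$ for all $Z$ if and only if it commutes with $c_{X^{\otimes n}}$ and $c'_{X^{\otimes n}}$ for all $n$, which by the hexagon formula is equivalent to commuting with $c_X$ and $c'_X$.

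The main obstacle is the input that naturality on $\p_*,\p^*,\p_H,\p_X$ implies naturality for all partition morphisms. This is the standard presentation of the partition category as the prop of a commutative Frobenius algebra together with the ambient symmetry, and in the modular setting it follows from the analogous presentation of the partition algebra $P_k(d)$ by the Temperley--Lieb-type diagrammatic generators, so that the argument transfers verbatim to $\Rep_p S_d$ using the basis $\{f_{\pi}\}$. The final reformulation in terms of $c_X$ commuting with the Frobenius algebra structure maps is then just a relabeling of the four conditions.
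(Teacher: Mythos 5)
Your forward direction and your treatment of $\uRep S_t$ are sound and essentially reproduce the argument the paper delegates to \cite{FL}*{Propositions 3.1 and A.1}: extend $c_X$ to $c_{X^{\otimes n}}$ by the hexagon formula, check naturality against the four generators $\p_*,\p^*,\p_H,\p_X$ of the partition category, and descend to idempotent summands. The same works for $\Rep_p S_d$ when $p>d$, since then $\langle X_d\rangle$ is all of $\Rep_p S_d$.

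However, there is a genuine gap in the remaining case $p\le d$, which is exactly the case the paper needs (in \Cref{limitthm}(3) one has $t_i=p_i+d>p_i$, so the relevant categories $\Rep_{p_i}S_{t_i}$ are non-semisimple). Your premise that ``$X$ tensor-generates $\Rep_p S_d$ as a Karoubian symmetric monoidal category'' is false there: $\langle X_d\rangle$ is a proper Karoubian subcategory (its indecomposables are the Young-module-type summands), so your construction only produces components $c_Z$ for $Z\in\langle X_d\rangle$, whereas a half-braiding must be a natural isomorphism defined on \emph{every} object of $\Rep_p S_d$. The missing step is the extension argument supplied by \Cref{center-extend} together with \Cref{cor:ZReppSd-extend}: since $\langle X_d\rangle$ contains all projective objects, an arbitrary $M$ admits a presentation $P_1\to P_0\to M\to 0$ with $P_i\in\langle X_d\rangle$, and exactness of $Y\otimes(-)$ and $(-)\otimes Y$ forces a unique compatible $c_M$; one then checks the hexagon on all of $\Rep_p S_d$ by resolving both tensor factors. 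Without this step the equivalence of data claimed in the lemma is not established in the non-semisimple case. (A smaller point you pass over, also handled in the cited reference: the conditions on $c_X$ must be shown to imply that $c_X$, and hence each $c_Z$, is invertible, since a half-braiding is by definition a natural \emph{isomorphism}.)
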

\begin{proof}
See \cite{FL}*{Proposition 3.1} for details on the argument in the case of $\uRep S_t$. 

If $\cha \Bbbk=p>d$, then $\Rep_p S_d$ is semisimple and the argument detailed in \cite{FL}*{Appendix A} proves the lemma since the Karoubian tensor subcategory $\langle X\rangle$ generated by $X$ inside of $\Rep_p S_d$ is the entire category $\Rep_p S_d$.

In remains to consider the case where $p\leq d$ and $\Rep_p S_d$ is non-semisimple. In this case, the $\langle X_d\rangle$ contains all projective objects. Further, morphisms $X_d^{\otimes n}\to X_d^{\otimes m}$ are generated by $\p_*, \p^*,\p_H, \p_X$.  
Hence, \Cref{center-extend} and \Cref{cor:ZReppSd-extend} shows that $c$ is uniquely determined by its restriction to $\cA=\langle X_d\rangle$. The half-braiding on $\langle X_d\rangle$, in turn, is determined by $c_X$ using an argument as in \cite{FL}*{Proposition A.1}.
\end{proof}

\subsection{Semisimplicity of the center} 
The techniques of viewing $\uRep S_t$ as a model-theoretic limit via ultrafilters enable us to resolve a question raised in \cite{FL}*{Question 3.32} about semisimplicity of $\cZ(\uRep S_t)$.

\begin{theorem}\label{semisimple}
The category $\cZ(\uRep S_t)$ is semisimple if and only if $t\notin \mZ_{\geq 0}$.
\end{theorem}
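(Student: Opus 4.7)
The plan is to prove the two directions separately, using the ultraproduct description of the center from \Cref{center-limit} for the forward implication and a structural embedding for the converse.

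For the forward direction, suppose $t \notin \mZ_{\geq 0}$. I would invoke cases (1) and (2) of \Cref{limitthm}, which present $\uRep S_t$ as (a restricted portion of) an ultraproduct of categories $\Rep_{\Bbbk_i} S_{t_i}$, where either $\Bbbk_i = \mC$ (case 1) or $\Bbbk_i = \overline{\mF}_{p_i}$ with $p_i > t_i$ (case 2). In both cases the characteristic of $\Bbbk_i$ does not divide $|S_{t_i}|$, so by Maschke's theorem each $\Rep_{\Bbbk_i} S_{t_i}$ is semisimple, and the Drinfeld double $\Drin(\Bbbk_i S_{t_i})$ is itself a semisimple Hopf algebra, forcing each $\cZ(\Rep_{\Bbbk_i} S_{t_i})$ to be semisimple. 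By \Cref{center-limit}, the filtered piece $\cZ(\uRep S_t)^{\leq k, m}$ is equivalent to the ultraproduct $\prod_{\cU} \cZ(\Rep_{\Bbbk_i} S_{t_i})^{\leq k, m}$. Semisimplicity of an object, expressed as the condition that its endomorphism algebra has trivial Jacobson radical (equivalently, that every non-iso morphism between indecomposable summands is zero), is a first-order property, so \L o\'s' theorem transfers semisimplicity from the factors to the ultraproduct. Since every object of $\cZ(\uRep S_t)$ lies in some $\cZ(\uRep S_t)^{\leq k, m}$, the whole category is semisimple.

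For the backward direction, suppose $t = d \in \mZ_{\geq 0}$. The key observation is that since $\uRep S_d$ is symmetric, its braiding $c$ yields a canonical braided monoidal functor
\[
\iota\colon \uRep S_d \longrightarrow \cZ(\uRep S_d), \qquad Y \longmapsto (Y, c_{Y, -}),
\]
satisfying $U \circ \iota = \id_{\uRep S_d}$, where $U\colon \cZ(\uRep S_d) \to \uRep S_d$ is the forgetful functor. Full faithfulness is immediate: for any morphism $f\colon Y \to Y'$ in $\uRep S_d$, the condition $(\id_Z \otimes f) c_{Y, Z} = c_{Y', Z} (f \otimes \id_Z)$ holds automatically by naturality of the symmetric braiding, so $\Hom_{\cZ}(\iota(Y), \iota(Y')) = \Hom_{\uRep S_d}(Y, Y')$. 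Since $U$ is faithful and $U\iota = \id$, any decomposition $\iota(Y) = A \oplus B$ in the center gives a decomposition $Y = U(A) \oplus U(B)$ in $\uRep S_d$; hence $\iota$ preserves indecomposability and restricts to an isomorphism of endomorphism rings on each indecomposable. The non-semisimplicity of $\uRep S_d$ for $d \in \mZ_{\geq 0}$ (recalled in the introduction) therefore transfers via $\iota$ to $\cZ(\uRep S_d)$: the image under $\iota$ of a witness of non-semisimplicity in $\uRep S_d$ (either an indecomposable with nontrivial $\End$ ring, or a nonzero non-invertible morphism between non-isomorphic indecomposables) remains one in $\cZ(\uRep S_d)$.

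The main technical hurdle lies in the forward direction, specifically in confirming that semisimplicity of the filtered pieces $\cZ(\uRep S_t)^{\leq k, m}$ transfers faithfully through the ultraproduct via \L o\'s' theorem; this requires an explicit first-order formulation of the Schur-type vanishing of Hom-spaces between non-isomorphic indecomposables, together with the observation that every object in the Karoubian envelope of the ultraproduct arises from a bounded piece. The backward direction is structural and immediate from the standard embedding of a symmetric category into its center.
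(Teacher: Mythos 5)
Your proposal is correct and follows essentially the same strategy as the paper: the ultraproduct description of \Cref{center-limit} plus semisimplicity of the centers $\cZ(\Rep_{\Bbbk_i}S_{t_i})$ for the forward direction, and the fully faithful braided embedding $\uRep S_d\hookrightarrow \cZ(\uRep S_d)$ via the symmetric braiding for the converse (the paper dispatches the latter in one sentence). The only substantive difference is the tactic for the \L o\'s transfer, which is exactly the ``main technical hurdle'' you flag: rather than trying to express semisimplicity of an entire filtered piece as a first-order property (your Jacobson-radical formulation needs care, since nilpotency and quantification over indecomposable summands are not obviously first-order without first bounding dimensions of Hom-spaces uniformly in $i$), the paper first observes that $\cZ(\uRep S_t)$ is abelian with objects of finite length (because $\uRep S_t$ is semisimple), so semisimplicity reduces to the splitting of each individual epimorphism $f\colon V\twoheadrightarrow W$; one then realizes $f$ as an ultraproduct of epimorphisms $f_i$, splits each $f_i$ in the semisimple factor, and takes the ultraproduct of the splittings $g_i$ to get $g$ with $f\circ g=\ide_W$. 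This sidesteps your formalization issue entirely, since ``there exists $g$ with $fg=\ide$'' is a single existential statement about morphisms in a fixed filtered piece. Your outline is sound, but you should replace the wholesale transfer of semisimplicity by this pointwise splitting argument (or else supply the uniform dimension bounds needed to make your first-order sentence legitimate).
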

\begin{proof}
It is clear that for $t\in \mZ_{\geq 0}$, $\cZ(\uRep S_t)$ is not semisimple since it contains the non-semisimple category $\uRep S_t$ (with the standard symmetric braiding of $\uRep S_t$ for all objects) as a full subcategory. So let us consider the remaining case where $t$ is either transcendental or in $\overline{\mQ}\setminus \mZ_{\geq 0}$.

Since $\uRep S_t$ is semisimple for these values of $t$, we know  that  $\cZ(\uRep S_t)$ is an abelian category with objects of finite length. Therefore, in order to prove that $\cZ(\uRep S_t)$ is semisimple it suffices to check that every epimorphism in $\cZ(\uRep S_t)$ splits. 

Suppose $f\colon V \twoheadrightarrow W$ is an epimorphism in $\cZ(\uRep S_t)$.  There exist $k$ and $m$ such that $V$ and $W$ both lie in $\cZ(\uRep S_t)^{\le k, m}$, so by Proposition \ref{center-limit} we can identify $f$ with a sequence of morphisms $f_i\colon V_i \twoheadrightarrow W_i$ in $\cZ(\Rep_{p_i}S_{n_i})^{\le k,m}$ which are epimorphisms for almost every $i$. However, in these cases we have that $\cZ(\Rep_{p_i}S_{n_i})$ is a semisimple category, so for each value of $i$ where this is an epimorphism we can find a monomorphism $g_i\colon W_i \hookrightarrow V_i$ in $\cZ(\Rep_{p_i}S_{n_i})^{\le k,m}$ such that $f_i \circ g_i$ is the identity, meaning $g_i$ defines a splitting of $f_i$.

Taking this sequence of monomorphisms $g_i$ and applying Proposition \ref{center-limit} again we obtain a map $g: W \hookrightarrow V$ in $\cZ(\uRep S_t)^{\le k, m}$.  Since $f_i \circ g_i$ was the identity map for $W_i$ almost always, $f \circ g$ must be the identity map for $W$.  Hence $g$ defines a splitting for $f$, and we see that every epimorphism in $\cZ(\uRep S_t)$ splits as desired.
\end{proof}

It was shown in \cite{FL}*{Corollary~3.40} that if $d\in \mZ_{\geq 0}$, then $\cZ(\Rep  S_d)$ is the semisimplification of $\cZ(\uRep S_d)$ in the sense of \cite{EO}*{Section~2.3}.

\subsection{Induction functors to the center}

In this section, we give a construction of objects in $\cZ(\uRep S_t)$ using induction functors defined on $\cZ(\Rep  S_n)\boxtimes \uRep S_{t-n}$. For this, we fix $t\in\mC$, $(t_i)_i$, and $(p_i)_i$ such that
$\uRep S_t \subset \prod_\cU \Rep_{p_i} S_{t_i}$.
Then it follows that $\uRep S_{t-n} \subset \prod_\cU \Rep_{p_i} S_{t_i-n}$ using that $t_i$ grows to infinity, cf. Theorem \ref{limitthm},  so that for almost all $i$, $t_i\geq n$.
Note that  in the transcendental case, we use $\Rep S_{t_i}$ for all $i$ without explicitly mentioning this in the following statements, for which the proofs are easier in this case.

 \begin{definition}\label{def:Wcenter}
For any $n\geq 0$, $\mu$ a cycle type in $S_n$ (i.e.~a partition of $n$), $V$ a $\mC$-representation of $Z=Z(\mu)$, the centralizer in $S_n$ of an element of cycle type $\mu$, $U$ an object in $\uRep S_{t-n}$, we define
$$
W := \uInd_{Z\times S_{t-n}}^{S_t} (V \boxtimes U)
\qquad\text{in } \uRep S_t
.
$$
\end{definition}

If we identify $U\cong \prod_{\cU} U_i$ in the ultraproduct $\prod_{\cU}\Rep_{p_i}S_{t_i}$ and, using Theorem \ref{rep-limit}, $V\cong \prod_{\cU}V_i$ with  $V_i\in \Rep_{p_i} Z$, we find that setting 
$$
W_i := \Ind_{Z_i}^{S_{t_i}} \circ \Ind_{Z\times S_{t_i-n}}^{Z_i}(V_i \boxtimes U_i)
\qquad\text{in } \Rep_{p_i}S_{t_i}
,
$$
and $Z_i:=Z_{S_{t_i}}(\s)$ whenever $t_i\geq n$ gives $W\cong \prod_\cU W_i$ in the ultraproduct.

Fix an element $\s\in S_n$ of cycle type $\mu$. Then, for all $i$ such that $t_i\geq n$, we define $c^i$ to be the Yetter--Drinfeld braiding of $W_i$ in $\cZ(\Rep_{p_i} S_{t_i-n})$ for the grading determined by assigning degree $\s$ to every element in $V_i\boxtimes U_i\subseteq W_i$. In other words, the structure of $W_i$ as an object in the center is obtained using Theorem \ref{DPR-W-Thm} from the $Z_i$-module $\Ind_{Z\times S_{t_i-n}}^{Z_i}(V_i \boxtimes U_i)$.

\begin{lemma} The morphism $c:=\prod_\cU c^i$ defines a half-braiding for $W$ in $\uRep S_t$. Up to isomorphism in $\cZ(\uRep S_t)$, $c$ is independent of choice of the element $\s\in S_n$ of cycle type $\mu$. 
\end{lemma}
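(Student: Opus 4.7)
The plan is to handle the two assertions separately, in both cases by reducing to the already-understood finite-group setting via \Cref{center-limit}.

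For the first assertion, I would start by observing that $W$ lies in some filtered piece $(\uRep S_t)^{\leq k, m}$, with $k$ and $m$ bounded uniformly in $n$ and in the filtration level of $U$, and that the same holds for $W_i$ in $(\Rep_{p_i} S_{t_i})^{\leq k, m}$ for almost all $i$, by the filtered compatibility of induction recorded in \eqref{ind-fitration-comp}. By construction, each $(W_i, c^i)$ is an object of the Drinfeld center $\cZ(\Rep_{p_i} S_{t_i})^{\leq k, m}$. \Cref{center-limit} then directly identifies $\prod_\cU (W_i, c^i)$ with a genuine object of $\cZ(\uRep S_t)^{\leq k, m}$ whose underlying object is $W$ and whose half-braiding is $c$; in particular $c$ is a half-braiding. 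If one prefers a more hands-on route, the conditions \eqref{centercond1} and \eqref{centercond2} of \Cref{center-data}, which involve only the tensor generator $X$ and the Frobenius structure maps, are first-order relations in the monoidal language with distinguished object $X$; they hold at each finite level by assumption and transfer to the ultraproduct by \L o\'s' theorem, after which \Cref{center-data} extends $c_X$ uniquely to the required half-braiding on $\uRep S_t$.

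For the second assertion, let $\sigma, \sigma' \in S_n$ both have cycle type $\mu$ and write $\sigma' = g \sigma g^{-1}$ for some $g \in S_n$. At each finite level $i$ with $t_i \geq n$ one has $Z_{S_{t_i}}(\sigma') = g Z_i g^{-1}$, and left multiplication by $g$ on the induced module provides an isomorphism of $S_{t_i}$-modules between $W_i$ and the analogue $W_i'$ constructed using $\sigma'$ and the module ${}^g V$ obtained from $V$ by transporting the $Z$-action along conjugation by $g$. A direct check shows that this map intertwines the degree-$\sigma$ grading with the degree-$\sigma'$ grading, and hence lifts to an isomorphism $(W_i, c^i) \cong (W_i', c'^{i})$ in $\cZ(\Rep_{p_i} S_{t_i})$; this is precisely the standard independence underlying \Cref{DPR-W-Thm}. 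Taking the ultraproduct of these isomorphisms yields the desired isomorphism in $\cZ(\uRep S_t)$.

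The substantive content is the first assertion, since the independence of $\sigma$ is essentially folklore for finite groups and passes to the ultraproduct for free once we know the ultraproduct lands in the center. The only mild subtlety is ensuring that the filtration levels $(k, m)$ can be chosen uniformly in $i$, which is immediate from the explicit description of $W_i$ as a double induction together with the shift bound \eqref{ind-fitration-comp}.
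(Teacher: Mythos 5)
Your proposal is correct and follows essentially the same route as the paper: the half-braiding is obtained as the ultraproduct of the $c^i_X$, verified via the finite first-order conditions of \Cref{center-data} (equivalently, via \Cref{center-limit}), and independence of $\sigma$ is the standard conjugation-transport isomorphism at each finite level, passed through the ultraproduct. One minor slip: the intertwiner $W_i'\to W_i$ is right multiplication by $g^{-1}$ on the group-algebra factor (as in the paper, $h\otimes(v\otimes u)\mapsto hg^{-1}\otimes v\otimes u$), not left multiplication by $g$, since only right multiplication commutes with the left $S_{t_i}$-action.
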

\begin{proof}
By construction, $c^i_{X}\colon W_i\otimes X\to X\otimes W_i$ defines a morphism in $\Rep_{p_i}S_{t_i}$ for almost all $i$. Thus, we obtain a morphism $c_{X}=\prod_\cU c_X^i \colon  \prod_{\cU}W_i\otimes X\to X\otimes \prod_{\cU}W_i$, using that $\prod_{\cU} X=X$. For almost all $i$, the morphism $c_X$ satisfies the conditions of Lemma \ref{center-data} and thus uniquely extends to give an object $(W,c)$ in $\cZ(\uRep S_t)$.

Now let $\tau\in S_n$ be conjugate to $\s$, i.e.~$\s=g\tau g^{-1}$ for some $g\in S_n$. Then $h\mapsto ghg^{-1}$ defines an isomorphism $\phi\colon Z(\tau)\to Z(\s)$. We regard $V_i$ as a $\ov{\mF}_{p_i}$-representation over $Z(\tau)$, denoted by  $V'_i$, via restriction along $\phi$. This way, we obtain isomorphisms
$$W_i':=\Ind_{Z(\tau)\times S_{t_i-n}}^{S_{t_i}}(V'_i\boxtimes U_i)\isomorph W_i=\Ind_{Z(\s)\times S_{t_i-n}}^{S_{t_i}}(V_i\boxtimes U_i).$$
Now mapping $h\otimes (v\otimes u)$ to $hg^{-1}\otimes v\otimes u$ defines an isomorphism $(W_i',(c')^i)\to (W_i,c^i)$ in $\cZ(\Rep_{p_i} S_{t_i})$. These induce isomorphisms in $\cZ(\uRep S_t)$ of the corresponding ultraproducts of objects. 
\end{proof}

\begin{definition}\label{Wdef} With $\mu\vdash n$, $V$ a representation of $Z(\mu)\subseteq S_n$, $U$ an object in $\uRep S_{t-n}$ as above we denote
$$
\uW{\mu,V,U} := (W, c),
\qquad\text{in }\cZ(\uRep S_t).
$$
The definition depends, by choice of $\mu\vdash n$, on $n$ and we sometimes write $\uW{\mu\vdash n,V,U}$ to highlight this dependence.
\end{definition}

We can make the constructions of \Cref{Wdef} functorial in the following way.

\begin{proposition}\label{prop:indcenter}
Let $n\geq 0$. There exists a $\mC$-linear functor
$$\uInd\colon \cZ(\Rep  S_n)\boxtimes \uRep S_{t-n} \longrightarrow \cZ(\uRep S_t),$$
sending the object $\Ind_{Z(\mu)}^{S_n}(V)\boxtimes U$ to $\uW{\mu,V,U}$. This functor $\uInd$ is a separable Frobenius monoidal functor (see \Cref{app:frob}) compatible with braidings in the sense that the diagram
\begin{gather}\label{braidcomp2}
    \vcenter{\hbox{
    \xymatrix{
    \uInd((W\boxtimes U)\otimes (W'\boxtimes U'))\ar[d]_{\delta_{W\boxtimes U,W'\boxtimes U'}}\ar[rrr]^{\uInd(\Psi_{W,W'}\boxtimes \Psi_{U,U'})}&&&  \uInd( (W'\boxtimes U')\otimes (W\boxtimes U))\\
    \uInd(W\boxtimes U)\otimes \Ind(W'\boxtimes U')\ar[rrr]^{\Psi_{\uInd(W\boxtimes U),\uInd(W'\boxtimes U')})}&&&\uInd(W'\boxtimes U')\otimes \Ind(W\boxtimes U)\ar[u]_{\mu_{W\boxtimes U,W'\boxtimes U'}}
    }
    }}
\end{gather}
commutes for any objects $W,W'$ in $\cZ(\Rep  S_n)$, $U,U'$ in $\uRep S_{t-n}$.
\end{proposition}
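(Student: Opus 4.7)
The plan is to construct $\uInd$ as an ultraproduct of classical induction functors between Drinfeld centers in finite characteristic, then transfer the separable Frobenius monoidal and braiding-compatibility properties through \L o\'s' theorem. First, I would invoke the result of \Cref{app:frob}: for each index $i$ with $t_i\geq n$, the subgroup inclusion $S_n\times S_{t_i-n}\hookrightarrow S_{t_i}$ yields a separable Frobenius monoidal functor
$$\cZ\Ind_i\colon \cZ(\Rep_{p_i}(S_n\times S_{t_i-n}))\longrightarrow \cZ(\Rep_{p_i} S_{t_i})$$
compatible with braidings in the sense of \eqref{braidcomp2}. Precomposing with the canonical equivalence $\cZ(\Rep_{p_i} S_n)\boxtimes\cZ(\Rep_{p_i} S_{t_i-n})\simeq \cZ(\Rep_{p_i}(S_n\times S_{t_i-n}))$ and with the braided (in fact symmetric monoidal) functor $\Rep_{p_i} S_{t_i-n}\to\cZ(\Rep_{p_i} S_{t_i-n})$ that equips an object with the identity half-braiding, I obtain a separable Frobenius monoidal functor
$$\uInd_i\colon \cZ(\Rep_{p_i} S_n)\boxtimes \Rep_{p_i} S_{t_i-n}\longrightarrow \cZ(\Rep_{p_i} S_{t_i})$$
compatible with braidings for every sufficiently large $i$.

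Second, I would take the ultraproduct $\prod_\cU \uInd_i$ and restrict it to the $\mC$-linear subcategory $\cZ(\Rep S_n)\boxtimes \uRep S_{t-n}$ picked out by Theorems \ref{rep-limit} and \ref{limitthm}. Unwinding the definitions, the image of $\Ind_{Z(\mu)}^{S_n}(V)\boxtimes U\cong \prod_\cU(\Ind_{Z(\mu)}^{S_n}(V_i)\boxtimes U_i)$ under $\prod_\cU\uInd_i$ is precisely the ultraproduct of the $W_i$ from \Cref{def:Wcenter} with their half-braidings $c^i$, hence by the constructions preceding \Cref{Wdef} equals $\uW{\mu,V,U}$. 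To see that the image lands in $\cZ(\uRep S_t)$ rather than merely in $\prod_\cU \cZ(\Rep_{p_i} S_{t_i})$, I would use the filtration shift \eqref{ind-fitration-comp}: if $U\in(\uRep S_{t-n})^{\leq k}$ then the underlying object $\uInd_i(\Ind_{Z(\mu)}^{S_n}(V_i)\boxtimes U_i)$ lies in $(\Rep_{p_i} S_{t_i})^{\leq k+n}$ for almost every $i$, so \Cref{center-limit} identifies the ultraproduct with an object of $\cZ(\uRep S_t)^{\leq k+n,m'}$ for a suitable $m'$. Functoriality on morphisms is a consequence of \Cref{lem:cZHoms}, which reduces center morphisms between the $W_{\mu,V}$ to ordinary $Z(\mu)$-module maps, combined with the naturality of each $\uInd_i$.

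Third, the Frobenius monoidal and braiding-compatibility data transfer through the ultraproduct. Each $\uInd_i$ carries coherent natural transformations $\mu^i,\eta^i,\delta^i,\varepsilon^i$ satisfying the separable Frobenius axioms and the braiding diagram \eqref{braidcomp2}. These axioms can be phrased as equalities of morphisms between finite tensor products of the input objects; by \L o\'s' theorem, setting $\mu:=\prod_\cU \mu^i$ (and analogously $\eta,\delta,\varepsilon$) on any two objects $A\boxtimes U$ and $A'\boxtimes U'$ yields natural transformations whose components lie in the appropriate filtered pieces of $\cZ(\uRep S_t)$ and satisfy the required axioms there. The compatibility of $\uInd$ with braidings follows in the same way from the corresponding compatibility of each $\uInd_i$, which itself combines the braiding compatibility of $\cZ\Ind_i$ with the fact that the functor $\Rep_{p_i} S_{t_i-n}\to\cZ(\Rep_{p_i} S_{t_i-n})$ is braided.

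The main technical obstacle will be ensuring the ultraproduct structure maps $\mu,\delta$ assemble into a \emph{globally defined} Frobenius structure on $\uInd$ rather than one only defined on each filtered piece $\cZ(\uRep S_t)^{\leq k,m}$. This is handled by observing that each structure morphism involves only finitely many objects at a time, so the uniform filtration shift $k\mapsto k+n$ for $\uInd$ gives a predictable target filtered piece within which \L o\'s' theorem applies. A secondary check is that the two potential descriptions of the half-braiding on the image of $\uInd$ on a product object — the Yetter--Drinfeld half-braiding of the induced object at each finite level, versus the half-braiding obtained from the Frobenius monoidal structure applied to the tensor of the half-braidings on the factors — agree; this is automatic at finite characteristic by the explicit formulas for $\cZ\Ind_i$ from \Cref{app:frob}, and passes to the ultraproduct.
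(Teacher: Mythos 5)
Your proposal follows essentially the same route as the paper: build $\uInd_i$ at each finite level by composing the separable Frobenius monoidal functor $\cZ(\Ind_{S_n\times S_{t_i-n}}^{S_{t_i}})$ of \Cref{centerind-groups} with the equivalence of centers and the canonical braided functor $\Rep_{p_i}S_{t_i-n}\to\cZ(\Rep_{p_i}S_{t_i-n})$, then pass to the ultraproduct via \Cref{center-limit} and transfer the separable Frobenius and braiding axioms as first-order statements by \L o\'s' theorem. The only small slip is calling the half-braiding placed on $U_i$ the ``identity'' half-braiding --- it is the one given by the symmetric braiding, $U_i\mapsto(U_i,\Psi_{U_i,-})$ --- but this does not affect the argument.
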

\begin{proof}
The composition $F\circ \uInd$ with the forgetful functor is the functor $\uInd_{S_n\times S_{t-n}}^{S_t}$ from \Cref{ind-res-sect}. Thus, given an object $(V,c)$ in $\cZ(\Rep  S_n)$ and $U\in \uRep S_{t-n}$, 
$$\uInd((V,c)\boxtimes U)=\uInd_{S_n\times S_{t-n}}^{S_t}(V\boxtimes U).$$
To define the half-braiding, we describe this functor using  ultraproducts. 
Using Theorem \ref{rep-limit}, we fix an equivalence of symmetric monoidal categories between $\Rep  S_n$ and the subcategory $\langle X \rangle$ of $\prod_{\cU}\Rep_{p_i}S_n$, for the ultraproduct $X$ of the standard representations. Under this equivalence, an object $(V,c)$ of $\cZ(\Rep  S_n)$ corresponds to the pair $(\prod_{\cU}V_i,\prod_{\cU}c^i)$. By a similar argument as the one used in  \Cref{center-limit}, the pairs $(V_i,c^i)$ define objects in $\cZ(\Rep_{p_i} S_{n})$ for almost all $i$.

Further, $U\cong \prod_{\cU} U_i$ for some $U_i\in \Rep_{p_i} S_{t_i-n}$, which can be defined for almost all $i$. Since $ \Rep_{p_i} S_{t_i-n}$ is symmetric monoidal, with braiding $\Psi$, there is a braided monoidal functor
$$ \Rep_{p_i} S_{t_i-n}\longrightarrow \cZ( \Rep_{p_i} S_{t_i-n}), \qquad U_i\mapsto (U_i,\Psi_{U_i,-}).$$
We now form the Deligne tensor product of this functor with the identity on $\cZ(\Rep_{p_i} S_n)$, and compose with the braided equivalences of finite tensor categories (see e.g.~\cite{EGNO})
$$\cZ(\Rep_{p_i} S_n)\boxtimes \cZ(\Rep_{p_i} S_{t_i-n})\isomorph \cZ(\Rep_{p_i} S_n\boxtimes \Rep_{p_i} S_{t_i-n})\simeq \cZ(\Rep_{p_i} (S_n\times S_{t_i-n})).$$
The resulting functor is composed with the functor $\cZ(\Ind_{S_n\times S_{t_i-n}}^{S_{t_i}})$ from \Cref{centerind-groups} to yields a functor
$$\Ind_i\colon \cZ(\Rep_{p_i} S_n)\boxtimes \Rep_{p_i} S_{t_i-n}\longrightarrow \cZ(\Rep_{p_i} S_{t_i}).$$
Passing to ultraproducts, \Cref{center-limit} yields a functor
$$\uInd\colon \cZ(\Rep  S_n)\boxtimes \uRep S_{t-n}\longrightarrow \cZ(\uRep S_{t}).$$
By \Cref{centerind-groups}, the functors $\Ind_i$ are separable Frobenius monoidal functors. Indeed, they are a composition of the Frobenius monoidal functor $\Ind_{S_n\times S_{t_i-n}}^{S_{t_i}}$ with strong monoidal functors (which are, in particular, separable Frobenius monoidal functors). Passing to ultraproducts induces natural transformations $\mu,\delta$, and the unit and counit maps $\eta,\epsilon$ for the functor $\uInd$. The axioms of a lax and oplax monoidal structure, the Frobenius compatibilities Equation \eqref{frobmon1}--\eqref{frobmon2}, as well as separability Equation \ref{eq:sep}, and the braiding compatibility Equation \ref{braidcomp} are first order expressions, since they are functional equalities (after fixing the needed tuples of objects to state these properties). Hence, by \L o\'s' theorem, $\uInd$ inherits all of these properties. 
\end{proof}

In particular, functoriality of $\uInd$ implies that isomorphic $Z(\mu)$-representations $V,V'$ and isomorphic objects $U,U'$ in $\uRep S_{t-n}$ yield isomorphic objects $\uW{\mu,V,U}\cong \uW{\mu,V',U'}$ in $\cZ(\uRep S_t)$. From \Cref{cor:frob}, we obtain the following consequences of \Cref{prop:indcenter}.

\begin{corollary} For any $n\geq 0$, we derive the following properties of the functor $\uInd$.
\begin{enumerate}
    \item $\uInd$ is exact and preserves duals.
    \item $\uInd$ preserves Frobenius algebra objects.
    \item For any $W,W'$ in $\cZ(\Rep  S_n)$, and $U,U'$ in $\uRep S_{t-n}$, the object $\uInd((W\boxtimes U)\otimes(W'\boxtimes U'))$ is a direct summand of $\uInd(W\boxtimes U)\otimes\uInd(W'\boxtimes U')$.
\end{enumerate}
\end{corollary}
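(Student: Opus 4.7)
The plan is to show that all three claims are direct consequences of the fact, proven in \Cref{prop:indcenter}, that $\uInd$ is a separable Frobenius monoidal functor, together with the construction of $\uInd$ in terms of ultraproducts of standard induction functors. The appendix \Cref{app:frob} collects the relevant general categorical facts in \Cref{cor:frob}, and my task here is essentially to unpack what these say in our setting. No new idea is required; the substantive work was done in proving \Cref{prop:indcenter}.

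For part (2), I will invoke the classical fact that if $F\colon \cA\to\cB$ is a Frobenius monoidal functor with lax data $(\mu,\eta)$ and oplax data $(\delta,\epsilon)$, and $(A, m, u, \Delta, \varepsilon_A)$ is a Frobenius algebra in $\cA$, then $F(A)$ becomes a Frobenius algebra in $\cB$ with multiplication $F(m)\circ\mu_{A,A}$, unit $F(u)\circ\eta$, comultiplication $\delta_{A,A}\circ F(\Delta)$, and counit $\varepsilon_A\circ\epsilon$. The Frobenius compatibility for $F(A)$ is a direct diagram chase combining the Frobenius compatibilities \eqref{frobmon1}--\eqref{frobmon2} of $F$ with those of $A$; associativity, unitality, and their coalgebra duals are similarly routine.

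For part (3), separability of $\uInd$ is precisely the identity $\mu_{X,Y}\circ\delta_{X,Y}=\id_{\uInd(X\otimes Y)}$ for all $X,Y$. Taking $X=W\boxtimes U$ and $Y=W'\boxtimes U'$ and applying this identity exhibits $\delta_{X,Y}$ as a split monomorphism with retraction $\mu_{X,Y}$, so $\uInd((W\boxtimes U)\otimes(W'\boxtimes U'))$ is a direct summand of $\uInd(W\boxtimes U)\otimes\uInd(W'\boxtimes U')$, as desired.

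For part (1), duality preservation is a standard categorical consequence of being Frobenius monoidal: combining $\mu$, $\delta$, $\eta$, $\epsilon$ with $\ev_X$ and $\coev_X$ one manufactures natural isomorphisms $\uInd(X^*)\cong \uInd(X)^*$, and these are automatically compatible with the rigid structure. Exactness, on the other hand, is easier to read off from the construction than from the abstract Frobenius-monoidal formalism: composing $\uInd$ with the conservative exact forgetful functor $\cZ(\uRep S_t)\to\uRep S_t$ yields the functor $\uInd_{S_n\times S_{t-n}}^{S_t}$ precomposed with the forgetful functor on $\cZ(\Rep S_n)$, and this is exact by \Cref{ind-res-sect} as it is simultaneously left and right adjoint to restriction. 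Since the forgetful functor from the center is conservative and exact, exactness lifts to $\uInd$ itself. The only point one has to be mildly careful about is distinguishing the roles of the two forgetful functors, but nothing here presents a genuine obstacle.
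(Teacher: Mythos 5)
Your proposal is correct and follows essentially the same route as the paper, which simply cites the general properties of separable Frobenius monoidal functors (collected in \Cref{cor:frob} and \cite{DP}): duals and Frobenius algebras are preserved because $\uInd$ is Frobenius monoidal, separability makes $\delta_{X,Y}$ a split monomorphism (equivalently, $\delta\circ\mu$ an idempotent) giving the direct-summand claim, and exactness is read off from the underlying induction functor being a two-sided adjoint to restriction. Your unpacking of these standard facts is accurate; no gap.
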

\begin{proof}
This follows from the consequences collected in \Cref{cor:frob} which are general properties of a separable Frobenius monoidal functor, see \cite{DP}.
\end{proof}

\subsection{Indecomposable objects of the center}\label{sec:indec}

We are now ready to classify all indecomposable and simple objects of $\cZ(\uRep S_t)$. The following main theorem of this section settles \cite{FL}*{Question 3.32} by proving a classification of indecomposable objects in $\cZ(\uRep S_t)$.

\begin{theorem}\label{indec-classification}
The objects  $\uW{\mu,V,U}$ for $\mu,V,U$ as in \Cref{Wdef}, for $V$ a $Z(\mu)$-module, $U$  in $\uRep S_{t-n}$, and $\mu\vdash n$ singleton free is indecomposable if and only if $V$ is irreducible and $U$ is indecomposable. These objects provide a complete list of indecomposable objects in $\cZ(\uRep S_t )$ up to isomorphism.
\end{theorem}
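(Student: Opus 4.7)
The plan is to reduce the classification to the finite-characteristic setting via \Cref{center-limit}, apply the Dijkgraaf--Pasquier--Roche--Witherspoon classification (\Cref{DPR-W-Thm}) at each finite level, and lift back using ultraproducts. I will proceed in three stages: (i) verify indecomposability of $\uW{\mu,V,U}$ when $V$ is irreducible and $U$ is indecomposable by computing its endomorphism ring; (ii) show every indecomposable object of $\cZ(\uRep S_t)$ arises this way by bounding complexity and refining the ultrafilter; (iii) observe pairwise non-isomorphism.

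For (i), I would fix ultraproduct presentations $V \cong \prod_\cU V_i$ and $U \cong \prod_\cU U_i$ from \Cref{rep-limit} and \Cref{limitthm}, giving $\uW{\mu, V, U} \cong \prod_\cU \Ind_{Z(\mu) \times S_{t_i - n}}^{S_{t_i}}(V_i \boxtimes U_i)$. Applying \Cref{lem:cZHoms} componentwise, the endomorphism ring becomes $\prod_\cU \bigl( \End_{Z(\mu)}(V_i) \otimes \End_{S_{t_i - n}}(U_i) \bigr)$. Assuming $V$ irreducible, Schur combined with $p_i > |Z(\mu)|$ gives $\End_{Z(\mu)}(V_i) \cong \ov{\mF}_{p_i}$ almost always, so the ultraproduct collapses to $\End_{\uRep S_{t-n}}(U)$, which is local exactly when $U$ is indecomposable. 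The converse direction uses additivity of $\uInd$ from \Cref{prop:indcenter}: a decomposition of $V$ or $U$ into nonzero summands yields a decomposition of $\uW{\mu, V, U}$.

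For (ii), take an indecomposable $(Y,c)$ in some $\cZ(\uRep S_t)^{\leq k, m}$ and present it via \Cref{center-limit} as $\prod_\cU (Y_i, c_i)$. Since $\End((Y, c))$ is local, \L o\'s' theorem applied to the first-order sentence ``the endomorphism ring contains no nontrivial idempotent'' gives that $(Y_i, c_i)$ is indecomposable for almost all $i$. By \Cref{DPR-W-Thm}, we can write $(Y_i, c_i) \cong \Ind_{Z(\mu_i) \times S_{t_i - n_i}}^{S_{t_i}}(W_i)$ with $\mu_i \vdash n_i$ singleton-free. The key step is the uniform bound $n_i \leq k$, which follows from the sharp form of \Cref{ind-fitration-comp} applied to the forgetful image $Y_i$. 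Since there are only finitely many singleton-free partitions of size at most $k$, pigeonhole and passage to a subset of $\cU$ make $\mu_i = \mu$ constant. For $p_i > |Z(\mu)|$, $\ov{\mF}_{p_i} Z(\mu)$ is semisimple, so $W_i \cong V_i \boxtimes U_i$ with $V_i$ absolutely irreducible; \Cref{modular-summary} and \Cref{ultra-reduction} identify each such $V_i$ with the reduction of one of finitely many complex irreducibles of $Z(\mu)$, so a further refinement gives $V_i = V$ constant. Setting $U := \prod_\cU U_i$, which is indecomposable because its endomorphism ring is an ultraproduct of local rings, I obtain $(Y,c) \cong \uW{\mu, V, U}$. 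Pairwise non-isomorphism transfers from the finite-characteristic uniqueness supplied by \Cref{DPR-W-Thm} componentwise.

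The main obstacle I foresee is the uniform filtration bound $n_i \leq k$ in stage (ii); without the sharp (tight) form of \Cref{ind-fitration-comp}, the $n_i$ could in principle grow with $i$ along the ultrafilter, and the pigeonhole refinement used to make $\mu$ and $V$ constant would be unavailable, blocking the descent from the DPR--Witherspoon data on the components to a single triple $(\mu, V, U)$ on the Deligne side.
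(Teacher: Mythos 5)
Your proposal follows essentially the same route as the paper's proof: reduce to finite characteristic via \Cref{center-limit}, apply \Cref{DPR-W-Thm} componentwise, use the sharpness of the filtration shift under induction to get the uniform bound $n_i\leq k$, and then pigeonhole/\L o\'s to stabilize $\mu$ and $V$ before assembling $U=\prod_\cU U_i$. The only (cosmetic) difference is in the forward direction, where you certify indecomposability by computing the endomorphism ring through \Cref{lem:cZHoms} and checking locality, whereas the paper argues that each component $\Ind_{Z\times S_{t_i-n}}^{S_{t_i}}(V_i\boxtimes U_i)$ is indecomposable and transfers this first-order property by \L o\'s' theorem; both rest on the same facts (fixed-point-freeness giving $Z_{S_{t_i}}(\sigma)=Z\times S_{t_i-n}$, and semisimplicity of $\Rep_{p_i}Z(\mu)$ for large $p_i$).
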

The condition that $\mu\vdash n$  is singleton free is equivalent to the condition that any $\s$ of cycle type $\mu$ is a fixed-point free permutation of $n$.

\begin{proof} Consider a triple $\mu, V,U$ as above.
Write $Z=Z(\mu)=Z(\s)$ for $\s\in S_n$ of cycle type $\mu$ and assume that $\s$ has no fixed points. 
Using Theorem \ref{rep-limit}, we identify $\Rep  Z\simeq \prod_{\cU}\Rep_{p_i}Z$, and fix an isomorphism $V\cong \prod_\cU V_i$ for $Z$-modules over $\ov{\mF}_{p_i}$.
We also find an isomorphism $U\cong \prod_{\cU}U_i$, where $U_i$ is an object in $\Rep_{p_i}S_{t_i}$.
As the sequence $t_i$ is increasing, we may view $\s\in S_{t_i}$ for almost all $i$, and, using that $\s\in S_n$ is fixed-point free, $Z_{S_{t_i}}(\s)=Z\times S_{t_i-n}$. Using \L o\'s' theorem, $\Ind_{Z\times S_{t_i-n}}^{S_{t_i}}$ preserves indecomposable (respectively, irreducible) objects. Indeed, if $V,U$ are indecomposable, then $U_i$ is indecomposable for almost all $i$, and thus the $Z\times S_{t_i-n}$-module $V_i\boxtimes U_i$ is indecomposable for almost all $i$. Here, we use that the category $\Rep_{p_i}Z$ is semisimple for sufficiently large $i$.  Thus, any object in $\Rep_{p_i}(Z\times S_{t_i-n})$ is a direct sum of objects $V_i\boxtimes W_i$, where $V_i$ is a simple $Z$-module and $W_i$ is an object in $\Rep S_{t_i-n}$. %
Thus, for almost all $i$, $\Ind_{Z\times S_{t_i-n}}^{S_{t_i}}(V_i\boxtimes U_i)$ defines an indecomposable object in the center, and thus $\uInd_{Z\times S_{t-n}}^{S_t}(V\boxtimes U)$ is an indecomposable object in $\cZ(\uRep S_t)$. Here, we use that an object being indecomposable (respectively, irreducible) can be expressed using first order logic as in the proof of \Cref{semisimple}. 

Conversely, let $(W,c)$ be an indecomposable (or irreducible) object in $\cZ(\uRep S_t)^{\leq k,m}$. By \Cref{center-limit}, we know that $(W,c)$ corresponds to an ultraproduct of objects $(W_i, c^i)$ in $\cZ(\Rep_{p_i} S_{t_i})^{\leq k,m}$. For almost all $i$, $(W_i, c^i)$ is indecomposable (respectively, irreducible). Hence, we find elements $\s_i\in S_{t_i}$ and indecomposable (respectively, irreducible) $Z(\s_i)$-modules $Y_i$ such that $W_i\cong \Ind_{Z(\s_i)}^{S_{t_i}}(Y_i)$. We may find $n_i\leq t_i$ such that (possibly after changing $\s_i$ through conjugation), $\s_i\in S_{n_i}$ is fixed-point free. Then $Z(\s_i)\cong Z_i\times S_{t_i-n_i}$ for $Z_i$ the centralizer of $\s_i$ in $S_{n_i}$.

We observe that, since, for almost all $i$, $(W_i,c^i)$ lies in $(\Rep_{p_i} S_{t_i})^{\leq k}$ of the coarser filtration  --- which is based on the maximum tensor power of $X$, $\Ind_{Z_i\times S_{t_i-n_i}}^{S_{n_i}\times S_{t_i-n_i}}(Y_i)$ lies in $\Rep_{p_i} (S_{n_i})\boxtimes \Rep_{p_i} (S_{t_i})^{\leq k-n_i}$ (see \Cref{ind-res-sect}) and in particular for almost all $i$, $n_i\leq k$. This provides an upper bound on $n_i$. Hence, by \L o\'s' theorem, there exists $n\in \mN$ such that $n=n_i$ for almost all $i$. Hence, there are also only finitely many choices for the cycle type $\mu_i$ and one choice $\mu\vdash n$ is assumed for almost all $i$.

We have seen above that $(W_i,c^i)$ is indecomposable for almost all $i$ and $W_i=\Ind_{Z\times S_{t_i-n}}^{S_{t_i}}(Y_i)$ for a fixed choice of $n$, $\s\in S_n$ and $Z=Z(\s)$. Hence, by \Cref{DPR-W-Thm} and \L o\'s' theorem, $Y_i$ is indecomposable as an object in $$\Rep_{p_i}(Z\times S_{t_i-n})\simeq \Rep_{p_i}Z\boxtimes \Rep_{p_i}S_{t_i-n}$$ for almost all $i$. As the size of $Z$ is fixed and the sequence $p_i$ increases to infinity, for large enough $i$, $\Rep_{p_i}Z$ is semisimple. Thus, we find that the indecomposable object $Y_i$ is isomorphic to an object $Y_i\cong V_i\boxtimes U_i$, where $V_i$ is an indecomposable (and hence simple) $Z$-module and $U_i$ is an indecomposable  $S_{t_i-n}$-module.

Now, under the isomorphism $\prod_{\cU} \ov{\mF}_{p_i}\cong \mC$, the ultraproduct $\prod_{\cU}V_i$ corresponds to a $Z$-representation over $
\mC$ using \Cref{rep-limit}. Further, by construction, $U:=\prod_{\cU}U_i$ defines an object in $\uRep S_{t-n}$. The objects $V$, $U$ are indecomposable (respectively, irreducible) again using \L o\'s' theorem. By construction, it follows that $W\cong W_{\s,V,U}$ as an object in $\cZ(\uRep S_t)$. Thus, the objects $W_{\mu,V,U}$, with $\mu$ singleton free, $V$ irreducible and $U$ indecomposable, provide a full list of indecomposable (respectively, irreducible) objects of $\cZ(\uRep S_{t})$ as claimed.
\end{proof}

\begin{remark}In the notation of \Cref{indec-classification}, we may take $\mu=\varnothing\vdash 0$ and obtain the indecomposable objects of $\uRep S_{t}$ embedded in $\cZ(\uRep S_t)$ using the symmetric half-braiding.
\end{remark}

By \Cref{semisimple}, we have in particular classified the irreducible objects in the semisimple categories $\cZ(\uRep S_t)$ for $t\notin \mZ_{\geq 0}$ in \Cref{indec-classification}.

\begin{proposition}\label{uWdistinct}
Assume given two objects $\uW{\mu,V,U}$ and $\uW{\mu',V',U'}$, with singleton free $\mu\vdash n$, $\mu'\vdash n'$,  $V$, $V'$ irreducible as a $\mC Z(\mu)$-module, respectively, $\mC Z(\mu')$-module, $U$, $U'$ indecomposable (respectively, irreducible) in $\uRep S_{t-n}$, respectively, $\uRep S_{t-n'}$. Then  $\uW{\mu,V,U}$ and $\uW{\mu',V',U'}$ are isomorphic in $\cZ(\uRep S_t)$ if and only if $\mu=\mu'$, $V\cong V'$, and $U\cong U'$.
\end{proposition}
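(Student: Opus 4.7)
The plan is to verify the ``if'' direction as an immediate corollary of functoriality of $\uInd$ (already noted after \Cref{prop:indcenter}) and then prove the nontrivial ``only if'' direction by reducing to the Dijkgraaf--Pasquier--Roche--Witherspoon classification (\Cref{DPR-W-Thm}) fiberwise in the ultraproduct.

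Concretely, assume $\uW{\mu,V,U}\cong \uW{\mu',V',U'}$ in $\cZ(\uRep S_t)$. Both objects lie in some common filtration piece $\cZ(\uRep S_t)^{\le k,m}$, so by \Cref{center-limit} an isomorphism between them corresponds to an ultraproduct of isomorphisms in $\cZ(\Rep_{p_i} S_{t_i})$. Writing $\sigma\in S_n$ and $\sigma'\in S_{n'}$ for the singleton free cycle-type representatives used to build the half-braidings, the ultraproduct components are $W_i=\Ind_{Z(\sigma)\times S_{t_i-n}}^{S_{t_i}}(V_i\boxtimes U_i)$ and $W'_i=\Ind_{Z(\sigma')\times S_{t_i-n'}}^{S_{t_i}}(V'_i\boxtimes U'_i)$ with the Yetter--Drinfeld structures associated with $\sigma$ and $\sigma'$ respectively, viewed as elements of $S_{t_i}$. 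By \L o\'s' theorem these are isomorphic for almost all $i$.

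Now the key step: by \Cref{DPR-W-Thm}, the existence of an isomorphism of these two indecomposable objects in $\cZ(\Rep_{p_i} S_{t_i})$ forces $\sigma$ and $\sigma'$ to be conjugate in $S_{t_i}$. The cycle type of $\sigma$ in $S_{t_i}$ is $\mu$ augmented by $t_i-n$ singletons, and similarly for $\sigma'$. Conjugacy in $S_{t_i}$ therefore means $\mu\cup (1^{t_i-n})=\mu'\cup (1^{t_i-n'})$, and singleton-freeness of $\mu,\mu'$ forces $\mu=\mu'$ (and hence $n=n'$). After conjugating to $\sigma=\sigma'$ and identifying centralizers, the isomorphism of induced Yetter--Drinfeld modules descends to an isomorphism $V_i\boxtimes U_i\cong V'_i\boxtimes U'_i$ of $Z(\mu)\times S_{t_i-n}$-modules (this is the internal content of \Cref{lem:cZHoms}). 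Since $p_i\to\infty$ and $|Z(\mu)|$ is fixed, $\Rep_{p_i}Z(\mu)$ is semisimple for almost all $i$; irreducibility of $V_i, V'_i$ and indecomposability of $U_i,U'_i$ combined with Krull--Schmidt then yield $V_i\cong V'_i$ and $U_i\cong U'_i$ for almost all $i$. Passing to ultraproducts (using \Cref{rep-limit} to identify $\prod_\cU V_i\cong V$ over $\mC$) gives $V\cong V'$ in $\Rep Z(\mu)$ and $U\cong U'$ in $\uRep S_{t-n}$.

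The only subtle point I anticipate is the cycle-type bookkeeping in $S_{t_i}$: one must be careful that the Yetter--Drinfeld grading on $W_i$ used in \Cref{def:Wcenter} is placed at $\sigma\in S_n\subset S_{t_i}$ (and similarly for $\sigma'$), so that the conjugacy-class obstruction of \Cref{DPR-W-Thm} genuinely sees the difference between $\mu$ and $\mu'$ as partitions of possibly different $n,n'$; this is exactly why singleton-freeness of $\mu,\mu'$ is needed, and it eliminates the ambiguity coming from how many fixed points are absorbed into $S_{t_i-n}$ versus $S_n$.
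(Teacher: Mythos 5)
Your proposal is correct and follows essentially the same route as the paper's proof: pass to the ultraproduct components via \Cref{center-limit}, invoke the Dijkgraaf--Pasquier--Roche--Witherspoon classification to force conjugacy of the grading elements (with singleton-freeness pinning down $\mu=\mu'$ and $n=n'$), identify the centralizer $Z_{S_{t_i}}(\sigma)$ with $Z(\mu)\times S_{t_i-n}$, and conclude $V_i\cong V'_i$, $U_i\cong U'_i$ before passing back through the ultraproduct. Your explicit cycle-type bookkeeping ($\mu\cup(1^{t_i-n})=\mu'\cup(1^{t_i-n'})$) is a slightly more detailed rendering of a step the paper leaves implicit, but the argument is the same.
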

\begin{proof}
We may use the ultraproduct description $(W,c)\cong \prod_{\cU}(W_i,c^i)$ as in the proof of \Cref{indec-classification} which gives that  $W=\uW{\mu,V,U}$ and $W'=\uW{\mu',V',U'}$ and almost all $i$, we find that $W_i\cong \Ind_{Z(\mu)\times S_{t_i-n}}^{S_{t_i}}(V_i\boxtimes U_i)$, for some indecomposable (respectively, irreducible) $V_i,U_i$; and similarly, $W'_i\cong \Ind_{Z(\mu')\times S_{t_i-n'}}^{S_{t_i}}(V'_i\boxtimes U'_i)$. Now, if $W\cong W'$ as objects in $\cZ(\uRep S_t)$, then, for almost all $i$, $W_i\cong W_i'$ in $\cZ(\Rep_{p_i} S_{t_i})$. 
This yields, by \Cref{DPR-W-Thm}, that $n=n'$, $\mu=\mu'$ and 
$$ \Ind_{Z(\mu)\times S_{t_i-n}}^{Z_i}(V_i\boxtimes U_i)\cong  \Ind_{Z(\mu)\times S_{t_i-n}}^{Z_i}(V'_i\boxtimes U'_i),$$
for $Z_i$ the centralizer of $\s$, of cycle type $\mu$, in $S_{t_i}$. Using that $\mu$ is singleton free, we see that $Z_i=Z\times S_{t_i-n}$ and conclude that $V_i\cong V_i'$, $U_i\cong U_i'$ for almost all $i$. Thus,  $\prod_{\cU}V\cong \prod_{\cU}V'$ which implies by  \Cref{rep-limit} that $V\cong V'$. Further, $\prod_{\cU}U_i\cong \prod_{\cU}U'$ whence $U\cong U'$. 

Conversely, if $U\cong U'$, $V\cong V'$, it was already established as a consequence of \Cref{prop:indcenter} that $\uW{\mu,V,U}\cong \uW{\mu',V',U'}$.
\end{proof}

\subsection{The blocks of \texorpdfstring{$\cZ(\uRep S_t)$}{Z(Rep St)}}

We can now describe the blocks of the additive $\Bbbk$-linear category $\cZ(\uRep S_t)$ based on the description of blocks of $\uRep S_t$ from \cite{CO}.

\begin{lemma}\label{lem:ZuRepHoms}
If $\mu\vdash n$ and $\nu\vdash m$ are singleton free partitions, $V\in\cZ(\Rep  S_n)$, $V'\in \Rep  S_m$, $U\in \uRep S_{t-n}$, and $U'\in \uRep S_{t-m}$. Then 
\begin{equation}
\Hom_{\cZ(\uRep S_t)}(\uW{\mu, V,U}, \uW{\nu, V',U'}) \cong \begin{cases}
\Hom_{\Rep  Z(\mu)}(V,V')\otimes \Hom_{\uRep S_{t-n}}(U,U'), & \text{if }\mu=\nu,\\
\{0\}, & \text{if }\mu\neq \nu.
\end{cases}
\end{equation}
\end{lemma}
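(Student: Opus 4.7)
The plan is to reduce the computation to the ultraproduct model from Proposition \ref{center-limit} and apply the description of Hom spaces in $\cZ(\Rep_\Bbbk G)$ from Lemma \ref{lem:cZHoms} componentwise. Concretely, first I would choose filtration indices $(k,\ell)$ large enough that both $\uW{\mu,V,U}$ and $\uW{\nu,V',U'}$ lie in $\cZ(\uRep S_t)^{\le k,\ell}$. Using the construction of $\uW{-,-,-}$ recorded in Definition \ref{def:Wcenter}, I identify these objects with ultraproducts $\prod_\cU W_i$ and $\prod_\cU W'_i$, where
$$W_i=\Ind_{Z(\mu)\times S_{t_i-n}}^{S_{t_i}}(V_i\boxtimes U_i),\qquad W'_i=\Ind_{Z(\nu)\times S_{t_i-m}}^{S_{t_i}}(V'_i\boxtimes U'_i),$$
viewed as objects of $\cZ(\Rep_{p_i}S_{t_i})$. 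By Proposition \ref{center-limit}, the Hom space on the left of the claimed isomorphism is the ultraproduct of the Hom spaces between these objects in $\cZ(\Rep_{p_i}S_{t_i})$.

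Next, I analyze the conjugacy class data. A representative $\sigma\in S_n\subset S_{t_i}$ of cycle type $\mu$, viewed inside $S_{t_i}$, has $t_i-n$ additional fixed points, and similarly $\tau$ has $t_i-m$ fixed points. Because $\mu$ and $\nu$ are singleton free, the $S_{t_i}$-cycle types of $\sigma$ and $\tau$ determine $\mu$ and $\nu$ uniquely; hence $\sigma$ and $\tau$ are $S_{t_i}$-conjugate if and only if $\mu=\nu$. In the case $\mu\neq\nu$, the second part of Lemma \ref{lem:cZHoms} forces $\Hom_{\cZ(\Rep_{p_i}S_{t_i})}(W_i,W'_i)=0$ for almost all $i$, and the ultraproduct vanishes.

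For the case $\mu=\nu$ (so $n=m$), the first part of Lemma \ref{lem:cZHoms} gives
$$\Hom_{\cZ(\Rep_{p_i}S_{t_i})}(W_i,W'_i)\cong \Hom_{\Rep_{p_i}(Z(\mu)\times S_{t_i-n})}(V_i\boxtimes U_i,\,V'_i\boxtimes U'_i).$$
I would then factorize the right-hand side as a tensor product of Hom spaces over $Z(\mu)$ and over $S_{t_i-n}$; this is a standard Künneth-type argument using finite-dimensionality of $V_i,V'_i$ and the fact that taking invariants for a direct product of groups acting on different factors commutes with the tensor product of the two invariant spaces. Passing to the ultraproduct, the $Z(\mu)$-factor becomes $\Hom_{\Rep Z(\mu)}(V,V')$ via Theorem \ref{rep-limit} (using that $p_i$ eventually exceeds $|Z(\mu)|$), while the $S_{t_i-n}$-factor becomes $\Hom_{\uRep S_{t-n}}(U,U')$ by the definition of $\uRep S_{t-n}$ as an ultraproduct category.

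The main obstacle I expect is bookkeeping at the interface of the ultraproduct and the filtrations: one must verify that the finite-filtered-piece Hom spaces computed above actually capture the full Hom spaces (this is automatic since half-braidings extend globally once we are inside $\cZ$), and that the tensor factorization holds uniformly enough across $i$ for \L o\'s' theorem to apply to the ultraproduct. The singleton-free assumption on $\mu$ and $\nu$ is essential at two points: it keeps the centralizer of $\sigma$ in $S_{t_i}$ equal to $Z(\mu)\times S_{t_i-n}$ (so the induction in Definition \ref{def:Wcenter} is an actual centralizer induction) and it rigidifies the identification of $S_{t_i}$-conjugacy classes with partitions of $n$ or $m$.
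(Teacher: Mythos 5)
Your proposal is correct and follows essentially the same route as the paper: both pass to the ultraproduct, apply Lemma \ref{lem:cZHoms} componentwise (the non-conjugacy of the cycle types when $\mu\neq\nu$, and the full faithfulness of centralizer induction when $\mu=\nu$), use the singleton-free hypothesis to identify the $S_{t_i}$-centralizer with $Z(\mu)\times S_{t_i-n}$, factor the Hom space over the product group, and conclude via Theorem \ref{rep-limit} and the ultraproduct description of $\uRep S_{t-n}$. No gaps.
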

\begin{proof}
Assume that $\mu\neq \nu$. Using an ultraproduct description and \Cref{lem:cZHoms} we see that there are no non-zero morphisms between any $\uW{\mu, V,U}$ and $\uW{\nu, V',U'}$.
If $\mu=\nu$, we write $U\cong \prod_\cU U_i$ and $U'\cong \prod_\cU U'_i$, to see that
\begin{align*}
\Hom_{\cZ(\uRep S_t)}(\uW{\mu, V,U}, \uW{\nu, V',U'})
&\cong \prod_{\cU} \Hom_{\cZ(\Rep_{p_i} S_{t_i})}(
\Ind^{S_{t_i}}_{Z\times S_{t_i-n}}(V_i\boxtimes U_i),
\Ind^{S_{t_i}}_{Z\times S_{t_i-n}}(V'_i\boxtimes U_i')),
\\
&\cong \prod_\cU \Hom_{\Rep_{p_i} Z\times S_{t_i-n}} (V_i\boxtimes U_i, V'_i\boxtimes U'_i) 
\\
&\cong \prod_\cU \Hom_{\Rep_{p_i} Z}(V_i,V'_i)\otimes \Hom_{\Rep S_{t_i-n}} (U_i, U'_i)
\\
&\cong \Hom_{\Rep  Z}(V,V')\otimes \Hom_{\uRep S_{t-n}} (U, U'),
\end{align*}
as desired. Here, we use \Cref{lem:cZHoms} and the fact that, given that $\mu$ is singleton free, $Z(\mu)=Z\times S_{t_i-n}\leq S_{t_i}$, to obtain the third isomorphism.
\end{proof}

The above lemma enables us to decompose $\cZ(\uRep S_t)$ as a direct sum of additive $\Bbbk$-linear categories.

\begin{corollary}
Let $\mu \vdash n$ be singleton free. Then the $\Bbbk$-linear additive functor 
$$\uInd\colon \Rep  Z(\mu) \boxtimes \uRep S_{t-n}\longrightarrow \cZ(\uRep S_t)$$
is fully faithful. As an additive $\Bbbk$-linear category, $\cZ(\uRep S_t)$ is equivalent to the direct sum of categories
$$
\bigoplus_{n\geq0,\mu\vdash n} \Rep  Z(\mu)\boxtimes \uRep S_{t-n}.
$$
\end{corollary}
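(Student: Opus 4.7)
The plan is to combine Lemma \ref{lem:ZuRepHoms}, which computes all Hom spaces between the objects $\uW{\mu,V,U}$, with the classification of indecomposables from Theorem \ref{indec-classification}. For the fully faithful claim I would first reduce, by additivity, to objects of the form $V \boxtimes U$ with $V$ simple, using that $\Rep Z(\mu)$ is finite semisimple so every object of $\Rep Z(\mu) \boxtimes \uRep S_{t-n}$ is a finite direct sum of such. The defining property of the Deligne tensor product with a finite semisimple factor then yields
\[
\Hom_{\Rep Z(\mu) \boxtimes \uRep S_{t-n}}(V \boxtimes U, V' \boxtimes U') \cong \Hom_{\Rep Z(\mu)}(V,V') \otimes_{\Bbbk} \Hom_{\uRep S_{t-n}}(U,U'),
\]
which is precisely the formula of Lemma \ref{lem:ZuRepHoms} for the Hom spaces between the corresponding images $\uW{\mu,V,U}$ and $\uW{\mu,V',U'}$; full faithfulness on each summand follows at once.

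For the direct sum decomposition, the vanishing case $\mu \neq \nu$ of Lemma \ref{lem:ZuRepHoms} combined with the previous step produces a fully faithful additive functor
\[
\bigoplus_{n \geq 0,\; \mu \vdash n \text{ singleton-free}} \Rep Z(\mu) \boxtimes \uRep S_{t-n} \longrightarrow \cZ(\uRep S_t).
\]
Essential surjectivity is the heart of the argument: by Theorem \ref{indec-classification}, every indecomposable object of $\cZ(\uRep S_t)$ lies in the image of some summand, so the claim reduces to Krull--Schmidt for $\cZ(\uRep S_t)$. This holds because the forgetful functor $\cZ(\uRep S_t) \to \uRep S_t$ is faithful, the Hom spaces of $\uRep S_t$ are finite dimensional over $\Bbbk$, and $\cZ(\uRep S_t)$ is Karoubian; hence every endomorphism algebra is a finite-dimensional $\Bbbk$-algebra and Krull--Schmidt applies.

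The main subtlety I expect to address is the indexing of the sum. As written, the direct sum runs over all $\mu \vdash n$, but the classification of indecomposables requires $\mu$ singleton-free. I would reconcile this by noting that if $\mu$ has $k$ singletons and singleton-free part $\mu' \vdash n-k$, then $Z(\mu) \cong Z(\mu') \times S_k$ and transitivity of induction gives
\[
\uInd\bigl((V' \boxtimes W) \boxtimes U\bigr) \cong \uInd\bigl(V' \boxtimes \uInd^{S_{t-n+k}}_{S_k \times S_{t-n}}(W \boxtimes U)\bigr)
\]
in $\cZ(\uRep S_t)$, with the half-braidings matching because both are prescribed by the same element of $S_n \subset S_t$. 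Thus the non-singleton-free summands contribute only images already accounted for by singleton-free ones, and the equivalence of additive $\Bbbk$-linear categories follows.
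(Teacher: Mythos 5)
Your argument is essentially the paper's: full faithfulness comes from the Hom computation in \Cref{lem:ZuRepHoms} (reducing to simple $V$ and indecomposable $U$ via semisimplicity of $\Rep Z(\mu)$), and the direct sum decomposition comes from the classification of indecomposables in \Cref{indec-classification} together with the vanishing of Homs across distinct $\mu$. The paper's proof is terser and leaves the Krull--Schmidt point implicit, but you are filling in the same skeleton, so this part is fine.

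One caveat about your final paragraph. The intended index set for the direct sum is the set of \emph{singleton-free} partitions --- the hypothesis ``let $\mu\vdash n$ be singleton free'' in the first sentence governs the whole statement. Your observation that a $\mu$ with $k$ singletons and singleton-free part $\mu'$ has $Z(\mu)\cong Z(\mu')\times S_k$, so that its summand maps into the essential image of the $\mu'$-summand, is correct (this is the content of \Cref{lem::induced-objects}), but the conclusion you draw from it is backwards: if the direct sum genuinely contained both the $\mu$- and $\mu'$-summands as distinct summands, the resulting functor could not be an equivalence, since objects in distinct summands of a direct sum of categories have no nonzero morphisms between them while their images would be isomorphic. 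So redundant summands would falsify the equivalence rather than be harmlessly absorbed; the resolution is simply that the sum ranges over singleton-free $\mu$ only. This does not affect the validity of your proof of the intended statement.
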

\begin{proof}
\Cref{lem:ZuRepHoms} implies that $\uInd$ is fully faithful on objects of the form $U\boxtimes V$, for $U,V$ indecomposable. Since the category $ \Rep  Z(\mu)$ is semisimple, this provides a full list of indecomposables of $\Rep  Z(\mu) \boxtimes \uRep S_{t-n}$.
\end{proof}

Note that the above decomposition is \emph{not} a decomposition of tensor categories. We can now describe the blocks of $\cZ(\uRep S_t)$. For this, denote by $\Irrep(Z)$ the set of isomorphism classes of irreducible representations of $\mC Z$.

\begin{corollary}\label{cor:blocksZRepSt}
The blocks in $\cZ(\uRep S_t )$ are parametrized by triples $(\mu,V,B)$, where $\mu\vdash n$ is a singleton free partition of some integer $n\geq 0$ and $V\in \Irrep(Z(\mu))$ and $B$ is a block of $\uRep S_{t-n}$.
\end{corollary}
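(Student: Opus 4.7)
The plan is to combine the classification of indecomposable objects from Theorem \ref{indec-classification} with the Hom-space computation in Lemma \ref{lem:ZuRepHoms}, together with the known block decomposition of $\uRep S_{t-n}$ from \cite{CO}. Recall that a block of an additive Krull--Schmidt category is an equivalence class of indecomposable objects under the equivalence relation generated by the existence of a nonzero morphism between them; equivalently, the blocks correspond to the connected components of the Ext-quiver on indecomposables.

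First, by Theorem \ref{indec-classification}, every indecomposable object of $\cZ(\uRep S_t)$ is isomorphic to $\uW{\mu, V, U}$ for some singleton-free $\mu \vdash n$, some $V \in \Irrep(Z(\mu))$, and some indecomposable $U \in \uRep S_{t-n}$, and these data are unique (by Proposition \ref{uWdistinct}). Next, by Lemma \ref{lem:ZuRepHoms}, a Hom space between two such indecomposables
\[
\Hom_{\cZ(\uRep S_t)}\bigl(\uW{\mu,V,U},\, \uW{\nu,V',U'}\bigr)
\]
vanishes whenever $\mu \neq \nu$, and when $\mu = \nu$ it equals $\Hom_{\Rep Z(\mu)}(V,V') \otimes \Hom_{\uRep S_{t-n}}(U,U')$. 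Since $\Rep Z(\mu)$ is semisimple and $V, V'$ are irreducible, the first tensor factor is nonzero exactly when $V \cong V'$. Therefore a nonzero morphism between $\uW{\mu,V,U}$ and $\uW{\nu,V',U'}$ exists if and only if $(\mu,V) = (\nu,V')$ up to isomorphism \emph{and} there is a nonzero morphism $U \to U'$ in $\uRep S_{t-n}$.

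This immediately implies that for each fixed pair $(\mu, V)$, the equivalence classes among the objects $\{\uW{\mu,V,U} \mid U \in \uRep S_{t-n}\ \text{indecomposable}\}$ are in bijection with the connected components of the Hom-quiver on indecomposables of $\uRep S_{t-n}$, i.e. with the blocks $B$ of $\uRep S_{t-n}$ as classified in \cite{CO}. Putting this together across all $(\mu,V)$, the blocks of $\cZ(\uRep S_t)$ are parametrized exactly by triples $(\mu, V, B)$ with $\mu \vdash n$ singleton-free, $V \in \Irrep(Z(\mu))$, and $B$ a block of $\uRep S_{t-n}$, with the corresponding block being $B_{\mu, V, B} = \{\uW{\mu, V, U} \mid U \in B\}$.

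There is no real obstacle here beyond assembling the previously proved pieces; the only mild subtlety is being careful that the equivalence relation defining blocks is generated by the existence of a nonzero morphism in \emph{either} direction (and is closed under taking direct summands), which is harmless because the Hom-vanishing in Lemma \ref{lem:ZuRepHoms} is symmetric in the data $(\mu, V)$ versus $(\nu, V')$ and because being in the same block of $\uRep S_{t-n}$ is already the correct equivalence relation on that side.
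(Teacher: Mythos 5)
Your proof is correct and follows essentially the same route as the paper: both arguments combine the classification of indecomposables (Theorem \ref{indec-classification}) with the Hom-space computation of Lemma \ref{lem:ZuRepHoms} and the semisimplicity of $\Rep Z(\mu)$ to reduce the block structure, for fixed $(\mu,V)$, to that of $\uRep S_{t-n}$. Your write-up is somewhat more explicit about the equivalence relation defining blocks, but the substance is identical to the paper's proof.
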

\begin{proof} Note that the categories $\Rep  Z(\mu)$ are semisimple. Thus, each block contains a unique simple module $V$. \Cref{lem:ZuRepHoms} implies that $\uW{\mu,V,U}$ and $\uW{\mu,V',U'}$ are in the same block if and only if $\mu=\mu'$, $V\cong V'$, and $U$ and $U'$ are contained in the same block of $\uRep S_{t-n}$.
\end{proof}

The blocks of $\uRep S_t$ have been described combinatorially in \cite{CO}*{Section~5.1}. In particular they showed that for each $d\in \mathbb{Z}_{\ge 0}$ there are finitely many non-semisimple blocks in $\uRep S_d$, all of which are equivalent to the unique non-semisimple block in $\uRep S_0$.  It follows immediately from the description above that the same holds for $\cZ(\uRep S_t )$. \Cref{cor:blocksZRepSt} implies that blocks of $\cZ(\uRep S_t)$ are parametrized by pairs of blocks $B$ of $\uRep S_{t-n}$ and $(\mu,V)$. The latter parametrize those blocks of $\cZ(\Rep S_n)$ not induced from $\cZ(\Rep S_m)$ with $m<n$.

\subsection{Comparison with the previous construction}\label{sec:comparison}

In Section \ref{sec:indec}, we have constructed all objects in the center of $\uRep S_t$. Hence, our construction should recover those central objects constructed in the previous paper \cite{FL}. In this section, we show how the objects constructed there are indeed special cases of the construction explained here. We also note that the old objects from \cite{FL}*{Definition~3.12} generate $\cZ(\uRep S_t)$ as a Karoubian tensor category.

We recall the definition from \cite{FL} below. For this, we recall the embedding
\begin{equation}\label{x}
    x\colon \mC S_n\hookrightarrow \End_{\uRep S_t}(X^{\otimes n}), \quad g\mapsto x_g,
\end{equation}
see, e.g.,  \cite{CO}*{Equation (2.1)} for the definition of $x_g$.  We write $1_n$ for the identity of $S_n$. %
Hence, we can embed $\mC S_n\otimes M_k(\mC)$ into $\End_{\uRep S_t}((X^{\otimes n})^{\oplus k})$.

\begin{definition}[The central objects $\uWold{\mu,V}$] \label{Wold} Consider $n,k\geq 0$, $\sigma\in S_n$ an element of cycle type $\mu\vdash n$, $Z$ the centralizer of $\sigma$ in $S_n$, $V$ a $k$-dimensional representation of $Z$, $e_V\in\mC Z\otimes_\mC M_k(\mC)$ an idempotent with image isomorphic to $V$. We denote by $e$ the image of $e_V$ under the above embedding \eqref{x},
define
$\Img e = (X^{\otimes n}, e)$ to be the subobject of $X^{\otimes n}$ defined by the idempotent $e$ in $\uRep S_t$
and set
$$
d^{\s,V}_1 := (e\otimes \ide_{X})\Big(1_{n+1} + \sum_{1\leq i\leq n} E^i_{\sigma(i)} - E^i_i\Big)
\quad \in \End_{\uRep S_t}(\Img e\otimes X)
\ ,
$$
where $E^i_j$ is defined as a partition of $n+1$ upper and lower points as follows:
$$
E^i_j := \{ \{k,k'\} \}_{1\leq k\leq n}
 \setminus \{\{i,i'\},\{j,j'\}\}
 \cup \{\{i,n+1\},\{j',(n+1)'\}\}
\ .
$$
Let $\tau_n$ be the permutation $(1\ 2\ \dots\ n+1)$ viewed as an endomorphism of $X^{\otimes(n+1)}$ in $\uRep S_t$, i.e., $\tau_n$ is the symmetric braiding $\Psi_{X^{\otimes n},X}$ of $\uRep S_t$. It was shown in \cite{FL}*{Theorem~3.11}, that $c_X:= \tau_n d_1^{\s,V}$ determines an object $(\Img e,c)$ in the center of $\uRep S_t$ via Lemma \ref{center-data}. %
It was also shown in \cite{FL}*{Proposition 3.22} that the isomorphism class of this central object depends only on the cycle type $\mu$ of $\s$ and on the isomorphism class of $V$. Therefore, we will denote this central object considered in \cite{FL}*{Definition 3.12} by $\uWold{\mu,V}$.
\end{definition}

Let us assume $\mu$ has $r\geq 0$ singletons, so any permutation $\sigma$ of cycle type $\mu$ has $r$ fixed points. Let $\mu_0$ be the partition of $n_0:=n-r$ we obtain from $\mu$ by removing all singletons. Then the centralizer $Z$ of $\sigma$ is isomorphic to $Z_0\times S_r$, where $Z_0$ is the centralizer of an element of cycle type $\mu_0$ in $S_{n_0}$, and the $\mC Z$-module $V$ decomposes as $V_0\boxtimes U_0$ for a $\mC Z_0$-module $V_0$ and a $\mC S_r$-module $U_0$. We set $U_{\mu,V}:=\uInd_{S_r\times S_{t-n}}^{S_{t-n_0}} U_0\boxtimes\one$. Note that, in particular, if $r=0$, then $n_0=n$, $Z_0=Z$, $V_0=V$, and $U_{\mu,V}=\one$.

\begin{lemma}\label{lem::induced-objects} If we interpret $e_V$ as an idempotent $e$ in $\uRep S_t$ as in Definition \ref{Wold}, then 
\begin{align}\label{ind-eq-1}
\Img e \cong \uInd_{Z_0\times S_{t-n_0}}^{S_t} (V_0\boxtimes U_{\mu,V}) .
\end{align}
In particular, if $\mu$ has no singletons, that is, $\sigma$ has no fixed points, then
\begin{align}\label{ind-eq-2}
\Img e \cong \Ind_{Z\times S_{t-n}}^{S_t} (V\boxtimes \one) .
\end{align}
\end{lemma}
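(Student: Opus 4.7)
The plan is to reduce to a computation in the ultrafilter presentation of \Cref{limitthm}, where $\uRep S_t$ sits inside $\prod_\cU \Rep_{p_i}S_{t_i}$ and the object $X^{\otimes n}$ corresponds to the permutation $S_{t_i}$-module with basis $\{e_\mathbf{i}\}$ indexed by $n$-tuples $\mathbf{i} \in \{1,\dots,t_i\}^n$. The idempotent $e$ is the ultraproduct of the corresponding idempotents $e_i$ acting on $X^{\otimes n}$ in each factor, and the key point is to identify $\Img e_i$ in the classical setting as a $\Bbbk_i S_{t_i}$-module of the expected induced form. Then equations \eqref{ind-eq-1} and \eqref{ind-eq-2} follow by transitivity of induction together with \Cref{ind-res-sect}, which identifies Deligne-category induction with the ultraproduct of classical induction functors.

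The first step would be to decompose $X^{\otimes n} = M_i \oplus M_i'$, where $M_i$ is spanned by the $e_\mathbf{i}$ with all entries distinct. From the formula \eqref{eqn:xpi} and the description of $\mathcal{F}_{t_i}(x_g)$ as the morphism with matrix coefficient $1$ precisely on tuples inducing the partition equal to (not just refining) that of the permutation diagram $g$, I would deduce that the image of the embedding $x\colon \mC S_n \hookrightarrow \End(X^{\otimes n})$ annihilates $M_i'$. Consequently $\Img e_i = e_V \cdot M_i^{\oplus k}$, and the second summand $M_i'$ plays no role.

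Next I would identify $M_i$ as a bimodule. It is the permutation representation of $S_n \times S_{t_i}$ on the set of injections $\iota\colon \{1,\dots,n\} \hookrightarrow \{1,\dots,t_i\}$ (with $S_n$ acting via the $x$-embedding as $g\cdot \iota = \iota \circ g^{-1}$ and $S_{t_i}$ acting as $h\cdot \iota = h\circ \iota$). This action is transitive with stabilizer isomorphic to $S_n \times S_{t_i-n}$, yielding
\[
M_i \;\cong\; \Ind_{S_n \times S_{t_i-n}}^{S_{t_i}}\!\bigl( \mC S_n \boxtimes \mathbf{1}\bigr),
\]
where $\mC S_n$ is the regular representation carrying the commuting right $S_n$-action coming from the $x$-embedding. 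Since $e_V \in \mC Z \otimes M_k(\mC)$ acts through this commuting action and cuts out a copy of $V$ from $(\mC S_n)^{\oplus k}$, one has $(\mC S_n)^{\oplus k}\cdot e_V \cong \Ind_Z^{S_n}(V)$ as a left $\mC S_n$-module. Combining with the previous display and transitivity of induction gives
\[
\Img e_i \;\cong\; \Ind_{S_n \times S_{t_i-n}}^{S_{t_i}}\!\bigl(\Ind_Z^{S_n}(V) \boxtimes \mathbf{1}\bigr) \;\cong\; \Ind_{Z \times S_{t_i-n}}^{S_{t_i}}(V \boxtimes \mathbf{1}).
\]

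Passing to the ultraproduct and using \Cref{ind-res-sect} proves \eqref{ind-eq-2} in the singleton-free case $r=0$. For general $r$, the decompositions $Z = Z_0 \times S_r$ and $V = V_0 \boxtimes U_0$ let me rewrite the right-hand side as $\Ind_{Z_0 \times S_r \times S_{t_i-n}}^{S_{t_i}}(V_0\boxtimes U_0 \boxtimes \mathbf{1})$, which by transitivity of induction through the intermediate subgroup $Z_0 \times S_{t_i-n_0}$ (with $S_{t_i-n_0}\supset S_r\times S_{t_i-n}$ acting on the $(t_i-n_0)$ fixed points of $\sigma$) equals
\[
\Ind_{Z_0 \times S_{t_i-n_0}}^{S_{t_i}}\!\Bigl( V_0 \boxtimes \Ind_{S_r\times S_{t_i-n}}^{S_{t_i-n_0}}(U_0 \boxtimes \mathbf{1})\Bigr).
\]
Taking the ultraproduct converts the inner induction into $\uInd_{S_r\times S_{t-n}}^{S_{t-n_0}}(U_0\boxtimes\mathbf{1}) = U_{\mu,V}$ and the outer into $\uInd_{Z_0\times S_{t-n_0}}^{S_t}$, yielding \eqref{ind-eq-1}. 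The main technical point is the bimodule identification in the middle step; everything else is transitivity of induction and bookkeeping with the ultrafilter, and the left/right action of $x_g$ on injective tuples must be tracked carefully, but no deeper input is required.
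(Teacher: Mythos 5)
Your argument is correct and follows essentially the same route as the paper: both work componentwise in the ultraproduct, identify the image of $x_{1_n}$ on $X_{t_i}^{\otimes n}$ with the permutation module on injective tuples (equivalently $\ov{\mF}_{p_i}S_{t_i}/S_{t_i-n}$), cut out $\Ind_{Z\times S_{t_i-n}}^{S_{t_i}}(V_i\boxtimes\one)$ using the idempotent acting through the commuting right $S_n$-action, and finish with transitivity of induction to absorb the $r$ singletons. The one point the paper treats more carefully is the reduction of $V_0$, $U_0$ and the matrix entries of $e_V$ modulo primes of a ring of integers (via \Cref{ultra-reduction}), which is what guarantees that the componentwise modules assemble to the complex representation $V$ under the equivalence of \Cref{rep-limit}; your ``bookkeeping with the ultrafilter'' should be fleshed out with that step.
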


\begin{proof}
Again we use the ultraproduct concepts explained in \Cref{sect:int-group} and \Cref{ind-res-sect}: We choose an ultraproduct representation $\uRep S_t\cong \langle X\rangle\subset \prod_\cU \Rep_{p_i} S_{t_i}$ for $\uRep S_t$, and similarly, we fix  $\Rep  S_n \cong \langle X\rangle \subset \prod_\cU \Rep_{p_i} S_n$, for $n\geq 0$, cf. \Cref{rep-limit}. For simplicity, write $\mF_i:=\ov{\mF}_{p_i}$.

We prove the general case, Equation \eqref{ind-eq-1}, first. Almost always, $t_i\geq n$, so let us assume this holds.
Choose a finite field extension $\Bbbk$ of $\mQ$ such that $V_0,U_0$ are defined over $\Bbbk$. Denote by $\cO$ the ring of integers of $\Bbbk$ and for all $i$, choose a prime ideal $\mathfrak{p}_i\triangleleft \cO$ containing $p_i$. Assume that $i$ is sufficiently large $i$ such that $p_i$ does not divide the discriminant of $\Bbbk$ and $\cO/\mathfrak{p}_i$ is $\mF_{p_i^d}$ for some $d$, so in particular, $\cO/\mathfrak{p}_i$ embeds into $\mF_i$.

For the $\mC S_r$ module $U_0$,  by \Cref{ultra-reduction}, we fix an ultraproduct presentation $U_0\cong \prod_{\cU} U_{0,i}$ where $U_{0,i}$ is the reduction of a chosen integral form of $U_0$ modulo $\mathfrak{p}_i$ viewed as an $\mF_i S_{t_i}$-module.
From Section \ref{ind-res-sect} we know that the induction functor can be computed as an ultraproduct of the corresponding induction functors in finite characteristic. Thus, we find an ultraproduct presentation $U_{\mu,V}=\prod_\cU U_i$, where
$$U_i=\Ind_{S_r\times S_{t_i-n}}^{S_{t_i}-n_0}(U_{0,i}\boxtimes \mF_i).$$ 
Similarly, we choose an integral form of $V_0$ and write $V_{0,i}$ for the reduction of the integral form of $V_0$ modulo $\mathfrak{p}_i$ and observe that $V_0\cong \prod_{\cU} V_{0,i}$ by \Cref{ultra-reduction}. Thus, the right-hand side of \eqref{ind-eq-1} is isomorphic to the ultraproduct of the $\mF_i S_{t_i}$-modules 
\begin{align*}
\Ind_{Z_0\times S_{t_i-n_0}}^{S_{t_i}}(V_{0,i}\boxtimes U_i)
&=\Ind_{Z_0\times S_{t_i-n_0}}^{S_{t_i}}\big(V_{0,i}\boxtimes \Ind_{S_r\times S_{t_i-n}}^{S_{t_i-n_0}}(U_{0,i}\boxtimes \mF_i)\big)\\
&\cong \Ind_{Z_0\times S_r\times S_{t_i-n}}^{S_{t_i}}(V_{0,i}\boxtimes U_{0,i}\boxtimes \mF_i).
\end{align*}

As we have chosen integral forms of $V_0$, $U_0$ over the ring of integers $\cO$ of $\Bbbk$, we note that $(V_0\boxtimes U_0)\otimes_{\cO}\mC\cong V$. Thus, we denote $V_{i}:=(V_0\boxtimes U_0)\otimes_\cO \mF_i$ and $V\cong \prod_{\cU} V_i$. We may write, possibly replacing $V$ with an isomorphic module,
$$
e_V = \sum_{g\in Z} g\otimes_{\mC} m_g,
$$
with $k$-by-$k$-matrices $(m_g)_{g\in Z}$ whose entries lie in $\cO$. It follows that 
$$
e=\sum_{g\in Z} x_g\otimes_{\mC} m_g=\sum_{g\in Z} gx_{1_n}\otimes_{\mC} m_g\;\in \;\End_{\uRep S_t}((X^{\otimes n})^{\oplus k}).
$$
We first observe that the image of the component $(x_{1_n})_i$  of the idempotent $x_{1_n}$, in the notation of \eqref{eqn:xpi}, acting on the $\mF_iS_{t_i}$-module $X_{t_i}^{\otimes n}$ is isomorphic to $\mF_i S_{t_i}/S_{t_i-n}$ \cite{FL}*{Remark 2.5}. Denote by $m_{g,i}$ the $\mF_i$-matrix obtained from $m_g$ by reducing all entries modulo $\mathfrak{p}_i$. 
Now, the object $\Img e$ is isomorphic to the ultraproduct of the $\mF_i S_{t_i}$-modules $P_i$, where
$P_i$ is the image of the idempotent 
$$\sum_{g\in Z}g_i\otimes_{\mF_i} m_{g,i}\colon \mF_iS_{t_i}/S_{t_i-n}\otimes_{\mF_i} \mF_i^{\oplus k}\longrightarrow \mF_iS_{t_i}/S_{t_i-n}\otimes_{\mF_i} \mF_i^{\oplus k}.$$
Here, $g_i$ denotes the evaluation of the partition corresponding to $g$ from \eqref{piimage}. We note that $g_i$ corresponds to the right multiplication action of the subgroup $\{1_{t_i-n}\}\times S_n$ on $S_{t_i}/S_{t_i-n}$ \cite{FL}*{Remark 2.5}.
Hence, it follows from a computation similar to \cite{FL}*{Proposition~3.8} that $P_i$ is isomorphic to 
\begin{align*}
\mF_iS_{t_i}/S_{t_i-n}\otimes_{Z} V_i\cong \Ind_{Z\times S_{t_i-n}}^{S_{t_i}} (V_i \boxtimes \mF_i)\cong \Ind_{Z_0\times S_{r}\times S_{t_i-n}}^{S_{t_i}} (V_{0,i}\boxtimes U_{0,i} \boxtimes \mF_i).
\end{align*}
Thus, for almost all $i$, the component in $\Rep_{p_i} S_{t_i}$ of the objects on the left and right side in \eqref{ind-eq-1} are isomorphic, which proves the assertion.

It is clear from $V=V_0$ and $U_{\mu,V}=\one$ that \eqref{ind-eq-2} is a special case of \eqref{ind-eq-1}.
\end{proof}

\begin{proposition} $\uWold{\mu,V}$ is isomorphic to $\uW{\mu_0,V_0,U}$ for $U=U_{\mu,V}$.
In particular, if $\mu$ has no singletons, then $\uWold{\mu,V}$ is isomorphic to $\uW{\mu,V,\one}$.
\end{proposition}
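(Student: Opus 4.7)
The plan is to upgrade the underlying-object isomorphism of \Cref{lem::induced-objects} to an isomorphism in the center, by showing that the two half-braidings agree. Since both objects have explicit descriptions in terms of ultraproducts of objects in $\cZ(\Rep_{p_i}S_{t_i})$, the strategy is to reduce the claim to finite characteristic, where both half-braidings are Yetter--Drinfeld structures and can be compared directly via \Cref{DPR-W-Thm}.

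First I would set up the ultraproduct presentations used in the proof of \Cref{lem::induced-objects}. On the $\uW{\mu_0,V_0,U}$ side, the $i$-th component is $\Ind_{Z_0\times S_{t_i-n_0}}^{S_{t_i}}(V_{0,i}\boxtimes U_i)$ with the Yetter--Drinfeld grading placing $V_{0,i}\boxtimes U_i$ in degree $\sigma_0\in S_{n_0}\subset S_{t_i}$, which works since $\sigma_0$ has no fixed points in $\{1,\dots,n_0\}$ and thus $Z_{S_{t_i}}(\sigma_0)=Z_0\times S_{t_i-n_0}$. On the $\uWold{\mu,V}$ side, the $i$-th component is $\Ind_{Z\times S_{t_i-n}}^{S_{t_i}}(V_i\boxtimes \mathbf{F}_i)$ with $V_i=V_{0,i}\boxtimes U_{0,i}$, and the half-braiding from \cite{FL}*{Theorem~3.11, Proposition~3.22} coincides with the Yetter--Drinfeld structure in which $V_i\boxtimes \mathbf{F}_i$ sits in degree $\sigma\in S_n\subset S_{t_i}$ (this identification was already used implicitly in \Cref{def:Wcenter} for the case $r=0$, but it is essentially the content of the formula $c_X=\tau_n d_1^{\sigma,V}$).

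The key observation is that we may arrange $\sigma_0$ and $\sigma$ to be literally the same element of $S_{t_i}$: having removed the singletons from $\mu$, the permutation $\sigma_0$ acts on $\{1,\dots,n_0\}$ and fixes the rest, and we may take $\sigma$ to act in the same way (so that its $r$ fixed points in $\{n_0+1,\dots,n\}$ plus its $t_i-n$ fixed points in $\{n+1,\dots,t_i\}$ together constitute the $t_i-n_0$ fixed points of $\sigma_0$). With this choice, both gradings assign degree $\sigma=\sigma_0$ to the respective generating subspaces. Then the canonical transitivity-of-induction isomorphism
\[
\Ind_{Z_0\times S_{t_i-n_0}}^{S_{t_i}}\bigl(V_{0,i}\boxtimes \Ind_{S_r\times S_{t_i-n}}^{S_{t_i-n_0}}(U_{0,i}\boxtimes \mathbf{F}_i)\bigr) \;\cong\; \Ind_{Z_0\times S_r\times S_{t_i-n}}^{S_{t_i}}(V_{0,i}\boxtimes U_{0,i}\boxtimes \mathbf{F}_i)
\]
identifies the two $S_{t_i}$-gradings on the nose, because the identification sends the degree-$\sigma_0$ generating subspace on the left to the degree-$\sigma$ generating subspace on the right. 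Hence the half-braidings are identified, and taking the ultraproduct yields the desired isomorphism in $\cZ(\uRep S_t)$.

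The main obstacle is bookkeeping: verifying that the half-braiding $c_X=\tau_n d_1^{\sigma,V}$ of \cite{FL} really corresponds component-wise to the Yetter--Drinfeld braiding with grading $\sigma$, in the case where $\sigma$ has fixed points (so $V_i\boxtimes \mathbf{F}_i$ is only a $Z\times S_{t_i-n}$-module, not a full $Z_{S_{t_i}}(\sigma)$-module). This requires checking that extending the grading trivially from $Z\times S_{t_i-n}$ to $Z_0\times S_{t_i-n_0}$ along the inclusion matches Frobenius reciprocity, which is exactly the content of the transitivity isomorphism above. The final statement for singleton-free $\mu$ is immediate since then $r=0$, $\mu_0=\mu$, $V_0=V$, and $U_{\mu,V}=\one$.
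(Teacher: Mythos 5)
Your overall strategy coincides with the paper's: take the underlying-object isomorphism of \Cref{lem::induced-objects}, pass to the components of the ultraproduct, and compare the two half-braidings there as Yetter--Drinfeld structures. Your observation that $\s_0$ and $\s$ may be taken to be literally the same permutation of $\{1,\dots,t_i\}$ (the singletons of $\mu$ being absorbed into the $t_i-n_0$ fixed points of $\s_0$), so that the transitivity-of-induction isomorphism identifies the two gradings, is correct and is essentially how the paper handles that part of the bookkeeping.

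The gap is in the step you dispose of with ``the half-braiding from \cite{FL} coincides with the Yetter--Drinfeld structure \dots it is essentially the content of the formula $c_X=\tau_n d_1^{\s,V}$.'' That coincidence is not a prior result of the paper, is not implicit in \Cref{def:Wcenter}, and does not follow from Frobenius reciprocity or transitivity of induction (which only compare the two induced-module presentations once both are known to be Yetter--Drinfeld modules of degree $\s$); it is precisely the content of the proposition beyond \Cref{lem::induced-objects}, and it is where the paper spends the second half of its proof. One must evaluate the partition diagrams $E^i_{\s(i)}$ and $E^i_i$ under the functor to $\Rep_{p_i}S_{t_i}$ and check that $(d_1^{\s,V})_i$ sends $(e_{i_1}\otimes\cdots\otimes e_{i_n}\otimes v)\otimes e_j$ to $(e_{i_1}\otimes\cdots\otimes e_{i_n}\otimes v)\otimes e_{\phi\s\phi^{-1}(j)}$ when the $i_k$ are pairwise distinct and $j\in\{i_1,\dots,i_n\}$, to the same vector tensored with $e_j$ when $j$ lies outside that set, and to $0$ otherwise (with $\phi(k)=i_k$); under the identification of $e_{i_1}\otimes\cdots\otimes e_{i_n}$ with the coset $[g]\in S_{t_i}/S_{t_i-n}$, $g(k)=i_k$, this becomes $([g]\otimes v)\otimes e_j\mapsto([g]\otimes v)\otimes(g\s g^{-1}\cdot e_j)$, which is the Yetter--Drinfeld braiding of degree $\s$. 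Without this diagrammatic computation your argument assumes the very identification it is meant to prove; with it, the remaining grading comparison you describe is indeed routine.
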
 

\begin{proof} By \Cref{lem::induced-objects}, we know the underlying objects are isomorphic. To see that the half-braidings coincide, it is enough to verify this for the half-braidings evaluated at the tensor generating object.

With the same choices and symbols as in the proof of \Cref{lem::induced-objects}, let us pick a basis $e_1,\dots,e_{t_i}$ for $X_{t_i}$, the tensor generator in $\Rep_{p_i} S_{t_i}$. Then a basis of $X_{t_i}^{\o n}$ is given by the tensor products $e_{i_1}\otimes\dots\otimes e_{i_n}$. Now the $\mF_i S_{t_i}$-module map  $\big(d_{1}^{\sigma,V}\big)_i$ defined in \eqref{piimage}, corresponding to $d_{1}^{\sigma,V}$, sends $(e_{i_1}\otimes\dots\otimes e_{i_n}\otimes v)\o e_j$ to
$$(e_{i_1}\otimes\dots\otimes e_{i_n}\otimes v)\o \begin{cases}
e_{\phi\sigma\phi^{-1} (j)}, 
& \text{if } (i_1,\dots,i_n)
  \text{ are pairwise distinct and }
  j\in\{i_1,\dots,i_n\},
\\
e_{j},
& \text{if }(i_1,\dots,i_n)
  \text{ are pairwise distinct and }
  j\not\in\{i_1,\dots,i_n\}.
\\
0, & \text{else},
\end{cases}
$$
for any $v\in V_i$ (the $\mF_i$-reduction of the integral form of $V$), where $\phi\colon\{1,\dots,n\}\to\{1,\dots,t\}$ is the injective map defined by $\phi(k)=i_k$, for $k=1,\ldots, n$.

Under the isomorphism \eqref{ind-eq-1}, evaluated on the $i$-th part of the ultrafilter presentation used in the proof of \Cref{lem::induced-objects}, the map $\big(d_{1}^{\s,V}\big)$ corresponds to the map
$$
([g]\otimes v)\otimes e_j
\longmapsto ([g]\otimes v)\otimes (g(1,\sigma)g^{-1}\cdot e_j)
$$
for any $[g]\in S_{t_i}/S_{t_i-n}$, which is indeed the desired Yetter--Drinfeld braiding we expect from $\uW{\mu_0,V_0,U}$. This computation follows (similarly to \cite{FL}*{Proposition~3.36}) using that $[g]$ is identified with $e_{i_1}\otimes\dots\otimes e_{i_n}$ if $g(k)=i_k$ under the isomorphisms of $(\Img x_{1_n})_i$ and $\mF_i S_{t_i}/S_{t_i-n}$.
\end{proof}

We note that $U_{\mu,V}$ is generally (in particular, for $r\geq 1$) a decomposable object in $\uRep S_{t-r}$ whose decomposition into indecomposable objects implies a decomposition for $\uWold{\mu,V}$. Hence, by \Cref{indec-classification}, $\uWold{\mu,V}$ is typically not indecomposable.

We conclude our comparison with the results in \cite{FL} with the following observation, which answers \cite{FL}*{Question 3.31} in the affirmative:

\begin{proposition}\label{prop:oldgenall}
The objects $\uWold{\mu,V}$ generate the entire center $\cZ(\uRep S_t)$ as a Karoubian tensor category.
\end{proposition}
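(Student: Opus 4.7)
The plan is to combine the classification from \Cref{indec-classification} with the identification $\uWold{\mu,V} \cong \uW{\mu_0, V_0, U_{\mu,V}}$ established in the preceding proposition: by padding $\mu$ with a suitable number of singletons, one can arrange the third slot of $\uW{-,-,-}$ to contain any prescribed indecomposable of $\uRep S_{t-n_0}$ as a direct summand, and this turns out to be enough.

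First I would record a Krull--Schmidt-type reduction. Since $\Hom_{\cZ(\uRep S_t)}((Y,c),(Y',c'))$ sits inside the finite-dimensional space $\Hom_{\uRep S_t}(Y,Y')$, the center is a Hom-finite $\mathbb{C}$-linear Karoubian category; Fitting's lemma then gives a unique (up to permutation) decomposition of every object into a finite direct sum of indecomposables. By \Cref{indec-classification} each indecomposable is of the form $\uW{\mu_0, V_0, X_\lambda}$ with $\mu_0 \vdash n_0$ singleton-free, $V_0$ irreducible in $\Rep Z(\mu_0)$, and $X_\lambda \in \uRep S_{t-n_0}$ the indecomposable indexed by some partition $\lambda$. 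Hence it suffices to realise each such indecomposable as a direct summand of a single old object.

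Given the data $(\mu_0, V_0, X_\lambda)$, I would set $r := |\lambda|$, $\mu := \mu_0 \cup (1^r)$, and $V := V_0 \boxtimes S^\lambda$ viewed as a $Z(\mu) \cong Z(\mu_0) \times S_r$-module. By \Cref{lem::induced-objects} the old object then satisfies $\uWold{\mu, V} \cong \uW{\mu_0, V_0, U_{\mu,V}}$ with $U_{\mu,V} = \uInd_{S_r \times S_{t-n_0-r}}^{S_{t-n_0}}(S^\lambda \boxtimes \one) \in \uRep S_{t-n_0}$. The Deligne-category Pieri rule \Cref{lem:inddec} exhibits $X_\lambda$ as a direct summand of $U_{\mu,V}$ with multiplicity one in every case (generic and integral $t$), the remaining summands being objects $X_\nu$ with $|\nu| < |\lambda|$. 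Additivity of $\uInd$ from \Cref{prop:indcenter} in the third argument then transports this decomposition to $\uWold{\mu, V} \cong \uW{\mu_0, V_0, X_\lambda} \oplus \bigoplus_\nu \uW{\mu_0, V_0, X_\nu}$ in $\cZ(\uRep S_t)$, displaying the target indecomposable as a summand of an old object.

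The only step that might demand real care is Krull--Schmidt, which follows from the Hom-finiteness noted above. Notably, the argument uses no tensor products whatsoever: the old objects already generate $\cZ(\uRep S_t)$ under direct sums and direct summands alone, which is strictly stronger than the claimed generation as a Karoubian tensor category, so forming the tensor-closed Karoubian subcategory is more than enough.
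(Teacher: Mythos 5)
Your proposal is correct and follows essentially the same route as the paper's proof: reduce via \Cref{indec-classification} to realizing each indecomposable $\uW{\mu_0,V_0,X_\lambda}$ inside some $\uWold{\mu,V}$, use \Cref{lem::induced-objects} to identify $\uWold{\mu,V}\cong\uW{\mu_0,V_0,U_{\mu,V}}$, and apply the Pieri rule \Cref{lem:inddec} to exhibit $X_\lambda$ as a summand of $U_{\mu,V}$. The only differences are cosmetic — you make the choice $r=|\lambda|$, $U_0=S^\lambda$ explicit and spell out the Krull--Schmidt reduction, which the paper leaves implicit.
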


\begin{proof} By \Cref{indec-classification}, it is enough to show that any object $\uW{\mu',V',U'}$ for $\mu'$ without singletons, $V'$ irreducible, and $U'$ indecomposable, is isomorphic to a subobject of $\uWold{\mu,V}$ for some $\mu, V$, with $r, n_0, \mu_0, V_0, U_0, U_{\mu,_V}$ all defined as above. Note that by construction, $\uWold{\mu,V}$ contains $\uW{\mu_0,V_0,Y}$ for any subobject $Y$ of $U_{\mu,V}$. Hence it is enough to show that, for any $n_0$, any indecomposable $U'$ in $\uRep S_{t-n_0}$ is isomorphic to a subobject of $\uInd_{S_r\times S_{t-n_0-r}}^{S_{t-n_0}} (U_0\boxtimes\one)$ for some $r, U_0$,  or equivalently, that any indecomposable $U'$ in $\uRep S_t$ is isomorphic to a subobject of $\uInd_{S_r\times S_{t-r}}^{S_t} (U_0\boxtimes\one)$ for some $r, U_0$, for any $t$.  
But this is true by \Cref{lem:inddec}.
\end{proof}

\subsection{The center of \texorpdfstring{$\uRep^\ab S_d$}{Repab St} } \label{sec:ZRepab}

Let $d\in \mZ_{\geq 0}$ and consider the abelian envelope $\uRep^\ab S_d$ of the category $\uRep S_d$ of  \cites{Del,CO-Rep-St-ab}. There is a full and faithful functor of symmetric monoidal categories $\uRep S_d\to \uRep^\ab S_d$.

Before determining the center of $\uRep^\ab S_d$ we need some preliminary observations. 

\begin{proposition}\label{Z-ab-firstorder}
Let $(Y,c)$ be an object in $\cZ(\uRep^\ab S_d)$. Then the half-braiding $c$ is uniquely determined by the morphism $c_X\colon Y\otimes X\to X\otimes Y$.

Conversely, any pair $(Y,c_X)$, where $Y$ is an object in $\uRep^\ab S_d$ and $c_X$ is a morphism satisfying the finite list of conditions from Lemma \ref{center-data} uniquely extends to give an object in $\cZ(\uRep^\ab S_d)$. 
\end{proposition}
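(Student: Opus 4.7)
The plan is to combine \Cref{center-data} --- which handles $\uRep S_d$ and $\Rep_{p_i} S_{n_i}$ --- with the ultrafilter description of $\uRep^\ab S_d$ from \Cref{limitthm2} and with the naturality of half-braidings along the inclusion $\uRep S_d\hookrightarrow \uRep^\ab S_d$. Uniqueness reduces to the tensor-subcategory $\uRep S_d$ by a naturality and exactness argument, whereas existence is obtained by lifting $c_X$ to genuine half-braidings in the finite-characteristic categories $\Rep_{p_i}S_{p_i+d}$ and taking ultraproducts.

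For uniqueness, I would first observe that $\uRep S_d\hookrightarrow \uRep^\ab S_d$ is a full monoidal subcategory, so the restriction of $c$ to $\uRep S_d$ is a half-braiding there. By \Cref{center-data} applied to $\uRep S_d$, this restriction is uniquely determined by $c_X$, and in particular fixes all $c_{X^{\otimes n}}$. For a general object $Z\in \uRep^\ab S_d$, I would present $Z$ as a subquotient of some $X^{\otimes n_1}\oplus\dots\oplus X^{\otimes n_k}$ (possible by \cite{CO-Rep-St-ab}*{Remark~4.8}). Since the tensor product on $\uRep^\ab S_d$ is exact in each variable, the functor $Y\otimes(-)$ preserves the kernels and cokernels realizing $Z$ as such a subquotient, and hence naturality of $c$ forces $c_Z$ to be determined by the values on $X^{\otimes n_1}\oplus\dots\oplus X^{\otimes n_k}$ --- ultimately, by $c_X$.

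For existence, I would fix $k$ with $Y\in (\uRep^\ab S_d)^{\leq k}$, write $Y\cong\prod_\cU Y_i$ with $Y_i\in (\Rep_{p_i}S_{p_i+d})^{\leq k}$ via \Cref{limitthm2}, and similarly $c_X\cong\prod_\cU c_X^i$. The conditions \eqref{centercond1}--\eqref{centercond2} are first-order, so by \L o\'s' theorem each $c_X^i$ satisfies them for almost all $i$. \Cref{center-data} (in its $\Rep_p S_n$ form, which already covers the non-semisimple regime $p_i\leq p_i+d$) then promotes each such $c_X^i$ to a globally defined half-braiding $c^i$ on $Y_i$ in $\Rep_{p_i}S_{p_i+d}$. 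Mimicking \Cref{center-limit} now assembles these into $c$ on $Y$ in $\uRep^\ab S_d$: for $Z\cong\prod_\cU Z_i$ in $(\uRep^\ab S_d)^{\leq \ell}$, one sets $c_Z:=\prod_\cU c^i_{Z_i}$, and the axioms of a half-braiding transfer from the $c^i$ to $c$ by \L o\'s' theorem. The main technical obstacle lies in this final assembly: one must check that the ultraproduct-defined $c_Z$ patch consistently as $\ell$ grows and genuinely yield a natural transformation on the filtered colimit $\uRep^\ab S_d=\bigcup_\ell (\uRep^\ab S_d)^{\leq\ell}$, but this is an abelian-envelope analog of the coherence argument already carried out for \Cref{center-limit}.
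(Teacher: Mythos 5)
Your proof is correct, and it splits into one half that matches the paper and one that takes a genuinely different route. For uniqueness, the paper argues just as you do: every object of $\uRep^\ab S_d$ is a quotient of a projective, all projectives lie in $\uRep S_d$ (\cite{CO-Rep-St-ab}*{Remark 4.8}), and tensoring with $Y$ is exact, so naturality pins down $c_Z$ from the restriction of $c$ to $\uRep S_d$; this is packaged in the general extension \Cref{center-extend}. (Your subquotient presentation is slightly more than needed --- a surjection from a sum of tensor powers of $X$ already suffices --- but the mono/epi bookkeeping goes through.) For existence the paper again invokes \Cref{center-extend}, whose proof is a purely categorical Mitchell-style extension along projective presentations $P_1\to P_0\to M\to 0$ with $P_i\in\uRep S_d$, followed by the ``slight generalization'' of \Cref{center-data} that allows $Y$ to live in $\uRep^\ab S_d$. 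You instead descend via \Cref{limitthm2} to the categories $\Rep_{p_i}S_{p_i+d}$, apply \Cref{center-data} there, and reassemble by \L o\'s' theorem as in \Cref{center-limit}. This is legitimate: the components-to-ultraproduct direction of \Cref{center-limit} is the unproblematic one, \Cref{center-data} is already established in the non-semisimple modular setting, and the compatibility of the equivalences of \Cref{limitthm2} across filtration levels makes your $c_Z:=\prod_\cU c^i_{Z_i}$ well defined and natural on the colimit. The trade-off is that your route makes the coherence of that assembly (which you correctly flag) load-bearing, whereas the paper's categorical argument avoids the ultraproduct machinery entirely for this step; on the other hand, your route dispenses with the appendix lemma \Cref{center-extend}. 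Note that both arguments still need the generalization of \Cref{center-data} to $Y\in\uRep^\ab S_d$ when passing from $c_X$ to the half-braiding on tensor powers of $X$; you use this implicitly, and it is harmless since the argument of \cite{FL}*{Proposition A.1} only uses that morphisms between tensor powers of $X$ are generated by the four structural partitions.
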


\begin{proof}
The abelian category $\uRep^\ab S_d$ has enough projectives. The projectives are direct summands of sums of tensor powers of the distinguished generating object $X$ \cite{CO-Rep-St-ab}*{Remark 4.8}. Hence, the projectives of $\cC=\uRep^\ab S_d$ are
contained in $\cA=\uRep S_d$. Thus, we can use \Cref{center-extend} to conclude that $c$ is uniquely determined by its restriction to $\uRep S_d$.
By a slight generalization of \Cref{center-data}, allowing the object $Y$ be be in $\uRep^\ab S_d$ rather than $\uRep S_d$, it follows that $c$ is uniquely determined by a morphism $c_X$ satisfying the conditions \eqref{centercond1}--\eqref{centercond2}.
\end{proof}

The next corollary is a direct consequence of \Cref{Z-ab-firstorder} and \Cref{center-extend}.

\begin{corollary}\label{center-subcat}
The inclusion of $\uRep S_d$ into its abelian envelope induces a full and faithful functor of braided monoidal categories
$$\cZ(\uRep S_d)\hookrightarrow \cZ(\uRep^\ab S_d).$$
\end{corollary}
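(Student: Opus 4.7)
The plan is to use Proposition \ref{Z-ab-firstorder} together with the fact that $\uRep S_d \hookrightarrow \uRep^{\ab} S_d$ is a fully faithful embedding of symmetric monoidal categories. First I would define the functor on objects: given $(Y,c) \in \cZ(\uRep S_d)$, view $Y$ as an object of $\uRep^{\ab} S_d$ via the inclusion, and take the component $c_X\colon Y\otimes X \to X\otimes Y$. This $c_X$ automatically satisfies the conditions \eqref{centercond1}--\eqref{centercond2} of Lemma \ref{center-data} since these are already satisfied in $\uRep S_d$ and the inclusion preserves all the relevant structural morphisms $\pi^*, \pi_*, \pi_H, \pi_X$. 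By Proposition \ref{Z-ab-firstorder}, this data extends uniquely to a half-braiding $\widetilde{c}$ on $Y$ with all objects of $\uRep^{\ab} S_d$, giving an object $(Y, \widetilde{c}) \in \cZ(\uRep^{\ab} S_d)$. On morphisms, one sends $f\colon (Y_1, c_1) \to (Y_2, c_2)$ to the same underlying morphism, which visibly satisfies the center morphism condition at $X$, hence at every object by the uniqueness in Proposition \ref{Z-ab-firstorder}.

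Next I would check that this assignment is a functor of braided monoidal categories. The tensor product and braiding on both centers are induced from the underlying tensor products and the half-braidings at the tensor factors; since the inclusion $\uRep S_d \hookrightarrow \uRep^{\ab} S_d$ is strict symmetric monoidal and preserves the relevant half-braidings at $X$ (hence at all objects, again by Proposition \ref{Z-ab-firstorder}), the tensor and braiding structures are preserved on the nose.

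For faithfulness, a morphism in either center is determined by its underlying morphism in $\uRep S_d$ or $\uRep^{\ab} S_d$ respectively, and the inclusion $\uRep S_d \hookrightarrow \uRep^{\ab} S_d$ is faithful. For fullness, suppose $f\colon (Y_1, \widetilde{c}_1) \to (Y_2, \widetilde{c}_2)$ is a morphism in $\cZ(\uRep^{\ab} S_d)$ where both objects come from extending objects of $\cZ(\uRep S_d)$. Then $f\colon Y_1 \to Y_2$ in $\uRep^{\ab} S_d$; by fullness of the inclusion $\uRep S_d \hookrightarrow \uRep^{\ab} S_d$, $f$ lies in $\uRep S_d$. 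The centrality condition for $f$ reads $\widetilde{c}_{2,X}(f\otimes \id_X) = (\id_X \otimes f)\widetilde{c}_{1,X}$, which is precisely the finite condition from Lemma \ref{center-data} characterizing morphisms in $\cZ(\uRep S_d)$; hence $f$ lifts to a morphism in $\cZ(\uRep S_d)$.

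The main (minor) obstacle is to verify cleanly that the extension procedure of Proposition \ref{Z-ab-firstorder} commutes with tensor products, so that the functor on objects respects the monoidal structure. This follows from uniqueness of the extension: both $\widetilde{c_1 \otimes c_2}$ and $\widetilde{c}_1 \otimes \widetilde{c}_2$ restrict to the same morphism at $X$ (namely the one arising from $c_{1,X}$ and $c_{2,X}$ via the standard formula for tensor products of half-braidings), so they must agree by the uniqueness assertion of Proposition \ref{Z-ab-firstorder}. The same uniqueness argument handles the braiding compatibility.
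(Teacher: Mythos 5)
Your proposal is correct and follows essentially the same route as the paper, which derives the corollary directly from Proposition \ref{Z-ab-firstorder} (itself resting on Lemma \ref{center-extend}); you have simply spelled out the details of the extension-and-uniqueness argument that the paper leaves implicit, including the full/faithful and braided monoidal checks via the finite characterization in Lemma \ref{center-data}.
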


To classify simple objects in $\cZ(\uRep^\ab S_d)$ we first note that $\cZ(\uRep^\ab S_d)$ has the \emph{Jordan--H\"older property} by virtue of being a locally finite abelian category \cite{EGNO}*{Theorem 1.5.4}. That is, any object in $\cZ(\uRep^\ab S_d)$ has a finite filtration by simple objects in this category. Moreover, the simple composition factors are unique up to order.

The induction functor from \Cref{prop:indcenter} extends to the abelian envelope such that the following diagram commutes
\begin{align}\label{uInd-abelianized}
    \vcenter{\hbox{
    \xymatrix{ 
        \cZ(\Rep  S_n)\boxtimes \uRep S_{d-n} \ar@{^{(}->}[d]\ar[rr]^-{\uInd}&&\cZ(\uRep S_d )\ar@{^{(}->}[d]\\
        \cZ(\Rep  S_n)\boxtimes \uRep^\ab S_{d-n}\ar[rr]^-{\uInd^{\ab}}&&\cZ(\uRep^\ab S_d),
    }
    }}
\end{align}
where taking the abelian envelope of $\uRep S_{d-n}$ is only required if $d-n\geq 0$. 
This follows, using the above \Cref{center-subcat}, from right exactness of $\uInd$ in the second tensor factor. Indeed, we may resolve a given object $M$ in $\uRep^\ab S_{d-n}$ by $P_0\xrightarrow{p} P_1\to M\to 0$, where $P_0,P_1$ are projective objects of $\uRep S_{d-n}$, and define $\uInd(V\boxtimes M)$ to be the cokernel of the map $\uInd(\ide_V\boxtimes p)$. Alternatively, we may also introduce the functor $\uInd^{\ab}$ using ultraproducts, similarly to how $\uInd$ was introduced in \Cref{prop:indcenter}, working with objects from $\uRep^\ab S_{d-n}$ in the second tensor factor, cf. \Cref{limitthm2}.

We can now extend \Cref{indec-classification} to the abelian envelope.

\begin{corollary}\label{indec-classification2}
The objects  $\uW{\mu,V,U}$, for $\mu,V,U$ as in \Cref{Wdef}, where $V$ is irreducible as a $Z(\mu)$-module, $U$ is indecomposable (or irreducible, or indecomposable projective) in $\uRep^\ab S_{d-n}$ and $\mu\vdash n$ is singleton free, provide a full list of indecomposable (respectively, irreducible, or indecomposable projective) objects in $\cZ(\uRep^\ab S_d)$ up to isomorphism.
\end{corollary}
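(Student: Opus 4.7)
The plan is to parallel the proof of \Cref{indec-classification}, adapting the ultraproduct machinery to the abelian envelope setting. The first step is to establish an ultraproduct presentation of the center: the filtered piece $\cZ(\uRep^\ab S_d)^{\leq k, m}$ should be equivalent to $\prod_\cU \cZ(\Rep_{p_i} S_{p_i+d})^{\leq k, m}$. This follows exactly as in \Cref{center-limit}, using \Cref{Z-ab-firstorder} (which encodes the half-braiding as finite data on a filtered piece through $c_X$) in place of \Cref{center-data}, and \Cref{limitthm2} in place of \Cref{limitthm}. Functoriality of $\uInd^\ab$ from \eqref{uInd-abelianized} then realizes it as the ultraproduct of classical induction functors $\Ind_{Z(\mu) \times S_{p_i+d-n}}^{S_{p_i+d}}$ on each component, just as $\uInd$ was in \Cref{prop:indcenter}.

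For the indecomposable and irreducible cases, I would transcribe the proof of \Cref{indec-classification} essentially verbatim. Given $V$ irreducible over $Z(\mu)$ with $\mu$ singleton-free and $U \cong \prod_\cU U_i$ indecomposable (resp.\ irreducible) in $\uRep^\ab S_{d-n}$, almost all $U_i$ are indecomposable (irreducible); the Witherspoon form of \Cref{DPR-W-Thm} shows $\Ind_{Z(\mu) \times S_{p_i+d-n}}^{S_{p_i+d}}(V_i \boxtimes U_i)$ is indecomposable (irreducible) in $\cZ(\Rep_{p_i} S_{p_i+d})$, and \L o\'s' theorem transfers this to $\cZ(\uRep^\ab S_d)$. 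Conversely, any indecomposable (irreducible) object in $\cZ(\uRep^\ab S_d)^{\leq k}$ pulls back under the Step-1 presentation to almost always indecomposable (irreducible) objects in the centers in finite characteristic; by \Cref{DPR-W-Thm} each such is $\Ind_{Z(\sigma_i)}^{S_{p_i+d}}(Y_i)$, and the filtration bound forces $\sigma_i$ to have a bounded number of non-fixed points, leading to the same singleton-free reduction as before.

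For the indecomposable projective case, the new ingredient needed is a center analog of \Cref{lem:proj-ultra}: $W \cong \prod_\cU W_i$ is projective in $\cZ(\uRep^\ab S_d)$ if and only if for all sufficiently large $k$, almost all $W_i$ are projective in $\cZ(\Rep_{p_i} S_{p_i+d})^{\leq k}$. This I would establish by the same first-order right-exactness argument used in \Cref{lem:proj-ultra}, now applied to $\Hom(W, -)$ on the filtered centers from Step~1. Given this, if $U$ is indecomposable projective in $\uRep^\ab S_{d-n}$, then \Cref{lem:proj-ultra} yields $U_i$ projective in $(\Rep_{p_i} S_{p_i+d-n})^{\leq k}$ for almost all $i$; since $\Rep_{p_i} Z(\mu)$ is semisimple for large $i$, the module $V_i \boxtimes U_i$ is projective over $Z(\mu) \times S_{p_i+d-n}$; induction from a subgroup preserves projectivity, and \Cref{proj-reps} delivers projectivity of the induced module in $\cZ(\Rep_{p_i} S_{p_i+d})$. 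The converse reverses this chain, using \Cref{proj-reps} and semisimplicity of $\Rep_{p_i} Z(\mu)$ to extract a projective $U_i$ from the centralizer factor $Y_i$.

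The main obstacle I anticipate is the projectivity portion, both because it requires the new center-variant of \Cref{lem:proj-ultra} and because one must reconcile projectivity inside a filtered piece $\cZ(\Rep_{p_i} S_{p_i+d})^{\leq k}$ (which is where the ultraproduct naturally lives) with projectivity in the full center (where \Cref{proj-reps} is stated). This reconciliation should follow the pattern of \Cref{prop:proj-filt}: since the projective cover of the tensor unit in $\uRep^\ab S_d$ is $X_{(d+1)}$, tensoring with it produces projective covers that lie a bounded number of filtration levels above a given object, making projectivity a local property with respect to the filtration. With this in hand, the three cases assemble into the stated corollary.
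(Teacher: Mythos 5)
Your proposal is correct and follows essentially the same route as the paper: the indecomposable and irreducible cases are handled by rerunning the proof of \Cref{indec-classification} on ultraproducts of general (not necessarily permutation-summand) modules, and the projective case is exactly the paper's \Cref{prop::X-d-plus-1-in-center}(2), which combines the projective cover $X_{(d+1)}$ of the unit (to localize projectivity to a filtered piece, as in \Cref{prop:proj-filt}), the componentwise ultraproduct criterion, and \Cref{proj-reps}. The only detail worth making explicit is that your reconciliation step needs $X_{(d+1)}$ with its symmetric half-braiding to be projective \emph{in the center}, which the paper secures by noting that the braided induction $\uRep^\ab S_d\to\cZ(\uRep^\ab S_d)$ preserves projectives.
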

\begin{proof}
The classification of irreducible or indecomposable objects in $\cZ(\uRep^\ab S_d)$ is completely analogous to the proof of \Cref{indec-classification}. The only difference is that we consider ultraproducts of general (indecomposable) modules in the categories $\cZ(\Rep_{p_i} S_{t_i})$ rather than just objects for which the underlying $S_{t_i}$-modules are in $\langle X_{t_i}\rangle$. The projective objects among the indecomposables are identified in \Cref{prop::X-d-plus-1-in-center}(2) below.
\end{proof}

\begin{proposition} \label{prop::X-d-plus-1-in-center} Let $d\in \mZ_{\geq 0}$.
\begin{enumerate}
    \item The object $X_{(d+1)}$ with symmetric half-braiding is the projective cover of $\one$ in $\cZ(\uRep^\ab S_d)$.
    \item An object $\uW{\mu,V,U}$ is projective if and only if $U$ is projective. 
    \item All projective objects of $\cZ(\uRep^\ab S_d)$ are contained in the full subcategory $\cZ(\uRep S_d)$.
\end{enumerate}
\end{proposition}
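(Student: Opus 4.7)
My overall strategy is to lift the projectivity results from $\uRep^\ab S_d$ to $\cZ(\uRep^\ab S_d)$ via the ultraproduct description of \Cref{center-limit}. For part (1), the key observation is that in the ultraproduct presentation of \Cref{lem:projcover}, each $P_i = \Ind_{S_{d+1}\times S_{p_i-1}}^{S_{p_i+d}}(\one \boxtimes \one)$ is projective as an $S_{p_i+d}$-module, and \Cref{proj-reps} applied with $\s = 1$ (so $Z(\s) = S_{p_i+d}$ and the trivial grading gives the symmetric half-braiding) shows $P_i$ is also projective in $\cZ(\Rep_{p_i}S_{p_i+d})$. An ultraproduct argument parallel to \Cref{lem:proj-ultra} and \Cref{prop:proj-filt}, applied to the bi-filtration $\cZ(\uRep^\ab S_d)^{\le k,m}$ of \Cref{center-limit}, then shows that $P := \prod_\cU P_i$ with symmetric half-braiding is projective in $\cZ(\uRep^\ab S_d)$. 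The indecomposable summand $X_{(d+1)}$ of $P$ remains an indecomposable projective in $\cZ(\uRep^\ab S_d)$, because its endomorphism ring in the center coincides with that in $\uRep^\ab S_d$ (the condition that a morphism commutes with symmetric half-braidings is automatic from naturality). The surjection $X_{(d+1)} \twoheadrightarrow \one$ from \Cref{lem:projcover} is by the same token a morphism in $\cZ(\uRep^\ab S_d)$, so $X_{(d+1)}$ is the projective cover of $\one$.

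For part (2), the forward implication runs on the same ultrafilter principle: if $U \cong \prod_\cU U_i$ is projective in $\uRep^\ab S_{d-n}$, each $U_i$ is projective on a sufficiently large filtered piece by \Cref{lem:proj-ultra}, and since $\Rep_{p_i} Z(\mu)$ is semisimple for almost all $i$, the external product $V_i \boxtimes U_i$ is projective over $Z(\mu)\times S_{t_i-n}$; by \Cref{proj-reps}, the induction to $\cZ(\Rep_{p_i}S_{t_i})$ is projective, and the ultraproduct gives projectivity of $\uW{\mu,V,U}$ in $\cZ(\uRep^\ab S_d)$. For the converse, I would reduce via \Cref{indec-classification2} to the indecomposable case with $V$ irreducible and $U$ indecomposable. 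Letting $P(U) \twoheadrightarrow U$ be the projective cover in $\uRep^\ab S_{d-n}$, the exact functor $\uInd^{\ab}$ of diagram \eqref{uInd-abelianized} yields a surjection $\uW{\mu,V,P(U)} \twoheadrightarrow \uW{\mu,V,U}$ whose source is projective (by the forward direction) and indecomposable (by \Cref{indec-classification2}). If the target is projective, the surjection splits, and indecomposability of the source forces it to be an isomorphism. Fully-faithfulness of $\uInd^{\ab}$ on the relevant blocks --- an extension of \Cref{lem:ZuRepHoms} to the abelian envelope, established by the same ultraproduct computation --- then forces $P(U) \cong U$, so $U$ is projective.

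For part (3), any projective object of $\cZ(\uRep^\ab S_d)$ decomposes by \Cref{indec-classification2} into indecomposable projectives of the form $\uW{\mu,V,U}$, and by part (2) each such $U$ is projective in $\uRep^\ab S_{d-n}$. By the corollary stated immediately before this proposition, each such $U$ already lies in $\uRep S_{d-n}$. Diagram \eqref{uInd-abelianized} together with \Cref{center-subcat} then places each $\uW{\mu,V,U}$ in $\cZ(\uRep S_d) \subset \cZ(\uRep^\ab S_d)$, which completes the proof.

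The main obstacle I anticipate is the ultrafilter transfer of projectivity to $\cZ(\uRep^\ab S_d)$ paralleling \Cref{lem:proj-ultra} and \Cref{prop:proj-filt}: one must verify that bounded-filtered-piece projectivity remains a first-order property in the bi-filtration of \Cref{center-limit} and that tensoring with $X_{(d+1)}$ (via part (1), the new projective cover of $\one$ in $\cZ(\uRep^\ab S_d)$) still produces enough projective covers. Both should go through, because all half-braidings appearing are symmetric, so the center-theoretic axioms impose no extra condition beyond those already handled in the original arguments for $\uRep^\ab S_d$.
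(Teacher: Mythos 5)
Your strategy coincides with the paper's for part (1) and for the forward direction of part (2): both rest on \Cref{lem:projcover}, on \Cref{proj-reps} applied to induction functors (with $\s=1$ producing the symmetric half-braiding), and on an ultraproduct transfer of projectivity to the center parallel to \Cref{lem:proj-ultra} and \Cref{prop:proj-filt} --- a transfer the paper also invokes only by reference, so you are not being held to a higher standard there. You diverge on the converse of part (2) and on part (3). The paper runs a single ultraproduct chain of equivalences: $\uW{\mu,V,U}$ is projective if and only if almost all components $P_i$ are projective in $\cZ(\Rep_{p_i}S_{t_i})$, if and only if (by \Cref{proj-reps} and semisimplicity of $\Rep_{p_i}Z(\mu)$) almost all $U_i$ are projective, if and only if $U$ is projective; part (3) is then read off from the fact that almost all $P_i$ lie in $\langle X_{t_i}\rangle$. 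You instead argue categorically via projective covers, splittings, and full faithfulness of $\uInd^{\ab}$, and you obtain part (3) from the corollary preceding the proposition together with diagram \eqref{uInd-abelianized}; your route for (3) is clean and, given part (2), essentially immediate.

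There is one genuine gap in your converse for part (2): you claim $\uW{\mu,V,P(U)}$ is indecomposable on the grounds that $P(U)$ is the projective cover of the indecomposable object $U$. But a projective cover of an indecomposable object is indecomposable only when that object has simple top, which there is no reason to expect for a general indecomposable $U$ in a non-semisimple block of $\uRep^\ab S_{d-n}$; so ``indecomposability of the source forces an isomorphism'' is unjustified as written. The step is easily repaired without it: if $\uW{\mu,V,U}$ is projective, the epimorphism $\uW{\mu,V,P(U)}\twoheadrightarrow\uW{\mu,V,U}$ splits, and the full faithfulness you already invoke identifies the splitting with a morphism $V\boxtimes U\to V\boxtimes P(U)$ splitting $\ide_V\boxtimes p$; since $V$ is irreducible and $\Rep Z(\mu)$ is semisimple, this yields a splitting of $p\colon P(U)\to U$ itself, so $U$ is a direct summand of a projective and hence projective. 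With that correction your argument is sound.
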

\begin{proof}
Note that the functor 
$$I\colon \uRep^\ab S_d\to \cZ(\uRep^\ab S_d), \qquad V \mapsto (V, \Psi_{V,-})$$
admits an ultrafilter description. It is given by the ultrafilter of the corresponding functors in characteristic $p_i$. This functor preserves projective objects (cf. \Cref{proj-reps}) as an induction functor (for $\s=1\in S_0$). Thus $I$ preserves projective objects and we can apply $I$ to the projective cover $X_{(d+1)}\twoheadrightarrow\one$ from \Cref{lem:projcover}. This proves Part (1).

To prove Part (2), using Part (1), an object $P$ in $\cZ(\uRep^\ab S_d)^{\leq k}$, $P$ is projective in $\cZ(\uRep^\ab S_d)$ if and only if it is projective as an object of $\cZ(\uRep^\ab S_d)^{\leq k+d+1}$ (cf. the proof of \Cref{prop:proj-filt}). Thus, for  $P\cong \prod_\cU P_i$ is projective if and only if for almost all $i$, $P_i$ is projective. Since $P\cong \uW{\mu,V,U}$, we have that $P_i\cong \Ind_{Z\times S_{t_i-n}}^{S_{t_i}}(V_i\boxtimes U_i)$ and for almost all $i$. By \Cref{proj-reps}, $P_i$ is projective if and only if $U_i$ is projective. Thus, almost all $P_i$ are projective if and only if almost all $U_i$ are projective, proving Part (2). Further, observe that for almost all $i$, $P_i$ is in $\langle X_{t_i}\rangle$ and hence $P$ is contained in the subcategory $\cZ(\uRep S_d)$. This proves Part~(3). 
\end{proof}

We can now collect a few consequences of the constructions of this section.
\begin{corollary} \label{enough-projectives}
Any indecomposable object in $\cZ(\uRep^\ab S_d)$ is a quotient of an object in $\cZ(\uRep S_d)$. In particular, the abelian category $\cZ(\uRep^\ab S_d)$ has enough projectives.
\end{corollary}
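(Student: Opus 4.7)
The plan is to leverage the classification in \Cref{indec-classification2} together with the right exactness of the abelianized induction functor in \eqref{uInd-abelianized} and the fact, established at the end of \Cref{sec:RepSdab}, that all projective objects of $\uRep^\ab S_{d-n}$ already lie in the subcategory $\uRep S_{d-n}$. The core observation will be that these two ingredients combine to produce a surjection onto any indecomposable of $\cZ(\uRep^\ab S_d)$ from a projective object of $\cZ(\uRep S_d)$.

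Concretely, by \Cref{indec-classification2}, an arbitrary indecomposable object of $\cZ(\uRep^\ab S_d)$ is isomorphic to $\uW{\mu, V, U}$ for some singleton free $\mu \vdash n$, some irreducible $Z(\mu)$-module $V$, and some indecomposable $U$ in $\uRep^\ab S_{d-n}$. Since $\uRep^\ab S_{d-n}$ is a highest weight category and therefore has enough projectives, I can choose a projective $P$ in $\uRep^\ab S_{d-n}$ together with a surjection $P \twoheadrightarrow U$; by the corollary recalled above, $P$ actually lies in $\uRep S_{d-n}$. Applying the right exact functor $\uInd^{\ab}(V \boxtimes -)$ to this surjection yields a surjection $\uW{\mu,V,P} \twoheadrightarrow \uW{\mu,V,U}$ in $\cZ(\uRep^\ab S_d)$, and commutativity of \eqref{uInd-abelianized} identifies $\uW{\mu,V,P}$ with an object coming from the functor $\uInd$ in \eqref{intro:UInd}, hence lying in $\cZ(\uRep S_d)$. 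This will establish the first claim. By \Cref{prop::X-d-plus-1-in-center}(2), the object $\uW{\mu,V,P}$ is moreover projective in $\cZ(\uRep^\ab S_d)$ because $P$ is projective in $\uRep^\ab S_{d-n}$.

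For the ``in particular'' clause, it remains to show that every object of $\cZ(\uRep^\ab S_d)$ is a quotient of a projective. Since the ambient category is locally finite abelian, every object has finite length, and I will finish by induction on length: if $Y$ is simple it is indecomposable and the above applies. Otherwise, pick a short exact sequence $0 \to Y' \to Y \to S \to 0$ with $S$ simple; the inductive hypothesis gives a projective $Q'$ surjecting onto $Y'$, while a projective $Q_S \twoheadrightarrow S$ lifts through projectivity to a map $Q_S \to Y$, and together these yield a surjection $Q' \oplus Q_S \twoheadrightarrow Y$. I do not expect any serious obstacle; the only substantive technical input beyond the previously proved statements is right exactness of $\uInd^{\ab}$ in the second slot, which the paper already arranges via the cokernel presentation, so the argument is essentially a matter of assembly.
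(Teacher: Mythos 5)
Your proposal is correct and follows essentially the same route as the paper: apply \Cref{indec-classification2} to reduce to $\uW{\mu,V,U}$, surject a projective $P$ (which lies in $\uRep S_{d-n}$) onto $U$, and use right exactness of $\uInd$ in the second slot together with \Cref{prop::X-d-plus-1-in-center}(2) to get a projective object of $\cZ(\uRep S_d)$ surjecting onto $\uW{\mu,V,U}$. The only cosmetic difference is that you handle the passage from indecomposables to arbitrary objects by induction on length, where the paper implicitly relies on the Krull--Schmidt decomposition into finitely many indecomposables; both are fine.
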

\begin{proof}
By the above \Cref{indec-classification2}, an indecomposable object in  $\cZ(\uRep^\ab S_d)$ is isomorphic to one of the form $\uW{\mu,V,U}$. We may choose a projective cover $P\twoheadrightarrow U$. Then $V\boxtimes P$ is projective in $\cZ(\Rep  S_n)\boxtimes \uRep^\ab S_{d-n}$ as the first tensorand is a semisimple category. By right exactness of $\uInd$ in the second tensor factor, $\uW{\mu,V,U}$ arises as a quotient of the object $\uW{\mu,V,P}$, which lies in $\cZ(\uRep S_d)$.
This argument also shows that $\cZ(\uRep^\ab S_d)$ has enough projectives.
\end{proof}

Recall the concept of the abelian envelope from \cite{BEO}, discussed in \Cref{sec:RepSdab}, to obtain the following result.

\begin{corollary}\label{cor:abenvelopeZRep}
The category $\cZ(\uRep^\ab S_d)$ is the abelian envelope of $\cZ(\uRep S_d)$.
\end{corollary}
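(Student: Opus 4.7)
The plan is to invoke a general criterion for identifying abelian envelopes of monoidal centers, whose statement and proof would be the content of \Cref{app:ZC}. The criterion should take roughly the following form: if $\cC$ is a Karoubian rigid monoidal tensor category admitting an abelian envelope $\cC^\ab$, and if $\cZ(\cC^\ab)$ is an abelian multitensor category with enough projectives, all of which are contained in the essential image of $\cZ(\cC) \hookrightarrow \cZ(\cC^\ab)$, then $\cZ(\cC^\ab)$ satisfies the universal property of the abelian envelope of $\cZ(\cC)$ in the sense of \cite{BEO}.

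Granting such a theorem, the proof of the corollary reduces to assembling results already proved in this section. The fully faithful braided monoidal inclusion $\cZ(\uRep S_d) \hookrightarrow \cZ(\uRep^\ab S_d)$ is provided by \Cref{center-subcat}. That $\cZ(\uRep^\ab S_d)$ has enough projectives is \Cref{enough-projectives}, and that all its projectives lie in $\cZ(\uRep S_d)$ is \Cref{prop::X-d-plus-1-in-center}(3). The fact that $\cZ(\uRep^\ab S_d)$ is a locally finite abelian multitensor category can be deduced from the corresponding properties of $\uRep^\ab S_d$ via the exact conservative forgetful functor $\cZ(\uRep^\ab S_d) \to \uRep^\ab S_d$, together with rigidity coming from the compatibility of duals with half-braidings.

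The main obstacle is of course the general criterion itself in the appendix. There, the essential step is to construct, for every faithful tensor functor $F\colon \cZ(\uRep S_d) \to \cD'$ into a multitensor category $\cD'$, an extension $\widetilde F\colon \cZ(\uRep^\ab S_d) \to \cD'$ as an exact tensor functor, unique up to natural isomorphism. This is carried out by choosing, for each object $Y$ of $\cZ(\uRep^\ab S_d)$, a projective presentation $P_1 \to P_0 \twoheadrightarrow Y$ --- which by hypothesis lies entirely in $\cZ(\uRep S_d)$ --- and setting $\widetilde F(Y) = \mathrm{coker}(F(P_1) \to F(P_0))$. Independence of the choice of presentation and compatibility with the monoidal structure rely on the projective cover $X_{(d+1)}$ of the unit from \Cref{lem:projcover} tensor-generating the projectives in a controlled way, together with the fact that the induction construction $\uInd^\ab$ from \eqref{uInd-abelianized} is right exact in its second argument and preserves the relevant Frobenius monoidal structure, as established in \Cref{prop:indcenter}.
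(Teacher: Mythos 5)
Your reduction of the corollary matches the paper's: both rest on a general appendix criterion saying that if $\cZ(\cA^{\ab})$ is a multitensor category with enough projectives, all lying in the image of $\cZ(\cA)\hookrightarrow\cZ(\cA^{\ab})$, then $\cZ(\cA^{\ab})$ is the abelian envelope of $\cZ(\cA)$; and your assembly of the inputs (\Cref{center-subcat}, \Cref{enough-projectives}, \Cref{prop::X-d-plus-1-in-center}(3)) is exactly what the section provides. The divergence is in how the appendix criterion itself is established. The paper does \emph{not} verify the universal property by hand: it cites \cite{BEO}*{Theorem~2.42}, a recognition theorem stating that if the abelian envelope of a Karoubian tensor category exists and has enough projectives, then it is realized as the abelian tensor subcategory generated by the image of \emph{any} fully faithful monoidal functor into a multitensor category with enough projectives. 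Under the hypothesis that all projectives of $\cZ(\cA^{\ab})$ lie in $\cZ(\cA)$, every object is a quotient of an object of $\cZ(\cA)$, so the generated subcategory is everything and the proof is two lines. Your proposed route --- constructing, for each faithful tensor functor $F$ out of $\cZ(\uRep S_d)$, an extension $\widetilde F(Y)=\coker(F(P_1)\to F(P_0))$ via projective presentations --- amounts to re-proving that recognition theorem from scratch. As a sketch it omits the genuinely delicate points: independence of the presentation, why $\widetilde F$ is monoidal and exact, why it restricts to $F$ on $\cZ(\uRep S_d)$ (note $F$ is only a faithful monoidal functor on a Karoubian, non-abelian category, so it need not interact well with cokernels a priori), and why restriction gives an equivalence of functor categories rather than merely an essentially surjective assignment. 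Your appeal to $\uInd^{\ab}$ and its Frobenius structure is not relevant to these points. So the corollary-level argument is sound and essentially the paper's, but the appendix ingredient should be discharged by citing the existing recognition theorem rather than by the direct construction you outline, which as written has real gaps.
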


\begin{proof} 
The corollary follows from the more general results of \Cref{cor::center-abelian-envelope-general} since the functor $\uRep^\ab S_d\to\cZ(\uRep^\ab S_d)$ preserves projectives (as noted in the proof of \Cref{prop::X-d-plus-1-in-center}).
\end{proof}

\subsection{Non-degeneracy of the centers}\label{sec:non-deg}

We conclude this section with a discussion on non-degeneracy of the center of Deligne's interpolation categories and their abelian envelopes in order to highlight the analogy with modular tensor categories.

We note that the categories $\cZ(\uRep S_t)$, for $t$ generic, and $\cZ(\uRep^\ab S_d)$, for $d\in \mZ_{\geq 0}$, are tensor categories in the sense of \cite{EGNO}. That is, they are rigid $\Bbbk$-linear monoidal abelian categories, with bilinear tensor product, that are locally finite such that $\End(\one)=\Bbbk$. Recall that a braided tensor category $\cC$ is \emph{factorizable} if the canonoical functor  $\cC\boxtimes \cC^{\rev}\to \cZ(\cC)$
gives an equivalence of braided monoidal categories. The centers $\cZ(\uRep S_t)$ and $\cZ(\uRep^\ab S_d)$ are factorizable using \cite{EGNO}*{Proposition~8.6.3}. 

Let $\cC$ be a braided $\Bbbk$-linear monoidal category with braiding $\Psi$. The object $V$ in $\cC$ \emph{centralizes} the object $W$ of $\cC$ if 
$$\Psi_{W,V}\circ \Psi_{V,W}=\ide_{V\otimes W}.$$
The \emph{M\"uger center} $\cC'$ of $\cC$ is the full subcategory of $\cC$ on those objects which centralize \emph{all} objects of $\cC$. 
Recall that $\cC$ is called \emph{non-degenerate} if the M\"uger center $\cC'$ is generated by the tensor unit (i.e.~equivalent to $\op{Vect}_\Bbbk$). 

In the case of \emph{finite} braided tensor categories, the concepts of factorizability and non-degeneracy are equivalent \cite{Shi}. However, we saw that the categories $\cZ(\uRep S_t)$ are only locally finite and have infinitely many simple objects, see \Cref{indec-classification} and  \Cref{indec-classification2}. Nevertheless, we have the following result.

\begin{proposition}
The braided tensor categories $\cZ(\uRep S_t)$, for $t$ generic, and $\cZ(\uRep^\ab S_d)$, for $d\in \mZ_{\geq 0}$, are non-degenerate. 
\end{proposition}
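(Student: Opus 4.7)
The plan is to show that the only indecomposable summand appearing in the M\"uger center of each category is the tensor unit $\one$; since the M\"uger center of a braided tensor category is closed under direct summands and every object decomposes into indecomposables, this suffices for non-degeneracy. By \Cref{indec-classification} (respectively \Cref{indec-classification2}), every indecomposable of $\cZ(\uRep S_t)$ (respectively $\cZ(\uRep^\ab S_d)$) is isomorphic to $\uW{\mu,V,U}$ for a singleton-free $\mu\vdash n$, an irreducible $\mC Z(\mu)$-module $V$, and an indecomposable object $U$ in $\uRep S_{t-n}$ (respectively $\uRep^\ab S_{d-n}$), so I would reduce the problem to deciding for which triples $(\mu,V,U)$ the object $\uW{\mu,V,U}$ lies in the M\"uger center.

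The key computation will be carried out in the Yetter--Drinfeld picture at finite characteristic and then transferred via \Cref{center-limit} (using \Cref{limitthm2} in the abelian envelope case) to the ultraproduct. If $\uW{\mu,V,U}\cong \prod_\cU W_i$ with $W_i=\Ind_{Z(\mu)\times S_{t_i-n}}^{S_{t_i}}(V_i\boxtimes U_i)$, then $W_i$ is a Yetter--Drinfeld $\ov{\mF}_{p_i}S_{t_i}$-module whose grading is supported on the conjugacy class of a fixed $\s\in S_n$ of cycle type $\mu$, while any object of $\uRep S_t$ (or $\uRep^\ab S_d$) embedded in the center via its symmetric half-braiding corresponds to an ultraproduct of trivially graded Yetter--Drinfeld modules. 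The observation that drives the argument is that for two Yetter--Drinfeld modules $A$ (with trivial grading) and $B$ (with homogeneous grading $g\in S_{t_i}$) the double braiding sends $a\otimes b \mapsto (g\cdot a)\otimes b$, and hence fails to be the identity exactly when some $g$ in the support of the grading of $B$ acts non-trivially on $A$.

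Two test objects suffice. For $\mu\neq\varnothing$, I would take $X\in\uRep S_t$ (respectively $\uRep^\ab S_d$) with its symmetric half-braiding as test object; any element of the support of $W_i$'s grading is a non-trivial permutation, and since $X_{t_i}$ is a faithful representation of $S_{t_i}$, this permutation acts non-trivially on $X_{t_i}$ for almost all $i$, so \L o\'s' theorem implies the double braiding in the ultraproduct differs from the identity. For $\mu=\varnothing$ we have $\uW{\varnothing,\Bbbk,U}\cong U$ with its symmetric half-braiding, and I would take $Y:=\uW{(2),\triv,\one}$, whose grading in the ultraproduct description is supported on the full class of transpositions of $S_{t_i}$. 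If $U\not\cong\one$, then $U_i$ is a non-trivial $S_{t_i}$-representation for almost all $i$; since transpositions generate $S_{t_i}$, some transposition acts non-trivially on $U_i$, so \L o\'s' theorem again transfers the failure of the double braiding up to the ultraproduct.

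Combining the two cases shows that $\uW{\varnothing,\Bbbk,\one}\cong\one$ is the only indecomposable in the M\"uger center, which is therefore generated by the tensor unit, giving non-degeneracy of both $\cZ(\uRep S_t)$ and $\cZ(\uRep^\ab S_d)$. The main technical point to watch is the first-order bookkeeping when applying \L o\'s' theorem: the centralizing condition is an equality of two morphisms between fixed objects, which is first-order provided both objects sit inside a common filtered piece of $\cZ(\uRep S_t)^{\le k,m}$ (respectively $\cZ(\uRep^\ab S_d)^{\le k,m}$), so one must verify that for each indecomposable $\uW{\mu,V,U}$ the test object can be placed in the same filtered piece for \Cref{center-limit} to apply as described.
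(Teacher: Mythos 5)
Your proof is correct and follows essentially the same strategy as the paper's: reduce to the indecomposables $\uW{\mu,V,U}$ via the classification theorems, compute the double braiding explicitly in the Yetter--Drinfeld picture at finite characteristic, and transfer its failure to be the identity through the ultraproduct via \L o\'s' theorem. The only difference is in the choice of test objects: for $\mu\neq\varnothing$ the paper braids against a second nontrivially graded object $\uW{\tau,\one,\one}$ with $\tau\notin Z(\s)$, whereas you braid against the trivially graded $X$ and invoke faithfulness of the permutation representation --- both computations give the same conclusion.
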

\begin{proof}
By \Cref{indec-classification} and \Cref{indec-classification2}, we have a classification of indecomposable objects $W=\uW{\mu,U,V}$ in $\uRep S_t$ and $\uRep^\ab S_d$, respectively. Assume that $\s\in S_n$ is an element of cycle type $\mu$ which is fixed-point free. Recall the ultrafilter description $W\cong \prod_{\cU}W_i$, where $W_i=\Ind_{Z(\s)\times S_{t_i-n}}^{S_{t_i}}(U_i\boxtimes V_i)$. 

First consider the case that $\s\neq 1$. Then for $i$ large enough, $\s\notin Z(S_{t_i})$. Choose $\tau\in S_{t_i}$ which does not commute with $\s$ and define $W':=\uW{\tau,\one,\one}$. Then choose an ultraproduct representation
$W'=\prod_{\cU}W_i'$ and for any $j\geq i$ one computes that 
$\Psi_{W'_j,W_j}\circ\Psi_{W_j,W_j'}\neq \ide_{W_j\otimes W_j'}.$
Indeed,
\begin{align*}
    \Psi_{W'_j,W_j}\circ\Psi_{W_j,W_j'}\left((1\otimes (u\boxtimes v))\otimes (1\otimes 1)\right )&=
    \Psi_{W,W'}\left((\s\otimes 1)\otimes  (1\otimes (u\boxtimes v))\right)\\
    &=(\s\tau \s^{-1} \otimes (u\boxtimes v))\otimes (\s\otimes 1)
\end{align*}
does not equal the identity as $\s\tau \s^{-1}\in Z(\s)$ if and only if $\tau \in Z(\s)$ (note that the tensor products in the induced modules are taken over the group algebras of subgroups, not over the base field).

If $\s=1\in S_0$, then without loss of generality $U=\one$ but assume that $V\neq \one\in \uRep S_{t}$, as otherwise $W\cong\one$ as objects of the center. Let us fix an ultrafilter representation $V\cong\prod_\cU V_i$. Then there exists an element $\tau\in S_{t_i}$ such that $\tau$, viewed as an element of $S_{t_j}$, for $j\geq i$, acts non-trivially on $V_j$. Then, computing as above, the braiding again does not square to the identity. Indeed, for almost all $j$,
\begin{align*}
    \Psi_{W'_j,W_j}\circ\Psi_{W_j,W_j'}\left((1\otimes v)\otimes (1\otimes 1)\right )&=
    (\s\tau\s^{-1}\otimes v)\otimes (\s\otimes 1)\\
    &=(1\otimes \tau v)\otimes (1 \otimes 1),
\end{align*}
for all $v\in \Bbbk \boxtimes V_j\cong V_j$. Passing to ultraproducts, as for almost all $i$, the braiding does not square to the identity, we conclude $\Psi_{W',W}\Psi_{W,W'}\neq \ide_{W\otimes W'}$ in both cases. Thus, $W$ is not in the M\"uger center, which is hence trivial.
\end{proof}

To summarize, $\cZ(\uRep S_t)$ and $\cZ(\uRep^\ab S_d)$ are non-degenerate and factorizable braided tensor categories that possess a ribbon structure by \cite{FL}*{Theorem~3.28}. Thus, these categories can be seen as infinite (possibly non-semisimple) analogues of modular tensor categories (cf. \cite{Shi} in the case of a finite tensor category).

\section{The Grothendieck ring of the center of \texorpdfstring{$\uRep S_t$}{Rep St}}\label{sec:K0}

In this section, we show that the Grothendieck ring of $\cZ(\uRep S_t)$ is a filtered ring such that its associated graded ring can be identified with the Grothendieck ring of a braided monoidal category $\cZ\Rep  S_{\geq 0}$ defined as the sum of the categories $\cZ(\Rep  S_n)$, for all $n\geq 0$, with the induction product on centers from \Cref{app:frob}. Thus, the Grothendieck ring can be described using a tower of centers of the symmetric groups $S_n$, varying $n$.

\subsection{The tower of centers of a sequence of groups}\label{sec:towers}

In this subsection, $\{G_n\}_{n\in \mZ_{\geq 0}}$ can be any sequence of groups such that $G_0=\{1\}\subseteq G_1\subseteq G_{2}\subseteq\ldots$, together with embeddings $\iota_{m,n}\colon G_m\times G_n\subseteq G_{m+n}$ such that $\iota_{m+n,r}(\iota_{m,n},\ide)=\iota_{m,n+r}(\ide,\iota_{n,r})$ under the natural identifications of $(G_m\times G_n)\times G_r$ and $G_m\times (G_n\times G_r)$. In addition, we require that $G_n\times G_0 \to G_n$ and $G_0\times G_n\to G_n$ are simply given by $(g,1)\mapsto g$, respectively, $(1,g)\mapsto g$. We refer to such a sequence as a \emph{tower of groups}. It induces a tower of algebras $\{\Bbbk G_n\}_{n\geq 0}$ as in \cite{SY}. Our main example of interest is the case where $G_n=S_n$, the tower of symmetric groups. 

Note that, unlike the group algebras $\Bbbk G_n$, the Drinfeld doubles $\Drin(G_n)$, varying $n$, do \emph{not} constitute a tower of algebras in the sense of \cite{SY}. Nevertheless, we can use Proposition \ref{centerind-groups} to construct an external tensor product
$$\cZ\Ind_{G_n\times G_m}^{G_{n+m}}\colon \cZ(\Rep_\Bbbk  G_n)\boxtimes \cZ(\Rep_\Bbbk  G_m)\cong \cZ(\Rep_\Bbbk  (G_{n}\times G_m))\longrightarrow \cZ(\Rep_\Bbbk  G_{n+m}).$$
We denote 
\begin{align}
    V\odot W:&=\Ind_{G_n\times G_m}^{G_{n+m}}(V\boxtimes W), &V\in \cZ(\Rep_\Bbbk  G_n), W\in \cZ(\Rep_\Bbbk  G_m),
\end{align}
and define the additive $\Bbbk$-linear category
$$\cZ(\Rep_\Bbbk G_{\geq 0}):=\bigoplus_{n\geq 0} \cZ(\Rep_\Bbbk  G_n).$$
Its objects are formal direct sums $\bigoplus_n V_n$, where $V_n$ is an object of $\cZ(\Rep_\Bbbk  G_n)$ which is zero for all but finitely values of $n$. The morphism spaces in $\cZ(\Rep_\Bbbk G_{\geq 0})$ are given by 
$$\Hom_{\cZ(\Rep_\Bbbk G_{\geq 0})}\Big(\bigoplus_n V_n,\bigoplus_n W_n\Big)=\bigoplus_{n}\Hom_{\cZ(\Rep_\Bbbk  G_n)}(V_n,W_n).$$

\begin{lemma}\label{lem:Ztower}
Given objects $V\in \cZ(\Rep_\Bbbk  G_n), W\in \cZ(\Rep_\Bbbk  G_m), U\in \cZ(\Rep_\Bbbk  G_k)$, there are natural isomorphisms 
\begin{align}
    \alpha_{V,W,U}\colon (V\odot W)\odot U\to V\odot (W\odot U)
\end{align}
in $\cZ(\Rep_\Bbbk G_{n+m+k})$ which satisfy the pentagon axiom of the associativity isomorphism of a monoidal category. In addition, there are coherent natural isomorphisms $V\odot \one\cong V\cong \one \odot V$, for $\one\in \cZ(\Rep_\Bbbk  G_0)$. 

This way, $\cZ(\Rep_\Bbbk G_{\geq 0})$ obtains the structure of an abelian $\Bbbk$-linear symmetric monoidal category with biexact tensor product such that $\End(\one)=\Bbbk$.
\end{lemma}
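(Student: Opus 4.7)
The plan is to combine transitivity of induction with the lift to centers provided by \Cref{centerind-groups}. First I would construct the associator: since $(G_n\times G_m)\times G_k$ and $G_n\times(G_m\times G_k)$ have the same image in $G_{n+m+k}$ by hypothesis on the tower, transitivity of induction for ordinary group representations supplies a canonical natural isomorphism
\begin{equation*}
\Ind_{G_{n+m}\times G_k}^{G_{n+m+k}}\circ \bigl(\Ind_{G_n\times G_m}^{G_{n+m}}\boxtimes \id\bigr)
\;\isomorph\;
\Ind_{G_n\times G_{m+k}}^{G_{n+m+k}}\circ \bigl(\id\boxtimes \Ind_{G_m\times G_k}^{G_{m+k}}\bigr).
\end{equation*}
Composing with the Frobenius monoidal lifts $\cZ\Ind$ from the appendix gives $\alpha_{V,W,U}$ inside $\cZ(\Rep_\Bbbk G_{n+m+k})$.

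For the pentagon, I would reduce to the unique (up to canonical iso) iterated induction from $G_n\times G_m\times G_k\times G_l$ to $G_{n+m+k+l}$: the five paths around the pentagon all factor uniquely through this iterated induction, and functoriality of $\cZ\Ind$ then produces the required equality in the center. For the unit constraints, the hypothesis that $G_n\times G_0\to G_n$ is $(g,1)\mapsto g$ says that $G_n\times G_0 = G_n$ as a subgroup, so induction along this inclusion is the identity functor, giving $V\odot \one\cong V$; the right unit works symmetrically and the triangle axiom is immediate.

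For the abelian structure and biexactness: each $\cZ(\Rep_\Bbbk G_n)$ is locally finite abelian, hence so is the direct sum. Induction of finite-group representations is two-sided adjoint to the exact restriction functor and therefore exact; since the forgetful functor $\cZ(\Rep_\Bbbk G)\to \Rep_\Bbbk G$ is exact and conservative, $\cZ\Ind$ is exact in each variable, so $\odot$ is biexact. Finally $\End(\one) = \End_{\cZ(\Rep_\Bbbk G_0)}(\Bbbk) = \End_{\Rep_\Bbbk\{1\}}(\Bbbk) = \Bbbk$.

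For the symmetric structure I would use a swap element: in the symmetric group case (and more generally whenever the tower admits an involutive element $\tau_{n,m}\in G_{n+m}$ with $\tau_{n,m}(G_m\times G_n)\tau_{n,m}^{-1} = G_n\times G_m$), conjugation by $\tau_{n,m}$ induces a natural isomorphism $V\odot W\isomorph W\odot V$ in $\cZ(\Rep_\Bbbk G_{n+m})$. The hexagon axioms then reduce to the compatibility of $\tau_{n,m+k}$ with $\tau_{n,m}$ and $\tau_{n,k}$, which in the symmetric group tower is just the braid relation; involutivity gives the symmetry $\Psi^2 = \id$. The main obstacle I anticipate is verifying that the natural isomorphisms defined on underlying group representations actually respect the induced half-braidings on each side; this reduces to a direct inspection of the Yetter--Drinfeld braiding formulas on induced modules given in \Cref{DPR-W-Thm}, using that both sides are gradings by the same conjugacy class inside $G_{n+m+k}$.
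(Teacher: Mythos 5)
Your proposal is correct and follows the paper's route for everything except the symmetry. For the associator the paper invokes uniqueness of left adjoints (via Yoneda) of the common restriction functor $\Res^{G_{n+m+k}}_{G_n\times G_m\times G_k}$, which is exactly your ``transitivity of induction'' and makes coherence automatic from canonicity, just as you argue by factoring the pentagon through the fourfold iterated induction; the unit, exactness/biexactness, and $\End(\one)=\Bbbk$ arguments coincide with the paper's. For the symmetric structure the paper simply writes down the swap $g\otimes(v\boxtimes w)\mapsto g\otimes(w\boxtimes v)$ between $\Ind_{G_n\times G_m}^{G_{n+m}}(V\boxtimes W)$ and $\Ind_{G_m\times G_n}^{G_{n+m}}(W\boxtimes V)$ and observes it squares to the identity, whereas you conjugate by an explicit involutive shuffle $\tau_{n,m}$. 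These amount to the same thing --- the paper's formula implicitly identifies the two subgroups $G_n\times G_m$ and $G_m\times G_n$ of $G_{n+m}$, which for symmetric groups is precisely your conjugation --- and your version is the more scrupulous one; likewise your closing point about checking that the isomorphisms respect the induced half-braidings is a verification the paper leaves implicit but which does go through by inspecting the Yetter--Drinfeld formulas of \Cref{centerind-groups}. The one caveat is that you hedge the symmetry to towers admitting a swap element, while the lemma is stated for arbitrary towers; this is harmless for the paper's purposes (only $G_n=S_n$ is used), but for the general statement one should either add the swap element to the tower axioms or accept that the paper's braiding formula presupposes the same identification.
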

\begin{proof}
Given a fixed choice of adjunctions of $\Ind_{G_n\times G_m}^{G_{n+m}}$ and $\Res_{G_n\times G_m}^{G_{n+m}}$ we obtain that the two left adjoints $\Ind_{G_{n+m}\times G_k}^{G_{n+m+k}}\circ\big(\Ind_{G_n\times G_m}^{G_{n+m}}\boxtimes \ide\big)$ and $\Ind_{G_{n}\times G_{m+k}}^{G_{n+m+k}}\circ\big(\ide\boxtimes \Ind_{G_m\times G_k}^{G_{m+k}}\big)$ of the corresponding restriction $\Res^{G_{n+m+k}}_{G_n\times G_m\times G_k}$ are canonically naturally isomorphic by the Yoneda Lemma and hence necessarily coherent. Similarly, one proves the coherence of the natural isomorphisms
$$V\odot \one=\Ind_{G_n\times G_0}^{G_n}(V\boxtimes \one)\cong V\cong\Ind_{G_0\times G_n}^{G_n}(\one\boxtimes V)\cong  \one \odot V.$$
The category $\cZ(\Rep_\Bbbk G_{\geq 0})$ is abelian and $\Bbbk$-linear as a direct sum of such categories. Note that $V\odot W$ is exact in both arguments as it is given by the composition of the exact functors of external tensor product \cite{EGNO}*{Section~1.11} and induction. Further, $$\End_{\cZ(\Rep_\Bbbk G_{\geq 0})}(\one)\cong\End_{\cZ(\Rep_\Bbbk  G_{0})}(\one)\cong \End_{\Rep_\Bbbk  G_0}(\one)=\Bbbk.$$
The natural isomorphisms
$$V\odot W=\Ind_{G_n\times G_m}^{G_{n+m}}(V\boxtimes W)\isomorph\Ind_{G_m\times G_n}^{G_{n+m}}(W\boxtimes V) =W\odot V,\quad 
g\otimes (v\boxtimes w) \mapsto g\otimes (w\boxtimes v),
$$
square to the identity and give $\cZ(\Rep_\Bbbk G_\geq 0)$ the structure of a symmetric monoidal category. 
\end{proof}
In the terminology of \cite{EGNO}*{Def.~4.2.3}, $\cZ(\Rep_\Bbbk G_{\geq 0})$ is a \emph{ring category}. Note that it does not have duals since there are no morphisms from $V\odot W$ to $\one\in \cZ(\Rep_\Bbbk  G_0)$ as soon as either $V$ or $W$ are of degree at least $1$.
Moreover, \Cref{lem:Ztower} shows that the category $\cZ(\Rep_\Bbbk G_{\geq 0})$ is an $\mN$-graded monoidal category with respect to the grading given by
$$\cZ(\Rep_\Bbbk G_{\geq 0})^k:=\cZ(\Rep_\Bbbk  G_k).$$

\begin{remark}\label{rem:computation}
In practice, the tensor product $\odot$ and the resulting product on the Grothendieck ring is computed as follows: Set $W=\Ind_{Z(\s)}^{G_n}(V)$ and $W'=\Ind_{Z(\tau)}^{G_m}(V')$ for $\s\in G_n$, $\tau\in G_m$ and $V$ a $Z(\s)$-module, $V'$ a $Z(\tau)$-module. Then, as an $G_{n+m}$-module, 
$$W\odot W'\cong \Ind_{Z(\s)\times Z(\tau)}^{G_{n+m}}(V\boxtimes V').$$
The $G_{n+m}$-coaction for the element $g\otimes (v\otimes v')\in W\odot W'$ is given by
$$\delta(g\otimes (v\otimes v'))= g(\iota_{n,m}(\s,\tau))g^{-1}\otimes (v\otimes v').$$
Thus, one needs to decompose $\Ind_{Z(\s)\times Z(\tau)}^Z(V\boxtimes V')$ into indecomposable $Z$-modules, for $Z=Z(\iota_{n,m}(\s ,\tau))\subseteq G_{n+m}$, in order to compute $W\odot W'$.
\end{remark}

\subsection{An oplax tensor functor from \texorpdfstring{$\cZ(\Rep S_{\geq 0})$}{ZRep S>=0} to \texorpdfstring{$\cZ(\uRep S_t)$}{Z(Rep St)}}\label{sec:oplax}

For the rest of this section, we restrict to the case of the symmetric groups $G_n=S_n$, $\kk=\mC$, and recall the functors
$$\uInd\colon \cZ(\Rep  S_n)\boxtimes \uRep S_{t-n}\longrightarrow \cZ(\uRep S_t)$$
from \Cref{prop:indcenter}. We restrict to a functor 
\begin{align*}
    \uInd\colon \cZ(\Rep  S_n)&\longrightarrow \cZ(\uRep S_t),\qquad  V\longmapsto \uInd(W\boxtimes \one),
\end{align*}
for the tensor unit $\one$ of $\uRep S_{t-n}$ and $V\in \cZ(\Rep  S_n)$. These functors, varying  $n\geq 0$, produce a functor
\begin{align}\label{eq:uInd}
    \uInd\colon \cZ(\Rep S_{\geq 0})&\longrightarrow \cZ(\uRep S_t).
\end{align}

We further recall the filtration $\cZ(\uRep S_t)^{\leq k}$ induced by the filtration on $\uRep S_t$ determined by the number of tensor powers of the generating object $X$, see \Cref{sec:centerlimit}.

\begin{proposition}\label{prop:Indtensorfunctor}
There is a split injective natural transformation  
$$\tau_{V,W}\colon \uInd(V\odot W)\to \uInd (V)\otimes \uInd(W)$$
making $\uInd\colon \cZ(\Rep S_{\geq 0})\to\cZ(\uRep S_t)$ an oplax monoidal functor of braided $\Bbbk$-linear monoidal categories which is compatible with the respective filtrations. 

In particular, the oplax monoidal structure $\tau$ is compatible with the braiding in the sense that the diagram 
\begin{align*}
    \xymatrix{
    \uInd(V\odot W) \ar[rr]^{\uInd(\Psi_{V,W})}\ar@{^{(}->}[d]^{\tau_{V,W}}&& \uInd(W\odot V)\ar@{^{(}->}[d]^{\tau_{W,V}}\\
    \uInd(V)\otimes \uInd(W)\ar[rr]^{\Psi_{\uInd(V),\uInd(W)}}&& \uInd(W)\otimes \uInd(V)
    }
\end{align*}
commutes. Here $\Psi_{V,W}$ is the symmetric braiding on $\cZ\Rep S_{\geq 0}$.
\end{proposition}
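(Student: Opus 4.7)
The plan is to construct $\tau_{V,W}$ via an ultrafilter description of both sides together with a Mackey-style decomposition of tensor products of induced modules at the finite levels. Fix an ultrafilter description $\uRep S_t\subset\prod_\cU \Rep_{p_i}S_{t_i}$ as in \Cref{limitthm}, and use \Cref{rep-limit} to write $V\cong\prod_\cU V_i$ with $V_i\in\cZ(\Rep_{p_i}S_n)$ and $W\cong\prod_\cU W_i$ with $W_i\in\cZ(\Rep_{p_i}S_m)$. Chasing through the construction of $\uInd$ in \Cref{prop:indcenter}, and applying transitivity of induction to $V\odot W=\cZ\Ind_{S_n\times S_m}^{S_{n+m}}(V\boxtimes W)$, yields
\begin{align*}
    \uInd(V) &\cong \prod_\cU \cZ\Ind_{S_n\times S_{t_i-n}}^{S_{t_i}}(V_i\boxtimes\one), \\
    \uInd(W) &\cong \prod_\cU \cZ\Ind_{S_m\times S_{t_i-m}}^{S_{t_i}}(W_i\boxtimes\one), \\
    \uInd(V\odot W) &\cong \prod_\cU \cZ\Ind_{S_n\times S_m\times S_{t_i-n-m}}^{S_{t_i}}(V_i\boxtimes W_i\boxtimes\one).
\end{align*}

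At each finite level $i$ with $t_i\geq n+m$, the subgroups $S_n\times S_{t_i-n}$ and $S_m\times S_{t_i-m}$ of $S_{t_i}$, embedded so that $S_n$ acts on $\{1,\ldots,n\}$ and $S_m$ acts on $\{n+1,\ldots,n+m\}$, intersect precisely in $S_n\times S_m\times S_{t_i-n-m}$. Mackey's formula applied to the tensor product of two induced Yetter--Drinfeld modules then exhibits $\cZ\Ind_{S_n\times S_m\times S_{t_i-n-m}}^{S_{t_i}}(V_i\boxtimes W_i\boxtimes\one)$ as the direct summand of $\cZ\Ind_{S_n\times S_{t_i-n}}^{S_{t_i}}(V_i\boxtimes\one)\otimes\cZ\Ind_{S_m\times S_{t_i-m}}^{S_{t_i}}(W_i\boxtimes\one)$ indexed by the identity double coset in $(S_n\times S_{t_i-n})\backslash S_{t_i}/(S_m\times S_{t_i-m})$. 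This gives a natural split injective morphism $\tau_i$ in $\cZ(\Rep_{p_i}S_{t_i})$, and I then set $\tau_{V,W}:=\prod_\cU\tau_i$, which by \Cref{center-limit} is split injective in $\cZ(\uRep S_t)$.

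The remaining verifications pass from the finite levels to the ultraproduct by \L o\'s' theorem. Naturality of $\tau_{V,W}$ in $V$ and $W$ is inherited componentwise. The oplax monoidal axioms (coassociativity and counitality) and the braiding-compatibility square in the statement are first-order equalities of morphisms that hold on almost every level, thanks to the associativity of Mackey decomposition and the braiding-compatibility of the Frobenius monoidal structure on $\cZ\Ind$ for finite groups (cf.~\Cref{centerind-groups} and \Cref{app:frob}). Filtration compatibility is immediate from \eqref{ind-fitration-comp}: since $V\in\cZ(\Rep S_n)$ forces $\uInd(V)\in\cZ(\uRep S_t)^{\leq n}$, both $\uInd(V)\otimes\uInd(W)$ and $\uInd(V\odot W)$ lie in $\cZ(\uRep S_t)^{\leq n+m}$. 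The hard part will be checking that $\tau_i$ is a morphism in the Drinfeld center rather than merely in $\Rep_{p_i}S_{t_i}$: this amounts to matching the Yetter--Drinfeld $S_{t_i}$-grading on $\cZ\Ind_{S_n\times S_m\times S_{t_i-n-m}}^{S_{t_i}}(V_i\boxtimes W_i\boxtimes\one)$ with the convolution of the gradings on the two tensor factors, and it reduces to the functoriality of $\cZ\Ind$ as a Frobenius monoidal functor on centers as developed in \Cref{app:frob}.
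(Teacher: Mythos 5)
Your skeleton matches the paper's: reduce to finite levels via the ultraproduct, observe that $(S_n\times S_{t_i-n})\cap(S_m\times S_{t_i-m})=S_n\times S_m\times S_{t_i-n-m}$ inside $S_{t_i}$, build $\tau$ there, and transfer naturality, the oplax axioms, the braiding square, and filtration compatibility by \L o\'s' theorem. The image of the paper's $\tau_{V_i,W_i}$ is indeed exactly the identity-double-coset part of the tensor product of the two induced modules, so your Mackey picture identifies the right subobject.

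The gap is that the one genuinely nontrivial step --- which you correctly flag as ``the hard part'' --- is asserted rather than proven, and the reduction you propose for it does not work. A Mackey decomposition of $\Ind_H^{G}(V)\otimes\Ind_K^{G}(W)$ over double cosets is standard in $\Rep_{p_i}S_{t_i}$, but you need it as a direct sum decomposition in $\cZ(\Rep_{p_i}S_{t_i})$, i.e.\ you must check that the complement of the identity-double-coset summand is closed under both the $S_{t_i}$-action \emph{and} the Yetter--Drinfeld grading. This does not follow from ``functoriality of $\cZ\Ind$ as a Frobenius monoidal functor'': separability (\Cref{centerind-groups}) only gives $\mu_{V,W}\circ\delta_{V,W}=\ide$ for the single functor $\cZ\Ind_{S_n\times S_m\times S_{t_i-n-m}}^{S_{t_i}}$ applied to one tensor product, whereas $\tau_i$ is $\delta$ followed by a counit-induced projection onto a \emph{different} pair of induced modules, and a posteriori injectivity and the existence of a graded invariant complement must be verified by hand. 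The paper does this by fixing coset representatives $g_\alpha$ of $S_n\times S_{t_i-n}$ and $h_\beta$ of $S_m\times S_{t_i-m}$, computing $\tau_{V_i,W_i}\bigl(\s_{\alpha,\beta}\otimes(v_\gamma\boxtimes w_\delta\boxtimes 1)\bigr)=(g_\alpha\otimes(v_\gamma\boxtimes 1))\otimes(h_\beta\otimes(w_\delta\boxtimes 1))$, and observing that the basis vectors with $g_\alpha H\cap h_\beta K=\varnothing$ span a $G$-invariant, $G$-graded complement. A secondary consequence of skipping the explicit formula: the coassociativity of $\tau$ and the commutativity of the braiding square are not ``first-order equalities that hold on almost every level'' until you know \emph{which} morphism $\tau_i$ is; the paper gets them precisely because $\tau_i$ is the composite of the Frobenius-monoidal $\delta_{V_i,W_i}$ (which satisfies \eqref{frobmon1}--\eqref{frobmon2} and \eqref{braidcomp}) with a natural surjection. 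You should either supply the basis computation or prove the Yetter--Drinfeld Mackey decomposition you are invoking.
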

\begin{proof}
Let $V\in \cZ(\Rep  S_n)$ and $W\in \cZ(\Rep  S_m)$ be objects. We know by \eqref{ind-fitration-comp} in Section \ref{ind-res-sect} that 
$\uInd(V)\in \cZ(\uRep S_t)^{\leq n}$, $\uInd(W)\in \cZ(\uRep S_t)^{\leq m}$. Further, $V\odot W$ is an object in  $\cZ(\Rep  S_{n+m})$ whence $\uInd (V\odot W)\in \cZ(\uRep S_t)^{\leq m+n}$ and hence $\uInd$ is compatible with the filtrations (using  the induced filtration from the grading on $\cZ\Rep S_{\geq 0}$).

In order to  construct $\tau$ and prove split injectivity, 
we use an ultrafilter description. Without loss of generality, consider $i$ such that $t_i\geq n+m$ and write $\mF_i:=\ov{\mF}_{p_i}$. We note that the trivial module $\mF_i$ is a submodule and quotient of $\Ind_{S_{t_i-n-m}\times S_m}^{S_{t_i-n}}(\mF_i\boxtimes \mF_i)$ and $\Ind_{S_{t_i-n-m}\times S_n}^{S_{t_i-m}}(\mF_i\boxtimes \mF_i)$, respectively, in a canonical way (using the unit of the adjunction $(\Res, \Ind)$, respectively, counit of the adjunction $(\Ind,\Res)$ evaluated on the one-dimensional module).  %
As induction is an exact functor, having a left and right adjoint, this implies that we have a surjective map in $\cZ(\Rep_{p_i} S_{t_i})$
\begin{align*}
&\Ind_{S_n\times S_m\times S_{t_i-n-m}}^{S_{t_i}}(V_i\boxtimes \mF_i\boxtimes \mF_i)
\otimes\Ind_{S_n\times S_m\times S_{t_i-n-m}}^{S_{t_i}}(\mF_i\boxtimes W_i\boxtimes \mF_i)\\
&\cong \Ind_{S_n\times S_{t_i-n}}^{S_{t_i}}(V_i\boxtimes \Ind_{S_m\times S_{t_i-n-m}}^{S_{t_i-n}}(\mF_i\boxtimes\mF_i))
\otimes\Ind_{S_m\times S_{t_i-m}}^{S_{t_i}}(W_i\boxtimes \Ind_{S_n\times S_{t_i-n-m}}^{S_{t_i-m}}(\mF_i\boxtimes \mF_i))\\
&\twoheadrightarrow \Ind_{S_n\times S_{t_i-n}}^{S_{t_i}}(V_i\boxtimes \mF_i)
\otimes\Ind_{S_m\times S_{t_i-m}}^{S_{t_i}}(W_i\boxtimes \mF_i)
\end{align*}
which displays the former as a quotient of the latter. These morphisms are natural in $V_i, W_i$ by construction. We pre-compose with the natural transformation $\delta_{V_i,W_i}$ from \Cref{centerind-groups} to yield the natural transformation 
\begin{align*}
\tau_{V_i,W_i}\colon \Ind_{S_n\times S_m\times S_{t_i-n-m}}^{S_{t_i}}(V_i\boxtimes W_i\boxtimes \mF_i)%
&\longrightarrow \Ind_{S_n\times S_{t_i-n}}^{S_{t_i}}(V_i\boxtimes \mF_i)
\otimes\Ind_{S_m\times S_{t_i-m}}^{S_{t_i}}(W_i\boxtimes \mF_i).
\end{align*}

We claim that $\tau_{V_i,W_i}$ is split injective. To see this, we note the  equality
$$(S_{n}\times S_{t_i-n})\cap (S_{m}\times S_{t_i-m})=S_n\times S_m\times S_{t_i-n-m}$$
of subgroups of $S_{t_i}$, where $S_n$ embeds as $S_{\{1,\ldots,n\}}$ and $S_m$ embeds as $S_{\{n+1,\ldots, n+m\}}$. To show this, take an element $g=(g_n,h_n)=(g_m,h_m)$ in the intersection, with $g_n\in S_n$, $g_m\in S_m$, and $h_n\in S_{t_i-n}$, $h_m\in S_{t_i-m}$. We note that $g_n^{-1}g=(g_m,g_n^{-1}h_m)$ preserves the set $\{1,\ldots, n\}$, which is fixed by $g_m$ by assumption, and hence also preserved by $g_n^{-1}h_m\in S_{t_i-m}$. Thus, $g_n^{-1}h_m\in S_n\times S_m\times S_{t_i-n-m}$, and hence $g\in S_n\times S_m\times S_{t_i-n-m}$.

Next, we recall the elementary observation that for subgroups $K,H$ of $G$, $g,h\in G$, the intersection of cosets $gH\cap hK$ is either empty or a coset $z(K\cap H)$ of $K\cap H$ in $G$, for any $z\in gH\cap hK$. We use this observation as follows. Fix coset decompositions %
$$G=\bigsqcup_\alpha g_\alpha H, \qquad G=\bigsqcup_\beta h_\beta K,$$
to yield a coset decomposition
$$G=\bigsqcup_{(\alpha,\beta)} g_\alpha H\cap h_\beta K= \bigsqcup_{(\alpha,\beta)}\s_{\alpha,\beta} H\cap K,$$
over the pairs $(\alpha,\beta)$ for which the intersections of cosets is non-empty, and
where
$$\s_{\alpha,\beta}=g_\alpha h_{\alpha,\beta}=h_{\beta}k_{\alpha,\beta}\in g_\alpha H\cap h_\beta K,$$
for some elements $h_{\alpha,\beta}\in H$, $k_{\alpha,\beta}\in K$. We set $H=S_n\times S_{t_i-n}$ and $K=S_m\times S_{t_i-m}$ viewed as subgroups of $G=S_{t_i}$.

Observe that a basis for $\Ind_{S_n\times S_m\times S_{t_i-n-m}}^{S_{t_i}}(V_i\boxtimes W_i\boxtimes \mF_i)$ is given by the set 
$$\{\s_{\alpha,\beta}\otimes (v_\gamma\boxtimes w_\delta\boxtimes 1)\}_{(\alpha, \beta, \gamma, \delta)},$$
where $\{v_\gamma\}_\gamma$ and $\{w_\delta\}_\delta$ are bases of $V_i$ and $W_i$, respectively, and $(\alpha,\beta)$ with non-empty coset intersection as above. We consider the images of these basis elements under $\tau_{V_i,W_i}$. Using the formula for $\delta_{V_i,W_i}$ in \Cref{prop:indcenter}, we compute that
\begin{align*}
    \tau_{V_i,W_i}\big(\s_{\alpha,\beta}\otimes (v_\gamma\boxtimes w_\delta\boxtimes 1)\big)&=\big(\s_{\alpha,\beta}\otimes (v_\gamma\boxtimes 1)\big)\otimes\big( \s_{\alpha,\beta}\otimes ( w_\delta\boxtimes 1)\big)\\
    &=\big(g_\alpha h_{\alpha,\beta}\otimes (v_\gamma\boxtimes 1)\big)\otimes\big( h_{\beta}k_{\alpha,\beta}\otimes ( w_\delta\boxtimes 1)\big)\\
    &=\big(g_\alpha \otimes (v_\gamma\boxtimes 1)\big)\otimes\big( h_{\beta}\otimes ( w_\delta\boxtimes 1)\big).
\end{align*}
Thus, the map $\tau_{V_i,W_i}$ sends the given basis to a subset of a basis for the target space $\Ind_{S_n\times S_{t_i-n}}^{S_{t_i}}(V_i\boxtimes \mF_i)
\otimes\Ind_{S_m\times S_{t_i-m}}^{S_{t_i}}(W_i\boxtimes \mF_i)$. Hence, $\tau_{V_i,W_i}$ is injective.

Next, we claim that a complement of the image of $\tau_{V_i,W_i}$ is closed under the $S_{t_i}$-action. We have seen that the image of $\tau_{V_i,W_i}$ is spanned over $\mF_i$ by those vectors
$$u_{\alpha,\beta,\gamma,\delta}:=\big(g_\alpha \otimes (v_\gamma\boxtimes 1)\big)\otimes\big( h_{\beta}\otimes ( w_\delta\boxtimes 1)\big)$$
for which $g_\alpha H\cap h_\beta K\neq \varnothing$, or equivalently, $g_\alpha^
{-1}h_\beta\in H\cap K$. Those vectors $u_{\alpha,\beta,\gamma,\delta}$ for which $g_\alpha^
{-1}h_\beta\not\in H\cap K$ clearly span a $G$-invariant complement. 

If $\{v_\gamma\}_\gamma$ and $\{w_\delta\}_\delta$ are chosen to be $G$-homogeneous bases, this complement is also a $G$-graded subspace, hence, a Yetter--Drinfeld submodule, showing that $\tau_{V_i,W_i}$ splits as a morphism in $\cZ(\Rep_{p_i} S_{t_i})$.

Next, we observe that $\tau_{V_i,W_i}$ is oplax monoidal. This is a consequence of $\delta_{V_i,W_i}$ being oplax monoidal mapping to a larger object in $\cZ(\Rep_{p_i} S_{t_i})$ by \Cref{prop:indcenter} and naturality of $\uInd(X\boxtimes Y)$ in $Y\in \uRep S_{t-n}$. 

Using \eqref{braidcomp} and naturality of the braiding of $\cZ(\uRep S_t)$, we see that this oplax monoidal structure is compatible with the braidings, using the symmetric braiding on $\cZ\Rep S_{\geq 0}$.

By \L o\'s' theorem, we obtain an induced injective natural transformation $\tau_{V,W}$ as claimed. This structure makes $\uInd$ an oplax monoidal functor compatible with braiding as claimed.
\end{proof}

\subsection{The Grothendieck ring of \texorpdfstring{$\cZ(\uRep S_t)$}{Z(Rep St)}}

Recall that 
$$\gr K_0^\oplus (\uRep S_t)\cong \bigoplus_{k\geq 0}K_0(\Rep  S_n)$$
from \Cref{sec:K0RepSt}, where the right-hand side is the ring of symmetric functions.
To study the Grothendieck ring of the center $\cZ(\uRep S_t)$, we first note that the direct sum of Grothendieck rings
$$K_0(\cZ (\Rep S_{\geq 0}))=\bigoplus_{n\geq 0} K_0\big(\cZ(\Rep  S_n)\big)$$
obtains the structure of a graded commutative algebra with the product
\begin{align}
    [V]\cdot [W]:=[V\odot W].
\end{align}
The unit is given by $[\one]$, for $\one\in \cZ(\Rep  S_0)$.

\begin{theorem}\label{thm:K0}
The functor $\uInd$ from \eqref{eq:uInd} induces an isomorphism of graded rings
$$\gr K_0^{\oplus}(\uInd) \colon K_0(\cZ\Rep S_{\geq 0})\isomorph \gr K_0^\oplus\cZ(\uRep S_t)),$$
where the associated graded of $K_0^\oplus\cZ(\uRep S_t))$ is taken with respect to the filtration induced by the filtration $\cZ(\uRep S_t)^{\leq k}$ of categories.
\end{theorem}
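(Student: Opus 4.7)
The plan is to show that $\gr K_0^\oplus(\uInd)$ is both a degree-preserving bijection on natural bases and a graded ring homomorphism; the isomorphism of graded rings then follows immediately, since $\uInd$ is already compatible with filtrations by the filtration analysis of \Cref{ind-res-sect}.

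For the \emph{bijection of bases}, I will use on the source the basis $\{[\Ind_{Z(\mu)}^{S_n}(V)] : n\geq 0,\ \mu\vdash n,\ V\in \Irrep(Z(\mu))\}$ of $K_0(\cZ\Rep S_{\geq 0})$ provided by \Cref{DPR-W-Thm}. On the target, by \Cref{indec-classification} together with the exact-filtration-degree statement of \Cref{ind-res-sect} and \Cref{thm::X-lambda}, the graded piece $\gr_N K_0^\oplus(\cZ(\uRep S_t))$ has as basis the classes $[\uW{\mu_0,V_0,X_\lambda}]$ where $\mu_0$ is singleton-free, $V_0\in \Irrep(Z(\mu_0))$, $\lambda$ is any partition, and $|\mu_0|+|\lambda|=N$. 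The singleton decomposition $\mu=\mu_0\sqcup (1^{|\lambda|})$ together with $V=V_0\boxtimes S^\lambda$ under $Z(\mu)\cong Z(\mu_0)\times S_{|\lambda|}$ yields a canonical degree-preserving bijection between the two bases, and that $\uInd$ realizes this bijection at the associated graded level is the combined content of \Cref{lem::induced-objects}, which identifies the underlying object of $\uInd(\Ind_{Z(\mu)}^{S_n}(V)\boxtimes \one)$ with $\uW{\mu_0,V_0,\uInd_{S_{|\lambda|}\times S_{t-n}}^{S_{t-|\mu_0|}}(S^\lambda\boxtimes \one)}$, and \Cref{lem:inddec}, which decomposes the inner induction as $X_\lambda$ plus summands $X_\tau$ with $|\tau|<|\lambda|$; the lower-order summands produce objects $\uW{\mu_0,V_0,X_\tau}$ of filtration degree $|\mu_0|+|\tau|<N$ and so vanish in $\gr_N$.

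For the \emph{ring homomorphism property} I will use the split injective natural transformation $\tau_{V,W}\colon \uInd(V\odot W)\hookrightarrow \uInd(V)\otimes\uInd(W)$ supplied by the oplax monoidal structure of \Cref{prop:Indtensorfunctor}. In $K_0^\oplus$ this gives
\[ [\uInd(V)]\cdot[\uInd(W)] = [\uInd(V\odot W)] + [C_{V,W}], \]
where $C_{V,W}$ is the cokernel of $\tau_{V,W}$; both $\uInd(V\odot W)$ and $\uInd(V)\otimes\uInd(W)$ lie in $\cZ(\uRep S_t)^{\leq N}$ with $N=|V|+|W|$. Hence the ring homomorphism property at the associated graded level reduces to the filtration estimate $C_{V,W}\in \cZ(\uRep S_t)^{\leq N-1}$, which causes the identity to collapse modulo filtration $\leq N-1$ to $\gr \uInd([V]\cdot [W]) = \gr\uInd([V])\cdot \gr\uInd([W])$.

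The main obstacle is precisely this cokernel filtration bound, to be established via the explicit ultraproduct description from the proof of \Cref{prop:Indtensorfunctor}: in $\Rep_{p_i}S_{t_i}$ the cokernel $C_{V_i,W_i}$ is spanned by the vectors $u_{\alpha,\beta,\gamma,\delta}$ with $g_\alpha^{-1}h_\beta\notin H\cap K$, which by Mackey's formula collect into induced summands $\Ind_{H\cap gKg^{-1}}^{S_{t_i}}(\cdot)$ indexed by the nontrivial double cosets in $H\backslash S_{t_i}/K$. A direct double-coset analysis, parametrizing cosets by the overlap integer $k=|\{1,\ldots,n\}\cap g\{n+1,\ldots,n+m\}|$, shows that $H\cap gKg^{-1}$ is conjugate to $S_k\times S_{n-k}\times S_{m-k}\times S_{t_i-n-m+k}$ with $k\geq 1$ for nontrivial cosets, and the resulting summand acts trivially on the $S_{t_i-n-m+k}$ factor; consequently it defines an object of filtration degree $n+m-k\leq N-1$ under the ultraproduct, as required.
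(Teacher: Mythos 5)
Your proposal is correct, but it diverges from the paper's proof at the key technical step, so let me compare. For the ring-homomorphism property, the paper does \emph{not} compute the complement of $\tau_{V,W}$ directly: it decomposes $V$ and $W$ into simples, uses the Littlewood--Richardson rule, \Cref{lem:inddec} and \Cref{lem::K0-RepSt} to match the top-degree indecomposable summands of $\uInd(V)\otimes\uInd(W)$ and of $\uInd(V\odot W)$ \emph{as objects of $\uRep S_t$}, and then invokes Krull--Schmidt together with split injectivity of $\tau_{V,W}$ to conclude that the complement must sit in strictly lower filtration. Your Mackey double-coset analysis replaces this indirect leading-term comparison with an explicit decomposition of the complement: the nontrivial double cosets $HgK$ are indeed indexed by the overlap $k=|\{1,\dots,n\}\cap g\{n+1,\dots,n+m\}|\geq 1$, the group $H\cap gKg^{-1}$ is the stabilizer of the common refinement with blocks of sizes $k,n-k,m-k,t_i-n-m+k$, the induced module is trivial on the last block, and \eqref{ind-fitration-comp} then gives filtration degree $\leq n+m-k$. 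This is a valid and in some ways sharper argument (it exhibits the complement as a sum of induced objects each of which drops filtration by at least one), at the price of redoing some group theory that the paper's Pieri-rule machinery already encapsulates. For the isomorphism itself, your ``bijection of bases'' is essentially the paper's injectivity argument --- the singleton-stripping correspondence $(\mu,V)\leftrightarrow(\mu_0,V_0,\lambda)$ with $V\cong V_0\boxtimes S^\lambda$, plus \Cref{lem:inddec} to isolate $X_\lambda$ as the unique top-degree summand of $U_{\mu,V}$ --- promoted to also deliver surjectivity, which the paper instead gets from \Cref{prop:oldgenall}. Two small points to tighten: the identification $\uInd(\Ind_{Z(\mu)}^{S_n}(V)\boxtimes\one)\cong\uW{\mu_0,V_0,U_{\mu,V}}$ needs the transitivity-of-induction computation and the half-braiding comparison carried out in \Cref{lem::induced-objects} and the proposition following it, not just the statement about underlying objects; and the claim that the $[\uW{\mu_0,V_0,X_\lambda}]$ form a basis of the target requires \Cref{uWdistinct} in addition to \Cref{indec-classification}.
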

\begin{proof}
The functor $\uInd$ induces a morphism of abelian groups
$$\gr K_0^{\oplus}(\uInd)\colon K_0(\cZ\Rep S_{\geq 0})\longrightarrow \gr K_0 (\cZ(\uRep S_t)).$$
We first show $\gr K_0^{\oplus}(\uInd)$ is an algebra map. Recall the split injective natural transformations 
$$\tau_{V,W}\colon \uInd(V\odot W)\to \uInd (V)\otimes \uInd(W)$$
for $V\in\cZ(\Rep  S_n)$ and $W\in\cZ(\Rep  S_m)$ from \Cref{prop:Indtensorfunctor} which gives an oplax monoidal structure. 
We use $\tau_{V,W}$  to show that 
$$[\uInd(V\odot W)]=[\uInd (V)\otimes \uInd(W)]$$
in $\gr K_0^\oplus(\cZ(\uRep S_t))$. 
For this, we will first look at how these objects decompose inside of $\uRep S_t$ which will help us identify the highest degree parts.
To this end, we decompose 
$$V=\bigoplus_i S^{\lambda_i},\qquad  W=\bigoplus_j S^{\mu_j},$$
as direct sums of simple modules in $\Rep  S_n$ and $\Rep  S_m$, respectively. By the Littlewood--Richardson rule, we have the decomposition
$$V\odot W=\Ind_{S_n\times S_m}^{S_{n+m}}(\bigoplus_i S^{\lambda_i}\boxtimes \bigoplus_j S^{\mu_j})=\bigoplus_{i,j,\nu}(S^\nu)^{\oplus c_{\lambda_i,\mu_j}^{\nu}}$$
in $\Rep  S_{n+m}$. By \Cref{lem:inddec} we have, decomposing as elements in $K_0^{\oplus}(\uRep S_t)$,
\begin{gather*}
[\uInd (V)]=\sum_i [X_{\lambda_i}]+ \{\text{l.o.t.}\},\qquad [\uInd(W)]=\sum_j [X_{\mu_j}]+ \{\text{l.o.t.}\},\\
    [\uInd(V\odot W)]=\sum_{i,j,\nu}c_{\lambda_i,\mu_j}^{\nu}[X_\nu]+ \{\text{l.o.t.}\},
\end{gather*}
where the lower order terms $\{\text{l.o.t.}\}$ are in strictly lower filtration pieces $K_0^{\oplus}(\uRep S_t)^{\leq k}$.
Further, we have by \Cref{lem::K0-RepSt} that in $K_0^\oplus(\uRep S_t)$,
$$[X_{\lambda_i}][X_{\mu_j}]=[X_{\lambda_i}\otimes X_{\mu_j}]=\sum_{\nu} c_{\lambda_i,\mu_j}^{\nu}[X_{\nu}]+ \{\text{l.o.t.}\}.$$
Thus, all direct summands, as objects of $\uRep S_t$, that have maximal degree $n+m$ in $\uInd(V)\otimes \uInd(W)$, are in fact already contained in $\Ind(V\odot W)$ and hence are contained in the image of $\tau_{V,W}$ using split injectivity.

Now, we may decompose $$\uInd(V)\otimes \uInd(W)=\uInd(V\odot W)\oplus Y,$$
 in $\cZ(\uRep S_t)$,
where $Y$ is a complement of the image of $\tau_{V,W}$ (see \Cref{prop:Indtensorfunctor}).
We want to identify all \emph{leading} terms, i.e., those that are contained in the leading filtration piece $K_0^\oplus(\cZ(\uRep S_t))^{\leq n+m}$ but not contained in $K_0^\oplus(\cZ(\uRep S_t))^{\leq n+m-1}$. By the Krull--Schmidt property of the category  $\cZ(\uRep S_t)$, the decomposition in  $\cZ(\uRep S_t)$ refines into a decomposition into indecomposables in $\uRep S_t$. In such a refinement, any leading direct summand in $\cZ(\uRep S_t)$ needs to contain at least a direct summand from $(\uRep S_t)^{n+m}$ that is not contained in $(\uRep S_t)^{n+m-1}$. However, all such summands occurring in $\uInd(V)\o\uInd(W)$ are already contained in $\uInd(V\odot W)$ by the above observations. This implies that 
$$[\uInd(V)\otimes \uInd(W)]=[\uInd(V\odot W)]+\{\text{l.o.t.}\}$$
in $K_0^\oplus (\cZ(\uRep S_t)).$
Thus, the functor $\uInd$ induces an algebra map to the associated graded Grothendieck ring as claimed. 

\smallskip

It remains to show that $\gr K_0^{\oplus}(\uInd)$ is an isomorphism.
Indeed, using \Cref{prop:oldgenall} we see that $\gr K_0^{\oplus}(\uInd)$ is surjective, since by combining \Cref{indec-classification} and \Cref{uWdistinct}, we know that a $\mZ$-basis for $K_0^\oplus(\cZ(\uRep S_t))$ is given by the objects $\uW{\mu,V,U}$ for $V,U$ indecomposable and $\mu$ singleton free.

To prove injectivity of $\gr K_0^{\oplus}(\uInd)$ we recall the notation from the proof of \Cref{prop:oldgenall}. Given a simple $Z(\mu)$-module $V$, with $\mu$ not necessarily singleton free, decompose $Z(\mu)\cong Z_0\times S_r$ and $V\cong V_0\boxtimes U_0$. Since we are working over $\mC$ and $V$ is simple, $U_0$ is simple as a $\mC S_r$-module. Thus, by \Cref{lem:inddec}, $U_{\mu,V}=\uInd_{S_r\times S_{t-n}}^{S_{t-n_0}}(U_0\boxtimes \one)$ contains a unique simple summand $X$ of maximal filtration degree $r$. This implies that $\uW{\mu_0,V_0,X}$ is a subobject of $\uInd(\Ind_{Z(\mu)}^{S_n}V)=\uWold{\mu,V}$, which has maximal filtration degree $n=n_0+r$ by \Cref{cor:inddec}. By the same reasoning, decomposing $[\uWold{\mu,V}]$ in $K_0^\oplus(\cZ(\uRep S_t))$, all other summands are of strictly smaller filtration degree. Thus, $[\uInd(\Ind_{Z(\mu)}^{S_n}V)]=[\uW{\mu_0,V_0,X}]$ in $\gr K_0^\oplus(\cZ(\uRep S_t))$. The datum $(\mu_0,V_0,X)$ uniquely determines $(\mu, V)$ since $X$ determines $r,U_0$ and $V\cong V_0\boxtimes U_0$.
Thus, since the $[\uW{\mu_0,V_0,X}]$ are linearly independent in $\gr K_0^\oplus\cZ(\uRep S_t))$, the map $\gr K_0^{\oplus}(\uInd)$ is injective. 
\end{proof}

The description of the additive Grothendieck ring of $\cZ(\uRep S_t)$ can be extended to the Grothendieck ring of the abelian envelope as follows:

\begin{theorem}\label{thm:K0-ab}
For any $d\in\mZ_{\ge0}$, the functor $\uInd$ from \eqref{eq:uInd} induces an isomorphism of graded rings
$$\gr K_0^{\oplus}(\uInd)\colon K_0(\cZ\Rep S_{\geq 0})\isomorph \gr K_0(\cZ(\uRep^\ab S_d)),$$
where the associated graded of $K_0(\cZ(\uRep^\ab S_d))$ is taken with respect to the filtration induced by the filtration $\cZ(\uRep^\ab S_d)^{\leq k}$ of categories.
\end{theorem}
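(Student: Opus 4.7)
The proof strategy mirrors that of \Cref{thm:K0}, with the key substitution of simple objects $D^\lambda$ of $\uRep^\ab S_{d-n}$ in place of the indecomposable summands $X_\lambda$ used in the additive setting. First, the functor $\uInd$ of \eqref{eq:uInd} factors through $\cZ(\uRep^\ab S_d)$ via the commutative square \eqref{uInd-abelianized}, and the oplax monoidal transformation $\tau_{V,W}$ of \Cref{prop:Indtensorfunctor} transports to this setting essentially unchanged: the ultraproduct description of $\uRep^\ab S_d$ in \Cref{limitthm2} supports the same \L o\'s-theoretic construction, and the split injectivity argument is purely group-theoretic (taking place in $\cZ(\Rep_{p_i}S_{t_i})$) so it goes through verbatim. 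Moreover, $\uInd$ remains compatible with the filtration $\cZ(\uRep^\ab S_d)^{\leq k}$ by the same analysis of \eqref{ind-fitration-comp}.

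To show that $\gr K_0^\oplus(\uInd)$ is a ring homomorphism, I would, given simple objects $V\in\cZ(\Rep S_n)$ and $W\in\cZ(\Rep S_m)$, decompose the underlying $S_n$- and $S_m$-modules as sums of simples $S^{\lambda_i}$, $S^{\mu_j}$. Invoking \Cref{cor:Ko(St-ab)} in place of \Cref{cor:inddec}, both $[\uInd(V\odot W)]$ and $[\uInd(V)\otimes\uInd(W)]$ exhibit leading composition factor $\sum_{i,j,\nu} c_{\lambda_i,\mu_j}^\nu [D^\nu]$ in $\gr K_0(\cZ(\uRep^\ab S_d))$. Split injectivity of $\tau_{V,W}$ then forces all maximal-filtration composition factors of $\uInd(V)\otimes\uInd(W)$ to already appear in $\uInd(V\odot W)$, so the two classes agree modulo lower filtration.

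For bijectivity, I would use \Cref{indec-classification2}, which lists the simples of $\cZ(\uRep^\ab S_d)$ as $\uW{\mu,V,D^\lambda}$ with $\mu\vdash n$ singleton free, $V$ irreducible over $Z(\mu)$, and $D^\lambda$ irreducible in $\uRep^\ab S_{d-n}$. By \Cref{lem::K0-RepSt-ab}, the classes $\{[D^\lambda]\}$ form a $\mZ$-basis of $K_0(\uRep^\ab S_{d-n})$ matching the additive basis $\{[X_\lambda]\}$ up to lower order terms, so the $\{[\uW{\mu,V,D^\lambda}]\}$ form a $\mZ$-basis of $\gr K_0(\cZ(\uRep^\ab S_d))$. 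Surjectivity then follows from the same padded-partition calculation as in \Cref{prop:oldgenall}: each leading class $[\uW{\mu_0,V_0,D^\lambda}]$ is realized by $\gr K_0^\oplus(\uInd)$ applied to $[\Ind_{Z(\mu)}^{S_n}(V)]$ for an appropriate $\mu$ recovered by padding $\mu_0$ and the first row of $\lambda$. Injectivity follows by the same triangularity argument as in \Cref{thm:K0}: the assignment $(\mu,V)\mapsto(\mu_0,V_0,D^\lambda)$ is a bijection on bases, and $[\uInd(\Ind_{Z(\mu)}^{S_n}V)]$ has $[\uW{\mu_0,V_0,D^\lambda}]$ as its unique maximal-filtration composition factor.

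The principal obstacle will be verifying that the leading-term control from \Cref{thm:K0}, which was formulated in terms of indecomposable direct summands of $\uRep S_t$ for generic $t$, transfers to a statement about simple composition factors inside $\uRep^\ab S_d$. This reduces to the upper-triangular relation $[X_\lambda]=[D^\lambda]+\{\text{l.o.t.}\}$ of \Cref{lem::K0-RepSt-ab}, propagated through the abelian-envelope Pieri rule \Cref{cor:Ko(St-ab)}; once this bookkeeping is carried out, the rest of the argument is structurally identical to the proof of \Cref{thm:K0}.
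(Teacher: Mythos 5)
Your proposal is correct and follows essentially the same route as the paper's proof, which simply says to adapt the argument of Theorem \ref{thm:K0} by substituting Corollary \ref{cor:Ko(St-ab)} for Lemma \ref{lem::K0-RepSt}, the Jordan--H\"older property for Krull--Schmidt, and using that all projectives (hence, via Proposition \ref{prop:oldgenall}, all simples as subquotients) come from $\cZ(\uRep S_d)$. Your more detailed bookkeeping with the triangularity $[X_\lambda]=[D^\lambda]+\{\text{l.o.t.}\}$ and the basis $\{[\uW{\mu,V,D^\lambda}]\}$ is exactly the intended adaptation.
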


\begin{proof}
First recall that the functor $\uInd$, together with the split injective oplax monoidal structure $\tau$ from \Cref{prop:Indtensorfunctor}, extends to the abelian envelope as via composition
$$
\uInd\colon \cZ\Rep S_{\geq 0}\longrightarrow \cZ(\uRep S_t)\hookrightarrow \cZ(\uRep^\ab S_d),
$$
cf. the diagram \eqref{uInd-abelianized}.

The proof of \Cref{thm:K0} can be adapted to working with the abelian envelope using \Cref{cor:Ko(St-ab)} instead of \Cref{lem::K0-RepSt}. Instead of the Krull--Schmidt property, we use the Jordan--H\"older property of the category $\cZ(\uRep^\ab S_d)$. 

For the proof in the case of the abelian envelope we also used that by \Cref{prop::X-d-plus-1-in-center}(3), all projective objects are contained in the subcategory $\cZ(\uRep S_d)$. Thus, all simple objects occur as subquotients of the objects $\uInd(V)$ by \Cref{prop:oldgenall}. This shows that $\gr K_0(\uInd)$ is an isomorphism in the case of the abelian envelope.
\end{proof}

\subsection{Some sample computation in the associated graded Grothendieck ring}\label{sec:computations}

As described in \Cref{rem:computation}, the induction product $W\odot W'$, for $W=\Ind_{Z(\s)}^{S_n}(V)\in \cZ(\Rep S_n)$ and $W'=\Ind_{Z(\tau)}^{S_m}(V')\in \cZ(\Rep S_m)$ can be computed by decomposing
$$\Ind_{Z(\s)\times Z(\tau)}^{Z(\s\times\tau)}(V\boxtimes V')$$
as a module over the centralizer $Z(\s\times\tau)\subset S_{n+m}$, where $\s\times \tau$ permutes $\{1,\ldots, n\}$ using $\s$ and $\{n+1,\ldots, n+m\}$ using $\tau$. With \Cref{thm:K0}, this computes products in the associated graded Grothendieck ring $\gr K_0^\oplus (\cZ(\uRep S_t))$ or its abelian envelope. In this section, we consider a few examples. We start with the smallest non-trivial example.

\begin{example}
Let $\s=\tau=(12)\in S_2$. Then $\s\times\tau=(12)(34)\in S_4$ and $Z((12)(34))$ is the wreath product 
$$\mZ_2\wr S_2\cong \langle (12),(34), (13)(24)\rangle\subset S_4.$$ 
Using \cite{CM}*{Proposition~3.7}, there are five simple modules of $\mZ_2\wr S_2$, one two-dimensional module $V_2$, and four one-dimensional modules $V^{\epsilon_1,\epsilon_2}$, where $\epsilon_i\in \{\pm 1\}$, and $(12),(34)$ act via multiplication by $\epsilon_1$, and $(13)(24)$ acts via multiplication by $\epsilon_2$.

We can choose $V, V'$ to be either the trivial module $\Bbbk^{\triv}$ or the sign module $\Bbbk^{\sgn}$ of $S_2=Z(\s)=Z(\tau)$. The resulting module $V\odot V'=\Ind_{S_2\times S_2}^{\mZ_2\wr S_2}(V\boxtimes V')$ is two-dimensional, with a basis given by 
$$v_1=1\otimes (v\boxtimes v'),\quad  v_2=(13)(24)\otimes (v\boxtimes v'),$$
where $v$ generates $V$ and $v'$ generates $V'$.

If $V=V'=\kk^{\triv}$, then $V\odot V'\cong V^{+1,+1}\oplus V^{+1,-1}$ is a direct sum of the trivial and sign module over $\mZ_2\wr S_2$, splitting as $\Bbbk(v_1+v_2)\oplus \Bbbk(v_1-v_2)$.
Similarly, if $V=V'=\kk^{\sgn}$, then  $V\odot V'\cong V^{-1,+1}\oplus V^{-1,-1}$.
If $V\neq V'$, then $V\odot V'\cong V'\odot V$ is the simple two-dimensional module $V_2$ over $\mZ_2\wr S_2$.
\end{example}

We include the product computations in a slightly more general situation involving cyclic permutations. 
\begin{example}
Assume that $\s$ and $\tau$ are $k$-cycles in $S_k$. Then their centralizers are cyclic groups isomorphic to $\mZ_k$. Thus, 
$$Z(\s\times\tau)=\mZ_k\wr S_2=\langle \s,\tau, \omega \rangle \subset S_{2k},$$
where conjugation by $\omega$ swaps $\s$ and $\tau$.
Let $\Bbbk^{\zeta}$  denote the one-dimensional simple $\mZ_k$-module where the generator acts through a root of unity $\zeta\in \Bbbk$.
Again using, e.g., \cite{CM}*{Proposition 3.7}, the simple modules of $\mZ_k\wr S_2$ fall into two classes. First, two-dimensional simples
\begin{align*}
V^{\zeta_1,\zeta_2}_2=\Ind_{\mZ_k\times\mZ_k}^{\mZ_k\wr S_2}\left(\Bbbk^{\zeta_1}\boxtimes \Bbbk^{\zeta_2}\right),
\end{align*}
where $\zeta_1,\zeta_2$ are distinct $k$-th roots of unity; second, one-dimensional simples 
$V^{\zeta, \epsilon}$, where both copies of $\mZ_k$ act via multiplication by $\zeta$ and $S_2$ acts via $\kk^{\triv}$ if $\epsilon=1$ and via $\kk^{\sgn}$ if $\epsilon=-1$. By \cite{CM}*{Lemma~3.1}, $V_2^{\zeta_1,\zeta_2}\cong V_2^{\zeta_2,\zeta_1}$.

We see that for all $k$-th roots of unity $\zeta$ and $\zeta_1\neq \zeta_2$,
\begin{align*}
\Bbbk^{\zeta}\odot \Bbbk^{\zeta} &=\Ind_{\mZ_k\times \mZ_k}^{\mZ_k\wr S_n}(\Bbbk^{\zeta}\boxtimes \Bbbk^{\zeta})\cong V^{\zeta,+1}\oplus V^{\zeta,-1},&
\Bbbk^{\zeta_1}\odot \Bbbk^{\zeta_2}&\cong V_2^{\zeta_1,\zeta_2}\cong V_2^{\zeta_2,\zeta_1}\cong \Bbbk^{\zeta_2}\odot \Bbbk^{\zeta_1}.
\end{align*}
\end{example}

If the cycle types of $\s$ and $\tau$ have no common cycle length (including one-cycles) then $Z(\s\times \tau)=Z(\s)\times Z(\tau)$ and the induction product $\odot$ is simply given by the exterior tensor product $\boxtimes$. 

In general, the representation theory of the products of wreath product groups that appear as centralizers in the symmetric groups is well understood and induction products may be computed using similar but more involved analogues of Littlewood--Richardson coefficients for wreath products.

\appendix

\section{The abelian envelope of the monoidal center}\label{app:ZC}

In this section, we collect general results about the monoidal center required in the core of the paper. Given a monoidal category $\cC$, the center $\cZ(\cC)$ is a braided monoidal category \cites{Maj2, JS}. Objects in $\cZ(\cC)$ are pairs $(Y,c)$ where $Y$ is an object of $\cC$ and $c=\Set{c_V\colon Y\otimes V\to V\otimes Y}_{V\in \cC}$ is a natural isomorphism (in $V$), called a \emph{half-braiding}, which satisfies the tensor compatibility 
\begin{align*}c_{V\otimes W}&=(\ide_V\otimes c_W)(c_V\otimes \ide_W), &\forall V,W\in \cC,
\end{align*}
where the associativity isomorphism of $\cC$ is omitted. A morphism $f\colon (Y,c)\to (Y',c')$ is a morphism $f\colon Y\to Y'$ in $\cC$ such that 
\begin{align*}
c'_V(f\otimes \ide_V)&=(\ide_V\otimes f)c_V,& \forall V\in \cC.
\end{align*}
For basic properties of the monoidal center see e.g.~\cite{EGNO}*{Section 7.13}. We employ the following extension property of the monoidal center.

\begin{lemma}\label{center-extend}
Let $\cC$ be a locally finite abelian rigid monoidal category with enough projectives and $\cA$ be an additive tensor subcategory of $\cC$ containing all projective objects.

Assume given a pair $(Y,\left.c\right|_{\cA})$ where $Y$ is an object in $\cC$ and 
$$\left.c\right|_{\cA}\colon Y\otimes \ide_{\cA}\xrightarrow{~\sim~}\ide_{\cA} \otimes Y$$
is a natural isomorphism which is tensor compatible in the sense that 
\begin{align*}
    \big(\left.c\right|_{\cA}\big)_{A\otimes A'}&=\Big(\ide_{A}\otimes \big(\left.c\right|_{\cA}\big)_{A'}\Big)\Big(\big(\left.c\right|_{\cA}\big)_A\otimes \ide_{A'}\Big),& \forall A,A'\in \cA.
    \end{align*}
Then $\left.c\right|_{\cA}$ admits a unique extension to a half-braiding defining an object in $\cZ(\cC)$.
\end{lemma}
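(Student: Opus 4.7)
The plan is to extend $c|_\cA$ one object at a time using projective presentations, then verify naturality, invertibility, and tensor compatibility via the universal property of cokernels. The two key facts we will exploit are: (i) the tensor product functors $Y\otimes(-)$ and $(-)\otimes Y$ are exact on $\cC$, because $\cC$ is rigid; and (ii) the tensor product of any projective with any object of $\cC$ is again projective, since $\Hom_\cC(P\otimes W,-)\cong\Hom_\cC(P,(-)\otimes W^*)$ is a composition of exact functors when $P$ is projective.

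First I would define the extension. For $V\in\cC$, choose a projective presentation $P_1\xrightarrow{p} P_0\to V\to 0$ with $P_0,P_1\in\cA$. Naturality of $c|_\cA$ at $p$ yields $c_{P_0}(\ide_Y\otimes p)=(p\otimes\ide_Y)c_{P_1}$, and the exactness in (i) implies that $Y\otimes V$ is the cokernel of $\ide_Y\otimes p$ and $V\otimes Y$ is the cokernel of $p\otimes\ide_Y$. The universal property of cokernels then produces a unique morphism $c_V\colon Y\otimes V\to V\otimes Y$ making the resulting square commute.

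Second, I would check that this definition is independent of the chosen presentation. Given a second presentation, projectivity of $P_0$ and $P_1$ lifts the identity on $V$ to a morphism of presentations; naturality of $c|_\cA$ applied to this lift, together with the fact that $\ide_Y\otimes\pi_V$ is an epimorphism, forces the two candidate morphisms $c_V$ to agree. The same argument, applied to a lift of an arbitrary $f\colon V\to W$ to a morphism of projective presentations, proves naturality of $c$ on all of $\cC$. Invertibility of $c_V$ follows by running the same construction with $(c|_\cA)^{-1}$, which is itself a natural transformation on $\cA$ satisfying analogous compatibilities; the resulting morphism is a two-sided inverse of $c_V$ by uniqueness of the induced maps on cokernels.

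Third, for tensor compatibility --- which I expect to be the main technical step --- pick projective presentations $P_\bullet\to V$ and $Q_\bullet\to W$. By (ii), each of $P_0\otimes Q_0$, $P_1\otimes Q_0$, and $P_0\otimes Q_1$ is projective and hence lies in $\cA$, and by right exactness of $\otimes$ the complex $(P_1\otimes Q_0)\oplus(P_0\otimes Q_1)\to P_0\otimes Q_0\to V\otimes W\to 0$ is a projective presentation of $V\otimes W$ in $\cA$. The tensor compatibility assumption on $\cA$ gives the identity $c_{P_0\otimes Q_0}=(\ide_{P_0}\otimes c_{Q_0})(c_{P_0}\otimes\ide_{Q_0})$, which descends along the epimorphism $\ide_Y\otimes\pi_V\otimes\pi_W$ to $c_{V\otimes W}=(\ide_V\otimes c_W)(c_V\otimes\ide_W)$ by the universal property defining $c_{V\otimes W}$, $c_V$, and $c_W$. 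Finally, uniqueness of the extension is automatic: any half-braiding extending $c|_\cA$ coincides with $c_{P_0}$ on the chosen projective cover, and is therefore forced on $V$ by the defining commutative square together with the fact that $\ide_Y\otimes\pi_V$ is epic.
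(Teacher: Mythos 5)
Your proof is correct and takes essentially the same route as the paper: extend $c|_{\cA}$ from the projectives to all of $\cC$ using projective presentations and exactness of tensoring with $Y$ (the paper delegates this step to an adaptation of Mitchell's extension theorem for natural transformations, while you spell it out via cokernels), and then verify tensor compatibility by the identical uniqueness argument applied to the presentation $(P_1\otimes Q_0)\oplus(P_0\otimes Q_1)\to P_0\otimes Q_0\to V\otimes W\to 0$. Your observation that $P\otimes W$ is projective is the standard fact needed to place these objects in $\cA$, and matches the paper's implicit use of the same point.
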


In particular, the inclusion functor $\cA\hookrightarrow \cC$ extends to an inclusion functor $\cZ(\cA)\hookrightarrow \cZ(\cC).$
\begin{proof}
Assume given an object $Y$ and
$$\left.c\right|_{\cA}\colon Y\otimes \ide_{\cA}\xrightarrow{\sim} \ide_{\cA}\otimes Y $$
as in the statement of the lemma.
We note that the functors of left and right tensoring with $Y$ are exact (see e.g.~\cite{BK}*{Proposition 2.1.8}). We can  restrict $c$ to a natural isomorphism of these functors on the full monoidal subcategory $\cP$ on projective objects of $\cC$, which is contained in $\cA$ by assumption.

Now let $M$ be an object of $\cC$. Then we can find projective objects $P_1,P_0$ and an exact sequence 
$$P_1\to P_0\to M\to 0.$$
We note that $P_1,P_0$ are both finite sums of indecomposable projectives by assumption of local finiteness of $\cC$ and therefore the argument of \cite{Mit}*{Theorem 5.4} can be adapted to extend the  $\left.c\right|_{\cA}$ to the full subcategory of projective objects to all of $\cC$ in a unique way to a natural isomorphism
$$c\colon Y\otimes \ide_{\cC}\xrightarrow{\sim} \ide_{\cC}\otimes Y.$$
It remains to check that $c$ satisfies tensor compatibility and thus gives a half-braiding on all of $\cC$. But this follows from  exactness of the tensor product. In fact, for objects $M,N$ of $\cC$, given exact sequences 
$$P_1\to P_0\to M\to 0, \qquad Q_1\to Q_0\to N\to 0,$$
with $P_i,Q_i$ projective, exactness of $\otimes$ gives an exact sequence 
$$P_1\otimes Q_0\oplus P_0\otimes Q_1\to P_0\otimes Q_0\to M\otimes N\to 0.$$
Now both $c=c_{M\otimes N}$ and $c=(c_{M}\otimes \ide)(\ide\otimes c_N)$ make the following diagram commute:
\begin{align*}
    \xymatrix{
Y\otimes P_1\otimes Y\otimes Q_0\oplus Y\otimes P_0\otimes Q_1\ar[rrrr]\ar[d]_{\mat{c_{P_1\otimes Q_0}&0\\0&c_{P_0\otimes Q_1}}=}^{\mat{c_{P_1}\otimes \ide&0\\0&c_{P_0\otimes \ide}}\mat{\ide\otimes c_{Q_0}&0\\0&\ide\otimes c_{Q_1}}}&&&& Y\otimes P_0\otimes Q_0\ar[r]\ar[d]_{c_{P_0\otimes Q_0}=}^{(c_{P_0}\otimes \ide)(\ide\otimes c_{Q_0})}& Y\otimes M\otimes N\ar[d]^{c}\\
P_1\otimes Y\otimes Q_0\otimes Y\oplus P_0\otimes Q_1\otimes Y\ar[rrrr]&&&& P_0\otimes Q_0\otimes Y\ar[r]& M\otimes N\otimes Y
}
\end{align*}
Thus, these morphisms have to be equal by uniqueness of the morphism $c$. 
\end{proof}

\begin{corollary}\label{cor:ZReppSd-extend}
Let $X$ be a faithful representation in $\Rep_p G$. Assume given two objects $(Y,c)$, $(Y,c')$ in $\cZ(\Rep_p G)$. Then $c_X=c_X'$ implies $c=c'$.
\end{corollary}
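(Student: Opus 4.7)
The plan is to reduce to the extension result in \Cref{center-extend}. I will set $\cA := \langle X\rangle$, the full Karoubian tensor subcategory of $\cC := \Rep_p G$ generated by $X$, which by construction is closed under tensor products, direct sums, direct summands, and duals. Since $G$ is a finite group, $\cC$ is locally finite abelian rigid with enough projectives. To apply \Cref{center-extend}, I will need $\cA$ to contain every projective object of $\cC$; for this I will invoke the classical fact from modular representation theory that, because $X$ is faithful, every indecomposable projective $\Bbbk G$-module arises as a direct summand of some tensor power $X^{\otimes n}$. This is precisely the property of $\langle X_d\rangle$ inside $\Rep_p S_d$ used in the proof of \Cref{center-data}.

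Given this, \Cref{center-extend} guarantees that any half-braiding on $Y$ over $\cC$ is uniquely determined by its restriction to $\cA$. It therefore suffices to show that $c_X = c'_X$ forces $c|_\cA = c'|_\cA$. This in turn will follow from three standard features of half-braidings in a rigid monoidal category: the tensor compatibility $c_{V \otimes W} = (\ide_V \otimes c_W)(c_V \otimes \ide_W)$ inductively determines $c_{X^{\otimes n}}$ from $c_X$; rigidity determines $c_{X^*}$, and hence all objects of the form $X^{\otimes n} \otimes (X^*)^{\otimes m}$, from $c_X$ via the usual formula involving evaluation and coevaluation; and naturality extends $c$ from these tensor products to any direct sum or direct summand thereof. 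Since $\cA$ is generated from $X$ by precisely these four operations, $c|_\cA$ is fully determined by $c_X$, and the equality $c = c'$ then follows from the uniqueness part of \Cref{center-extend}.

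The main obstacle is the assertion that $\langle X\rangle$ contains all indecomposable projectives of $\Rep_p G$; beyond that, the argument is a direct combination of the extension lemma with the routine propagation of half-braidings through the generating operations of $\cA$.
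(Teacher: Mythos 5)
Your proposal is correct and follows essentially the same route as the paper: the paper's proof likewise invokes the Bryant--Kov\'acs-type fact that $\langle X\rangle$ contains all projectives when $X$ is faithful (citing \cite{BK1}*{Theorem 1}), reduces to $\langle X\rangle$ via \Cref{center-extend}, and then observes that the half-braiding on $\langle X\rangle$ is determined by $c_{X^{\otimes n}}$ and hence by $c_X$. You merely make explicit the propagation through duals, direct sums, and summands that the paper leaves implicit.
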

\begin{proof}
Note that $\cA=\langle X \rangle$ contains all projective objects \cite{BK1}*{Theorem 1}. The half-braidings are determined by $c_{X^{\otimes n}}$ which in turn is determined by $c_X$.
\end{proof}

In the following, let $\cA$ be a Karoubian tensor category. We recall that an (abelian) multitensor category $\cA^\ab$ with a fully faithful tensor functor $\iota\colon\cA\to\cA^\ab$ is called an \emph{abelian envelope} (\cite{BEO}*{Section~2.10}) of $\cA$ if for any multitensor category $\cD$, the category of tensor functors $\cA^\ab\to\cD$ is equivalent to the category of faithful monoidal functors $\cA\to\cD$ by restriction along $\iota$. If it exists, the abelian envelope is unique up to equivalence. If the abelian envelope exists and has enough projectives, then a construction is given in \cite{BEO}. A necessary and sufficient condition on $\cA$ (``separated and complete'') for $\cA^\ab$ to have enough projectives is proven in \emph{loc.cit.}
In the following, we will identify $\cA$ with a full tensor subcategory of $\cA^\ab$ using $\iota$.

\begin{lemma} \label{lem::ab-enough-projectives} Assume $\cA^\ab$ is the abelian envelope of $\cA$ and  has enough projectives. Assume also that $\cA$ admits a braiding $c$ such that the functor $\cA^\ab\to\cZ(\cA^\ab)$, $X\mapsto (X,c_{X,-})$, preserves projectives. Then $\cZ(\cA^\ab)$ has enough projectives and all its projectives lie in $\cZ(\cA)$.
\end{lemma}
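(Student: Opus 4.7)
The plan is to exploit the exact monoidal functor $I\colon\cA^\ab\to\cZ(\cA^\ab)$, $X\mapsto(X,c_{X,-})$, supplied by the hypothesis, and to produce projectives in $\cZ(\cA^\ab)$ by tensoring an object in the image of $I$ with arbitrary objects. Note that $I$ is a section of the forgetful functor $F\colon\cZ(\cA^\ab)\to\cA^\ab$, and since kernels and cokernels in $\cZ(\cA^\ab)$ are computed at the level of underlying objects in $\cA^\ab$, a sequence in $\cZ(\cA^\ab)$ is exact if and only if its image under $F$ is; this makes $I$ exact.

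First I would fix an epimorphism $\pi\colon P_0\twoheadrightarrow\one$ in $\cA^\ab$ with $P_0$ projective, using that $\cA^\ab$ has enough projectives. By hypothesis $I(P_0)$ is projective in $\cZ(\cA^\ab)$, and by exactness of $I$ the morphism $I(\pi)\colon I(P_0)\twoheadrightarrow\one$ is an epimorphism in $\cZ(\cA^\ab)$. For any $M\in\cZ(\cA^\ab)$, tensoring on the right gives an epimorphism $I(P_0)\otimes M\twoheadrightarrow M$, and the source is projective because tensoring with a projective preserves projectivity in any rigid abelian tensor category (via the isomorphism $\Hom(I(P_0)\otimes M,-)\cong\Hom(I(P_0),-\otimes M^*)$ and exactness of $-\otimes M^*$). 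This establishes the first assertion, namely that $\cZ(\cA^\ab)$ has enough projectives.

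For the second assertion, I take any projective $Q\in\cZ(\cA^\ab)$. The epimorphism $I(P_0)\otimes Q\twoheadrightarrow Q$ must split, so $Q$ is a direct summand of $I(P_0)\otimes Q$ in $\cZ(\cA^\ab)$. The underlying object $F(I(P_0)\otimes Q)=P_0\otimes F(Q)$ is a tensor product of a projective with an arbitrary object, hence is itself projective in $\cA^\ab$. Invoking the general fact from \cite{BEO} that projectives in the abelian envelope of a Karoubian tensor category lie in that Karoubian subcategory, we conclude $P_0\otimes F(Q)\in\cA$, and thus $I(P_0)\otimes Q\in\cZ(\cA)$. Since $\cA$ is Karoubian, so is $\cZ(\cA)$ (any idempotent splitting in $\cA$ canonically upgrades to one in $\cZ(\cA)$), and the inclusion $\cZ(\cA)\hookrightarrow\cZ(\cA^\ab)$ is fully faithful by \Cref{center-extend}; hence the summand $Q$ lies in $\cZ(\cA)$ as well.

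The main delicate point will be the invocation from \cite{BEO} that projectives of the abelian envelope already lie in the Karoubian subcategory $\cA$; once this is granted, the remainder reduces to routine manipulations with the section $I$, exactness, and rigidity, which proceed uniformly in the setting of the lemma.
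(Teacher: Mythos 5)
Your proof is correct and follows essentially the same route as the paper's: both use the hypothesis to obtain a projective object of $\cZ(\cA^\ab)$ mapping onto the tensor unit, deduce enough projectives by tensoring (the paper cites this as a standard fact for multitensor categories where you prove it inline), and for the second claim both realize an arbitrary projective $Q$ as a direct summand of a central object whose underlying object is projective in $\cA^\ab$, hence lies in $\cA$ by the cited result of \cite{BEO}, so that $Q$ lies in the Karoubian subcategory $\cZ(\cA)$.
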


\begin{proof} We recall that the embedding of $\cA$ into $\cA^\ab$ is full on projectives by \cite{BEO}*{Theorem~2.41, Theorem~2.42}, so all projectives of $\cA^\ab$ lie in $\cA$.

As $\cZ(\cA^\ab)$ is a multitensor category, having enough projectives is equivalent to the existence of a projective object in $\cZ(\cA^\ab)$ with an epimorphism to the tensor unit in $\cZ(\cA^\ab)$.

The tensor unit of $\cZ(\cA^\ab)$ is given by the tensor unit $\one$ of $\cA^\ab$ together with the natural isomorphisms between the functors $\ide\o\one \cong \ide \cong \one\o \ide$. Since $\cA^\ab$ has enough projectives, all of which lie in $\cA$, there is a projective object $X\in\cA^\ab$ which lies in $\cA$ with an epimorphism to $\one\in\cA^\ab$. Now $\cA$ admits a braiding which yields a half-braiding $c$ for $X$. Then $(X,c)$ defines an object in the center of $\cA$, hence by \Cref{center-extend}, this is also an object in the center of $\cA^\ab$. 
By our assumptions, $(X,c)$ is a projective object in $\cZ(\cA^\ab)$ with an epimorphisms to the tensor unit in $\cZ(\cA^\ab)$, so $\cZ(\cA^\ab)$ has enough projectives.

Now consider any projective object $(P,d)$ in $\cZ(\cA^\ab)$. Then $(P,d)\o (X,c)$ is a projective object in $\cZ(\cA^\ab)$ with $(P,d)$ as a quotient, hence, as a direct summand. However, $P\o X$ is a projective object in $\cA^\ab$, hence it lies in $\cA$. This means $(P,d)$ appears as a direct summand in an object of $\cZ(\cA)$, hence, it lies in $\cZ(\cA)$, and we have shown that all projective objects of $\cZ(\cA^\ab)$ lie in $\cZ(\cA)$.
\end{proof}

\begin{corollary} \label{cor::center-abelian-envelope-general} In the situation of \Cref{lem::ab-enough-projectives}, $\cZ(\cA^\ab)$ is the abelian envelope of $\cZ(\cA)$.
\end{corollary}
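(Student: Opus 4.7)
The plan is to invoke a general characterization of abelian envelopes extracted from \cite{BEO}: if $\cB$ is an abelian multitensor category with enough projectives and $\cB_0 \subseteq \cB$ is a full Karoubian rigid monoidal subcategory containing all projective objects of $\cB$, with the embedding being an equivalence on projectives, then $\cB$ is an abelian envelope of $\cB_0$. I would apply this with $\cB = \cZ(\cA^\ab)$ and $\cB_0 = \cZ(\cA)$, so the task reduces to verifying these hypotheses.

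The first ingredient I need is that $\cZ(\cA)$ embeds as a full Karoubian rigid monoidal subcategory of the abelian multitensor category $\cZ(\cA^\ab)$. This follows from \Cref{center-extend}: every half-braiding for an object $Y\in\cA$ extends uniquely to one on all of $\cA^\ab$, producing a fully faithful tensor functor $\cZ(\cA)\hookrightarrow\cZ(\cA^\ab)$, since morphisms in the center are morphisms in the underlying category subject to compatibility with the half-braiding and both fullness and faithfulness are inherited from $\cA\hookrightarrow\cA^\ab$ (cf.\ the analogous \Cref{center-subcat} in the main text). The second and third ingredients are exactly the two conclusions of \Cref{lem::ab-enough-projectives}: $\cZ(\cA^\ab)$ has enough projectives, and every projective object of $\cZ(\cA^\ab)$ already lies in $\cZ(\cA)$. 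Taken together with the embedding from the first step, these verify the criterion and yield the corollary.

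The main obstacle I anticipate is pinning down the precise form of the abelian envelope criterion from \cite{BEO} needed here; in particular, one must confirm that the embedding $\cZ(\cA)\hookrightarrow\cZ(\cA^\ab)$ is an equivalence on the respective full subcategories of projectives, rather than merely an inclusion. This should be reduced to \cite{BEO}*{Theorems~2.41, 2.42}, which give that $\cA\hookrightarrow\cA^\ab$ is an equivalence on projectives, combined with the hypothesis of \Cref{lem::ab-enough-projectives} that $X\mapsto(X,c_{X,-})$ sends projectives to projectives: this bridge allows one to match projectives of $\cZ(\cA^\ab)$ with objects of $\cZ(\cA)$ via the procedure in the proof of \Cref{lem::ab-enough-projectives}, where any projective in $\cZ(\cA^\ab)$ was realized as a summand of an induced projective from a projective of $\cA^\ab$ (hence of $\cA$), equipped with a half-braiding already defined at the level of $\cA$.
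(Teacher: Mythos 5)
Your proposal is correct and follows essentially the same route as the paper: both arguments invoke the recognition criterion for abelian envelopes from \cite{BEO}*{Theorems~2.41--2.42}, use \Cref{center-extend} to obtain the fully faithful monoidal embedding $\cZ(\cA)\hookrightarrow\cZ(\cA^\ab)$, and then feed in the two conclusions of \Cref{lem::ab-enough-projectives} (enough projectives in $\cZ(\cA^\ab)$, all of them lying in $\cZ(\cA)$) to conclude. The only cosmetic difference is that the paper phrases the criterion as ``the abelian tensor subcategory generated by the image of the embedding is the envelope'' and checks that this subcategory is everything because every object is a quotient of an object of $\cZ(\cA)$, whereas you phrase it via the embedding containing (and being an equivalence on) projectives; these are interchangeable here.
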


\begin{proof} By \cite{BEO}*{Theorem~2.42}, the abelian envelope of a Karoubian tensor category $\cC$, if it exists and has enough projectives, is given by any fully faithful monoidal functor $E\colon\cC\to\cD$, where $\cD$ is a multitensor category with enough projectives; in this case, the abelian envelope is the abelian tensor subcategory generated by the image of $E$.

Set $\cD:=\cZ(\cA^\ab)$ and let $E\colon\cZ(\cA)\to\cZ(\cA^\ab)=\cD$ be the functor induced by the inclusion $\cA\to\cA^\ab$ according to \Cref{center-extend}. By \Cref{lem::ab-enough-projectives}, $\cD$ is a multitensor category with enough projectives, the projectives lying in $\cZ(\cA)$. In particular, every object is a quotient of an object in $\cZ(\cA)$ and the subcategory generated by the image of $E$ is all of $\cD$.
\end{proof}

\section{Separable Frobenius monoidal functors and the monoidal center}\label{app:frob}

In this section, we recall the definition of a (separable) Frobenius monoidal functor and show that induction functors of finite group representation display such a structure which extends to their monoidal centers. 

A \emph{Frobenius monoidal functor} $F\colon \cC\to \cD$ between two monoidal categories $\cC$, $\cD$ is a bilax monoidal functor, i.e., comes with a lax monoidal structure $(\mu,\eta)$, and an oplax monoidal structure $(\delta, \epsilon)$, where
\begin{gather}
    \mu_{V,W}\colon F(V)\otimes F(W)\longrightarrow F(V\otimes W), \qquad \delta_{V,W}\colon F(V\otimes W)\longrightarrow F(V)\otimes F(W),\\
    \eta\colon \one \longrightarrow F(\one), \qquad \epsilon\colon F(\one)\longrightarrow \one,
\end{gather}
for any objects $V,W$ of $\cC$,
satisfying the additional compatibility conditions
\begin{gather}\label{frobmon1}
\vcenter{\hbox{\xymatrix{
&&F(V)\otimes F(W)\otimes F(U)\ar[rrd]^{\mu_{V,W}\otimes \ide_{F(U)}}&&\\
F(V)\otimes F(W\otimes U)\ar[rru]^{\ide_{F(V)}\otimes \delta_{W,U}}\ar[rrd]_{\mu_{V,W\otimes U}} &&&& F(V\otimes W)\otimes F(U),\\
&&F(V\otimes W\otimes U)\ar[rru]_{\delta_{V\otimes W,U}}&&
}}}
\end{gather}
\begin{gather}
\vcenter{\hbox{\xymatrix{
&&F(V)\otimes F(W)\otimes F(U)\ar[rrd]^{\ide_{F(V)}\otimes \mu_{W,U}}&&\\
F(V\otimes W)\otimes F(U)\ar[rru]^{\delta_{V,W}\otimes \ide_{F(U)}} \ar[rrd]_{\mu_{V\otimes W,U}} &&&& F(V)\otimes F(W\otimes U).\\
&&F(V\otimes W\otimes U)\ar[rru]_{\delta_{V,W\otimes U}}&&
}}}\label{frobmon2}
\end{gather}
For details on this definition see e.g.~\cite{AA}*{Section~3.5}, \cite{DP}. A Frobenius monoidal functor is \emph{separable} if, in addition, for any objects $V,W$ of $\cC$,
\begin{gather}\label{eq:sep}
    \mu_{V,W}\circ\delta_{V,W}= \ide_{F(V\otimes W)}.
\end{gather}

Examples of Frobenius monoidal functors are obtained from the induction functor of representation categories of finite groups. We prove here that these Frobenius monoidal functors extend to the centers of the representation categories. This functor, without its Frobenius monoidal structure, already appeared in \cite{Dav}*{Theorem 3.3.2}.
In the following, $\Bbbk$ is any field.
\begin{proposition}\label{centerind-groups}
Let $\Bbbk$ be a field and $H\subseteq G$ be finite groups. Then the induction functor $\Ind_{H}^G$ induces a separable Frobenius monoidal functor 
$$\cZ(\Ind_H^G)\colon \cZ(\Rep_\Bbbk H)\longrightarrow \cZ(\Rep_\Bbbk  G).$$
The lax and oplax monoidal structures $\mu$, $\delta$ are compatible with the braiding in the sense that the diagrams
\begin{gather}\label{braidcomp}
    \vcenter{\hbox{
    \xymatrix{
    \Ind_H^G(V\otimes W)\ar[d]_{\delta_{V,W}}\ar[rrr]^{\Ind_H^G(\Psi_{V,W})}&&&  \Ind_H^G(W\otimes V)\ar[d]_{\delta_{W,V}}\\
    \Ind_H^G(V)\otimes \Ind_H^G(W)\ar[rrr]^{\Psi_{\Ind_H^G(V),\Ind_H^G(W)}}&&&\Ind_H^G(W)\otimes \Ind_H^G(V)
    }
    }}
    \end{gather}
    \begin{gather}
\label{braidcompmu}
    \vcenter{\hbox{
    \xymatrix{
    \Ind_H^G(V\otimes W)\ar[rrr]^{\Ind_H^G(\Psi_{V,W})}&&&  \Ind_H^G(W\otimes V)\\
    \Ind_H^G(V)\otimes \Ind_H^G(W)\ar[rrr]^{\Psi_{\Ind_H^G(V),\Ind_H^G(W)}}\ar[u]_{\mu_{V,W}}&&&\Ind_H^G(W)\otimes \Ind_H^G(V)\ar[u]_{\mu_{W,V}}
    }
    }}
\end{gather}
commute for any objects $V,W$ in $\cC$.
\end{proposition}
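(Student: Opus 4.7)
The plan is to work with the Yetter--Drinfeld description of the monoidal center and to write down all four structural transformations explicitly. An object of $\cZ(\Rep_\Bbbk G)$ is a $G$-graded $G$-module $V = \bigoplus_{g \in G}V_g$ satisfying $g' \cdot V_g \subseteq V_{g' g g'^{-1}}$. I define $\cZ(\Ind_G^H)(V)$ to have underlying $H$-module $\Bbbk H\otimes_{\Bbbk G}V$ and $H$-grading $\deg(h\otimes v)=hgh^{-1}$ whenever $v\in V_g$; the first step is to check this is well-defined modulo the tensor relations over $\Bbbk G$ (which is exactly the Yetter--Drinfeld axiom for $V$) and that it verifies the $H$-Yetter--Drinfeld condition (automatic from the formula). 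This construction is clearly functorial, yielding the required functor.

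Next, I write the bilax structural morphisms as
\begin{align*}
\delta_{V,W}(h\otimes(v\otimes w))&=(h\otimes v)\otimes (h\otimes w),\\
\epsilon(h\otimes 1)&=1,\\
\mu_{V,W}((h\otimes v)\otimes(h'\otimes w))&=
\begin{cases}h\otimes(v\otimes (h^{-1}h')w) & \text{if }h^{-1}h'\in G,\\ 0&\text{otherwise,}\end{cases}\\
\eta(1)&=\sum_{hG\in H/G} h\otimes 1.
\end{align*}
Only $\mu$ and $\eta$ require nontrivial checks: for $\mu$ one must verify independence of the chosen representatives $h\otimes v$, $h'\otimes w$ modulo the $\Bbbk G$-relations (straightforward), $H$-linearity, and preservation of the $H$-grading --- the last point again being where the $G$-Yetter--Drinfeld axiom on $V$ and $W$ enters, so that both sides land in the same $H$-homogeneous component. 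For $\eta$, reindexing the sum over $H/G$ shows it is $H$-invariant and lands in the trivially-graded component.

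The verification of the remaining axioms is by direct computation on elementary tensors. The lax/oplax unit and associativity axioms are immediate. Separability $\mu_{V,W}\circ\delta_{V,W}=\ide$ is obvious from the formula since $(h\otimes v)\otimes(h\otimes w)$ has $h^{-1}h=e\in G$, and $\mu$ returns $h\otimes(v\otimes w)$. The Frobenius compatibilities \eqref{frobmon1}, \eqref{frobmon2} then reduce to pushing $\delta$ past $\mu$ on a single slot, which is an elementary case analysis on whether $h^{-1}h'\in G$. For the braiding compatibilities \eqref{braidcomp}, \eqref{braidcompmu}, one computes both sides on $h\otimes(v\otimes w)$ using the Yetter--Drinfeld braiding $\Psi_{V,W}(v\otimes w)=(\deg v)w\otimes v$ and then uses the tensor relation $hg\otimes x=h\otimes gx$ in $\Bbbk H\otimes_{\Bbbk G}V$ to identify corresponding outputs; both diagrams collapse to the same element.

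Conceptually, this statement reflects the fact that $\Bbbk G\subseteq \Bbbk H$ is a separable Frobenius extension and that $\Ind_G^H$ is the two-sided adjoint to the strong monoidal restriction $\Res_G^H$, producing the Frobenius monoidal structure by the general theory of Day--Pastro. The reason this general argument cannot simply be quoted at the level of centers is that $\cZ(\Res_G^H)$ is not a monoidal functor --- tensor products in the Drinfeld center mix homogeneous components of all degrees, not only those graded by $G\subseteq H$. Hence the main technical point (and the one requiring care in the plan above) is exhibiting the explicit $\mu$ and $\delta$ at the Yetter--Drinfeld level and checking that they respect the $H$-grading; once this is in place, all other verifications are routine calculations in $\Bbbk H\otimes_{\Bbbk G}(-)$.
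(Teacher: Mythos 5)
Your proposal is correct and follows essentially the same route as the paper: the same Yetter--Drinfeld grading $h\otimes v\mapsto h|v|h^{-1}$ on $\Bbbk H\otimes_{\Bbbk G}V$, the same explicit formulas for $\mu,\eta,\delta,\epsilon$, and the same elementwise verifications of the Frobenius, separability, and braiding compatibilities (your choice of \emph{left} cosets in $\eta$ is in fact the cleaner convention). The only difference is your closing conceptual paragraph explaining why the Day--Pastro framework cannot be invoked directly at the level of centers, which is a helpful remark but not a change of method.
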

\begin{proof}
Given $(V,c)$ in $\cZ(\Rep_\Bbbk H)$, $\Ind_H^G(V)=\Bbbk G\otimes_{\Bbbk H}V$ can be equipped with the morphism
$$c'_X ((g\otimes v)\otimes x)= (g|v|g^{-1}\cdot x)\otimes v,$$
for $X$ any $\Bbbk G$-module and regarding the degree $|v|\in H\subseteq G$ as an element in $G$. One checks that $c'_X$ defines a half-braiding. In fact, under the braided equivalence of $\cZ(\Rep_\Bbbk G)$ and the category of Yetter--Drinfeld modules over $G$, this half-braiding corresponds to the Yetter--Drinfeld module with coaction given by 
$$\delta'(g\otimes v)=g|v|g^{-1}\otimes (g\otimes v).$$
This construction is clearly functorial with respect to morphisms in $\cZ(\Rep_\Bbbk H)$. 

The functor $\Ind_H^G\colon \Rep_\Bbbk  H\to \Rep_\Bbbk  G$ is both a left and right adjoint to the monoidal functor $\Res_H^G$. Thus, $\Ind_H^G$ is both lax and oplax monoidal. Explicitly, the lax and oplax structures are given by 
\begin{gather*}
    \mu_{V,W} \colon \Ind_H^G(V)\otimes \Ind_H^G(W)\longrightarrow \Ind_H^G(V\otimes W), \qquad \eta \colon \Bbbk \longrightarrow \Ind_H^G(\Bbbk),\\
    (g\otimes v)\otimes (k\otimes w)\xmapsto{\,\mu_{V,W}\,} \begin{cases} h\otimes (v\otimes g^{-1}kw), & \text{if }g^{-1}k\in H\\ 0, & \text{otherwise}\end{cases}, \qquad 1\xmapsto{\,\eta\,} \sum_{j} g_j\otimes 1,
    \\
    \delta_{V,W} \colon \Ind_H^G(V\otimes W)\longrightarrow \Ind_H^G(V)\otimes \Ind_H^G(W),\qquad \epsilon \colon  \Ind_H^G(\Bbbk)\longrightarrow \Bbbk,\\
    g\otimes (v\otimes w)\xmapsto{\,\delta_{V,W}\,} (g\otimes v)\otimes (g\otimes w), \qquad g\otimes 1 \xmapsto{\,\epsilon\,} 1.
\end{gather*}
where $\{g_j\}_{j\in J}$ is a set of representatives for the left cosets of $H$ in $G$, i.e.~$H=\coprod_j g_jG$.
We have to check that the natural transformations which determine the lax and oplax monoidal structure, are compatible with the half-braidings defined above. It is easiest to check this using the formulation of objects in $\cZ(\Rep_\Bbbk G)$ as Yetter--Drinfeld modules and amounts to a straightforward computation. 

Next, we  check that the lax and oplax monoidal structures displayed above indeed satisfy Equations \eqref{frobmon1}--\eqref{frobmon2}. For instance, Equation  \eqref{frobmon1} follows from the terminal expressions in the following lines being equal:
\begin{align*}
    (g\otimes v)\otimes (k\otimes (w\otimes u))&\mapsto (g\otimes v)\otimes (k\otimes w)\otimes (k\otimes u)\\
    &\mapsto \begin{cases}(g\otimes (v\otimes  g^{-1}kw))\otimes (k\otimes u),& \text{if $g^{-1}k\in H$,}\\ 0,& \text{otherwise}\end{cases}\\
    (g\otimes v)\otimes (k\otimes (w\otimes u))&\mapsto \begin{cases} g\otimes (v\otimes  g^{-1}kw\otimes g^{-1}ku), &\text{if $g^{-1}k\in H$,}\\ 0,& \text{otherwise}\end{cases}\\
    &\mapsto \begin{cases} (g\otimes (v\otimes  g^{-1}kw))\otimes (g\otimes g^{-1}ku),&\text{if $g^{-1}k\in H$,}\\ 0,& \text{otherwise.}\end{cases}
\end{align*}

Finally, we check compatibility with the braiding. The composition $\Psi_{\Ind_H^G(V),\Ind_H^G(W)}\circ\delta_{V,W}$ maps $g\otimes v\otimes w$ to 
$$g|v|g^{-1}g\otimes w\otimes g\otimes v=g|v|\otimes w\otimes g\otimes v=g\otimes |v|w\otimes g\otimes v,$$
using that $|v|\in H$, where $\delta(v)=|v|\otimes v$ is the $H$-coaction on $V$. Hence, this composition equals $\delta_{W,V}\circ\Ind_H^G(\Psi_{V,W})$. This proves Equation \eqref{braidcomp}. Equation \eqref{braidcompmu} follows by noting that 
$\Psi_{V,W}\circ\mu_{V,W}(g\otimes v\otimes k\otimes w)$ is zero unless $g^{-1}k\in H$, in which case it evaluates to 
$$g\otimes (|v|g^{-1}k w\otimes v)=g|v|g^{-1}k\otimes (w\otimes k^{-1}g|v|^{-1}v),$$
which is the image of $g\otimes v\otimes k\otimes w$ under the composition $\mu_{W,V}\circ \Ind_H^G(\Psi_{V,W}).$
\end{proof}

\begin{corollary}\label{cor:frob}
Let $H\subset G$ be finite groups.
\begin{enumerate}
\item
The functor $\cZ(\Ind_H^G)$ is exact and preserves duals. 
\item
The functor $\cZ(\Ind_H^G)$ preserves Frobenius algebra objects. 
\item The object $\Ind_H^G(V\otimes W)$ is naturally isomorphic to a direct summand of $\Ind_H^G(V)\otimes\Ind_H^G(W)$ in $\cZ(\Rep_\Bbbk G)$.
\end{enumerate}
\end{corollary}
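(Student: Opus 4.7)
The plan is to derive each of the three claims directly from the separable Frobenius monoidal structure on $\cZ(\Ind_G^H)$ established in \Cref{centerind-groups}, invoking the standard consequences of such structures collected by Day--Pastro \cite{DP}.

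For part (1), exactness of $\cZ(\Ind_G^H)$ can be checked after post-composing with the forgetful functor $F\colon\cZ(\Rep_\Bbbk H)\to\Rep_\Bbbk H$, since $F$ is exact and reflects exactness (kernels and cokernels in the monoidal center are computed in the underlying category). The composite $F\circ\cZ(\Ind_G^H)$ is simply the ordinary induction $\Ind_G^H$, which is exact as it is simultaneously a left and right adjoint to $\Res_G^H$. Preservation of duals is a general consequence of being a separable Frobenius monoidal functor: the lax, oplax and (Frobenius plus separability) axioms allow one to transport evaluation and coevaluation morphisms, so that $\cZ(\Ind_G^H)(V^*)$ is canonically a left and right dual of $\cZ(\Ind_G^H)(V)$.

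For part (2), a Frobenius algebra $(A, m, u, \Delta, \varepsilon)$ in $\cZ(\Rep_\Bbbk G)$ is transported to $\cZ(\Rep_\Bbbk H)$ as follows: the lax structure $(\mu,\eta)$ turns $\cZ(\Ind_G^H)(A)$ into an algebra with multiplication $\cZ(\Ind_G^H)(m)\circ\mu_{A,A}$ and unit $\cZ(\Ind_G^H)(u)\circ\eta$, while the oplax structure $(\delta,\epsilon)$ dually makes it a coalgebra. The two Frobenius compatibilities \eqref{frobmon1}--\eqref{frobmon2} are precisely what is needed to deduce the Frobenius identity on $\cZ(\Ind_G^H)(A)$ from that on $A$; this is the standard fact that Frobenius monoidal functors preserve Frobenius algebras.

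For part (3), the separability axiom \eqref{eq:sep} asserts that $\mu_{V,W}\circ\delta_{V,W}=\ide_{\cZ(\Ind_G^H)(V\otimes W)}$, which exhibits $\cZ(\Ind_G^H)(V\otimes W)$ as a retract of $\cZ(\Ind_G^H)(V)\otimes\cZ(\Ind_G^H)(W)$ via the natural maps $\delta_{V,W}$ and $\mu_{V,W}$. Since $\cZ(\Rep_\Bbbk H)$ is abelian, in particular Karoubian, every retract splits as a direct summand, and naturality of $\delta$ and $\mu$ makes this splitting functorial in $V$ and $W$. No step is really an obstacle, as \Cref{centerind-groups} has already established the separable Frobenius monoidal structure; the remaining content is to quote the three formal consequences from \cite{DP}.
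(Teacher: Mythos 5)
Your proposal is correct and follows essentially the same route as the paper: all three parts are deduced from the separable Frobenius monoidal structure of \Cref{centerind-groups} together with the standard consequences in \cite{DP}, with exactness reduced to that of the underlying induction functor and part (3) obtained from the separability identity $\mu_{V,W}\circ\delta_{V,W}=\ide$ realizing $\Ind_G^H(V\otimes W)$ as a natural retract, hence direct summand.
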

\begin{proof}
The functor $\cZ(\Ind_H^G)$ is exact because it equals, on morphisms, the underlying functor $\Ind_H^G$, which is both left and right adjoint to restriction.  Further, a Frobenius monoidal functor preserves left and right duals and Frobenius algebras by \cite{DP}. Separability shows that the morphism $e_{V,W}=\delta_{V,W}\circ\mu_{V,W}$ is an idempotent, natural in $V,W$, that cuts out $\Ind_H^G(V\otimes W)$ as a direct summand of $\Ind_H^G(V)\otimes \Ind_H^G(W)$.
\end{proof}

\bibliography{biblio}
\bibliographystyle{amsrefs}%

\end{document}